\documentclass{amsart}

\usepackage{amssymb}
\usepackage{amsmath}
\usepackage{braket}
\usepackage{mathrsfs}
\usepackage[all]{xy}
\usepackage{graphicx}
\usepackage{color}
\usepackage{transparent}


\newcommand{\C}{\Csc}               
\newcommand{\D}{\Dsc}               
\newcommand{\E}{\Ebb}               
\newcommand{\F}{\Fbb}               
\newcommand{\T}{\Tsc}               

\newcommand{\Ebb}{\mathbb{E}}        
\newcommand{\Fbb}{\mathbb{F}}        
\newcommand{\Wbb}{\mathbb{W}}        

\newcommand{\CEs}{(\C,\E,\sfr)}       
\newcommand{\wC}{\wt{\C}}             
\newcommand{\wE}{\wt{\Ebb}}           
\newcommand{\ws}{\wt{\sfr}}           
\newcommand{\wCEs}{(\wC,\wE,\ws)}     

\newcommand{\Mor}{\operatorname{Mor}}      
\newcommand{\Iso}{\operatorname{Iso}}      
\newcommand{\add}{\operatorname{add}}      
\newcommand{\Ob}{\operatorname{Ob}}        
\newcommand{\Ext}{\operatorname{Ext}}       
\newcommand{\Ker}{\operatorname{Ker}}       
\newcommand{\Cone}{\operatorname{Cone}}     
\newcommand{\CoCone}{\operatorname{CoCone}} 

\newcommand{\Ab}{\mathit{Ab}}        
\newcommand{\op}{\mathrm{op}}       

\newcommand{\ush}{^\sharp}           
\newcommand{\ssh}{_\sharp}           
\newcommand{\uas}{^{\ast}}            
\newcommand{\sas}{_{\ast}}            

\newcommand{\id}{\mathrm{id}}         
\newcommand{\Id}{\mathrm{Id}}         

\newcommand{\se}{\subseteq}           
\newcommand{\ppr}{^{\prime}}          
\newcommand{\pprr}{^{\prime\prime}}   
\newcommand{\co}{\colon}              
\newcommand{\ci}{\circ}               
\newcommand{\iv}{^{-1}}               

\newcommand{\lla}{\longleftarrow}     
\newcommand{\lra}{\longrightarrow}    
\newcommand{\tc}{\Rightarrow}         
\newcommand{\ltc}{\Longrightarrow}    
\newcommand{\EQ}{\Leftrightarrow}     
\newcommand{\dra}{\dashrightarrow}  

\newcommand{\wt}{\widetilde}          




\newcommand{\sfr}{\mathfrak{s}}  
\newcommand{\tfr}{\mathfrak{t}}  

\newcommand{\Csc}{\mathscr{C}}   
\newcommand{\Dsc}{\mathscr{D}}   
\newcommand{\Ssc}{\mathscr{S}}   
\newcommand{\Tsc}{\mathscr{T}}   

%
\newcommand{\Ical}{\mathcal{I}} 
\newcommand{\Kcal}{\mathcal{K}} 
\newcommand{\Lcal}{\mathcal{L}} 
\newcommand{\Mcal}{\mathcal{M}} 
\newcommand{\Ncal}{\mathcal{N}} 
\newcommand{\Rcal}{\mathcal{R}} 
\newcommand{\Scal}{\mathcal{S}} 
\newcommand{\Tcal}{\mathcal{T}} 
\newcommand{\Ucal}{\mathcal{U}} 
\newcommand{\Vcal}{\mathcal{V}} 
%
\newcommand{\abf}{\mathbf{a}}   
\newcommand{\bbf}{\mathbf{b}}   
\newcommand{\ubf}{\mathbf{u}}   
\newcommand{\vbf}{\mathbf{v}}   
\newcommand{\wbf}{\mathbf{w}}   
%

\newcommand{\ovl}{\overline}    
\newcommand{\ov}{\overset}      
\newcommand{\un}{\underset}     
\newcommand{\ti}{\times}        

\newcommand{\bsm}{\begin{smallmatrix}}
\newcommand{\esm}{\end{smallmatrix}}

\newcommand{\al}{\alpha}         
\newcommand{\be}{\beta}          
\newcommand{\gam}{\gamma}        
\newcommand{\lam}{\lambda}       
\newcommand{\thh}{\theta}        
\newcommand{\del}{\delta}        
\newcommand{\ep}{\varepsilon}    
\newcommand{\sig}{\sigma}        



\newcommand{\FR}[3]{{#1}\backslash {#2}/{#3}}

\numberwithin{equation}{section}

\newtheorem{thm}{Theorem}[section]
\newtheorem{cor}[thm]{Corollary}
\newtheorem{prop}[thm]{Proposition}
\theoremstyle{definition}
\newtheorem{dfn}[thm]{Definition}
\newtheorem{ex}[thm]{Example}
\newtheorem{claim}[thm]{Claim}
\newtheorem{lem}[thm]{Lemma}

\newtheorem{cond}[thm]{Condition}

\theoremstyle{remark}
\newtheorem{rem}[thm]{Remark}

\newtheorem*{introthm}{Theorem}

\begin{document}

\title{Localization of extriangulated categories}

\author{Hiroyuki Nakaoka}
\email{nakaoka.hiroyuki@math.nagoya-u.ac.jp} %
\address{Graduate School of Mathematics, Nagoya University, Furocho, Chikusaku, Nagoya 464-8602, Japan}

\author{Yasuaki Ogawa}
\email{ogawa.yasuaki.gh@cc.nara-edu.ac.jp} %
\address{Center for Educational Research of Science and Mathematics, Nara University of Education, Takabatake-cho, Nara, 630-8528, Japan}

\author{Arashi Sakai}
\email{m20019b@math.nagoya-u.ac.jp} %
\address{Graduate School of Mathematics, Nagoya University, Furocho, Chikusaku, Nagoya 464-8602, Japan}

\thanks{The authors are grateful for Mikhail Gorsky, Osamu Iyama, Kiriko Kato, Yann Palu and Katsunori Takahashi for their interest and valuable comments. This work is supported by JSPS KAKENHI Grant Number JP20K03532.}

\begin{abstract}
In this article, we show that the localization of an extriangulated category by a multiplicative system satisfying mild assumptions can be equipped with a natural, universal structure of an extriangulated category. This construction unifies the Serre quotient of abelian categories and the Verdier quotient of triangulated categories. Indeed we give such a construction for a bit wider class of morphisms, so that it covers several other localizations appeared in the literature, such as Rump's localization of exact categories by biresolving subcategories, localizations of extriangulated categories by means of Hovey twin cotorsion pairs, and the localization of exact categories by two-sided admissibly percolating subcategories.
\end{abstract}

\maketitle

\tableofcontents

\section{Introduction}
Abelian categories, exact categories and triangulated categories are the main categorical frameworks used in homological algebra.
The notion of an {\it extriangulated category} was recently introduced in \cite{NP} as a unification of such classes of categories. The class of extriangulated categories not only contains them as typical cases, but has an advantage that it is closed by several operations such as taking extension-closed subcategories \cite[Remark~2.18]{NP}, ideal quotients by subcategories consisting of projective-injectives \cite[Proposition 3.30]{NP}, and relative theories \cite[Proposition 3.16]{HLN}. 
If one names another fundamental operation which still lacks in extriangulated categories, it will be the \emph{localization}.
As we know, for abelian/triangulated categories, localization can be performed in a satisfactory generality. As a unifying notion, it is natural to expect that extriangulated categories provide their common generalization. 

In this article, let us discuss about localizations of extriangulated categories. As for the localizations which involve abelian/exact/triangulated categories, the following are known in the literature.
\begin{itemize}
\item[{\rm (i)}] Serre quotient of an abelian category \cite{Ga}.
\item[{\rm (ii)}] Verdier quotient of a triangulated category \cite{V}.
\item[{\rm (iii)}] C\'{a}rdenas-Escudero's localization of an exact category \cite{C-E}. More generally, localization of an exact category by a two-sided admissibly percolating subcategory \cite{HR},\cite{HKR}. 
\item[{\rm (iv)}] Rump's localization of an exact category \cite{R} by a biresolving subcategory.
\item[{\rm (v)}] Localization of an abelian category with respect to an abelian model structure  \cite{Ho1},\cite{Ho2}. More generally, localization of an exact category with respect to an exact model structure \cite{Gi}.
\item[{\rm (vi)}] As a counterpart of {\rm (v)}, localization of a triangulated category with respect to a triangulated model structure \cite{Y}.
\item[{\rm (vii)}] As a unification of {\rm (v)} and {\rm (vi)}, localization of an extriangulated category with respect to an admissible Hovey twin cotorsion pair \cite{NP}.
\end{itemize}

The localization of extriangulated categories which we will introduce in this article, gives a unification of all the above localizations. 
After briefly reviewing the definition and basic properties of extriangulated categories in Section~\ref{Section_ReviewOnExtri}, we introduce our main theorem (Theorem~\ref{ThmMultLoc}) in Section~\ref{Section_Localization}, which is as follows. In the below, $\CEs$ is an extriangulated category, $\Ncal_{\Ssc}\se\C$ is an additive full subcategory associated to $\Ssc$, and $\ovl{\Ssc}$ denotes a set of morphisms in the ideal quotient $\ovl{\C}=\C/[\Ncal_{\Ssc}]$ obtained from $\Ssc$ by taking closure with respect to the composition with isomorphisms in $\ovl{\C}$.

\begin{introthm}
Let $\Ssc$ be a set of morphisms in $\C$ containing all isomorphisms and closed by compositions. 
Suppose that $\ovl{\Ssc}$ satisfies the following conditions in $\ovl{\C}$.
\begin{itemize}
\item[{\rm (MR1)}] $\ovl{\Ssc}$ satisfies $2$-out-of-$3$ with respect to compositions.
\item[{\rm (MR2)}] $\ovl{\Ssc}$ is a multiplicative system.
\item[{\rm (MR3)}] Let $\langle A\ov{x}{\lra}B\ov{y}{\lra}C,\del\rangle$, $\langle A\ppr\ov{x\ppr}{\lra}B\ppr\ov{y\ppr}{\lra}C\ppr,\del\ppr\rangle$ be any pair of $\sfr$-triangles, and suppose that $a\in\C(A,A\ppr),c\in\C(C,C\ppr)$ satisfies $a\sas\del=c\uas\del\ppr$. If $\ovl{a},\ovl{c}\in\ovl{\Ssc}$, then there exists $\bbf\in\ovl{\Ssc}$ such that $\bbf\ci\ovl{x}=\ovl{x}\ppr\ci\ovl{a}$, $\ovl{c}\ci\ovl{y}=\ovl{y}\ppr\ci\bbf$.
\item[{\rm (MR4)}]  $\ovl{\Mcal}_{\mathsf{inf}}=\{ \vbf\ci \ovl{x}\ci \ubf\mid x\ \text{is an}\ \sfr\text{-inflation}, \ubf,\vbf\in\ovl{\Ssc}\}$ is closed by compositions. Dually for $\sfr$-deflations. 
\end{itemize}
Then the localization of $\C$ by $\Ssc$ gives an extriangulated category $(\wC,\wE,\ws)$ equipped with an exact functor $(Q,\mu)\co\CEs\to \wCEs$, which is universal among exact functors inverting $\Ssc$.
\end{introthm}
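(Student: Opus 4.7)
The plan is to build $(\wC,\wE,\ws)$ in stages that mirror Verdier's construction in the triangulated setting, using the ideal quotient $\ovl{\C}=\C/[\Ncal_{\Ssc}]$ to absorb the morphisms that are forced to become zero after localization. First I would set $\wC:=\ovl{\C}[\ovl{\Ssc}^{-1}]$, the Gabriel--Zisman localization of the additive category $\ovl{\C}$ at the multiplicative system $\ovl{\Ssc}$; conditions (MR1) and (MR2) are exactly what is needed for morphisms in $\wC$ to be representable by right (or equivalently left) roofs and for the resulting category to be additive. The canonical functor $Q\co\C\thra\ovl{\C}\to\wC$ is additive and, by the very definition of $\Ncal_{\Ssc}$, inverts precisely the class $\Ssc$.

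For the bifunctor $\wE$, I would define $\wE(X,Z)$ to be the set of equivalence classes of ``roofs of extensions'': triples $(a,\del,c)$ in which $\del\in\E(A,C)$ is an $\E$-extension with $\sfr$-realization $A\ov{x}{\lra}B\ov{y}{\lra}C$, and $a\co X\to A$, $c\co C\to Z$ are morphisms of $\C$ whose images $\ovl{a},\ovl{c}$ lie in $\ovl{\Ssc}$. Two such triples are to be identified whenever they can be connected by a morphism of $\sfr$-triangles whose outer components represent the same pair of maps in $\wC$. The realization $\ws$ then sends the class of $(a,\del,c)$ to the equivalence class of the sequence $X\to B\to Z$ in $\wC$ obtained from $Q(x),Q(y)$ after pre- and post-composition with the $\wC$-isomorphisms $Q(a)\iv$ and $Q(c)$. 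Bifunctoriality of $\wE$ then reduces to checking that pullbacks and pushouts of $\del$ along arbitrary morphisms of $\wC$ are well-defined on equivalence classes; here the calculus of fractions granted by (MR2) lets one lift a $\wC$-morphism to a roof in $\C$ and apply the standard $\E$-operations, while (MR3) is what guarantees that distinct lifts produce the same $\wE$-class.

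Verification of the extriangulated axioms forms the technical core. Axioms (ET1) and (ET2) should be essentially formal from the definitions above. For (ET3), a commutative square in $\wC$ between two $\ws$-triangles can, after passing to roofs in $\C$, be connected by a morphism of $\sfr$-triangles provided by (ET3) in $\CEs$; condition (MR3) then ensures the middle morphism lies in $\ovl{\Ssc}$ whenever the outer ones do, which is precisely what is needed to descend the morphism of triangles to $\wC$. The hardest part will be (ET4), the analogue of the octahedral axiom: given a composable pair of $\ws$-inflations in $\wC$, one must lift it to a composable pair of $\sfr$-inflations in $\C$ (modulo elements of $\ovl{\Ssc}$), apply (ET4) in $\CEs$, and verify that the resulting $3\times 3$ configuration descends to a genuine octahedron in $\wC$, independently of the lift. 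Condition (MR4) is indispensable here: without closure of $\sfr$-inflations composed with elements of $\ovl{\Ssc}$ under composition, one cannot even guarantee that the composition of two $\ws$-inflations is again a $\ws$-inflation.

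Finally, for the universal property, any exact functor $F\co\CEs\to(\D,\E\ppr,\sfr\ppr)$ inverting $\Ssc$ annihilates $\Ncal_{\Ssc}$ (by the same argument used for $Q$) and therefore factors through $\ovl{\C}$; the universal property of the Gabriel--Zisman localization then produces a unique additive functor $\wt{F}\co\wC\to\D$ with $\wt{F}\ci Q=F$. The exact structure on $\wt{F}$ is forced on each roof $(a,\del,c)$ by sending it to the $\E\ppr$-extension obtained from $F(\del)$ by pulling back along $F(a)$ and pushing forward along $F(c)$, both $\D$-isomorphisms; that this assignment is well-defined on equivalence classes and intertwines $\ws$ with $\sfr\ppr$ rests once more on (MR3). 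Uniqueness of the extriangulated enhancement of $\wt{F}$ follows since every element of $\wE$ is, by construction, a pullback/pushout of one of the form $\mu(\del)$ with $\del\in\E$.
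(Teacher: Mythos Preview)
Your outline is broadly on the right track, but it diverges from the paper at the single most important strategic point, and this makes your plan substantially harder than it needs to be.

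The paper does \emph{not} verify the original axioms {\rm (ET1)}--{\rm (ET4)} in $\wC$. Instead it invokes the equivalent axiomatization {\rm (C1)}--{\rm (C4)} of extriangulated categories from \cite{HLN} (see Definition~\ref{DefEACat} and Remark~\ref{RemExTriEquiv}). In that formulation, the octahedral axiom is replaced by a ``mapping cone'' condition {\rm (C3)} together with the mere requirement that inflations compose ({\rm (C4)}). The paper first checks {\rm (C1)}, {\rm (C2)}, {\rm (C3)} to obtain a \emph{weakly} extriangulated structure on $\wC$; {\rm (C3)} descends directly from the same condition in $\CEs$ via one commutative square (proof of Theorem~\ref{ThmMultLoc}~{\rm (1)}). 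After that, {\rm (MR4)} is used only to verify {\rm (C4)}, i.e.\ that $\ws$-inflations are closed under composition (Lemma~\ref{LemComposeInf} and the proof of Theorem~\ref{ThmMultLoc}~{\rm (3)}). By contrast, your plan to lift a pair of $\ws$-inflations, apply {\rm (ET4)} in $\CEs$, and push the full $3\times 3$ diagram back down to $\wC$ while checking independence of lifts would work in principle but is far more laborious and is precisely what the {\rm (C1)}--{\rm (C4)} reformulation is designed to avoid.

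Two smaller points. First, your description of the role of {\rm (MR3)} is off: it is not used to verify {\rm (ET3)}, but rather to show that $\ws$ is well-defined on equivalence classes of roofs (Proposition~\ref{PropWellDef}) and, via Lemma~\ref{LemNtoS}, in the exactness argument for {\rm (C1)} (Proposition~\ref{PropExact}). Second, the paper does not build $\wE$ directly from roofs of $\E$-extensions; it first passes to $\ovl{\E}=\E/\Kcal$, where $\Kcal$ consists of extensions annihilated by some element of $\Ssc$ (Definition~\ref{DefK}). This intermediate quotient is what makes the equivalence relation on roofs tractable (Remark~\ref{RemTrivK}), and your vaguer ``connected by a morphism of $\sfr$-triangles'' relation would need to be sharpened to something equivalent before the construction goes through.
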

In fact, we show in Theorem~\ref{ThmMultLoc} that even without condition {\rm (MR4)}, we obtain a universal \emph{weakly} extriangulated category in the sense of \cite{BBGH}.
In the last Section~\ref{Section_Examples}, we will demonstrate how the above-mentioned localizations can be seen as particular cases of the localization given in this article, by showing in Propositions~\ref{PropSatisfy} and \ref{PropPerc} that the assumption of Theorem~\ref{ThmMultLoc} is indeed satisfied. More precisely, we roughly divide the above list into the following two cases. 
\begin{itemize}
\item[{\rm (A)}] Localizations obtained in {\rm (ii),(iv),(vii)} (and hence also {\rm (v),(vi)}). 
\item[{\rm (B)}] Localizations obtained in {\rm (i) (ii),(iii)}. 
\end{itemize}
This division results from particular properties of thick subcategories (Definition~\ref{DefThick}) used to give $\Ssc$. We remark that {\rm (ii)} sits in their intersection.
Case {\rm (A)} is dealt in Subsection~\ref{Subsection_LocTri} by using \emph{biresolving} thick subcategories (Definition~\ref{Def_BiResol}), while case {\rm (B)} is dealt in Subsection~\ref{Subsection_Percolating} by using \emph{percolating} thick subcategories (Definition~\ref{Def_Percolating}). In case {\rm (A)}, the resulting localization always becomes triangulated as will be shown in Corollary~\ref{CorLocTri}. On the other hand in case {\rm (B)}, with some additional condition which fits well with percolating subcategories, the resulting localization becomes exact as in Corollary~\ref{CorLast}.

\bigskip

Throughout this article, we use the following notations and terminology. For a category $\C$, let $\Mcal=\Mor(\C)$ denote the class of all morphisms of $\C$. Also, $\Iso(\C)\se\Mcal$ denotes the class of all isomorphisms. If a class of morphisms $\Ssc\se\Mcal$ is closed by compositions and contains all identities in $\C$, then we may regard $\Ssc\se\C$ as a (not full) subcategory satisfying $\Ob(\Ssc)=\Ob(\C)$. With this view in mind, we write $\Ssc(X,Y)=\{f\in \C(X,Y)\mid f\in\Ssc\}$ for any $X,Y\in\C$.
Throughout this article, let $\C$ denote an additive category. To avoid any set-theoretic problem in considering its localizations, we assume that $\C$ is small.

\section{Review on the definition of extriangulated category}\label{Section_ReviewOnExtri}

In this subsection, we review the definition of extriangulated categories. More precisely, we use instead the notion of $1$-exangulated category equivalent to it (\cite[Definition~2.32]{HLN}).

\begin{dfn}\label{DefExtension}
Suppose $\C$ is equipped with a biadditive functor $\E\co\C^\op\ti\C\to\Ab$. For any pair of objects $A,C\in\C$, an element $\del\in\E(C,A)$ is called an \emph{$\E$-extension}. 
We abbreviately express it as $C\ov{\del}{\dra}A$.

For any $a\in\C(A,A\ppr)$ and $c\in\C(C\ppr,C)$, we abbreviate
$\E(C,a)(\del)\in\E(C,A\ppr)$ and $\E(c,A)(\del)\in\E(C\ppr,A)$ to $a\sas\del$ and $c\uas\del$, respectively.
By Yoneda lemma, natural transformations
\[ \del\ssh\colon\C(-,C)\Rightarrow\E(-,A)\ \ \text{and}\ \ \del\ush\colon\C(A,-)\Rightarrow\E(C,-) \]
are associated to $\del$.
Explicitly, they are given by
\begin{eqnarray*}
&\del\ssh\co\C(X,C)\to\E(X,A)\ ;\ f\mapsto f\uas\del,&\\
&\del\ush\co\C(A,X)\to\E(C,X)\ ;\ g\mapsto g\sas\delta&
\end{eqnarray*}
for each $X\in\C$.
\end{dfn}

\begin{dfn}\label{DefEquiv2Seq}
Let $A,C\in\C$ be any pair of objects. Sequences $A\ov{x}{\lra}B\ov{y}{\lra}C$ and $A\ov{x\ppr}{\lra}B\ppr\ov{y\ppr}{\lra}C$ are said to be equivalent if there exists an isomorphism $b\in\C(B,B\ppr)$ such that $b\ci x=x\ppr$ and $y\ppr\ci b=y$. We denote the equivalence class to which $A\ov{x}{\lra}B\ov{y}{\lra}C$ belongs by $[A\ov{x}{\lra}B\ov{y}{\lra}C]$.
\end{dfn}

\begin{dfn}\label{DefConf}
Let $\C$ and $\Ebb$ be as before. Suppose that we are given a map $\sfr$ which assigns an equivalence class $\sfr(\del)=[A\ov{x}{\lra}B\ov{y}{\lra}C]$ to each $\Ebb$-extension $\del\in\Ebb(C,A)$. We use the following terminology.
\begin{enumerate}
\item A sequence $A\ov{x}{\lra}B\ov{y}{\lra}C$ in $\C$ is called an \emph{$\sfr$-conflation} if it satisfies $\sfr(\del)=[A\ov{x}{\lra}B\ov{y}{\lra}C]$ for some $\del\in\Ebb(C,A)$.

A morphism $x$ in $\C$ is called an \emph{$\sfr$-inflation} if it appears in some $\sfr$-conflation as $A\ov{x}{\lra}B\ov{y}{\lra}C$. 
Dually, a morphism $y$ in $\C$ is called an \emph{$\sfr$-deflation} if it appears in some $\sfr$-conflation as $A\ov{x}{\lra}B\ov{y}{\lra}C$. 
\item An \emph{$\sfr$-triangle} $\langle A\ov{x}{\lra}B\ov{y}{\lra}C,\del\rangle$ is a pair of a sequence $A\ov{x}{\lra}B\ov{y}{\lra}C$ in $\C$ and an $\Ebb$-extension $\del\in\Ebb(C,A)$ satisfying $\sfr(\del)=[A\ov{x}{\lra}B\ov{y}{\lra}C]$. We denote such a pair abbreviately by $A\ov{x}{\lra}B\ov{y}{\lra}C\ov{\del}{\dra}$.
\item A \emph{morphism of $\sfr$-triangles} from $\langle A\ov{x}{\lra}B\ov{y}{\lra}C,\del\rangle$ to $\langle A\ppr\ov{x\ppr}{\lra}B\ppr\ov{y\ppr}{\lra}C\ppr,\del\ppr\rangle$ is a triplet $(a,b,c)$ of morphisms in $\C$ which satisfies $b\ci x=x\ppr\ci a$, $c\ci y=y\ppr\ci b$ and $a\sas\del=c\uas\del\ppr$. We denote it abbreviately by a diagram as follows.
\begin{equation}\label{***abc}
\xy
(-12,6)*+{A}="0";
(0,6)*+{B}="2";
(12,6)*+{C}="4";
(24,6)*+{}="6";
(-12,-6)*+{A\ppr}="10";
(0,-6)*+{B\ppr}="12";
(12,-6)*+{C\ppr}="14";
(24,-6)*+{}="16";
{\ar^{x} "0";"2"};
{\ar^{y} "2";"4"};
{\ar@{-->}^{\del} "4";"6"};
{\ar_{a} "0";"10"};
{\ar_{b} "2";"12"};
{\ar^{c} "4";"14"};
{\ar_{x\ppr} "10";"12"};
{\ar_{y\ppr} "12";"14"};
{\ar@{-->}_{\del\ppr} "14";"16"};
{\ar@{}|\circlearrowright "0";"12"};
{\ar@{}|\circlearrowright "2";"14"};
\endxy
\end{equation}
\end{enumerate}
\end{dfn}

\begin{dfn}\label{DefEACat}
Let $\CEs$ be a triplet of an additive category $\C$, a biadditive functor $\E\co\C^\op\ti\C\to\Ab$, and a map $\sfr$ which assigns an equivalence class $\sfr(\del)=[A\ov{x}{\lra}B\ov{y}{\lra}C]$ to each $\Ebb$-extension $\del\in\Ebb(C,A)$.
\begin{enumerate}
\item $\CEs$ is a \emph{weakly extriangulated category} if it satisfies the following conditions. 
\begin{itemize}
\item[{\rm (C1)}] For any $\sfr$-triangle $A\ov{x}{\lra}B\ov{y}{\lra}C\ov{\del}{\dra}$, the sequence
\[ \C(X,A)\ov{x\ci-}{\lra}\C(X,B)\ov{y\ci-}{\lra}\C(X,C)\ov{\del\ssh}{\lra}\Ebb(X,A) \]
is exact for any $X\in\C$.
\item[{\rm (C1')}] Dually, for any $\sfr$-triangle $A\ov{x}{\lra}B\ov{y}{\lra}C\ov{\del}{\dra}$, the sequence
\[ \C(C,X)\ov{-\ci y}{\lra}\C(B,X)\ov{-\ci x}{\lra}\C(A,X)\ov{\del\ush}{\lra}\Ebb(C,X) \]
is exact for any $X\in\C$.
\item[{\rm (C2)}] For any $A\in\C$, zero element $0\in\E(0,A)$ satisfies
$\sfr(0)=[A\ov{\id_A}{\lra}A\to0]$.
\item[{\rm (C2')}] Dually, for any $A\in\C$, zero element $0\in\E(A,0)$ satisfies $\sfr(0)=[0\to A\ov{\id_A}{\lra}A]$.
\item[{\rm (C3)}] For any $\sfr$-triangle $A\ov{x}{\lra}B\ov{y}{\lra}C\ov{\del}{\dra}$, any $a\in\C(A,A\ppr)$ and any $\sfr$-triangle $A\ppr\ov{x\ppr}{\lra}B\ppr\ov{y\ppr}{\lra}C\ov{a\sas\del}{\dra}$, there exists $b\in\C(B,B\ppr)$ which gives a morphism of $\sfr$-triangles
\[
\xy
(-12,6)*+{A}="0";
(0,6)*+{B}="2";
(12,6)*+{C}="4";
(24,6)*+{}="6";
(-12,-6)*+{A\ppr}="10";
(0,-6)*+{B\ppr}="12";
(12,-6)*+{C}="14";
(24,-6)*+{}="16";
{\ar^{x} "0";"2"};
{\ar^{y} "2";"4"};
{\ar@{-->}^{\del} "4";"6"};
{\ar_{a} "0";"10"};
{\ar_{b} "2";"12"};
{\ar@{=} "4";"14"};
{\ar_{x\ppr} "10";"12"};
{\ar_{y\ppr} "12";"14"};
{\ar@{-->}_{a\sas\del} "14";"16"};
{\ar@{}|\circlearrowright "0";"12"};
{\ar@{}|\circlearrowright "2";"14"};
\endxy
\]
and makes
\[ A\ov{\left[\bsm x\\ a\esm\right]}{\lra}B\oplus A\ppr\ov{[b\ -x\ppr]}{\lra}B\ppr\ov{y^{\prime\ast}\del}{\dra} \]
an $\sfr$-triangle.

\item[{\rm (C3')}] For any $\sfr$-triangle $A\ov{x}{\lra}B\ov{y}{\lra}C\ov{\del}{\dra}$, any $c\in\C(C\ppr,C)$ and any $\sfr$-triangle $A\ov{x\ppr}{\lra}B\ppr\ov{y\ppr}{\lra}C\ppr\ov{c\uas\del}{\dra}$, there exists $b\in\C(B\ppr,B)$ which gives a morphism of $\sfr$-triangles
\[
\xy
(-12,6)*+{A}="0";
(0,6)*+{B\ppr}="2";
(12,6)*+{C\ppr}="4";
(24,6)*+{}="6";
(-12,-6)*+{A}="10";
(0,-6)*+{B}="12";
(12,-6)*+{C}="14";
(24,-6)*+{}="16";
{\ar^{x\ppr} "0";"2"};
{\ar^{y\ppr} "2";"4"};
{\ar@{-->}^{c\uas\del} "4";"6"};
{\ar@{=} "0";"10"};
{\ar_{b} "2";"12"};
{\ar^{c} "4";"14"};
{\ar_{x} "10";"12"};
{\ar_{y} "12";"14"};
{\ar@{-->}_{\del} "14";"16"};
{\ar@{}|\circlearrowright "0";"12"};
{\ar@{}|\circlearrowright "2";"14"};
\endxy
\]
and makes
\[ B\ppr\ov{\left[\bsm -y\ppr\\ b\esm\right]}{\lra}C\ppr\oplus B\ov{[c\ y]}{\lra}C\ov{x\ppr\sas\del}{\dra} \]
an $\sfr$-triangle.
\end{itemize}
\item $\CEs$ is an \emph{extriangulated category} if it is weakly extriangulated and moreover satisfies the following conditions.
\begin{itemize}
\item[{\rm (C4)}] $\sfr$-inflations are closed by compositions. Namely, if $A\ov{f}{\lra}A\ppr$ and $A\ppr\ov{f\ppr}{\lra}A\pprr$ are $\sfr$-inflations, then so is $f\ppr\ci f$.
\item[{\rm (C4')}] Dually, $\sfr$-deflations are closed by compositions.
\end{itemize}
\end{enumerate}
For a (weakly) extriangulated category $\CEs$, we call $\sfr$ a \emph{realization} of $\Ebb$.

For an $\sfr$-conflation $A\ov{x}{\lra}B\ov{y}{\lra}C$ in an extriangulated category, we write $C=\Cone(x)$ and call it a \emph{cone} of $x$. This is uniquely determined by $x$ up to isomorphisms. Dually we write $A=\CoCone(y)$ and call it a \emph{cocone} of $y$, which is uniquely determined by $y$ up to isomorphisms.
\end{dfn}

\begin{rem}\label{RemExTriEquiv}
It has been shown in \cite[Section~4.1]{HLN} that a triplet $\CEs$ satisfies the above conditions 
if and only if it is an extriangulated category defined in \cite[Definition~2.12]{NP}. By this reason, we call it simply an extriangulated category in this article.
In \cite{HLN}, an additional condition {\rm (R0)} is also required, which is negligible as it follows from {\rm (C3)} and {\rm (C3')}.

The notion of a weakly extriangulated category was introduced in \cite[Definition 5.15]{BBGH}, regarding its importance in the classification of additive subfunctors of the functor $\Ebb$ of an extriangulated category $\CEs$.
\end{rem}

\begin{rem}\label{RemWE}
Let $\CEs$ be a weakly extriangulated category, and let $(\ref{***abc})$ be any morphism of $\sfr$-triangles. If $a,c$ are isomorphisms, then so is $b$. In this case we have $\sfr(\del\ppr)=[A\ppr\ov{b\iv\ci x\ppr}{\lra}B\ov{y\ppr\ci b}{\lra}C\ppr]=[A\ppr\ov{x\ci a\iv}{\lra}B\ov{c\ci y}{\lra}C\ppr]$.
\end{rem}

\begin{rem}
As in Remark~\ref{RemExTriEquiv}, 
an extriangulated category $\CEs$ satisfies the following {\rm (ET4)} (and its dual {\rm (ET4)$^\op$}), which is one of the conditions in its original definition \cite[Definition~2.12]{NP}.

\begin{itemize}
\item[{\rm (ET4)}]
Let $A\ov{f}{\lra}B\ov{f\ppr}{\lra}D\ov{\del}{\dra}$ and $B\ov{g}{\lra}C\ov{g\ppr}{\lra}F\ov{\rho}{\dra}$ be any pair of $\sfr$-triangles.
Then there exists a diagram satisfying $d\uas\del\ppr=\del,e\uas\rho=f\sas\del\ppr$
\[
\xy
(-21,7)*+{A}="0";
(-7,7)*+{B}="2";
(7,7)*+{D}="4";
(-21,-7)*+{A}="10";
(-7,-7)*+{C}="12";
(7,-7)*+{E}="14";
(-7,-21)*+{F}="22";
(7,-21)*+{F}="24";
{\ar^{f} "0";"2"};
{\ar^{f\ppr} "2";"4"};
{\ar^{\del}@{-->} "4";(19,7)};
{\ar@{=} "0";"10"};
{\ar_{g} "2";"12"};
{\ar^{d} "4";"14"};
{\ar_{h=g\ci f} "10";"12"};
{\ar_{h\ppr} "12";"14"};
{\ar@{-->}^{\del\ppr} "14";(19,-7)};
{\ar_{g\ppr} "12";"22"};
{\ar^{e} "14";"24"};
{\ar@{=} "22";"24"};
{\ar@{-->}_{\rho} "22";(-7,-34)};
{\ar@{-->}^{f\ppr\sas\rho} "24";(7,-34)};
{\ar@{}|\circlearrowright "0";"12"};
{\ar@{}|\circlearrowright "2";"14"};
{\ar@{}|\circlearrowright "12";"24"};
\endxy
\]
in which $A\ov{h}{\lra}C\ov{h\ppr}{\lra}E\ov{\del\ppr}{\dra}$ and $D\ov{d}{\lra}E\ov{e}{\lra}F\ov{f\ppr\sas\rho}{\dra}$ are $\sfr$-triangles. 
\end{itemize}
\end{rem}

\begin{rem}\label{Ex_ExTri}
The following holds for an extriangulated category $\CEs$. (See \cite[Corollaries~3.18 and 7.6]{NP} for the detail.)
\begin{enumerate}
\item If any $\sfr$-inflation is monomorphic and any $\sfr$-deflation is epimorphic, then $\C$ has a natural structure of an exact category, in which admissible exact sequences are precisely given by $\sfr$-conflations. In this case we simply say that $\CEs$ \emph{corresponds to an exact category}, in this article.
\item If any morphism is both an $\sfr$-inflation and an $\sfr$-deflation, then  $\C$ has a natural structure of a triangulated category, in which distinguished triangles come from $\sfr$-triangles. In this case we simply say that $\CEs$ \emph{corresponds to a triangulated category}, in this article.
\end{enumerate}
\end{rem}

In this article, we sometimes refer to the following condition introduced in \cite[Condition~5.8]{NP}. 
\begin{cond}\label{ConditionWIC}
For a (weakly) extriangulated category $\CEs$, consider the following condition {\rm (WIC)}.
\begin{itemize}
\item[{\rm (WIC)}] For morphisms $h=g\ci f$ in $\C$, if $h$ is an $\sfr$-inflation, then $f$ is also an $\sfr$-inflation. Dually, if $h$ is an $\sfr$-deflation, then so is $g$.
\end{itemize}
\end{cond}

\begin{rem}
If an extriangulated category $\CEs$ corresponds to a triangulated category, then {\rm (WIC)} is always satisfied.
On the other hand, if $\CEs$ corresponds to an exact category, then it satisfies {\rm (WIC)} if and only if $\C$ is weakly idempotent complete (\cite[Proposition~7.6]{B}). (In \cite{R} and the reference therein, it is also called \emph{divisive}.)
\end{rem}

\begin{dfn}\label{DefExFun}
Let $\CEs$, $(\C\ppr,\E\ppr,\sfr\ppr)$ and $(\C\pprr,\E\pprr,\sfr\pprr)$ be weakly extriangulated categories.
\begin{enumerate}
\item  {\rm (\cite[Definition 2.23]{B-TS})}
An \emph{exact functor} $(F,\phi)\co\CEs\to(\C\ppr,\E\ppr,\sfr\ppr)$ is a pair of an additive functor $F\co\C\to\C\ppr$ and a natural transformation $\phi\co\E\Rightarrow\E\ppr\ci(F^\op\ti F)$ which satisfies
\[\sfr\ppr(\phi_{C,A}(\del))=[F(A)\ov{F(x)}{\lra}F(B)\ov{F(y)}{\lra}F(C)]\]
for any $\sfr$-triangle $A\ov{x}{\lra}B\ov{y}{\lra}C\ov{\del}{\dra}$ in $\C$.
\item If $(F,\phi)\co\CEs\to (\C\ppr,\E\ppr,\sfr\ppr)$ and $(F\ppr,\phi\ppr)\co(\C\ppr,\E\ppr,\sfr\ppr)\to (\C\pprr,\E\pprr,\sfr\pprr)$ are exact functors, then their composition $(F\pprr,\phi\pprr)=(F\ppr,\phi\ppr)\ci(F,\phi)$ is defined by
 $F\pprr=F\ppr\ci F$ and $\phi\pprr=(\phi\ppr\ci(F^\op\ti F))\cdot\phi$.

\item Let $(F,\phi),(G,\psi)\co \CEs\to (\C\ppr,\E\ppr,\sfr\ppr)$ be exact functors. A \emph{natural transformation} $\eta\co (F,\phi)\tc (G,\psi)$ \emph{of exact functors} is a natural transformation $\eta\co F\tc G$ of additive functors, which satisfies
\begin{equation}\label{P7}
(\eta_A)\sas\phi_{C,A}(\del)=(\eta_C)\uas\psi_{C,A}(\del)
\end{equation}
for any $\del\in\Ebb(C,A)$. Horizontal compositions and vertical compositions are defined by those for natural transformations of additive functors.
\end{enumerate}
\end{dfn}

\begin{rem}\label{RemExFun}
The above definition of an exact functor in {\rm (1)} is nothing but that of an \emph{extriangulated functor} introduced in \cite{B-TS}, applied to weakly extriangulated categories. 
The notion of an exact functor coincides with usual ones when both of $\CEs$ and $(\C\ppr,\E\ppr,\sfr\ppr)$ correspond to exact categories or to triangulated categories. Here we briefly recall how an exact functor ($=$ triangulated functor) between triangulated categories can be regarded as an exact functor of extriangulated categories in the above sense. See \cite[Theorem 2.33, 2.34]{B-TS} for the detail.

By definition, an exact functor between triangulated categories $(F,\xi)\co\T\to\T\ppr$ consists of an additive functor $F$ and a natural isomorphism $\xi\co F\ci [1]\ov{\cong}{\ltc}[1]\ci F$ which respects distinguished triangles. As shown in \cite[Proposition~3.22]{NP}, we may regard $\T$ as an extriangulated category $(\T,\E,\sfr)$, for which $\E$ is given by $\E=\Ext^1_{\T}=\T(-,-[1])$ and a sequence $A\ov{x}{\lra}B\ov{y}{\lra}C\ov{\del}{\dra}$ is an $\sfr$-triangle if and only if $A\ov{x}{\lra}B\ov{y}{\lra}C\ov{\del}{\lra}A[1]$ is a distinguished triangle in $\T$. Similarly for $\T\ppr$.
We see that $\xi$ induces a natural transformation $\phi\co\Ext^1_{\T}\tc \Ext_{\T\ppr}^1\ci(F^\op\ti F)$ defined by
\[ \phi_{C,A}\co \Ext^1_{\T}(C,A)\to \Ext_{\T\ppr}^1(FC,FA)\ ;\ \del\mapsto\xi_A\ci F(\del) \]
for each $A,C\in\C$. This gives an exact functor $(F,\phi)$ between extriangulated categories in the sense of Definition~\ref{DefExFun}.

The above definition of a natural transformation of exact functors in {\rm (3)} is automatically satisfied when $(\C\ppr,\E\ppr,\sfr\ppr)$ corresponds to an exact categories, and is equivalent to the definition of a morphism of triangulated functors in \cite[Definition 10.1.9]{KS} when both $\CEs$ and $(\C\ppr,\E\ppr,\sfr\ppr)$ correspond to triangulated categories.
\end{rem}

\begin{prop}\label{PropExEq}
Let $(F,\phi)\co\CEs\to(\D,\Fbb,\tfr)$ be an exact functor between weakly extriangulated categories. The following are equivalent.
\begin{enumerate}
\item $F$ is an equivalence of categories and $\phi$ is a natural isomorphism. 
\item $(F,\phi)$ is an equivalence of extriangulated categories, in the sense that there exist an exact functor $(G,\psi)\co(\D,\Fbb,\tfr)\to \CEs$, natural transformations of exact functors $(\Id_{\C},\id_{\E})\tc (G,\psi)\ci(F,\phi)$ and $(F,\phi)\ci (G,\psi)\tc(\Id_{\D},\id_{\F})$ which have inverses.
\end{enumerate}
\end{prop}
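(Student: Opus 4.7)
The nontrivial direction is (1) $\Rightarrow$ (2); the converse reduces to a direct computation, since the defining equation (P7) for the natural transformations relating $(G,\psi)\ci(F,\phi)$ and $(F,\phi)\ci(G,\psi)$ to the identities forces $\psi_{FC,FA}\ci\phi_{C,A}$ and $\phi_{GD,GD'}\ci\psi_{D,D'}$ to be bijective, because they equal push-pulls along components of natural isomorphisms. Essential surjectivity of $F$ together with naturality of $\phi$ then promotes this to the bijectivity of $\phi_{C,A}$ for all $C,A\in\C$, and the equivalence of underlying additive categories is immediate.

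For (1) $\Rightarrow$ (2), the plan is to choose an additive quasi-inverse $G\co\D\to\C$ of $F$ together with natural isomorphisms $\eta\co\Id_\C\tc GF$ and $\ep\co FG\tc\Id_\D$ satisfying the triangle identities $\ep_{FX}\ci F(\eta_X)=\id_{FX}$ and $G(\ep_Y)\ci \eta_{GY}=\id_{GY}$, and then to define
\[ \psi_{D,D'}(\rho)\;:=\;\phi_{GD,GD'}\iv\bigl((\ep_D)\uas(\ep_{D'}\iv)\sas\rho\bigr)\qquad\bigl(\rho\in\Fbb(D,D')\bigr), \]
which makes sense by the hypothesis that $\phi_{GD,GD'}$ is bijective. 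The naturality of $\psi$ in $D,D'$ follows formally from the naturality of $\phi\iv$ and of $\ep$. There remain two tasks: (a) to verify that $(G,\psi)$ is exact, and (b) to verify that $\eta$ and $\ep$ satisfy (P7).

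For (a), starting from a $\tfr$-triangle $D\ov{x}{\lra}D'\ov{y}{\lra}D''\ov{\rho}{\dra}$, choose any realization $GD\ov{\al}{\lra}M\ov{\be}{\lra}GD''\ov{\psi_{D'',D}(\rho)}{\dra}$ in $\C$; applying the exact functor $(F,\phi)$ yields a $\tfr$-triangle $FGD\ov{F\al}{\lra}FM\ov{F\be}{\lra}FGD''$ realizing $\phi_{GD'',GD}(\psi_{D'',D}(\rho))=(\ep_{D''})\uas(\ep_D\iv)\sas\rho$. On the other hand, naturality of $\ep$ gives $FGx=\ep_{D'}\iv\ci x\ci\ep_D$ and $FGy=\ep_{D''}\iv\ci y\ci\ep_{D'}$, and using (C3) and (C3') together with Remark~\ref{RemWE} one deduces that $FGD\ov{FGx}{\lra}FGD'\ov{FGy}{\lra}FGD''$ is itself a $\tfr$-triangle realizing the same extension $(\ep_{D''})\uas(\ep_D\iv)\sas\rho$. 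Uniqueness of realization up to equivalence produces an isomorphism $FM\cong FGD'$ compatible with the connecting maps, and full-faithfulness of $F$ descends it to an isomorphism $M\cong GD'$ compatible with $Gx$ and $Gy$, showing that $GD\ov{Gx}{\lra}GD'\ov{Gy}{\lra}GD''\ov{\psi(\rho)}{\dra}$ is an $\sfr$-triangle.

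For (b), the identity (P7) for $\eta\co(\Id_\C,\id_\Ebb)\tc(G,\psi)\ci(F,\phi)$, namely $(\eta_A)\sas\del=(\eta_C)\uas\psi_{FC,FA}(\phi_{C,A}(\del))$, becomes after applying $\phi_{C,GFA}$ and using both its naturality and the definition of $\psi$,
\[ (F\eta_A)\sas\phi_{C,A}(\del)\;=\;(F\eta_C)\uas(\ep_{FC})\uas(\ep_{FA}\iv)\sas\phi_{C,A}(\del), \]
which follows at once from the triangle identity $F\eta_X=\ep_{FX}\iv$ and the commutativity of push and pull in the biadditive bifunctor $\Fbb$; the verification for $\ep$ is dual. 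The main obstacle will be bookkeeping the four interacting pairs of covariant/contravariant actions and using the triangle identities at the right moment; once the formula for $\psi$ is in place, no new structural idea is required.
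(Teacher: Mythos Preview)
Your proof is correct and follows essentially the same route as the paper's: the same formula for $\psi$, the same comparison of realizations via Remark~\ref{RemWE} and full faithfulness of $F$, and the same use of the triangle identity $F\eta=(\ep F)\iv$ for the $\eta$-compatibility. One small remark: the check for $\ep$ is not so much ``dual'' as immediate from the very definition of $\psi$ (the defining equation $\phi_{GD,GD'}(\psi_{D,D'}(\rho))=(\ep_D)\uas(\ep_{D'}\iv)\sas\rho$ is exactly condition~(P7) for $\ep$), and in your (2)$\Rightarrow$(1) sketch it is the isomorphism $\eta\co C\cong GFC$ rather than essential surjectivity of $F$ that lets you transport surjectivity of $\phi_{GD,GD'}$ to surjectivity of $\phi_{C,A}$.
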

\begin{proof}
This is analogous to the usual argument for exact functors between triangulated categories.
As the converse is trivial, let us only show that {\rm (1)} implies {\rm (2)}. Suppose that $F$ is an equivalence of categories and let $G\co\D\to\C$ be a quasi-inverse of $F$, equipped with natural isomorphisms $\eta\co\Id_{\C}\ov{\cong}{\ltc} G\ci F$ and $\ep\co F\ci G\ov{\cong}{\ltc}\Id_{\D}$. We may choose $\eta$ and $\ep$ to be the unit and the counit of an adjoint pair $F\dashv G$.

Define $\psi$ to be the composition of
\[ \Fbb\ov{\Fbb\ci(\ep^\op\ti\ep\iv)}{\ltc}\Fbb\ci(F^\op\ti F)\ci(G^\op\ti G)
\ov{\phi\iv\ci(G^\op\ti G)}{\ltc}\Ebb\ci(G^\op\ti G). \]
By definition, for any $X,Z\in\D$, a homomorphism
\[ \psi_{Z,X}\co\Fbb(Z,X)\to\Ebb(GZ,GX) \]
sends each $\rho\in\Fbb(Z,X)$ to the unique element $\psi_{Z,X}(\rho)\in\Ebb(GZ,GX)$ which satisfies
\begin{equation}\label{Eq_phi_psi}
\phi_{GZ,GX}(\psi_{Z,X}(\rho))=(\ep_Z)\uas(\ep_X\iv)\sas\rho
\end{equation}
in $\Fbb(FGZ,FGX)$.

Let us show that $(G,\psi)\co(\D,\Fbb,\tfr)\to\CEs$ is an exact functor. Let $X\ov{x}{\lra}Y\ov{y}{\lra}Z\ov{\rho}{\dra}$ be any $\tfr$-triangle. 
By Remark~\ref{RemWE},
\begin{equation}\label{stri_compare1}
FGX\ov{FGx}{\lra}FGY\ov{FGy}{\lra}FGZ\ov{(\ep_Z)\uas(\ep_X\iv)\sas\rho}{\dra}
\end{equation}
is also a $\tfr$-triangle.
Put $\del=\psi_{Z,X}(\rho)\in\Ebb(GZ,GX)$, and realize it by an $\sfr$-triangle $GX\ov{f}{\lra}B\ov{g}{\lra}GZ\ov{\del}{\dra}$. Since $(F,\phi)$ is exact,
\begin{equation}\label{stri_compare2}
FGX\ov{Ff}{\lra}FB\ov{Fg}{\lra}FGZ\ov{\phi_{GZ,GX}(\del)}{\dra}
\end{equation}
becomes a $\tfr$-triangle. 
As $(\ref{Eq_phi_psi})$ suggests, $\tfr$-triangles $(\ref{stri_compare1})$ and $(\ref{stri_compare2})$ realize the same $\Fbb$-extension, and thus
\[ [FGX\ov{FGx}{\lra}FGY\ov{FGy}{\lra}FGZ]=[FGX\ov{Ff}{\lra}FB\ov{Fg}{\lra}FGZ] \]
holds as sequences in $\D$. Since $F$ is fully faithful, it is easy to deduce that
\[ [GX\ov{Gx}{\lra}GY\ov{Gy}{\lra}GZ]=[GX\ov{f}{\lra}B\ov{g}{\lra}GZ] \]
holds as sequences in $\C$. This means that $\sfr(\psi_{Z,X}(\rho))=[GX\ov{Gx}{\lra}GY\ov{Gy}{\lra}GZ]$ holds, and thus $(G,\psi)$ is exact.

It is immediate from the construction that $\ep\co(F,\phi)\ci (G,\psi)\tc(\Id_{\D},\id_{\F})$ is a natural transformation of exact functors. Indeed $(\ref{Eq_phi_psi})$ is nothing but the equality to be satisfied by $\ep$ in Definition~\ref{DefExFun} {\rm (3)}. Let us confirm that $\eta\co(\Id_{\C},\id_{\E})\tc (G,\psi)\ci(F,\phi)$ is a natural transformation of exact functors. Take any $\del\in\Ebb(C,A)$, and put $\sig=\phi_{C,A}(\del)\in\Fbb(FC,FA)$. 
It is enough to confirm that
\begin{equation}\label{ToShowpsi}
\psi_{FC,FA}(\sig)=(\eta_C\iv)\uas(\eta_A)\sas\del
\end{equation}
holds in $\Ebb(GFC,GFA)$.  
By definition $\psi_{FC,FA}(\sig)\in\Ebb(GFC,GFA)$ is the unique element which satisfies
\[ \phi_{GFC,GFA}(\psi_{FC,FA}(\sig))=(\ep_{FC})\uas(\ep_{FA}\iv)\sas(\sig) \]
in $\Ebb(FGFC,FGFA)$. Since $F\ci\eta=(\ep\ci F)\iv$ holds by the adjointness, we have
\[ (\ep_{FC})\uas(\ep_{FA}\iv)\sas(\sig)=(F(\eta_C)\iv)\uas(F(\eta_A))\sas(\sig)=(F(\eta_C)\iv)\uas(F(\eta_A))\sas(\phi_{C,A}(\del)). \]
By the naturality of $\phi$, we also have
\[ (F(\eta_C)\iv)\uas(F(\eta_A))\sas(\phi_{C,A}(\del))=\phi_{GFC,GFA}((\eta_C\iv)\uas(\eta_A)\sas\del). \]
Combining the above equalities, we obtain
\[ \phi_{GFC,GFA}(\psi_{FC,FA}(\sig))=\phi_{GFC,GFA}((\eta_C\iv)\uas(\eta_A)\sas\del). \]
Since $\phi_{GFC,GFA}$ is an isomorphism, this implies $(\ref{ToShowpsi})$.

It is also obvious from definition  that the inverses $\eta\iv\co(G,\psi)\ci(F,\phi)\tc(\Id_{\C},\id_{\E})$ and $\ep\iv\co (\Id_{\D},\id_{\F})\tc(F,\phi)\ci (G,\psi)$ are natural transformations of exact functors.
\end{proof}

\section{Localization}\label{Section_Localization}

In the rest of this article, let $\CEs$ be an extriangulated category.
In this section, let $\Ssc\se\Mcal$ be a set of morphisms which satisfies the following condition. 
\begin{itemize}
\item[{\rm (M0)}] $\Ssc$ contains all isomorphisms in $\C$, and is closed by compositions. Also, $\Ssc$ is closed by taking finite direct sums. Namely, if $f_i\in\Ssc(X_i,Y_i)$ for $i=1,2$, then $f_1\oplus f_2\in\Ssc(X_1\oplus X_1,Y_1\oplus Y_2)$.
\end{itemize}

Let $\wC$ denote the localization of $\C$ by $\Ssc$.
The aim of this section is to equip $\wC$ with a natural structure of an extriangulated category, under some assumptions.
First we associate a full subcategory $\Ncal_{\Ssc}\se\C$ in the following way.
\begin{dfn}\label{DefNS}
Let $\Ssc\se\Mcal$ be as above. Define $\Ncal_{\Ssc}\se\C$ to be the full subcategory consisting of objects $N\in\C$ such that both $N\to0$ and $0\to N$ belong to $\Ssc$. 

It is obvious that $\Ncal_{\Ssc}\se\C$ is an additive subcategory. In the rest, we will denote the ideal quotient by $p\co\C\to \ovl{\C}=\C/[\Ncal_{\Ssc}]$, and $\ovl{f}$ will denote a morphism in $\ovl{\C}$ represented by $f\in\C(X,Y)$. Also, let $\ovl{\Ssc}\se\ovl{\Mcal}=\Mor(\ovl{\C})$ be the closure of $p(\Ssc)$ with respect to compositions with isomorphisms in $\ovl{\C}$.
\end{dfn}

\begin{lem}\label{LemSplitN}
Let $\Ssc$ and $\ovl{\C}$ be as above. The following holds.
\begin{enumerate}
\item For any $f\in\Ssc(A,B)$ and any $i\in\C(A,N)$ with $N\in\Ncal_{\Ssc}$, we have $\left[\bsm f\\ i\esm\right]\in\Ssc(A,B\oplus N)$.
\item Suppose that $f,g\in\C(A,B)$ satisfy $\ovl{f}=\ovl{g}$ in $\ovl{\C}$. Then $f\in\Ssc$ holds if and only if $g\in\Ssc$.
\item The following conditions are equivalent.
\begin{itemize}
\item[{\rm (i)}] $p(\Ssc)=\ovl{\Ssc}$.
\item[{\rm (ii)}] $p\iv(\ovl{\Ssc})=\Ssc$.
\item[{\rm (iii)}] $p\iv(\Iso(\ovl{\C}))\se\Ssc$.
\item[{\rm (iv)}]  $f\in\Ssc$ holds for any split monomorphism $f\in\C(A,B)$ such that $\ovl{f}$ is an isomorphism in $\ovl{\C}$.
\item[{\rm (v)}] $f\in\Ssc$ holds for any split epimorphism $f\in\C(A,B)$ such that $\ovl{f}$ is an isomorphism in $\ovl{\C}$.
\end{itemize}
\end{enumerate}
\end{lem}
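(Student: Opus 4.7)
My plan is to prove the three parts in order, with (2) and (3) built on (1). For part (1), the approach is to factor $\left[\bsm f\\ i\esm\right]\co A\to B\oplus N$ as
\[
\left[\bsm f\\ i\esm\right] \;=\; (f\oplus 1_N)\ci\left[\bsm 1_A & 0\\ i & 1_N\esm\right]\ci\left[\bsm 1_A\\ 0\esm\right].
\]
The first factor lies in $\Ssc$ by (M0) since $f,1_N\in\Ssc$; the second is an isomorphism with inverse $\left[\bsm 1_A & 0\\ -i & 1_N\esm\right]$; and the third equals $1_A\oplus(0\to N)$, which lies in $\Ssc$ because $0\to N\in\Ssc$ is part of the definition of $N\in\Ncal_{\Ssc}$. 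Closure of $\Ssc$ under composition then yields the claim.

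For part (2), I will use part (1). Writing $g - f = j\ci k$ with $k\co A\to N$, $j\co N\to B$, and $N\in\Ncal_{\Ssc}$, one has $g = [1_B\ j]\ci\left[\bsm f\\ k\esm\right]$. If $f\in\Ssc$, then $\left[\bsm f\\ k\esm\right]\in\Ssc$ by part (1), and $[1_B\ j] = [1_B\ 0]\ci\left[\bsm 1_B & j\\ 0 & 1_N\esm\right]$ is a composition of $1_B\oplus(N\to 0)\in\Ssc$ with an isomorphism, so also lies in $\Ssc$; hence $g\in\Ssc$. The converse follows by symmetry.

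For part (3), the implications (i)$\Leftrightarrow$(ii), (ii)$\Rightarrow$(iii), and (iii)$\Rightarrow$(iv),(v) are largely formal. The equivalence (i)$\Leftrightarrow$(ii) follows from the surjectivity of $p$ on morphisms together with part (2). The implication (ii)$\Rightarrow$(iii) holds because $\Iso(\ovl{\C})\se\ovl{\Ssc}$ by construction. Conversely, (iii)$\Rightarrow$(ii) takes $f\in p\iv(\ovl{\Ssc})$, writes $\ovl{f}=\bar h_2\ci p(s)\ci\bar h_1$ with $s\in\Ssc$ and $\bar h_i\in\Iso(\ovl{\C})$, lifts each $\bar h_i$ to $h_i\in\C$ (which lies in $\Ssc$ by (iii)), and applies part (2) to $f$ and $h_2 s h_1\in\Ssc$. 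The implications (iii)$\Rightarrow$(iv) and (iii)$\Rightarrow$(v) are immediate.

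The main obstacle will be (iv)$\Rightarrow$(iii), and dually (v)$\Rightarrow$(iii). Given $f\co A\to B$ with $\ovl{f}\in\Iso(\ovl{\C})$, my plan is to lift an inverse of $\ovl{f}$ to $g\co B\to A$ and to write $gf - 1_A = j_1\ci i_1$ with $i_1\co A\to N_1$, $j_1\co N_1\to A$, $N_1\in\Ncal_{\Ssc}$. The key trick is to consider
\[ F \;=\; \left[\bsm f\\ i_1\esm\right]\co A\to B\oplus N_1, \]
which is a split monomorphism with retraction $[g\ -j_1]$, since $gf - j_1 i_1 = 1_A$. Moreover $\ovl{i_1}=0$ as $i_1$ factors through $N_1\in\Ncal_{\Ssc}$, so $\ovl{F}$ equals $\ovl{f}$ composed with the $\ovl{\C}$-isomorphism $\ovl{\left[\bsm 1_B\\ 0\esm\right]}$ (whose inverse in $\ovl{\C}$ is represented by $[1_B\ 0]$, via a routine computation modulo $\Ncal_{\Ssc}$), hence $\ovl{F}\in\Iso(\ovl{\C})$. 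Hypothesis (iv) forces $F\in\Ssc$, and then $f = [1_B\ 0]\ci F\in\Ssc$ by (M0), using $[1_B\ 0]=1_B\oplus(N_1\to 0)\in\Ssc$. The symmetric construction with $G=[f\ j_2]\co A\oplus N_2\to B$, where $fg - 1_B = j_2\ci i_2$, settles (v)$\Rightarrow$(iii).
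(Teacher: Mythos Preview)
Your proof is correct and follows essentially the same approach as the paper. The only differences are cosmetic: in part (1) you write the factorization as a three-fold composite while the paper absorbs the isomorphism into a two-step argument, and in (iv)$\Rightarrow$(iii) you spell out explicitly how to produce the split monomorphism $\left[\bsm f\\ i_1\esm\right]$ and why its image in $\ovl{\C}$ is invertible, whereas the paper leaves these routine verifications to the reader.
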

\begin{proof}
{\rm (1)} As $\Ssc$ is closed by finite direct sums, we have $\left[\bsm1\\0\esm\right]\in\Ssc(A,A\oplus N)$. This also implies $\left[\bsm1\\i\esm\right]\in\Ssc(A,A\oplus N)$, since
\[ \left[\begin{array}{cc}1&0\\i&1\end{array}\right]
\left[\begin{array}{c}1\\0\end{array}\right]=
\left[\begin{array}{c}1\\i\end{array}\right] \]
holds. Thus $\left[\bsm f\\i\esm\right]\in\Ssc(A,B\oplus N)$ follows from $\left[\bsm f\\i\esm\right]=(f\oplus \id_N)\ci \left[\bsm1\\i\esm\right]$.

{\rm (2)} Suppose that $f$ belongs to $\Ssc$. If $\ovl{f}=\ovl{g}$, there exist $N\in\Ncal_{\Ssc}$ and $i\in\C(A,N),j\in\C(N,B)$ such that $g=f+j\ci i$. By {\rm (1)} and its dual, we have $\left[\bsm f\\ i\esm\right]\in\Ssc(A,B\oplus N)$ and $[1\ j]\in\Ssc(B\oplus N,B)$. This implies $g=[1\ j]\left[\bsm f\\ i\esm\right]\in\Ssc$.

{\rm (3)} Remark that we always have $p(\Ssc)\se\ovl{\Ssc}$ and $p\iv(\ovl{\Ssc})\supseteq\Ssc$ by definition. {\rm (i)}\,$\EQ$\,{\rm (ii)}\,$\EQ$\,{\rm (iii)}
 follows from {\rm (2)} since $p$ is full.

Since {\rm (iii)}\,$\EQ$\,{\rm (v)} can be shown dually, it is enough to show {\rm (iii)}\,$\EQ$\,{\rm (iv)}.
As {\rm (iii)}\,$\tc$\,{\rm (iv)} is trivial, it remains to show {\rm (iv)}\,$\tc$\,{\rm (iii)}. Suppose that $\ovl{f}$ is an isomorphism in $\ovl{\C}$ for a morphism $f\in\C(A,B)$. Then there exist $N\in\Ncal_{\Ssc}$ and $i\in\C(A,N)$ such that $\left[\bsm f\\ i\esm\right]\in\C(A,B\oplus N)$ is a split monomorphism. Then {\rm (iv)} forces $\left[\bsm f\\ i\esm\right]\in\Ssc$, which implies $f=[1\ 0]\left[\bsm f\\ i\esm\right]\in\Ssc$.
\end{proof}

\begin{rem}\label{RemSplitN}
If $\C$ is idempotent complete, or more generally if any split monomorphism has a cokernel in $\C$ (namely, $\C$ is weakly idempotent complete), then any $\Ssc$ with {\rm (M0)} satisfies {\rm (iv)} and hence all the other equivalent conditions. In particular if $\CEs$ satisfies condition {\rm (WIC)}, then any $\Ssc$ with {\rm (M0)} should satisfy $p(\Ssc)=\ovl{\Ssc}$.
\end{rem}

\subsection{Statement of the main theorem}

One of our aim in Section~\ref{Section_Localization} is to show the following.
\begin{cor}\label{CorMultLoc}
Assume that $\Ssc\se\Mcal$ satisfies {\rm (M0)} as before, and moreover $p(\Ssc)=\ovl{\Ssc}$. Then the following holds.
\begin{enumerate}
\item Suppose that $\Ssc$ satisfies the following conditions {\rm (M1),(M2),(M3)}. Then we obtain a weakly extriangulated category $(\wC,\wE,\ws)$ together with an exact functor $(Q,\mu)\co\CEs\to\wCEs$.
\begin{itemize}
\item[{\rm (M1)}] $\Ssc\se\Mcal$ satisfies $2$-out-of-$3$ with respect to compositions in $\C$.
\item[{\rm (M2)}] $\Ssc$ is a multiplicative system in $\C$.
\item[{\rm (M3)}] Let $\langle A\ov{x}{\lra}B\ov{y}{\lra}C,\del\rangle$, $\langle A\ppr\ov{x\ppr}{\lra}B\ppr\ov{y\ppr}{\lra}C\ppr,\del\ppr\rangle$ be any pair of $\sfr$-triangles, and let $a\in\C(A,A\ppr),c\in\C(C,C\ppr)$ be any pair of morphisms satisfying $a\sas\del=c\uas\del\ppr$. If $a,c\in\Ssc$, then there exists $b\in\Ssc$ which gives the following morphism of $\sfr$-triangles.
\[
\xy
(-12,6)*+{A}="0";
(0,6)*+{B}="2";
(12,6)*+{C}="4";
(24,6)*+{}="6";
(-12,-6)*+{A\ppr}="10";
(0,-6)*+{B\ppr}="12";
(12,-6)*+{C\ppr}="14";
(24,-6)*+{}="16";
{\ar^{x} "0";"2"};
{\ar^{y} "2";"4"};
{\ar@{-->}^{\del} "4";"6"};
{\ar_{a} "0";"10"};
{\ar_{b} "2";"12"};
{\ar^{c} "4";"14"};
{\ar_{x\ppr} "10";"12"};
{\ar_{y\ppr} "12";"14"};
{\ar@{-->}_{\del\ppr} "14";"16"};
{\ar@{}|\circlearrowright "0";"12"};
{\ar@{}|\circlearrowright "2";"14"};
\endxy
\]
\end{itemize}

\item The exact functor $(Q,\mu)\co\CEs\to\wCEs$ obtained in {\rm (1)} is characterized  by the following universality.
\begin{itemize}
\item[{\rm (i)}]
For any exact functor $(F,\phi)\co(\C,\Ebb,\sfr)\to (\D,\Fbb,\tfr)$ such that $F(s)$ is an isomorphism for any $s\in\Ssc$, there exists a unique exact functor $(\wt{F},\wt{\phi})\co\wCEs\to (\D,\Fbb,\tfr)$ with $(F,\phi)=(\wt{F},\wt{\phi})\ci (Q,\mu)$.
\item[{\rm (ii)}] For any pair of exact functors $(F,\phi),(G,\psi)\co(\C,\Ebb,\sfr)\to (\D,\Fbb,\tfr)$ which send  any $s\in\Ssc$ to isomorphisms, let $(\wt{F},\wt{\phi}),(\wt{G},\wt{\psi})\co\wCEs\to (\D,\Fbb,\tfr)$ be the exact functors obtained in {\rm (i)}. Then for any natural transformation $\eta\co (F,\phi)\tc(G,\psi)$ of exact functors, there uniquely exists a natural transformation $\wt{\eta}\co (\wt{F},\wt{\phi})\tc(\wt{G},\wt{\psi})$ of exact functors satisfying $\eta=\wt{\eta}\ci (Q,\mu)$.
\end{itemize}

\item If $\Ssc$ moreover satisfies the following {\rm (M4)}, then $(\wC,\wE,\ws)$ is extriangulated.
\begin{itemize}
\item[{\rm (M4)}] $\Mcal_{\mathsf{inf}}:=\{ t\ci x\ci s\mid x\ \text{is an}\ \sfr\text{-inflation}, s,t\in\Ssc\}$ is closed by composition in $\Mcal$. 
Dually, $\Mcal_{\mathsf{def}}:=\{ t\ci y\ci s\mid y\ \text{is an}\ \sfr\text{-deflation}, s,t\in\Ssc\}\se\Mcal$ is closed by compositions.
\end{itemize}
\end{enumerate}
\end{cor}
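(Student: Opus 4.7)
The plan is to deduce this corollary from Theorem~\ref{ThmMultLoc}, whose hypotheses (MR1)--(MR4) are formulated on $\ovl{\Ssc}$ in $\ovl{\C}$. Under the extra assumption $p(\Ssc) = \ovl{\Ssc}$, Lemma~\ref{LemSplitN}(3) tells us that $p\iv(\ovl{\Ssc}) = \Ssc$, so for every $f \in \Mcal$ we have $\ovl{f} \in \ovl{\Ssc}$ iff $f \in \Ssc$. This \emph{lifting correspondence} is the main tool: it translates each condition on $\Ssc$ in $\C$ to the corresponding condition on $\ovl{\Ssc}$ in $\ovl{\C}$, so the heart of the proof is the verification (M$i$) $\tc$ (MR$i$) for $i=1,2,3,4$, after which Theorem~\ref{ThmMultLoc} supplies the localization $(\wC,\wE,\ws)$, the exact functor $(Q,\mu)$, and the universality.

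For (MR1), given a composable pair $\ovl{f},\ovl{g}$ in $\ovl{\C}$, I pick representatives $f,g$ in $\C$; the lifting turns membership of any two of $\ovl{f},\ovl{g},\ovl{g\ci f}$ in $\ovl{\Ssc}$ into membership of the corresponding pair in $\Ssc$, so (M1) forces the third. For (MR2), I lift a given span or cospan to $\C$, apply the Ore conditions of (M2), and project back; the cancellation axioms transfer because by Lemma~\ref{LemSplitN}(2) congruence modulo $[\Ncal_{\Ssc}]$ does not affect membership in $\Ssc$. For (MR3), lifting $\ovl{a},\ovl{c}\in\ovl{\Ssc}$ gives $a,c\in\Ssc$; then (M3) produces $b\in\Ssc$ completing the morphism of $\sfr$-triangles in $\C$, and its image $\ovl{b}\in\ovl{\Ssc}$ yields what (MR3) demands in $\ovl{\C}$.

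For (MR4) I observe that any $\ubf,\vbf\in\ovl{\Ssc}$ lift to $s,t\in\Ssc$, so $\vbf\ci\ovl{x}\ci\ubf=\ovl{t\ci x\ci s}$ and hence $\ovl{\Mcal}_{\mathsf{inf}}\se p(\Mcal_{\mathsf{inf}})$. Given two elements of $\ovl{\Mcal}_{\mathsf{inf}}$, I lift them to elements of $\Mcal_{\mathsf{inf}}$; (M4) ensures their composition lies in $\Mcal_{\mathsf{inf}}$, and functoriality of $p$ then places the original composition in $\ovl{\Mcal}_{\mathsf{inf}}$. The deflation half is dual. Finally, part (2) of the corollary is transported from the universal property built into Theorem~\ref{ThmMultLoc}: an exact functor inverts $\Ssc$ in $\C$ iff it factors through $p$ and the induced functor on $\ovl{\C}$ inverts $\ovl{\Ssc}$, so the factorization and the transported natural transformation statement are immediate. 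I expect the most delicate step to be the bookkeeping in (M3) $\tc$ (MR3), where one must check that the lifted equality $a\sas\del = c\uas\del\ppr$ is indeed the hypothesis of (M3) and not merely its image in $\ovl{\C}$; this is automatic since the $\sfr$-triangles and the extensions $\del,\del\ppr$ already live in $\C$ and $\Ebb$, not in their quotients.
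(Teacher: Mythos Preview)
Your overall strategy---reduce to Theorem~\ref{ThmMultLoc} by proving (M$i$)$\tc$(MR$i$)---is exactly what the paper does via Claim~\ref{ClaimMultLoc}, and your treatment of (MR1), (MR3), (MR4) is correct and essentially identical to the paper's.

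There is, however, a genuine gap in your handling of the cancellation half of (MR2). You write that ``the cancellation axioms transfer because by Lemma~\ref{LemSplitN}(2) congruence modulo $[\Ncal_{\Ssc}]$ does not affect membership in $\Ssc$.'' But Lemma~\ref{LemSplitN}(2) only tells you that two congruent morphisms are simultaneously in or out of $\Ssc$; it says nothing about producing a morphism $t\in\Ssc$ with $t\ci f=0$. The difficulty is that $\ovl{f}\ci\ovl{s}=0$ in $\ovl{\C}$ does \emph{not} lift to $f\ci s=0$ in $\C$; it only gives a factorization $f\ci s=j\ci i$ through some $N\in\Ncal_{\Ssc}$. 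So you cannot simply apply the cancellation axiom of (M2) to $f$ and $s$. The paper's fix (in the proof of Claim~\ref{ClaimMultLoc}) is to rewrite this as $[f\ -j]\ci\left[\bsm s\\ i\esm\right]=0$ in $\C$, observe via Lemma~\ref{LemSplitN}(1) that $\left[\bsm s\\ i\esm\right]\in\Ssc$, and then apply (M2) to obtain $t\in\Ssc$ with $t\ci[f\ -j]=0$, whence $t\ci f=0$. This direct-sum trick is the missing idea; without it, your argument for (MR2) does not go through.
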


More generally, in order to include several typical cases in mind (see Section~\ref{Section_Examples}), we will show the following, as our main theorem.
\begin{thm}\label{ThmMultLoc}
Assume that $\Ssc\se\Mcal$ satisfies {\rm (M0)}, as before.
\begin{enumerate}
\item Suppose that $\ovl{\Ssc}$ satisfies the following conditions {\rm (MR1),(MR2),(MR3)}.
Then we obtain a weakly extriangulated category $(\wC,\wE,\ws)$ together with an exact functor $(Q,\mu)\co\CEs\to\wCEs$.
\begin{itemize}
\item[{\rm (MR1)}] $\ovl{\Ssc}\se\ovl{\Mcal}$ satisfies $2$-out-of-$3$ with respect to compositions in $\ovl{\C}$.
\item[{\rm (MR2)}] $\ovl{\Ssc}$ is a multiplicative system in $\ovl{\C}$.
\item[{\rm (MR3)}] Let $\langle A\ov{x}{\lra}B\ov{y}{\lra}C,\del\rangle$, $\langle A\ppr\ov{x\ppr}{\lra}B\ppr\ov{y\ppr}{\lra}C\ppr,\del\ppr\rangle$ be any pair of $\sfr$-triangles, and let $a\in\C(A,A\ppr),c\in\C(C,C\ppr)$ be any pair of morphisms satisfying $a\sas\del=c\uas\del\ppr$. If $\ovl{a}$ and $\ovl{c}$ belong to $\ovl{\Ssc}$, then there exists $\bbf\in\ovl{\Ssc}(B,B\ppr)$ which satisfies $\bbf\ci\ovl{x}=\ovl{x}\ppr\ci\ovl{a}$ and $\ovl{c}\ci\ovl{y}=\ovl{y}\ppr\ci\bbf$.
\end{itemize}
\item The exact functor $(Q,\mu)\co\CEs\to\wCEs$ obtained in {\rm (1)} is characterized by the same universality as stated in Corollary~\ref{CorMultLoc} {\rm (2)}.

\item If $\ovl{\Ssc}$ moreover satisfies the following {\rm (MR4)}, then $(\wC,\wE,\ws)$ is extriangulated. 
\begin{itemize}
\item[{\rm (MR4)}] $\ovl{\Mcal}_{\mathsf{inf}}:=\{ \vbf\ci \ovl{x}\ci \ubf\mid x\ \text{is an}\ \sfr\text{-inflation}, \ubf,\vbf\in\ovl{\Ssc}\}$ is closed by composition in $\ovl{\Mcal}$. 
Dually, $\ovl{\Mcal}_{\mathsf{def}}:=\{ \vbf\ci \ovl{y}\ci \ubf\mid y\ \text{is an}\ \sfr\text{-deflation}, \ubf,\vbf\in\ovl{\Ssc}\}\se\ovl{\Mcal}$ is closed by compositions.
\end{itemize}
\end{enumerate}
\end{thm}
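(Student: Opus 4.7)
The plan is to realize $\wC$ as the Gabriel--Zisman localization $\ovl{\C}[\ovl{\Ssc}\iv]$ of the ideal quotient $\ovl{\C} = \C/[\Ncal_{\Ssc}]$, using the calculus of fractions granted by {\rm (MR1)} and {\rm (MR2)}, and then to transport an extriangulated structure along $Q = L \ci p \co \C \to \ovl{\C} \to \wC$. First I would verify this identification: any functor on $\C$ inverting $\Ssc$ sends each $N \in \Ncal_{\Ssc}$ to a zero object (since both $N \to 0$ and $0 \to N$ lie in $\Ssc$), hence factors through $p$, and the induced functor on $\ovl{\C}$ inverts $\ovl{\Ssc}$ in view of Lemma~\ref{LemSplitN}. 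Every morphism in $\wC$ is then a right fraction $\ovl{s}\iv \ci \ovl{f}$ with $\ovl{s} \in \ovl{\Ssc}$ (or dually a left fraction).

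For the extension structure, I would define
\[
\wE(QX, QY) \;=\; \varinjlim \Ebb(X\ppr, Y\ppr)
\]
as the filtered colimit (filteredness from {\rm (MR2)}) over pairs $\ovl{s} \in \ovl{\Ssc}(X\ppr, X)$, $\ovl{t} \in \ovl{\Ssc}(Y, Y\ppr)$, with transition maps the pullbacks and pushforwards in $\Ebb$. A class $[s, \del, t]$ represents the extension obtained from $\del \in \Ebb(X\ppr, Y\ppr)$ by transfer along the $\wC$-isomorphisms $Q(s)$ and $Q(t)$, and I set $\mu_{X, Y}(\del) = [\id_X, \del, \id_Y]$; bifunctoriality of $\wE$ on $\wC$ is then induced from that on $\C$ via the right-fraction description of $\wC$-morphisms. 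The realization is
\[
\ws([s, \del, t]) \;=\; [\, QY \ov{Q(x) \ci Q(t)}{\lra} QZ \ov{Q(s) \ci Q(y)}{\lra} QX \,],
\]
where $Y\ppr \ov{x}{\lra} Z \ov{y}{\lra} X\ppr$ realizes $\del$ in $\C$; well-definedness up to the equivalence of sequences follows from Remark~\ref{RemWE}. Axioms {\rm (C1)} and {\rm (C1')} descend from their $\C$-counterparts by applying $Q$ and passing to the filtered colimit; {\rm (C2)} and {\rm (C2')} are immediate. The key step is the morphism-lifting half of {\rm (C3)} (dually {\rm (C3')}): given a morphism $\wt{a}\sas\wt{\del} = \wt{\del}\ppr$ of $\ws$-extensions in $\wE$, I would pass to a common representative pair in the colimit via {\rm (MR2)}, so that $\wt{a}$ lifts to an honest morphism $a$ in $\ovl{\C}$ with $\ovl{a} \in \ovl{\Ssc}$, and then apply {\rm (MR3)} to produce the required middle morphism $\bbf \in \ovl{\Ssc}(B, B\ppr)$; its image under $L$ is the desired $\wt{b}$, and the direct-sum $\ws$-triangle demanded by {\rm (C3)} arises by applying {\rm (C3)} in $\C$ to the lifted diagram and projecting via $Q$.

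For universality (part (2)): given exact $(F, \phi)$ inverting $\Ssc$, the categorical universal property factors $F$ through $Q$ as $\wt{F}$, and the extension structure transfers by $\wt{\phi}[s, \del, t] = (F(t)\sas)\iv (F(s)\uas)\iv \phi_{X\ppr, Y\ppr}(\del)$, well-defined by the colimit description and the naturality of $\phi$; exactness of $(\wt{F}, \wt{\phi})$, uniqueness, and the assertion about natural transformations are routine verifications on representatives. For part (3), the $\ws$-inflations in $\wC$ are precisely the images under $L$ of elements of $\ovl{\Mcal}_{\mathsf{inf}}$, so {\rm (MR4)} yields closure of $\ws$-inflations under composition, i.e.\ {\rm (C4)}; {\rm (C4')} is dual. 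The main obstacle is the lifting step in {\rm (C3)}: converting the equality $\wt{a}\sas\wt{\del} = \wt{\del}\ppr$ of colimit classes into a single pair of $\sfr$-triangles in $\C$ whose outer columns are compatible through an honest $\ovl{\Ssc}$-morphism requires careful zig-zag manipulation in $\ovl{\C}$; once this reduction is made, {\rm (MR3)} applies directly and the remaining checks are routine but lengthy.
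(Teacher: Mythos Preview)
Your overall architecture---realizing $\wC$ as the calculus-of-fractions localization of $\ovl{\C}$, building $\wE$ as a filtered colimit of $\Ebb$ over zig-zags in $\ovl{\Ssc}$, and transporting realizations along $Q$---matches the paper's construction closely. But there is a genuine gap in how you deploy {\rm (MR3)}.

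In your treatment of {\rm (C3)} you write that, after passing to common representatives, the given morphism $\wt{a}\in\wC(A,A\ppr)$ ``lifts to an honest morphism $a$ in $\ovl{\C}$ with $\ovl{a}\in\ovl{\Ssc}$'', so that {\rm (MR3)} produces the middle map. This is not right: $\wt{a}$ is an \emph{arbitrary} morphism in $\wC$, hence only a fraction $\ovl{u}\iv\ci\ovl{a}$ with $\ovl{u}\in\ovl{\Ssc}$ and $\ovl{a}$ a general morphism of $\ovl{\C}$; there is no reason for $\ovl{a}$ itself to lie in $\ovl{\Ssc}$, and {\rm (MR3)} gives nothing in that case. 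The paper's proof of {\rm (C3)} does \emph{not} invoke {\rm (MR3)} at all: it writes $\wt{a}=\ovl{Q}(\ovl{u})\iv\ci\ovl{Q}(\ovl{a})$, completes the square via {\rm (MR2)} to get a morphism $a\ppr$ in $\C$, applies {\rm (C3)} for $\CEs$ to $a\ppr$ (which yields both the middle map $b$ and the mapping-cone $\sfr$-triangle), and then checks directly that the resulting diagram in $\wC$ satisfies the required conditions.

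Where {\rm (MR3)} is actually indispensable is earlier, in two places you treat too lightly. First, the well-definedness of $\ws$: two representatives $[s_1,\del_1,t_1]=[s_2,\del_2,t_2]$ are linked by a zig-zag through morphisms $\ovl{u},\ovl{v}\in\ovl{\Ssc}$, and one needs a middle isomorphism in $\wC$ between the realizing sequences; this is precisely what {\rm (MR3)} supplies (your appeal to Remark~\ref{RemWE} alone is insufficient, since that remark presumes you already have a morphism of triangles). Second, the exactness axioms {\rm (C1)}/{\rm (C1')} do not simply ``descend by applying $Q$ and passing to the colimit'': the paper's argument (Proposition~\ref{PropExact}) requires, among other things, knowing that any $\sfr$-inflation with cone in $\Ncal_{\Ssc}$ lies in $\Ssc$ (Lemma~\ref{LemNtoS}), and that lemma is again a consequence of {\rm (MR3)}. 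So your proof sketch misplaces the role of {\rm (MR3)} and, as written, the {\rm (C3)} step would not go through.
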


Proof of Theorem~\ref{ThmMultLoc} will be given in the end of this section.
In advance, let us confirm that Corollary~\ref{CorMultLoc} is indeed a corollary of Theorem~\ref{ThmMultLoc}. It suffices to show the following.

\begin{claim}\label{ClaimMultLoc}
Assume that $\Ssc$ satisfies {\rm (M0)} as before, and also $p(\Ssc)=\ovl{\Ssc}$. Then the following holds.
\begin{enumerate}
\item {\rm (M1)} is equivalent to {\rm (MR1)}.
\item {\rm (M2)} implies {\rm (MR2)}.
\item {\rm (M3)} implies {\rm (MR3)}.
\item {\rm (M4)} implies {\rm (MR4)}.
\end{enumerate}
\end{claim}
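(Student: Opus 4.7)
My approach will hinge on a single translation principle. Combining the hypothesis $p(\Ssc)=\ovl{\Ssc}$ with Lemma~\ref{LemSplitN}\,{\rm (3)} yields $p\iv(\ovl{\Ssc})=\Ssc$, so for any $f\in\Mcal$ one has $f\in\Ssc$ if and only if $\ovl{f}\in\ovl{\Ssc}$. Since $p$ is the identity on objects and full on morphisms, every morphism of $\ovl{\C}$ admits a lift to $\Mcal$, and such a lift automatically lies in $\Ssc$ whenever its image does. The plan is to use these two facts to translate each axiom between the barred and unbarred settings.

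Parts {\rm (1)}, {\rm (3)} and {\rm (4)} should follow routinely under this principle. For {\rm (1)}, any two-out-of-three configuration $f,g,g\ci f$ in $\C$ maps to $\ovl{f},\ovl{g},\ovl{g}\ci\ovl{f}$ in $\ovl{\C}$, and the correspondence above preserves and reflects both hypothesis and conclusion; the reverse direction lifts a configuration $\ubf,\vbf,\vbf\ci\ubf$ back to $\C$. For {\rm (3)}, $\ovl{a},\ovl{c}\in\ovl{\Ssc}$ force $a,c\in\Ssc$, so {\rm (M3)} supplies $b\in\Ssc$ making $(a,b,c)$ a morphism of $\sfr$-triangles in $\C$, whose image $\bbf=\ovl{b}$ gives the desired element of $\ovl{\Ssc}$. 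For {\rm (4)}, a composite of two elements of $\ovl{\Mcal}_{\mathsf{inf}}$ can be written in the form $\ovl{t_2\ci x_2\ci s_2\ci t_1\ci x_1\ci s_1}$ by lifting; applying {\rm (M4)} inside $\C$ realises this as $\ovl{t\ci x\ci s}$ for some $\sfr$-inflation $x$ and $s,t\in\Ssc$, which is again an element of $\ovl{\Mcal}_{\mathsf{inf}}$.

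The delicate part will be {\rm (2)}. The Ore conditions transfer immediately by lifting the given data to $\C$, invoking {\rm (M2)}, and pushing the resulting commutative square forward by $p$, so the only substantive task is cancellation. Suppose $\ovl{s}\ci\ubf=\ovl{s}\ci\vbf$ in $\ovl{\C}$ for some $\ovl{s}\in\ovl{\Ssc}$, and choose lifts $s\in\Ssc$, $f,g\in\C(X,Y)$. The hypothesis only provides an equation $sf-sg=j\ci i$ in $\C$ for some $N\in\Ncal_{\Ssc}$, $i\in\C(X,N)$, $j\in\C(N,Y\ppr)$, rather than the honest equality $sf=sg$ needed to feed directly into {\rm (M2)}. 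Bridging this gap between ``equal in $\ovl{\C}$'' and ``equal in $\C$'' will be the main obstacle.

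A matrix trick based on Lemma~\ref{LemSplitN}\,{\rm (1)} and its dual should resolve it. The morphism $[s\ -j]\co Y\oplus N\to Y\ppr$ lies in $\Ssc$ by the dual of Lemma~\ref{LemSplitN}\,{\rm (1)}, and one has
\[
[s\ -j]\ci\left[\bsm f\\ i\esm\right]\;=\;sf-j\ci i\;=\;sg\;=\;[s\ -j]\ci\left[\bsm g\\ 0\esm\right]
\]
as an honest equation in $\C$. Applying the cancellation axiom of {\rm (M2)} to $[s\ -j]\in\Ssc$ then produces $t\in\Ssc$ with $\left[\bsm f\\ i\esm\right]\ci t=\left[\bsm g\\ 0\esm\right]\ci t$, whose first component yields $f\ci t=g\ci t$. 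Passing back to $\ovl{\C}$ gives $\ubf\ci\ovl{t}=\vbf\ci\ovl{t}$ with $\ovl{t}\in\ovl{\Ssc}$, as required; the opposite-sided cancellation is handled dually.
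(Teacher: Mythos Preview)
Your proof is correct and follows essentially the same route as the paper. The paper likewise treats {\rm (1)}, {\rm (3)}, {\rm (4)} as immediate from $p\iv(\ovl{\Ssc})=\Ssc$, and for {\rm (2)} uses the same matrix trick via Lemma~\ref{LemSplitN}\,{\rm (1)} to upgrade an equality in $\ovl{\C}$ to one in $\C$; the only cosmetic difference is that the paper phrases cancellation in the additive form (if $\ovl{f}\ci\ovl{s}=0$ then $\ovl{t}\ci\ovl{f}=0$) and verifies the dual direction to yours, writing $[f\ -j]\ci\left[\bsm s\\ i\esm\right]=0$ instead of your two-morphism equation.
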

\begin{proof}
{\rm (1)} is immediate from $p(\Ssc)=\ovl{\Ssc}$ and Lemma~\ref{LemSplitN}.
Also {\rm (3),(4)} are obvious.

Let us show {\rm (2)}. Suppose that $\Ssc$ satisfies {\rm (M2)}.
Remark that $\ovl{\Ssc}$ is closed by compositions by definition.
To show {\rm (MR2)}, by duality it suffices to show the following.
\begin{itemize}
\item[{\rm (i)}] For any $\ovl{f}\in\ovl{\C}(A,B)$, if there exists $\ovl{s}\in\ovl{\Ssc}(A\ppr,A)$ satisfying $\ovl{f}\ci\ovl{s}=0$, then there exists $\ovl{t}\in\ovl{\Ssc}(B,B\ppr)$ such that $\ovl{t}\ci\ovl{f}=0$.
\item[{\rm (ii)}] For any $A\ppr\ov{\ovl{s}}{\lla}A\ov{\ovl{f}}{\lra}B$ with $\ovl{s}\in\ovl{\Ssc}$, there exists a commutative square
\[
\xy
(-6,6)*+{A}="0";
(6,6)*+{B}="2";
(-6,-6)*+{A\ppr}="4";
(6,-6)*+{B\ppr}="6";
{\ar^{\ovl{f}} "0";"2"};
{\ar_{\ovl{s}} "0";"4"};
{\ar^{\ovl{s}\ppr} "2";"6"};
{\ar_{\ovl{f}\ppr} "4";"6"};
{\ar@{}|\circlearrowright "0";"6"};
\endxy
\]
in $\ovl{\C}$ such that $\ovl{s}\ppr\in\ovl{\Ssc}$.
\end{itemize}
Since {\rm (ii)} is immediate from {\rm (M2)}, let us show {\rm (i)}. Suppose that $\ovl{f}\in\ovl{\C}(A,B)$ and $\ovl{s}\in\ovl{\Ssc}(A\ppr,A)$ satisfy $\ovl{f}\ci\ovl{s}=0$. Then there is $N\in\Ncal_{\Ssc}$ and $i\in\C(A\ppr,N),j\in\C(N,B)$ such that $f\ci s=j\ci i$. This means $[f\ -j]\ci \left[\bsm s\\ i\esm\right]=0$. Since $\left[\bsm s\\ i\esm\right]\in\Ssc$ by Lemma~\ref{LemSplitN}, there exists $t\in\Ssc(B,B\ppr)$ such that $t\ci [f\ -j]=0$ by {\rm (M2)}. In particular we have $t\ci f=0$. 
\end{proof}

The following is also an immediate corollary.
\begin{cor}\label{CorOfThm}
Let $(F,\phi)\co\CEs\to(\D,\Fbb,\tfr)$ be an exact functor to a weakly extriangulated category $(\D,\Fbb,\tfr)$. If $\Ssc=F\iv(\Iso(\D))$ satisfies {\rm (M2)}, or more generally if $\ovl{\Ssc}$ satisfies {\rm (MR2)},
then there exists a unique exact functor $(\wt{F},\wt{\phi})\co\wCEs\to(\D,\Fbb,\tfr)$ such that $(F,\phi)=(\wt{F},\wt{\phi})\ci (Q,\mu)$.
\end{cor}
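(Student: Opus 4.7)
The plan is to apply Theorem~\ref{ThmMultLoc}(2) to the set $\Ssc=F\iv(\Iso(\D))$. This reduces the statement to verifying {\rm (M0)} for $\Ssc$ together with {\rm (MR1), (MR2), (MR3)} for $\ovl{\Ssc}$: once these hold, Theorem~\ref{ThmMultLoc}(1) produces $\wCEs$ and $(Q,\mu)$, and since $(F,\phi)$ inverts $\Ssc$ by construction, the universal property in Theorem~\ref{ThmMultLoc}(2)(i) yields the unique $(\wt{F},\wt{\phi})$. Condition {\rm (M0)} is immediate from additivity of $F$ and stability of $\Iso(\D)$ under identities, composition, and direct sum; condition {\rm (MR2)} is provided by the hypothesis, either directly or via Claim~\ref{ClaimMultLoc}(2) after observing that $p(\Ssc)=\ovl{\Ssc}$ holds automatically in the present setup (see below).

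The first substantive step is to show that $F$ factors through the ideal quotient. Indeed, for any $N\in\Ncal_\Ssc$ the morphism $N\to 0$ lies in $\Ssc$, so $F(N)\to 0$ is an isomorphism and $F(N)=0$; hence $F=\ovl{F}\ci p$ for an additive functor $\ovl{F}\co\ovl{\C}\to\D$. I would then record the clean description
\[\ovl{\Ssc}=\ovl{F}\iv(\Iso(\D)).\]
The inclusion $\subseteq$ follows from the closure definition of $\ovl{\Ssc}$ because $\ovl{F}$ sends both $p(\Ssc)$ and isomorphisms of $\ovl{\C}$ to isomorphisms in $\D$. Conversely, any $\ovl{f}$ with $\ovl{F}(\ovl{f})\in\Iso(\D)$ has a lift $f\in\C$ with $F(f)\in\Iso(\D)$, whence $f\in\Ssc$ and $\ovl{f}=p(f)\in p(\Ssc)\se\ovl{\Ssc}$. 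The same calculation gives $p\iv(\ovl{\Ssc})=\Ssc$, which by Lemma~\ref{LemSplitN}(3) yields $p(\Ssc)=\ovl{\Ssc}$.

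With the preimage description of $\ovl{\Ssc}$ in hand, {\rm (MR1)} follows immediately from 2-out-of-3 for isomorphisms in $\D$. For {\rm (MR3)}, take $\sfr$-triangles and morphisms $a,c$ with $a\sas\del=c\uas\del\ppr$ and $\ovl{a},\ovl{c}\in\ovl{\Ssc}$. A standard two-step application of {\rm (C3)} and {\rm (C3')} in the extriangulated category $\CEs$---realizing the common extension $a\sas\del=c\uas\del\ppr$ by an intermediate $\sfr$-triangle $A\ppr\to B\pprr\to C$ and composing the fillers produced by {\rm (C3)} and {\rm (C3')}---gives $b\in\C(B,B\ppr)$ making $(a,b,c)$ a morphism of $\sfr$-triangles. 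Applying the exact functor $(F,\phi)$ yields a morphism $(F(a),F(b),F(c))$ of $\tfr$-triangles in $\D$; since $F(a)$ and $F(c)$ are isomorphisms, Remark~\ref{RemWE} applied in the weakly extriangulated category $\D$ forces $F(b)$ to be an isomorphism as well. Hence $\ovl{b}\in\ovl{F}\iv(\Iso(\D))=\ovl{\Ssc}$, and $\bbf=\ovl{b}$ satisfies the required identities.

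The main obstacle I anticipate is {\rm (MR3)}: it fuses an internal construction in $\C$ (producing a filler $b$ via the extriangulated axioms) with an external upgrade in $\D$ (promoting $F(b)$ to an isomorphism through Remark~\ref{RemWE}), and it essentially relies on the preimage characterization of $\ovl{\Ssc}$ to conclude $\ovl{b}\in\ovl{\Ssc}$. Once {\rm (MR1)--(MR3)} are in place, invoking Theorem~\ref{ThmMultLoc}(1)--(2) is routine and yields both the existence and uniqueness of $(\wt{F},\wt{\phi})$.
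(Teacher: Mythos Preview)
Your proposal is correct and follows essentially the same approach as the paper: verify that $\Ssc=F\iv(\Iso(\D))$ satisfies {\rm (M0)}, $p\iv(\ovl{\Ssc})=\Ssc$, and the conditions {\rm (MR1)}, {\rm (MR3)} (the paper phrases this as checking {\rm (M1)}, {\rm (M3)} and then invoking Claim~\ref{ClaimMultLoc}), then apply Theorem~\ref{ThmMultLoc}. Your extra step of recording the clean description $\ovl{\Ssc}=\ovl{F}\iv(\Iso(\D))$ is not in the paper's one-line proof but is a natural way to make the verifications transparent; the argument for {\rm (MR3)} via constructing $b$ with {\rm (C3)}/{\rm (C3')} and then applying Remark~\ref{RemWE} in $\D$ is exactly what underlies the paper's assertion that {\rm (M3)} holds ``by construction.''
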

\begin{proof}
Since $\Ssc$ satisfies {\rm (M0),(M1),(M3)} and $p\iv(\ovl{\Ssc})=\Ssc$ by construction, this will follow from Theorem~\ref{ThmMultLoc} {\rm (1),(2)} and Claim~\ref{ClaimMultLoc}.
\end{proof}

\begin{cor}\label{CorAdjoint}
Let $(F,\phi)\co\CEs\to(\D,\Fbb,\tfr)$ be an exact functor to a weakly extriangulated category $(\D,\Fbb,\tfr)$, and put $\Ssc=F\iv(\Iso(\D))$. 
If $F$ admits a fully faithful right adjoint and a fully faithful left adjoint,
then there exists a unique exact functor $(\wt{F},\wt{\phi})\co\wCEs\to(\D,\Fbb,\tfr)$ such that $(F,\phi)=(\wt{F},\wt{\phi})\ci (Q,\mu)$, and $\wt{F}$ is an equivalence.
\end{cor}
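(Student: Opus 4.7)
The plan is to verify the hypotheses of Corollary~\ref{CorOfThm} using the adjoints to produce $(\wt{F},\wt{\phi})$, and then to show that the underlying $\wt{F}$ is an equivalence of categories by exhibiting $QR$ as a quasi-inverse; upgrading this to an equivalence of extriangulated categories via Proposition~\ref{PropExEq} will require one further step.

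Write $L\dashv F\dashv R$. Full faithfulness of $R$ (resp.~$L$) means the counit of $F\dashv R$ (resp.~unit of $L\dashv F$) on $\D$ is a natural isomorphism. By the triangle identities, $F$ sends the unit $\eta\co\Id_{\C}\tc RF$ of $F\dashv R$ and the counit $\ep\co LF\tc\Id_{\C}$ of $L\dashv F$ to isomorphisms, so the components of $\eta$ and $\ep$ lie in $\Ssc=F\iv(\Iso(\D))$. Given any span $X\ppr\ov{s}{\lla}X\ov{f}{\lra}Y$ in $\C$ with $s\in\Ssc$, I complete it via $Z:=RFY$, $t:=\eta_Y\in\Ssc$, $f\ppr:=RF(f)\ci RF(s)\iv\ci\eta_{X\ppr}$, and naturality of $\eta$ gives $t\ci f=f\ppr\ci s$, verifying the left Ore condition; the right Ore condition is symmetric via $\ep$. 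For the cancellation condition, if $s\ci f=s\ci g$ with $s\in\Ssc$ then $F(f)=F(g)$ (since $F(s)$ is iso), hence $f\ci\ep_X=g\ci\ep_X$ with $\ep_X\in\Ssc$ by naturality of $\ep$; the dual is handled by $\eta$. Thus $\Ssc$ satisfies (M2), so Corollary~\ref{CorOfThm} yields the unique exact factoring $(F,\phi)=(\wt{F},\wt{\phi})\ci(Q,\mu)$.

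Next, $QR\co\D\to\wC$ is a quasi-inverse of $\wt{F}$: one side, $\wt{F}\ci QR=FR\cong\Id_{\D}$, uses the (iso) counit of $F\dashv R$, while the other side uses the natural isomorphism $Q\eta\co Q\tc QRF=QR\wt{F}\ci Q$, which exists because $\eta$ takes values in $\Ssc$ and $Q$ inverts $\Ssc$, and which by the universal property of $Q$ descends to an iso $\Id_{\wC}\cong QR\ci\wt{F}$. Hence $\wt{F}$ is an equivalence of underlying categories.

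To finally promote $(\wt{F},\wt{\phi})$ to an equivalence of extriangulated categories via Proposition~\ref{PropExEq}, one needs $\wt{\phi}$ to be a natural isomorphism. The approach is to construct an exact functor $(G,\psi)\co(\D,\Fbb,\tfr)\to\wCEs$ with $G=QR$, by setting $\psi_{Y,X}(\rho):=\mu(\del)\in\wE(QRY,QRX)$ for any $\del\in\E(RY,RX)$ whose image $\phi_{RY,RX}(\del)$ corresponds to $\rho$ under the counit isomorphism $FR\cong\Id_{\D}$. I expect the main obstacle to be the well-definedness of $\psi$, requiring both the existence of such $\del$ (a form of extension-surjectivity of $\phi$ at objects in the image of $R$) and the independence of $\mu(\del)$ from the choice; both should follow from the explicit fraction-type construction of $\wE$ given in the proof of Theorem~\ref{ThmMultLoc}, combined with the fact that $\eta,\ep$ take values in $\Ssc$. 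Once $(G,\psi)$ is available, the natural isomorphisms $(G,\psi)\ci(F,\phi)\cong(Q,\mu)$ and $(\wt{F},\wt{\phi})\ci(G,\psi)\cong(\Id_{\D},\id_{\F})$ produced by $Q\eta$ and the counit $FR\cong\Id_{\D}$ complete the equivalence.
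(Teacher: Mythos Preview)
Your first two paragraphs are correct and essentially match the paper: you verify {\rm (M2)} via the adjoints (the paper simply cites \cite{GZ} for this), invoke Corollary~\ref{CorOfThm}, and then exhibit $QR$ as a quasi-inverse of $\wt{F}$. The paper instead checks essential surjectivity, faithfulness, and fullness of $\wt{F}$ directly, but its fullness argument is precisely your formula $\al=Q(\eta_Y)\iv\ci QR(g)\ci Q(\eta_X)$, so the two routes coincide in substance.

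The third paragraph, however, is a genuine misstep: you are trying to prove more than the statement asserts, and what you are trying to prove is false in general. The corollary only claims that \emph{$\wt{F}$ is an equivalence of categories}, not that $(\wt{F},\wt{\phi})$ is an equivalence of extriangulated categories in the sense of Proposition~\ref{PropExEq}. Indeed, $\wt{\phi}$ need not be a natural isomorphism. A standard counterexample is given by a relative theory: take $\C=\D$, take $\Ebb\subsetneq\Fbb$ a proper closed subfunctor with $\sfr=\tfr|_{\Ebb}$, and let $(F,\phi)=(\Id,\Ebb\hookrightarrow\Fbb)$. Then $\Ssc=\Iso(\C)$, so $Q$ is an equivalence and $\wCEs\simeq\CEs$; but $\wt{\phi}$ is the inclusion $\Ebb\hookrightarrow\Fbb$, which is not surjective. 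Your proposed construction of $\psi$ would fail exactly at the ``extension-surjectivity of $\phi$'' step you flagged: there is no reason an arbitrary $\rho\in\Fbb(FRY,FRX)$ should lie in the image of $\phi_{RY,RX}$. Simply delete the last paragraph; the proof is complete after you have shown $\wt{F}$ is an equivalence.
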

\begin{proof}
It is well-known for experts that {\rm (M2)} holds for such adjoint triple, e.g. \cite[Ch.I, Section 2]{GZ}.
Thus, the existence of a unique exact functor $(\wt{F},\wt{\phi})\co\wCEs\to(\D,\Fbb,\tfr)$ directly follows from Corollary \ref{CorOfThm}.

Obviously $\wt{F}$ is essentially surjective.
To show the faithfulness of $\wt{F}$, let $\al$ be a morphism in $\wC$ which is expressed as $\al=Q(s)\iv\ci Q(f)$ by a digram $X\ov{f}{\lra}Y\ov{s}{\lla} Y\ppr$ in $\C$ with $s\in\Ssc$.
If $\wt{F}(\alpha)=0$, then we get $F(f)=0$. Let $R$ be a right adjoint functor of $F$, and let $\eta$ be its unit. 
Note that $\eta_Y\co Y\to RF(Y)$ belongs to $\Ssc$, and thus $RF(f)=0$ implies $\eta_Y\ci f=0$.
Hence we have $Q(f)=0$.

It remains to show that $\wt{F}$ is full. Let $X,Y\in\wC$ be any pair of objects, and let $g\in\D(\wt{F}(X),\wt{F}(Y))$ be any morphism. 
If we put $\alpha:=Q(\eta_Y)^{-1}\ci QR(g)\ci Q(\eta_X)$, then we can easily check $\wt{F}(\alpha)=g$.
\end{proof}

\begin{rem}
In the situation of Corollary~\ref{CorAdjoint},
unlikely to the case of triangulated categories, a quasi-inverse $\wt{F}\iv\co\wt{\D}\to\wt{\C}$ of $\wt{F}$ can not in general be equipped with a natural transformation $\psi$ which makes $(\wt{F}\iv,\psi)\co (\D,\Fbb,\tfr)\to\wCEs$ an exact functor, unless $\wt{\phi}$ is a natural isomorphism.

A typical example for which $\wt{\phi}$ is not an isomorphism is given by a relative theory. For an extriangulated category $(\D,\Fbb,\tfr)$ and a closed subfunctor $\Ebb\se\Fbb$, we have an extriangulated category $\CEs$ with $\C=\D$ and $\sfr=\tfr|_{\Ebb}$, and an exact functor $(\Id,\phi)\co\CEs\to(\D,\Fbb,\tfr)$ is induced by the inclusion $\phi\co\Ebb\hookrightarrow\Fbb$. The localization in Corollary~\ref{CorAdjoint} does not essentially change $\CEs$, hence $\wt{\phi}$ is not an isomorphism unless $\Ebb=\Fbb$.
\end{rem}

\begin{rem}
A \emph{recollement} of extriangulated categories is defined in \cite{WWZ} as a diagram of functors between extriangulated categories of the following shape
\[
\xymatrix@C=1.2cm{\Ncal\ar[r]|-{{i_\ast}}
&\C\ar[r]|-{j^\ast}\ar@/^1.2pc/[l]^-{i^!}\ar_-{i^\ast}@/_1.2pc/[l]
&\D \ar@/^1.2pc/[l]^{j_\ast}\ar@/_1.2pc/[l]_{j_!}}
\]
satisfying some assumptions (see \cite[Definition~3.1]{WWZ} for the detail) which allow us to apply Corollary~\ref{CorAdjoint} to the right half of the above diagram.
\end{rem}

In the rest of this section, we proceed to show Theorem~\ref{ThmMultLoc}.
\begin{rem}\label{RemSaturation}
Remark that replacing $\Ssc$ by $p\iv(\ovl{\Ssc})$ does not change $\ovl{\Ssc}$, neither affect the resulting localization. As conditions {\rm (MR1)},\,$\ldots\,$,\,{\rm (MR4)} is only about $\ovl{\Ssc}$, and since condition {\rm (M0)} is stable under this replacement, we see that $p\iv(\ovl{\Ssc})$ satisfies the assumption of Theorem~\ref{ThmMultLoc} whenever $\Ssc$ does. Thus in proving Theorem~\ref{ThmMultLoc}, we may replace $\Ssc$ by $p\iv(\ovl{\Ssc})$ so that $\Ssc=p\iv(\ovl{\Ssc})$ is satisfied from the beginning.
\end{rem}

\subsection{Construction of $\wE$}

Suppose that $\Ssc$ satisfies {\rm (M0),(MR1),(MR2),(MR3)} and $\Ssc=p\iv(\ovl{\Ssc})$.
Remark that a localization $\C\to\wC$ factors uniquely through $p\co\C\to\ovl{\C}$, and we may regard $\wC$ as a localization of $\ovl{\C}$ by $\ovl{\Ssc}$. In other words, we may use the following functor $Q$ as a localization functor.
\begin{dfn}\label{DefQ}
Take a localization $\ovl{Q}\co\ovl{\C}\to\wC$ of $\ovl{\C}$ by the multiplicative system $\ovl{\Ssc}$, and put $Q=\ovl{Q}\ci p\co\C\to\wC$. This gives a localization of $\C$ by $\Ssc$.
\end{dfn}

In this subsection, we will construct a biadditive functor $\wE\co\wC^\op\ti\wC\to\Ab$ and a natural transformation $\mu\co\Ebb\tc\wE\ci(Q^\op\ti Q)$.
More in detail, we are going to define a biadditive functor $\ovl{\Ebb}$ and natural transformations $\ovl{\mu},\wp$ which fit in the following diagram,
\[
\xy
(-12,14)*+{\C^\op\ti\C}="0";
(-12,0)*+{\ovl{\C}^\op\ti\ovl{\C}}="2";
(-12,-14)*+{\wC^\op\ti\wC}="4";
(14,0)*+{\Ab}="6";
{\ar_{p^\op\ti p} "0";"2"};
{\ar_(0.46){\ovl{Q}^\op\ti \ovl{Q}} "2";"4"};
{\ar@/_1.5pc/_{Q^\op\ti Q} (-19,11.5);(-19,-11.5)};
{\ar^{\Ebb} "0";"6"};
{\ar^{\ovl{\Ebb}} "2";"6"};
{\ar_{\wE} "4";"6"};
{\ar@{=>}_{\wp} (-4,7);(-4,3)};
{\ar@{=>}_{\ovl{\mu}} (-4,-3);(-4,-7)};
{\ar@{}|\circlearrowright (-17,0);(-26,0)};
\endxy
\]
and will define $\mu$ by $\mu=\big(\ovl{\mu}\ci(p^\op\ti p)\big)\cdot\wp$.

\begin{lem}\label{LemExtVanish}
For any extension $\del\in\Ebb(C,A)$, the following are equivalent.
\begin{enumerate}
\item There exists $s\in\Ssc(A,A\ppr)$ such that $s\sas\del=0$.
\item There exists $t\in\Ssc(C\ppr,C)$ such that $t\uas\del=0$.
\end{enumerate}
\end{lem}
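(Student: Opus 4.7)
The plan is to prove $(1)\Rightarrow(2)$; the converse will follow by the dual argument, using {\rm (C3')} in place of {\rm (C3)} and the dual direction of the Ore axiom in {\rm (MR2)}. By Remark~\ref{RemSaturation} we may assume $\Ssc=p\iv(\ovl{\Ssc})$. The key idea is to combine {\rm (C3)}, {\rm (MR3)} and the Ore condition in {\rm (MR2)}, so as to transport the vanishing of $\del$ from the $A$-end to the $C$-end.

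So assume $s\sas\del=0$ for some $s\in\Ssc(A,A\ppr)$, and fix an $\sfr$-triangle $A\ov{x}{\lra}B\ov{y}{\lra}C\ov{\del}{\dra}$ realizing $\del$, as well as an $\sfr$-triangle $A\ppr\ov{x\ppr}{\lra}B\ppr\ov{y\ppr}{\lra}C\ov{s\sas\del}{\dra}$ realizing the zero extension $s\sas\del$. By {\rm (C1)} applied to the latter triangle, there exists $\iota\co C\to B\ppr$ with $y\ppr\ci\iota=\id_C$. Applying {\rm (C3)} with outer maps $s$ and $\id_C$ gives a morphism between these two $\sfr$-triangles; since $\ovl{s},\ovl{\id_C}\in\ovl{\Ssc}$, condition {\rm (MR3)} then upgrades the middle map to some $\bbf\in\ovl{\Ssc}(B,B\ppr)$ satisfying $\ovl{y\ppr}\ci\bbf=\ovl{y}$ in $\ovl{\C}$.

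Next, I apply the Ore condition in {\rm (MR2)} to the cospan $C\ov{\ovl{\iota}}{\lra}B\ppr\ov{\bbf}{\lla}B$ whose right leg lies in $\ovl{\Ssc}$: this produces $\ovl{t}\in\ovl{\Ssc}(C\ppr,C)$ and $\ovl{c}\co C\ppr\to B$ with $\ovl{\iota}\ci\ovl{t}=\bbf\ci\ovl{c}$ in $\ovl{\C}$. Post-composing with $\ovl{y\ppr}$ and using $y\ppr\ci\iota=\id_C$ together with $\ovl{y\ppr}\ci\bbf=\ovl{y}$ yields $\ovl{t}=\ovl{y\ci c}$ for any lift $c\co C\ppr\to B$ of $\ovl{c}$. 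Setting $t\ppr:=y\ci c$, the hypothesis $\Ssc=p\iv(\ovl{\Ssc})$ forces $t\ppr\in\Ssc$, while $(t\ppr)\uas\del=c\uas(y\uas\del)=0$ follows from $y\uas\del=0$, which is an instance of {\rm (C1)} (taking $X=B$ and observing that $y=y\ci\id_B$ lies in the image of $y\ci-$).

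The crucial step is the use of {\rm (MR3)}: it replaces the middle map $b\in\C(B,B\ppr)$ supplied by {\rm (C3)}, about which nothing is a priori known with respect to $\Ssc$, by a representative $\bbf\in\ovl{\Ssc}$. Without this upgrade, the subsequent Ore pullback would only deliver a morphism in $\ovl{\C}$ rather than in $\ovl{\Ssc}$, and the argument would collapse. Once $\bbf\in\ovl{\Ssc}$ is in hand, the remainder is routine bookkeeping in the multiplicative-system calculus for $\ovl{\Ssc}$.
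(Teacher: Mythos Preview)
Your proof is correct and follows essentially the same route as the paper's: both apply {\rm (MR3)} to the pair of $\sfr$-triangles realizing $\del$ and $s\sas\del=0$ to obtain a middle map in $\ovl{\Ssc}$, then use the Ore condition {\rm (MR2)} to pull the splitting of the second triangle back through this map, and finally observe that the resulting composite $y\ci c$ lies in $\Ssc$ (via $\Ssc=p\iv(\ovl{\Ssc})$, equivalently Lemma~\ref{LemSplitN}) and annihilates $\del$ since $y\uas\del=0$. The only cosmetic differences are that the paper works directly with the canonical split realization $A\ppr\oplus C$ of $s\sas\del=0$ rather than an arbitrary one with a chosen section $\iota$, and that your preliminary invocation of {\rm (C3)} is redundant, since {\rm (MR3)} already produces the middle map in $\ovl{\Ssc}$ without it.
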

\begin{proof}
By duality, it is enough to show that {\rm (1)} implies {\rm (2)}. Remark that the equivalent conditions in Lemma~\ref{LemSplitN} {\rm (3)} are satisfied by our assumption. Let $A\ov{x}{\lra}B\ov{y}{\lra}C\ov{\del}{\dra}$ be an $\sfr$-triangle, and suppose that $s\in\Ssc(A,A\ppr)$ satisfies $s\sas\del=0$. By {\rm (MR3)}, there exists $u\in\Ssc(B,A\ppr\oplus C)$ which makes
\[
\xy
(-16,6)*+{A}="0";
(0,6)*+{B}="2";
(16,6)*+{C}="4";
(-16,-6)*+{A\ppr}="10";
(0,-6)*+{A\ppr\oplus C}="12";
(16,-6)*+{C}="14";
{\ar^{\ovl{x}} "0";"2"};
{\ar^{\ovl{y}} "2";"4"};
{\ar_{\ovl{s}} "0";"10"};
{\ar_{\ovl{u}} "2";"12"};
{\ar@{=} "4";"14"};
{\ar_(0.4){\left[\bsm1\\0\esm\right]} "10";"12"};
{\ar_(0.6){[0\ 1]} "12";"14"};
{\ar@{}|\circlearrowright "0";"12"};
{\ar@{}|\circlearrowright "2";"14"};
\endxy
\]
commutative in $\ovl{\C}$. By {\rm (MR2)}, we obtain a commutative square
\[
\xy
(-7,6)*+{B\ppr}="0";
(7,6)*+{C}="2";
(-7,-6)*+{B}="4";
(7,-6)*+{A\ppr\oplus C}="6";
{\ar^{\ovl{v}} "0";"2"};
{\ar_{\ovl{b}} "0";"4"};
{\ar^{\left[\bsm0\\1\esm\right]} "2";"6"};
{\ar_(0.4){\ovl{u}} "4";"6"};
{\ar@{}|\circlearrowright "0";"6"};
\endxy
\]
in $\ovl{\C}$ such that $v\in\Ssc$. Then we have $\ovl{v}=\ovl{y\ci b}$, which forces $y\ci b\in\Ssc$ by Lemma~\ref{LemSplitN} {\rm (2)}. Thus $t=y\ci b$ satisfies the required properties.
\end{proof}

\begin{dfn}\label{DefK}
Define a subset $\Kcal(C,A)\se\Ebb(C,A)$ by
\begin{eqnarray*}
\Kcal(C,A)&=&\{\del\in\Ebb(C,A)\mid s\sas\del=0\ \text{for some}\ s\in\Ssc(A,A\ppr)\}\\
&=&\{\del\in\Ebb(C,A)\mid t\uas\del=0\ \text{for some}\ t\in\Ssc(C\ppr,C)\}
\end{eqnarray*}
for each $A,C\in\C$.
By Lemma~\ref{LemExtVanish}, these form a subfunctor $\Kcal\se\Ebb$.
\end{dfn}

\begin{prop}\label{PropK}
The following holds.
\begin{enumerate}
\item $\Kcal\se\Ebb$ is an additive subfunctor.
\item $\Ebb/\Kcal\co\C^\op\ti\C\to\Ab$ is a biadditive functor which satisfies $(\Ebb/\Kcal)(N,-)=0$ and $(\Ebb/\Kcal)(-,N)=0$ for any $N\in\Ncal_{\Ssc}$.
\end{enumerate}
\end{prop}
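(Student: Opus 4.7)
The plan is to decompose (1) into (a) the subfunctor property of $\Kcal$ inside $\Ebb$, and (b) the subgroup property of each $\Kcal(C,A)$ together with compatibility with finite direct sums. Statement (2) will then follow from (1) together with a direct use of the defining property of $\Ncal_\Ssc$.

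For (a), I would use both characterizations in Lemma~\ref{LemExtVanish} simultaneously: if $\del\in\Kcal(C,A)$ admits a kill from the right, $t\uas\del=0$ with $t\in\Ssc(C',C)$, then for any $f\in\C(A,A_0)$ one has $t\uas(f\sas\del)=f\sas(t\uas\del)=0$, so $f\sas\del\in\Kcal(C,A_0)$; pullbacks are handled dually using a kill from the left.

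For (b), the zero extension and inverses are immediate from linearity of $s\sas$, so the essential step is closure under addition. Given $\del_1,\del_2\in\Kcal(C,A)$ with $s_i\sas\del_i=0$ and $s_i\in\Ssc(A,A_i')$, I plan to apply the left Ore condition from (MR2) in $\ovl{\C}$ to the pair $\ovl{s_2},\ovl{s_1}$ to obtain $\ovl{u}\in\ovl{\C}(A_1',A'')$ and $\ovl{v}\in\ovl{\Ssc}(A_2',A'')$ with $\ovl{u}\ci\ovl{s_1}=\ovl{v}\ci\ovl{s_2}$, after which (MR1) forces $\ovl{u}\in\ovl{\Ssc}$. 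Using the standing assumption $\Ssc=p\iv(\ovl{\Ssc})$ (Remark~\ref{RemSaturation}), I lift to $u,v\in\Ssc$ and write $u\ci s_1-v\ci s_2=j\ci i$ for some $i\in\C(A,N)$, $j\in\C(N,A'')$ with $N\in\Ncal_\Ssc$. Since $N\to 0\in\Ssc$, every element of $\Ebb(C,N)$ already lies in $\Kcal(C,N)$, so $i\sas\del_2\in\Kcal(C,N)$, whence $(j\ci i)\sas\del_2\in\Kcal(C,A'')$ by part (a). Then
\[
(u\ci s_1)\sas(\del_1+\del_2)=u\sas s_1\sas\del_1+v\sas s_2\sas\del_2+(j\ci i)\sas\del_2=(j\ci i)\sas\del_2\in\Kcal(C,A''),
\]
so some $s'''\in\Ssc$ kills this expression, and composing with $u\ci s_1\in\Ssc$ yields a single element of $\Ssc$ killing $\del_1+\del_2$, proving $\del_1+\del_2\in\Kcal(C,A)$. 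Compatibility with finite direct sums is then automatic: closure of $\Ssc$ under $\oplus$ from (M0) together with the componentwise action of $s\sas$ gives $\Kcal(X\oplus Y,A)=\Kcal(X,A)\oplus\Kcal(Y,A)$.

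For (2), $\Ebb/\Kcal$ is the quotient of a biadditive functor by an additive subfunctor, hence biadditive. The vanishing on $\Ncal_\Ssc$ is immediate: for $N\in\Ncal_\Ssc$, both $0\to N$ and $N\to 0$ lie in $\Ssc$, so any $\del\in\Ebb(N,A)$ has $(0\to N)\uas\del\in\Ebb(0,A)=0$, giving $\Ebb(N,A)=\Kcal(N,A)$, and dually $\Ebb(C,N)=\Kcal(C,N)$. The principal obstacle is the addition step of (b): two independent kills must be merged into one, and the residual defect from the fact that $u\ci s_1=v\ci s_2$ holds only in $\ovl{\C}$ forces a detour through $\Ncal_\Ssc$, which is absorbed using that extensions whose source or target lies in $\Ncal_\Ssc$ automatically belong to $\Kcal$; everything else is formal manipulation.
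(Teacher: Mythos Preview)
Your proof is correct but significantly more elaborate than the paper's for part (1). The paper already records, immediately after Lemma~\ref{LemExtVanish}, that $\Kcal$ is a subfunctor of $\Ebb$; so in Proposition~\ref{PropK} only additivity is at stake. For this the paper uses the standard diagonal/codiagonal trick: if $\del\in\Kcal(C,A)$ and $\del'\in\Kcal(C',A')$ then $(s\oplus s')\sas(\del\oplus\del')=0$ with $s\oplus s'\in\Ssc$ by {\rm (M0)}, so $\del\oplus\del'\in\Kcal(C\oplus C',A\oplus A')$. Closure under $\oplus$ together with the subfunctor property then gives closure under addition via $\del_1+\del_2=\nabla_{A\ast}\Delta_C^{\ast}(\del_1\oplus\del_2)$. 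This uses only {\rm (M0)}, whereas your argument invokes {\rm (MR1)}, {\rm (MR2)}, and the saturation $\Ssc=p^{-1}(\ovl{\Ssc})$ to merge two independent kills into one. Your route has the virtue of being completely explicit about how a common kill for $\del_1+\del_2$ arises, and it illustrates nicely how the Ore condition interacts with the ideal $[\Ncal_\Ssc]$; but the paper's approach is both shorter and logically more economical, since it does not rely on the multiplicative system hypotheses that are reserved for the later construction of $\wE$. For part (2) your argument and the paper's are the same.
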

\begin{proof}
{\rm (1)} For any $\del\in\Kcal(C,A)$ and $\del\ppr\in\Kcal(C\ppr,A\ppr)$, obviously we have $\del\oplus\del\ppr\in\Kcal(C\oplus C\ppr,A\oplus A\ppr)$. Here $\del\oplus\del\ppr\in\Ebb(C\oplus C\ppr,A\oplus A\ppr)$ is the element corresponding to $(\del,0,0,\del\ppr)$ through the isomorphism $\Ebb(C\oplus C\ppr,A\oplus A\ppr)\cong\Ebb(C,A)\oplus\Ebb(C,A\ppr)\oplus\Ebb(C\ppr,A)\oplus\Ebb(C\ppr,A\ppr)$ induced by the biadditivity of $\Ebb$. This shows the additivity of $\Kcal\se\Ebb$.
{\rm (2)} is immediate from {\rm (1)} and the definition.
\end{proof}

In the rest, for any $\del\in\Ebb(X,Y)$, let $\ovl{\del}\in(\Ebb/\Kcal)(X,Y)$ denote an element represented by $\del$. 

\begin{dfn}\label{DefEbar}
By Proposition~\ref{PropK}, we obtain a well-defined biadditive functor $\ovl{\Ebb}\co\ovl{\C}^\op\ti\ovl{\C}\to\Ab$ given by the following.
\begin{itemize}
\item $\ovl{\Ebb}(C,A)=(\Ebb/\Kcal)(C,A)$ for any $A,C\in\C$.
\item $\ovl{a}\sas\ovl{\del}=\ovl{a\sas\del}$ for any $\ovl{\del}\in\ovl{\Ebb}(C,A)$ and $\ovl{a}\in\ovl{\C}(A,A\ppr)$.
\item $\ovl{c}\uas\ovl{\del}=\ovl{c\uas\del}$ for any $\ovl{\del}\in\ovl{\Ebb}(C,A)$ and $\ovl{c}\in\ovl{\C}(C\ppr,C)$.
\end{itemize}
Also, we have a natural transformation $\wp\co\Ebb\tc\ovl{\Ebb}\ci(p^\op\ti p)$ given by
\[ \wp_{C,A}\co\Ebb(C,A)\to\ovl{\Ebb}(C,A)\ ;\ \del\mapsto\ovl{\del} \]
for each $A,C\in\C$. We will often abbreviate an $\ovl{\Ebb}$-extension $\ovl{\del}\in\ovl{\Ebb}(C,A)$ to $C\ov{\ovl{\del}}{\dra}A$, as in Definition~\ref{DefExtension}.

\end{dfn}

\begin{rem}\label{RemTrivK}
By definition, the following are equivalent for any $\del\in\Ebb(C,A)$.
\begin{enumerate}
\item $\ovl{\del}=0$ holds in $\ovl{\Ebb}(C,A)$.
\item $\ovl{s}\sas \ovl{\del}=0$ holds in $\ovl{\Ebb}(C,A\ppr)$ for some/any $\ovl{s}\in\ovl{\Ssc}(A,A\ppr)$.
\item $\ovl{t}\uas\ovl{\del}=0$ holds in $\ovl{\Ebb}(C\ppr,A)$ for some/any $\ovl{t}\in\ovl{\Ssc}(C\ppr,C)$.
\end{enumerate}
\end{rem}

We are going to construct a biadditive functor $\wE\co\wC^\op\ti\wC\to\Ab$ and a natural transformation $\ovl{\mu}\co\ovl{\Ebb}\tc\wE\ci(\ovl{Q}^\op\ti\ovl{Q})$. Though this can be substantialized by using coends as $\wC\un{\ovl{\C}}{\otimes}\ovl{\Ebb}\un{\ovl{\C}}{\otimes}\wC$, we give an explicit description of $\wE$ in the below, which enables us to construct a realization $\ws$ of $\wE$ in the next subsection.
In fact, we will define $\wE(C,A)$ to be a set of equivalence classes of triplets $(C\ov{\ovl{t}}{\lla}Z\ov{\ovl{\del}}{\dra}X\ov{\ovl{s}}{\lla}A)$ with respect to some equivalence relation. In its definition (Definition~\ref{DefwE}), we involve the following Proposition~\ref{PropCommonDenom}.

\begin{lem}\label{LemCommonDenom}
For any finite family of morphisms $\{\ovl{s}_i\in\ovl{\Ssc}(A,X_i)\}_{1\le i\le n}$, there exist $X\in\C$, $\ovl{s}\in\ovl{\Ssc}(A,X)$ and $\{\ovl{u}_i\in\ovl{\Ssc}(X_i,X)\}_{1\le i\le n}$ such that $\ovl{s}=\ovl{u}_i\ci\ovl{s}_i$ for any $1\le i\le n$. Dually for a family of morphisms $\{\ovl{t}_i\in\ovl{\Ssc}(Z_i,C)\}_{1\le i\le n}$.
\end{lem}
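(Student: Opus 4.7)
The plan is a straightforward induction on $n$, using the left Ore condition built into (MR2) together with the 2-out-of-3 property (MR1). By duality, I will only sketch the first assertion concerning $\{\ovl{s}_i\in\ovl{\Ssc}(A,X_i)\}_{1\le i\le n}$.

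For $n=1$ take $X=X_1$, $\ovl{s}=\ovl{s}_1$ and $\ovl{u}_1=\id_{X_1}$. For $n=2$, I view $\ovl{s}_1,\ovl{s}_2$ as a span $X_1\ov{\ovl{s}_1}{\lla}A\ov{\ovl{s}_2}{\lra}X_2$ with $\ovl{s}_1\in\ovl{\Ssc}$ and apply the Ore axiom of (MR2) (as displayed in condition (ii) in the proof of Claim~\ref{ClaimMultLoc}) to produce a commutative square
\[
\xy
(-8,6)*+{A}="0";
(8,6)*+{X_2}="2";
(-8,-6)*+{X_1}="4";
(8,-6)*+{X}="6";
{\ar^{\ovl{s}_2} "0";"2"};
{\ar_{\ovl{s}_1} "0";"4"};
{\ar^{\ovl{u}_2} "2";"6"};
{\ar_{\ovl{u}_1} "4";"6"};
{\ar@{}|\circlearrowright "0";"6"};
\endxy
\]
with $\ovl{u}_2\in\ovl{\Ssc}$. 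Set $\ovl{s}:=\ovl{u}_2\ci\ovl{s}_2=\ovl{u}_1\ci\ovl{s}_1$; this belongs to $\ovl{\Ssc}$ by closure of $\ovl{\Ssc}$ under composition. Since $\ovl{s}=\ovl{u}_1\ci\ovl{s}_1$ and $\ovl{s}_1\in\ovl{\Ssc}$, the 2-out-of-3 condition (MR1) forces $\ovl{u}_1\in\ovl{\Ssc}$ as well. This settles the $n=2$ case.

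For $n\ge 3$, I induct: apply the hypothesis to $\{\ovl{s}_1,\ldots,\ovl{s}_{n-1}\}$ to obtain $X'\in\C$, $\ovl{s}'\in\ovl{\Ssc}(A,X')$, and $\ovl{u}'_i\in\ovl{\Ssc}(X_i,X')$ with $\ovl{s}'=\ovl{u}'_i\ci\ovl{s}_i$ for $1\le i\le n-1$. Now apply the $n=2$ case to the pair $\ovl{s}'\in\ovl{\Ssc}(A,X')$ and $\ovl{s}_n\in\ovl{\Ssc}(A,X_n)$ to produce $X$, $\ovl{v}\in\ovl{\Ssc}(X',X)$, $\ovl{u}_n\in\ovl{\Ssc}(X_n,X)$ with $\ovl{s}:=\ovl{v}\ci\ovl{s}'=\ovl{u}_n\ci\ovl{s}_n$. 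Put $\ovl{u}_i:=\ovl{v}\ci\ovl{u}'_i$ for $i<n$; these lie in $\ovl{\Ssc}$ and satisfy $\ovl{u}_i\ci\ovl{s}_i=\ovl{v}\ci\ovl{s}'=\ovl{s}$, which closes the induction. The dual statement concerning $\{\ovl{t}_i\co Z_i\to C\}$ follows by an entirely symmetric argument, invoking the right half of the Ore axiom (condition (i) in the proof of Claim~\ref{ClaimMultLoc}).

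There is no real obstacle here; this is a standard common-denominator argument for (left/right) multiplicative systems. The only point that needs attention, and that distinguishes this setting from the classical one, is the use of (MR1) to upgrade the auxiliary arrow $\ovl{u}_1$ (which comes out of the Ore square a priori in the ambient $\ovl{\Mcal}$, not $\ovl{\Ssc}$) into a member of $\ovl{\Ssc}$; this is what makes each $\ovl{u}_i$ in the conclusion belong to $\ovl{\Ssc}$ rather than just being some morphism of $\ovl{\C}$.
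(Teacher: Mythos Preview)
Your proof is correct and is exactly the ``usual argument on multiplicative systems'' that the paper invokes without spelling out; you have supplied the details the paper omits, including the key use of (MR1) to force $\ovl{u}_1\in\ovl{\Ssc}$. One small slip: in your closing sentence about the dual statement you cite condition~(i) from the proof of Claim~\ref{ClaimMultLoc}, but that condition is the annihilation axiom, not an Ore square; the dual argument actually uses the dual of condition~(ii) (completing a cospan to a span), which is the other half of (MR2).
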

\begin{proof}
This follows from {\rm (MR1)} and {\rm (MR2)}, by a usual argument on multiplicative systems.
\end{proof}

\begin{prop}\label{PropCommonDenom}
Let $n\ge2$ be an integer. Let $A,C\in\C$ be any pair of objects, and let
\[ (\FR{\ovl{t}_i}{\ovl{\del}_i}{\ovl{s}_i})=(C\ov{\ovl{t}_i}{\lla}Z_i\ov{\ovl{\del}_i}{\dra}X_i\ov{\ovl{s}_i}{\lla}A)\qquad(i=1,2,\ldots,n) \]
be triplets with 
$\ovl{s}_i\in\ovl{\Ssc}(A,X_i), \ovl{t}_i\in\ovl{\Ssc}(Z_i,C), \ovl{\del}_i\in\ovl{\Ebb}(Z_i,X_i)$. The following holds.
\begin{enumerate}
\item We can take a \emph{common denominator} $\ovl{s},\ovl{t}\in\ovl{\Ssc}$. 
More precisely, there exist 
common $X,Z\in\C$, $\ovl{s}\in\ovl{\Ssc}(A,X)$ and $\ovl{t}\in\ovl{\Ssc}(Z,C)$, equipped with
$\ovl{u}_i\in\ovl{\Ssc}(X_i,X)$, $\ovl{v}_i\in\ovl{\Ssc}(Z,Z_i)$ 
which satisfy $\ovl{s}=\ovl{u}_i\ci\ovl{s}_i$ and $\ovl{t}=\ovl{t}_i\ci\ovl{v}_i$ as depicted in the following diagram for $1\le i\le n$,
\begin{equation}\label{DiagCommonDenom}
\xy
(-23,-6)*+{C}="C";
(-7,6)*+{Z_i}="1";
(-7,-6)*+{Z}="3";
(7,6)*+{X_i}="11";
(7,-6)*+{X}="13";
(23,-6)*+{A}="A";
{\ar_{\ovl{t}_i} "1";"C"};
{\ar^{\ovl{t}} "3";"C"};
{\ar@{-->}^{\ovl{\del}_i} "1";"11"};
{\ar@{-->}_{\ovl{\rho}_i} "3";"13"};
{\ar_{\ovl{s}_i} "A";"11"};
{\ar^{\ovl{s}} "A";"13"};
{\ar_{\ovl{v}_i} "3";"1"};
{\ar_{\ovl{u}_i} "11";"13"};
{\ar@{}|\circlearrowright (-12,0);(-10,-4)};
{\ar@{}|\circlearrowright (12,0);(10,-4)};
\endxy
\end{equation}
where we put $\ovl{\rho}_i=\ovl{v}_i\uas\ovl{u}_{i\ast}\ovl{\del}_i$.
When $n=2$, we write $(\FR{\ovl{t}_1}{\ovl{\del}_1}{\ovl{s}_1})\sim(\FR{\ovl{t}_2}{\ovl{\del}_2}{\ovl{s}_2})$ if they satisfy $\ovl{\rho}_1=\ovl{\rho}_2$. 
\item The relation $(\FR{\ovl{t}_1}{\ovl{\del}_1}{\ovl{s}_1})\sim(\FR{\ovl{t}_2}{\ovl{\del}_2}{\ovl{s}_2})$ is independent of the choice of common denominators. Namely, for any other choice of 
\[ (\FR{\ovl{t}\ppr}{\ovl{\rho}\ppr_i}{\ovl{s}\ppr})=(C\ov{\ovl{t}\ppr}{\lla}Z\ppr\ov{\ovl{\rho}\ppr_i}{\dra}X\ppr\ov{\ovl{s}\ppr}{\lla}A) \]
and $\ovl{u}\ppr_i\in\ovl{\Ssc}(X_i,X\ppr)$, $\ovl{v}\ppr_i\in\ovl{\Ssc}(Z\ppr,Z_i)$, $\ovl{\rho}\ppr_i\in\ovl{\Ebb}(Z\ppr,X\ppr)$ satisfying $\ovl{s}\ppr=\ovl{u}\ppr_i\ci\ovl{s}_i$, $\ovl{t}\ppr=\ovl{t}_i\ci\ovl{v}\ppr_i$ and $\ovl{\rho}\ppr_i=\ovl{v}_i^{\prime\ast}\ovl{u}\ppr_{i\ast}\ovl{\del}_i$ for $i=1,2$, we have $\ovl{\rho}_1=\ovl{\rho}_2$ if and only if $\ovl{\rho}_1\ppr=\ovl{\rho}_2\ppr$.
\end{enumerate}
\end{prop}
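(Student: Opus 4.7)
The plan is to deduce (1) directly from Lemma~\ref{LemCommonDenom}, and to prove (2) by constructing a third common denominator $(Z'',X'',\ovl{t}'',\ovl{s}'')$ that simultaneously refines both given choices---in a way compatible with the auxiliary morphisms $\ovl{u}_i,\ovl{v}_i$ and $\ovl{u}'_i,\ovl{v}'_i$---and then exploiting the characterization of zero elements in $\ovl{\Ebb}=\Ebb/\Kcal$ from Remark~\ref{RemTrivK}. For (1), I apply Lemma~\ref{LemCommonDenom} to $\{\ovl{s}_i\}_{1\le i\le n}$ to produce $X$, $\ovl{s}\in\ovl{\Ssc}(A,X)$ and $\ovl{u}_i\in\ovl{\Ssc}(X_i,X)$ with $\ovl{s}=\ovl{u}_i\ci\ovl{s}_i$, and dually to $\{\ovl{t}_i\}$ to obtain $Z,\ovl{t},\ovl{v}_i$; then I set $\ovl{\rho}_i=\ovl{v}_i^\ast\ovl{u}_{i\ast}\ovl{\del}_i$, which yields the diagram~(\ref{DiagCommonDenom}).

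For (2), I first apply Lemma~\ref{LemCommonDenom} to the pair $\ovl{s},\ovl{s}'\in\ovl{\Ssc}(A,-)$ to obtain $X''\in\C$ together with $\ovl{w}\in\ovl{\Ssc}(X,X'')$ and $\ovl{w}'\in\ovl{\Ssc}(X',X'')$ satisfying $\ovl{s}'':=\ovl{w}\ci\ovl{s}=\ovl{w}'\ci\ovl{s}'$. Precomposing with $\ovl{s}_i$ gives $(\ovl{w}\ci\ovl{u}_i)\ci\ovl{s}_i=(\ovl{w}'\ci\ovl{u}'_i)\ci\ovl{s}_i$; since $\ovl{s}_i\in\ovl{\Ssc}$, the cancellation property of the multiplicative system $\ovl{\Ssc}$ furnishes $\ovl{z}_i\in\ovl{\Ssc}$ with $\ovl{z}_i\ci\ovl{w}\ci\ovl{u}_i=\ovl{z}_i\ci\ovl{w}'\ci\ovl{u}'_i$. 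Taking a common left multiple $\ovl{z}\in\ovl{\Ssc}$ of $\ovl{z}_1,\ldots,\ovl{z}_n$ via Lemma~\ref{LemCommonDenom} and replacing $\ovl{w},\ovl{w}'$ by $\ovl{z}\ci\ovl{w},\ovl{z}\ci\ovl{w}'$, I may assume $\ovl{u}''_i:=\ovl{w}\ci\ovl{u}_i=\ovl{w}'\ci\ovl{u}'_i$ for all $i$. A dual construction provides $Z''$, $\ovl{t}''$, $\ovl{w}_Z\in\ovl{\Ssc}(Z'',Z)$, $\ovl{w}'_Z\in\ovl{\Ssc}(Z'',Z')$, and $\ovl{v}''_i:=\ovl{v}_i\ci\ovl{w}_Z=\ovl{v}'_i\ci\ovl{w}'_Z$. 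Setting $\ovl{\rho}''_i=(\ovl{v}''_i)^\ast(\ovl{u}''_i)_\ast\ovl{\del}_i$, the bifunctoriality of $\ovl{\Ebb}$ yields $\ovl{\rho}''_i=\ovl{w}_Z^\ast\ovl{w}_\ast\ovl{\rho}_i=(\ovl{w}'_Z)^\ast\ovl{w}'_\ast\ovl{\rho}'_i$.

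The implication $\ovl{\rho}_1=\ovl{\rho}_2\Rightarrow\ovl{\rho}''_1=\ovl{\rho}''_2$ is now immediate. For the converse, set $\ovl{\xi}=\ovl{\rho}'_1-\ovl{\rho}'_2\in\ovl{\Ebb}(Z',X')$; then $\ovl{\rho}''_1=\ovl{\rho}''_2$ forces $(\ovl{w}'_Z)^\ast\ovl{w}'_\ast\ovl{\xi}=0$, so Remark~\ref{RemTrivK}(3) applied with $\ovl{w}'_Z\in\ovl{\Ssc}$ gives $\ovl{w}'_\ast\ovl{\xi}=0$, and then Remark~\ref{RemTrivK}(2) applied with $\ovl{w}'\in\ovl{\Ssc}$ yields $\ovl{\xi}=0$, i.e.\ $\ovl{\rho}'_1=\ovl{\rho}'_2$. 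The symmetric argument on the $(Z,X)$-side gives the reverse implication, completing the equivalence $\ovl{\rho}_1=\ovl{\rho}_2\Leftrightarrow\ovl{\rho}'_1=\ovl{\rho}'_2$. The main obstacle is the cancellation step aligning the $\ovl{u}_i$'s (and dually the $\ovl{v}_i$'s) with the common refinement: Lemma~\ref{LemCommonDenom} matches only the $\ovl{s}$'s and $\ovl{t}$'s, so one must invoke the multiplicative system axioms to equate the auxiliary morphisms after a further postcomposition before the pushforward/pullback comparison of $\ovl{\rho}''_i$ becomes available.
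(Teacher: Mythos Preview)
Your proof is correct and follows essentially the same approach as the paper's: for {\rm (1)} both invoke Lemma~\ref{LemCommonDenom} directly, and for {\rm (2)} both construct a third common denominator refining the two given ones, arrange via the multiplicative system axioms that the auxiliary maps $\ovl{w}\ci\ovl{u}_i$ and $\ovl{w}'\ci\ovl{u}'_i$ (and their $Z$-side analogues) agree after a further postcomposition, and then apply Remark~\ref{RemTrivK} to the difference $\ovl{\rho}'_1-\ovl{\rho}'_2$. The paper compresses the cancellation step you singled out as the ``main obstacle'' into a single sentence (``Composing some morphisms \ldots\ if necessary, we may assume \ldots''), but the underlying argument is the same.
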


\begin{proof}
{\rm (1)} This is immediate from Lemma~\ref{LemCommonDenom}.

{\rm (2)}
Assume that there exists a diagram (\ref{DiagCommonDenom}) with $\ovl{\rho}_1=\ovl{\rho}_2$.
For another common denominator $\ovl{s}\ppr, \ovl{t}\ppr\in\ovl{\Ssc}$ given in the assertion (2), we shall show $\ovl{\rho}\ppr_1=\ovl{\rho}\ppr_2$.
For triplets 
$(\FR{\ovl{t}}{\ovl{\rho}_i}{\ovl{s}})$ and $(\FR{\ovl{t}\ppr}{\ovl{\rho}\ppr_i}{\ovl{s}\ppr})$, by Lemma~\ref{LemCommonDenom},
there exists a common denominator $\ovl{s}\pprr, \ovl{t}\pprr\in\ovl{\Ssc}$ together with the following commutative diagrams
\[
\xymatrix{
Z_i\ar[rrd]_{\ovl{t}_i}&Z\ar@{}[d]|( .3)\circlearrowright\ar[l]_{\ovl{v}_i}\ar[dr]|{\ovl{t}}&Z\pprr\ar@{}[dr]|( .3)\circlearrowright\ar@{}[dl]|( .3)\circlearrowright\ar[l]_{\ovl{v}}\ar[r]^{\ovl{v}\ppr}\ar[d]^{\ovl{t}\pprr}&Z\ppr\ar@{}[d]|( .3)\circlearrowright\ar[dl]|{\ovl{t}\ppr}\ar[r]^{\ovl{v}\ppr_i}&Z_i\ar[lld]^{\ovl{t}_i}&&A\ar[ld]|{\ovl{s}}\ar[d]^{\ovl{s}\pprr}\ar[rd]|{\ovl{s}\ppr}\ar[rrd]^{\ovl{s}_i}\ar[lld]_{\ovl{s}_i}&&\\
&&C&&X_i\ar[r]_{\ovl{u}_i}&X\ar@{}[u]|( .3)\circlearrowright\ar[r]_{\ovl{u}}&X\pprr\ar@{}[ur]|( .3)\circlearrowright\ar@{}[ul]|( .3)\circlearrowright&X\ppr\ar@{}[u]|( .3)\circlearrowright\ar[l]^{\ovl{u}\ppr}&X_i\ar[l]^{\ovl{u}\ppr_i}
}
\]
in which all morphisms belong to $\ovl{\Ssc}$.
Composing some morphisms $Z^{\prime\prime\prime}\to Z\pprr$ and $X\pprr\to X^{\prime\prime\prime}$ in $\ovl{\Ssc}$ if necessary, we may assume that $\ovl{v_iv}=\ovl{v\ppr_iv\ppr}$ and $\ovl{uu_i}=\ovl{u\ppr u\ppr_i}$ hold for $i=1,2$ from the first.
Then we have the following equations
\begin{eqnarray*}
\ovl{v}\uas\ovl{u}_{\ast}\ovl{\rho}_i\ =\ \ovl{v}\uas\ovl{u}_{\ast}(\ovl{v}_i\uas\ovl{u}_{i\ast}\ovl{\del}_i)&=&(\ovl{v_iv})\uas(\ovl{uu_i})_\ast\ovl{\del}_i\\
&=&(\ovl{v\ppr_iv\ppr})\uas(\ovl{u\ppr u\ppr_i})_\ast\ovl{\del}_i\ =\ 
{\ovl{v}\ppr}\uas\ovl{u}\ppr_{\ast}({\ovl{v}\ppr_i}\uas{\ovl{u}\ppr_i}_{\ast}\ovl{\del}_i)\ =\ {\ovl{v}\ppr}\uas\ovl{u}\ppr_{\ast}\ovl{\rho}\ppr_i
\end{eqnarray*}
for $i=1,2$.
By the assumption, we have ${\ovl{v}\ppr}\uas\ovl{u}\ppr_{\ast}\ovl{\rho}\ppr_1={\ovl{v}\ppr}\uas\ovl{u}\ppr_{\ast}\ovl{\rho}\ppr_2$ and particularly ${\ovl{v}\ppr}\uas\ovl{u}\ppr_{\ast}(\ovl{\rho}\ppr_1-\ovl{\rho}\ppr_2)=0$.
Remark~\ref{RemTrivK} shows $\ovl{\rho}\ppr_1=\ovl{\rho}\ppr_2$.
\end{proof}

\begin{cor}\label{CorwETrans}
The above $\sim$ gives an equivalence relation on the set of triplets
\begin{equation}\label{wE}
\{ (\FR{\ovl{t}}{\ovl{\del}}{\ovl{s}})=(C\ov{\ovl{t}}{\lla}Z\ov{\ovl{\del}}{\dra}X\ov{\ovl{s}}{\lla}A) \mid X,Z\in\C,\, \ovl{s},\ovl{t}\in\ovl{\Ssc},\,\ovl{\del}\in\ovl{\Ebb}(Z,X)\}
\end{equation}
for each $A,C\in\C$. 
\end{cor}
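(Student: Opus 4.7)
The plan is to verify the three axioms (reflexivity, symmetry, transitivity) of an equivalence relation one by one, relying on Proposition~\ref{PropCommonDenom} (1) for existence of common denominators and (2) for independence under different choices.

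Reflexivity is immediate: for a triplet $(\FR{\ovl{t}}{\ovl{\del}}{\ovl{s}})$, take the trivial common denominator $X_1=X_2=X$, $Z_1=Z_2=Z$ with $\ovl{u}_i=\id$ and $\ovl{v}_i=\id$, yielding $\ovl{\rho}_1=\ovl{\rho}_2=\ovl{\del}$. Symmetry is built into the definition, as the condition $\ovl{\rho}_1=\ovl{\rho}_2$ is symmetric in the indices $1,2$ and any common denominator for the pair $(1,2)$ serves as one for the pair $(2,1)$.

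The main issue is transitivity. Suppose $(\FR{\ovl{t}_1}{\ovl{\del}_1}{\ovl{s}_1})\sim(\FR{\ovl{t}_2}{\ovl{\del}_2}{\ovl{s}_2})$ and $(\FR{\ovl{t}_2}{\ovl{\del}_2}{\ovl{s}_2})\sim(\FR{\ovl{t}_3}{\ovl{\del}_3}{\ovl{s}_3})$. I would apply Proposition~\ref{PropCommonDenom} (1) with $n=3$ to the three triplets simultaneously, obtaining $X,Z\in\C$, morphisms $\ovl{s}\in\ovl{\Ssc}(A,X)$, $\ovl{t}\in\ovl{\Ssc}(Z,C)$ and $\ovl{u}_i\in\ovl{\Ssc}(X_i,X)$, $\ovl{v}_i\in\ovl{\Ssc}(Z,Z_i)$ for $i=1,2,3$ such that $\ovl{s}=\ovl{u}_i\ovl{s}_i$ and $\ovl{t}=\ovl{t}_i\ovl{v}_i$, together with extensions $\ovl{\rho}_i=\ovl{v}_i\uas\ovl{u}_{i\ast}\ovl{\del}_i\in\ovl{\Ebb}(Z,X)$. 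This is a common denominator both for the pair $(1,2)$ and for the pair $(2,3)$. By Proposition~\ref{PropCommonDenom} (2), the hypothesis $(\FR{\ovl{t}_1}{\ovl{\del}_1}{\ovl{s}_1})\sim(\FR{\ovl{t}_2}{\ovl{\del}_2}{\ovl{s}_2})$ is detected in any common denominator, so $\ovl{\rho}_1=\ovl{\rho}_2$; likewise $\ovl{\rho}_2=\ovl{\rho}_3$. Hence $\ovl{\rho}_1=\ovl{\rho}_3$, witnessing $(\FR{\ovl{t}_1}{\ovl{\del}_1}{\ovl{s}_1})\sim(\FR{\ovl{t}_3}{\ovl{\del}_3}{\ovl{s}_3})$.

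The only subtlety is making sure the $n=3$ common denominator produced by Lemma~\ref{LemCommonDenom} indeed restricts to a valid common denominator for each pair (which is automatic from the defining equalities $\ovl{s}=\ovl{u}_i\ovl{s}_i$, $\ovl{t}=\ovl{t}_i\ovl{v}_i$) and, crucially, using the independence-of-choice statement in Proposition~\ref{PropCommonDenom} (2) to transport the pairwise equalities $\ovl{\rho}_1=\ovl{\rho}_2$ (initially proved in some other common denominator) into the $n=3$ common denominator constructed above. No further calculation is needed.
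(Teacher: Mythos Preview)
Your proof is correct and follows essentially the same approach as the paper: apply Proposition~\ref{PropCommonDenom}~(1) with $n=3$ to obtain a simultaneous common denominator, then invoke Proposition~\ref{PropCommonDenom}~(2) to conclude $\ovl{\rho}_1=\ovl{\rho}_2=\ovl{\rho}_3$, while reflexivity and symmetry are immediate.
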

\begin{proof}
Transitivity can be shown by using Proposition~\ref{PropCommonDenom} {\rm (2)}.
In fact, if there exist triplets with $(\FR{\ovl{t}_1}{\ovl{\del}_1}{\ovl{s}_1})\sim(\FR{\ovl{t}_2}{\ovl{\del}_2}{\ovl{s}_2})\sim(\FR{\ovl{t}_3}{\ovl{\del}_3}{\ovl{s}_3})$,
by Proposition~\ref{PropCommonDenom} {\rm (1)}, we have a diagram (\ref{DiagCommonDenom}) for $i=1,2,3$.
Proposition~\ref{PropCommonDenom} {\rm (2)} forces $\ovl{\rho_1}=\ovl{\rho_2}=\ovl{\rho_3}$.
The other conditions can be checked easily.
\end{proof}

\begin{dfn}\label{DefwE}
Let $A,C\in\C$ be any pair of objects. Define $\wE(C,A)$ to be the quotient set of $(\ref{wE})$ by the equivalence relation $\sim$ obtained above.
We denote the equivalence class of $(\FR{\ovl{t}}{\ovl{\del}}{\ovl{s}})=(C\ov{\ovl{t}}{\lla}Z\ov{\ovl{\del}}{\dra}X\ov{\ovl{s}}{\lla}A)$ by $[\FR{\ovl{t}}{\ovl{\del}}{\ovl{s}}]=[C\ov{\ovl{t}}{\lla}Z\ov{\ovl{\del}}{\dra}X\ov{\ovl{s}}{\lla}A]\in\wE(C,A)$. By definition we have
\[ \wE(C,A)=\{ [\FR{\ovl{t}}{\ovl{\del}}{\ovl{s}}] \mid X,Z\in\C,\, \ovl{s},\ovl{t}\in\ovl{\Ssc},\,\ovl{\del}\in\ovl{\Ebb}(Z,X)\}. \]
\end{dfn}

In the following arguments, we frequently use the following.
\begin{rem}
By {\rm (MR2)}, any morphism $\al\in\wC(A,B)$ can be expressed as
\[ \al=\ovl{Q}(\ovl{s})\iv\ci\ovl{Q}(\ovl{f})=\ovl{Q}(g)\ci\ovl{Q}(\ovl{t})\iv \]
by some pairs of morphisms $A\ov{\ovl{f}}{\lra}B\ppr\ov{\ovl{s}}{\lla}B$ and $A\ov{\ovl{t}}{\lla}A\ppr\ov{\ovl{g}}{\lra}B$ satisfying $\ovl{s},\ovl{t}\in\ovl{\Ssc}$.
\end{rem}

\begin{dfn}\label{DefwEFtr}
Let $[\FR{\ovl{t}}{\ovl{\del}}{\ovl{s}}]=[C\ov{\ovl{t}}{\lla}Z\ov{\ovl{\del}}{\dra}X\ov{\ovl{s}}{\lla}A]\in\wE(C,A)$ be any element.
\begin{enumerate}
\item For any morphism $\al\in\wC(A,A\ppr)$, express it as $\al=\ovl{Q}(\ovl{u})\iv\ci\ovl{Q}(\ovl{a})$ with some $\ovl{a}\in\ovl{\C}(A,D)$ and $\ovl{u}\in\ovl{\Ssc}(A\ppr,D)$. Then take a commutative square
\begin{equation}\label{DD}
\xy
(-6,6)*+{X}="0";
(6,6)*+{X\ppr}="2";
(-6,-6)*+{A}="4";
(6,-6)*+{D}="6";
{\ar^{\ovl{a}\ppr} "0";"2"};
{\ar^{\ovl{s}} "4";"0"};
{\ar_{\ovl{s}\ppr} "6";"2"};
{\ar_{\ovl{a}} "4";"6"};
{\ar@{}|\circlearrowright "0";"6"};
\endxy
\end{equation}
in $\ovl{\C}$ with $\ovl{s}\ppr\in\ovl{\Ssc}$, and put
$\al\sas [\FR{\ovl{t}}{\ovl{\del}}{\ovl{s}}]=[\FR{\ovl{t}}{\ovl{a\ppr\sas\del}}{\ovl{s\ppr\ci u}}]$. 
\item For any morphism $\gam\in\wC(C\ppr,C)$, express it as $\gam=\ovl{Q}(\ovl{c})\ci\ovl{Q}(\ovl{v})\iv$ with some $\ovl{c}\in\ovl{\C}(E,C)$ and $\ovl{v}\in\ovl{\Ssc}(E,C\ppr)$. Then take a commutative square
\[
\xy
(-6,6)*+{E\ppr}="0";
(6,6)*+{Z}="2";
(-6,-6)*+{E}="4";
(6,-6)*+{C}="6";
{\ar^{\ovl{c}\ppr} "0";"2"};
{\ar_{\ovl{t}\ppr} "0";"4"};
{\ar^{\ovl{t}} "2";"6"};
{\ar_{\ovl{c}} "4";"6"};
{\ar@{}|\circlearrowright "0";"6"};
\endxy
\]
in $\ovl{\C}$ with $\ovl{t}\ppr\in\ovl{\Ssc}$, and put
$ \gam\uas [\FR{\ovl{t}}{\ovl{\del}}{\ovl{s}}]=[\FR{\ovl{v\ci t\ppr}}{\ovl{c^{\prime\ast}\del}}{\ovl{s}}]$. 
\end{enumerate}
\end{dfn}

\begin{lem}
The above is well-defined.
\end{lem}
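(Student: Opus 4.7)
The plan is to verify independence of the three pieces of arbitrariness built into Definition~\ref{DefwEFtr}(1): the choice of the commutative square (\ref{DD}), the factorization $\al=\ovl{Q}(\ovl{u})\iv\ci\ovl{Q}(\ovl{a})$ of $\al$, and the representative of the equivalence class $[\FR{\ovl{t}}{\ovl{\del}}{\ovl{s}}]$. The dual assertion in (2) is handled by the same argument with reversed variance, so I would state it only after (1).

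For a fixed representative and a fixed factorization of $\al$, suppose two fill-in squares produce $(\ovl{a}'_i,\ovl{s}'_i)$ for $i=1,2$, with $\ovl{s}'_i\in\ovl{\Ssc}(D,X'_i)$ and $\ovl{s}'_i\ci\ovl{a}=\ovl{a}'_i\ci\ovl{s}$. Applying Lemma~\ref{LemCommonDenom} to $\ovl{s}'_1,\ovl{s}'_2$ yields $\ovl{w}_i\in\ovl{\Ssc}(X'_i,X')$ with $\ovl{w}_1\ci\ovl{s}'_1=\ovl{w}_2\ci\ovl{s}'_2$; since the resulting morphisms $\ovl{w}_1\ci\ovl{a}'_1,\ovl{w}_2\ci\ovl{a}'_2\co X\to X'$ coincide after pre-composition with $\ovl{s}\in\ovl{\Ssc}$, the multiplicative-system axiom (MR2) supplies a further $\ovl{w}\in\ovl{\Ssc}$ that coequalizes them. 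Setting $\ovl{u}_i=\ovl{w}\ci\ovl{w}_i\in\ovl{\Ssc}$, one sees that the two triplets $(\FR{\ovl{t}}{\ovl{a}'_{i\sas}\ovl{\del}}{\ovl{s}'_i\ci\ovl{u}})$ share the common denominator produced by $\ovl{u}_i$, and Proposition~\ref{PropCommonDenom}(2) concludes the $\sim$-equivalence.

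Independence of the factorization of $\al$ reduces to the previous step: any two roofs $\al=\ovl{Q}(\ovl{u}_i)\iv\ci\ovl{Q}(\ovl{a}_i)$ are dominated in $\ovl{\Ssc}$ by a common refinement obtained from (MR1)--(MR2), and composing this refinement with an existing square (\ref{DD}) produces another valid square whose output matches that of each original via the fill-in comparison above. Finally, for change of representative, assume $(\FR{\ovl{t}_1}{\ovl{\del}_1}{\ovl{s}_1})\sim(\FR{\ovl{t}_2}{\ovl{\del}_2}{\ovl{s}_2})$ and fix common-denominator data $\ovl{u}_i\in\ovl{\Ssc}(X_i,X)$, $\ovl{v}_i\in\ovl{\Ssc}(Z,Z_i)$, $\ovl{s}\in\ovl{\Ssc}(A,X)$, $\ovl{t}\in\ovl{\Ssc}(Z,C)$ as in (\ref{DiagCommonDenom}) with $\ovl{\rho}_1=\ovl{\rho}_2$. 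For a chosen factorization of $\al$, first construct a single fill-in square over the common object $X$ instead of over $X_1$ or $X_2$; precomposing this square with $\ovl{u}_i$ yields compatible squares for each $(\FR{\ovl{t}_i}{\ovl{\del}_i}{\ovl{s}_i})$. The two output triplets then admit a simultaneous common denominator built from $\ovl{u}_i$ and from the chosen square, and the equation $\ovl{\rho}_1=\ovl{\rho}_2$ pushes forward through the top arrow to deliver the required identity of $\ovl{\Ebb}$-extensions, again via Proposition~\ref{PropCommonDenom}(2).

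The main obstacle is the bookkeeping in the last step, where one must interleave the common-denominator diagram (\ref{DiagCommonDenom}) that witnesses the $\sim$-equivalence with the (MR2)-square used to define $\al\sas$, arranging the combined diagram so that the pushforwards of the $\ovl{\del}_i$ land in a common $\ovl{\Ebb}(Z,X')$. Because Proposition~\ref{PropCommonDenom}(2) frees us from committing to any particular common denominator, the chase is tractable rather than delicate, and beyond Lemma~\ref{LemCommonDenom} and axioms (MR1)--(MR2) no further input is required; (MR3) is not needed for well-definedness.
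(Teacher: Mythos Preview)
Your proposal is correct and follows essentially the same approach as the paper's proof: both verify the three independences (of the square~(\ref{DD}), of the roof for $\al$, and of the representative of $[\FR{\ovl{t}}{\ovl{\del}}{\ovl{s}}]$) using only {\rm (MR1)}, {\rm (MR2)}, Lemma~\ref{LemCommonDenom}, and Proposition~\ref{PropCommonDenom}(2), and your remark that {\rm (MR3)} is not needed here is accurate. The only organizational difference is in the third step: the paper reduces to the case of a one-step refinement (the diagram~(\ref{Diagwelldefpull})) and then constructs an explicit comparison square via {\rm (MR2)}, whereas you work directly with the full common-denominator data of~(\ref{DiagCommonDenom}) and build a single fill-in over the common object $X$, precomposing with the $\ovl{u}_i$ to obtain compatible squares for both representatives simultaneously---this is slightly cleaner bookkeeping but amounts to the same computation.
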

\begin{proof}
{\rm (1)}
Let $(\FR{\ovl{t}}{\ovl{\del}}{\ovl{s}})=(C\ov{\ovl{t}}{\lla}Z\ov{\ovl{\del}}{\dra}X\ov{\ovl{s}}{\lla}A)$ be any triplet with $\ovl{s},\ovl{t}\in\ovl{\Ssc}$ and $\ovl{\del}\in\ovl{\Ebb}(Z,X)$. 
For each expression $\al=\ovl{Q}(\ovl{u})\iv\ci\ovl{Q}(\ovl{a})$ with $\ovl{a}\in\ovl{\C}(A,D)$ and $\ovl{u}\in\ovl{\Ssc}(A\ppr,D)$, the equivalence class  $[\FR{\ovl{t}}{\ovl{a\ppr\sas\del}}{\ovl{s\ppr\ci u}}]$ does not depend on the choice of $(\ref{DD})$. Indeed, for any other commutative square
\[
\xy
(-6,6)*+{X}="0";
(6,6)*+{X\pprr}="2";
(-6,-6)*+{A}="4";
(6,-6)*+{D}="6";
{\ar^{\ovl{a}\pprr} "0";"2"};
{\ar^{\ovl{s}} "4";"0"};
{\ar_{\ovl{s}\pprr} "6";"2"};
{\ar_{\ovl{a}} "4";"6"};
{\ar@{}|\circlearrowright "0";"6"};
\endxy
\]
with $\ovl{s}\pprr\in\ovl{\Ssc}$, by {\rm (MR2)} we may find a pair of morphisms $\ovl{s}_1\in\ovl{\Ssc}(X\ppr,X^{\prime\prime\prime})$ and $\ovl{s}_2\in\ovl{\Ssc}(X\pprr,X^{\prime\prime\prime})$ such that $\ovl{s}_1\ci\ovl{a}\ppr=\ovl{s}_2\ci\ovl{a}\pprr$ and $\ovl{s}_1\ci\ovl{s}\ppr=\ovl{s}_2\ci\ovl{s}\pprr$. Then we obviously have $[\FR{\ovl{t}}{\ovl{a\ppr\sas\del}}{\ovl{s\ppr\ci u}}]=[\FR{\ovl{t}}{\ovl{a\pprr\sas\del}}{\ovl{s\pprr\ci u}}]$. 

Next we check that the class $[\FR{\ovl{t}}{\ovl{a\ppr\sas\del}}{\ovl{s\ppr\ci u}}]$ does not depend on a choice of expressions of $\alpha\in\wC(A,A\ppr)$.
By using {\rm (MR2)}, we may tacitly reduce it to the case where $\alpha=\ovl{Q}(\ovl{u}_1)^{-1}\ci\ovl{Q}(\ovl{a}_1)=\ovl{Q}(\ovl{u})^{-1}\ci\ovl{Q}(\ovl{a})$ admits a commutative diagram below
\[
\xymatrix{
&D_1\ar[d]|{\ovl{u}_2}&\\
A\ar[r]^{\ovl{a}}\ar[ru]^{\ovl{a}_1}&D\ar@{}[ul]|( .3)\circlearrowright\ar@{}[ur]|( .3)\circlearrowright&A\ppr\ar[l]_{\ovl{u}}\ar[lu]_{\ovl{u}_1}
}
\]
with $\ovl{u}_1, \ovl{u}_2, \ovl{u}\in\ovl{\Ssc}$.
By (MR2), the morphisms $X\ov{\ovl{s}}{\lla}A\ov{\ovl{a}_1}{\lra}D_1\ov{\ovl{u}_2}{\lra}D$ yield the following commutative diagram
\[
\xymatrix{
A\ar@<0.5ex>@{}[rr]^\circlearrowright\ar@{}[dr]|\circlearrowright\ar@/^1.2pc/[rr]|{\ovl{a}}\ar[d]_{\ovl{s}}\ar[r]_{\ovl{a}_1}&D_1\ar@{}[dr]|\circlearrowright\ar[d]^{\ovl{s}\pprr}\ar[r]_{\ovl{u}_2}&D\ar[d]^{\ovl{s}\ppr}\\
X\ar@<-0.5ex>@{}[rr]_\circlearrowright\ar@/_1.2pc/[rr]|{\ovl{a}\ppr}\ar[r]^{\ovl{a}\ppr_1}&X\pprr\ar[r]^{\ovl{u}\ppr_2}&X\ppr
}
\]
where the vertical arrows belong to $\ovl{\Ssc}$.
By {\rm (MR1)}, we also have $\ovl{u}_2\ppr\in\ovl{\Ssc}$.
This shows that both $(\FR{\ovl{t}}{\ovl{a\ppr\sas\del}}{\ovl{s\ppr\ci u}})$ and $(\FR{\ovl{t}}{\ovl{a\ppr_{1\ast}\del}}{\ovl{s\pprr\ci u_1}})$ define the same equivalence class in $\wE(C,A)$.

Let us show the independence from the choice of representatives for $[\FR{\ovl{t}}{\ovl{\del}}{\ovl{s}}]$.
Let 
$(\FR{\ovl{t}\pprr}{\ovl{\rho}}{\ovl{s}\pprr})$ be another choice of its representative. It suffices to show in the case where there is a diagram as below,
\begin{equation}\label{Diagwelldefpull}
\xy
(-23,-6)*+{C}="C";
(-7,6)*+{Z}="1";
(-7,-6)*+{Z\pprr}="3";
(7,6)*+{X}="11";
(7,-6)*+{X\pprr}="13";
(23,-6)*+{A}="A";
{\ar_{\ovl{t}} "1";"C"};
{\ar^{\ovl{t}\pprr} "3";"C"};
{\ar@{-->}^{\ovl{\del}} "1";"11"};
{\ar@{-->}_{\ovl{\rho}} "3";"13"};
{\ar_{\ovl{s}} "A";"11"};
{\ar^{\ovl{s}\pprr} "A";"13"};
{\ar_{\ovl{v}\pprr} "3";"1"};
{\ar_{\ovl{u}\pprr} "11";"13"};
{\ar@{}|\circlearrowright (-12,0);(-10,-4)};
{\ar@{}|\circlearrowright (12,0);(10,-4)};
\endxy
\end{equation}
with $\ovl{v}\pprr,\ovl{u}\pprr\in\ovl{\Ssc}$ satisfying $\ovl{v^{\prime\prime\ast}u\pprr\sas\del}=\ovl{\rho}$. Take a commutative square as in $(\ref{DD})$.
For morphisms $X\pprr\ov{\ovl{u}\pprr}{\lla}X\ov{\ovl{a}\ppr}{\lra}X\ppr$, we may take the following commutative square by {\rm (MR2)}
\[
\xy
(-6,6)*+{X}="0";
(6,6)*+{X\ppr}="2";
(-6,-6)*+{X\pprr}="4";
(6,-6)*+{W}="6";
%
%
{\ar^{\ovl{a}\ppr} "0";"2"};
{\ar_{\ovl{u}\pprr} "0";"4"};
{\ar^{\ovl{w}} "2";"6"};
{\ar_{\ovl{a}\pprr} "4";"6"};
%
%
{\ar@{}|\circlearrowright "0";"6"};
\endxy
\]
with $\ovl{w}\in\ovl{\Ssc}$.
This gives rise to the following diagram
\[
\xy
(-23,-6)*+{C}="C";
(-7,6)*+{Z}="1";
(-7,-6)*+{Z\pprr}="3";
(7,6)*+{X\ppr}="11";
(7,-6)*+{W}="13";
(23,-6)*+{A\ppr}="A'";
{\ar_{\ovl{t}} "1";"C"};
{\ar^{\ovl{t}\pprr} "3";"C"};
{\ar@{-->}^{\ovl{a}\ppr_\ast\ovl{\del}} "1";"11"};
{\ar@{-->}_{\ovl{a}\pprr_\ast\ovl{\rho}} "3";"13"};
{\ar_{\ovl{s\ppr u}} "A'";"11"};
{\ar^{\ovl{ws\ppr u}} "A'";"13"};
{\ar_{\ovl{v}\pprr} "3";"1"};
{\ar_{\ovl{w}} "11";"13"};
{\ar@{}|\circlearrowright (-12,0);(-10,-4)};
{\ar@{}|\circlearrowright (12,0);(10,-4)};
\endxy
\]
where all solid arrows belong to $\ovl{\Ssc}$.
This shows $[\FR{\ovl{t}}{\ovl{a}\ppr\sas\ovl{\del}}{\ovl{s\ppr u}}]=[\FR{\ovl{t}\pprr}{\ovl{a}\pprr\sas\ovl{\rho}}{\ovl{ws\ppr u}}]$.

{\rm (2)} This is dual to {\rm (1)}.
\end{proof}

\begin{rem}\label{RemPP}
For any $[\FR{\ovl{t}}{\ovl{\del}}{\ovl{s}}]\in\wE(C,A)$, $\al=\ovl{Q}(\ovl{u})\iv\ci\ovl{Q}(\ovl{a})\in\wC(A,A\ppr)$ and  $\gam=\ovl{Q}(\ovl{c})\ci\ovl{Q}(\ovl{v})\iv\in\wC(C\ppr,C)$ as in Definition~\ref{DefwEFtr}, the element
\[ \gam\uas\al\sas [\FR{\ovl{t}}{\ovl{\del}}{\ovl{s}}]=[\FR{\ovl{v\ci t\ppr}}{\ovl{c^{\prime\ast}a\ppr\sas\del}}{\ovl{s\ppr\ci u}}] \]
is given by the following diagram.
\[
\xy
(-31,-8)*+{C\ppr}="0";
(-15,8)*+{E\ppr}="2";
(-23,0)*+{E}="4";
(-15,-8)*+{C}="6";
(-7,0)*+{Z}="8";
(7,0)*+{X}="10";
(15,8)*+{X\ppr}="12";
(15,-8)*+{A}="14";
(23,0)*+{D}="16";
(31,-8)*+{A\ppr}="18";
{\ar_{\ovl{t}\ppr} "2";"4"};
{\ar_{\ovl{v}} "4";"0"};
{\ar^{\ovl{c}\ppr} "2";"8"};
{\ar^{\ovl{t}} "8";"6"};
{\ar_{\ovl{c}} "4";"6"};
{\ar@{-->}_{\ovl{\del}} "8";"10"};
{\ar@/^0.60pc/@{-->}^{\ovl{c^{\prime\ast}a\ppr\sas\del}} "2";"12"};
{\ar^{\ovl{a}\ppr} "10";"12"};
{\ar_{\ovl{s}\ppr} "16";"12"};
{\ar^{\ovl{s}} "14";"10"};
{\ar_{\ovl{a}} "14";"16"};
{\ar_{\ovl{u}} "18";"16"};
{\ar@{}|\circlearrowright "2";"6"};
{\ar@{}|\circlearrowright "10";"16"};
\endxy
\]
\end{rem}

\begin{lem}\label{LemEmu}
For any $A,C\in\C$, the following holds.
\begin{enumerate}
\item For any pair of elements $[\FR{\ovl{t}_i}{\ovl{\del}_i}{\ovl{s}_i}]\in\wE(C,A)$ $(i=1,2)$, using Proposition~\ref{PropCommonDenom} {\rm (1)}, we define their sum as
\[ [\FR{\ovl{t}_1}{\ovl{\del}_1}{\ovl{s}_1}]+[\FR{\ovl{t}_2}{\ovl{\del}_2}{\ovl{s}_2}]=[\FR{\ovl{t}}{\ovl{\rho}_1+\ovl{\rho}_2}{\ovl{s}}] \]
by taking common denominators so that $[\FR{\ovl{t}_i}{\ovl{\del}_i}{\ovl{s}_i}]=[\FR{\ovl{t}}{\ovl{\rho}_i}{\ovl{s}}]$ hold for $i=1,2$.
Then this sum is well-defined, and makes $\wE(C,A)$ an abelian group with zero element $[\FR{\ovl{\id}}{0}{\ovl{\id}}]$.
\item If we define a map $\ovl{\mu}_{C,A}\co\ovl{\Ebb}(C,A)\to\wE(C,A)$ by $\ovl{\mu}_{C,A}(\ovl{\del})=[\FR{\ovl{\id}}{\ovl{\del}}{\ovl{\id}}]$ for any $\ovl{\del}\in\ovl{\Ebb}(C,A)$, then it is a monomorphism.
\end{enumerate}
\end{lem}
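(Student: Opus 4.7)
The proof of part (1) relies on Proposition~\ref{PropCommonDenom}, particularly the independence assertion (2), which will supply the well-definedness of the sum in both senses one must verify. The proof of part (2) then reduces the injectivity of $\ovl{\mu}_{C,A}$ to Remark~\ref{RemTrivK}.

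For the well-definedness of the sum in (1) I would split the verification into two claims: (a) once the representatives $(\FR{\ovl{t}_i}{\ovl{\del}_i}{\ovl{s}_i})$ are fixed, the class $[\FR{\ovl{t}}{\ovl{\rho}_1+\ovl{\rho}_2}{\ovl{s}}]$ does not depend on the chosen common denominators $\ovl{s},\ovl{t}$; and (b) it does not depend on the chosen representatives. For (a), given two systems of common denominators, Lemma~\ref{LemCommonDenom} allows me to produce a joint refinement exactly as in the proof of Proposition~\ref{PropCommonDenom}(2); applying the resulting pullback and pushforward to each $\ovl{\rho}_i$ and using biadditivity of $\ovl{\Ebb}$, the two sums are transported to a common class in the same $\ovl{\Ebb}$-group, so give equal elements of $\wE(C,A)$. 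For (b), a change of representative for one summand is absorbed by Lemma~\ref{LemCommonDenom} into a new common denominator system, and Proposition~\ref{PropCommonDenom}(2) applied to the two representatives of the changed summand forces the corresponding $\ovl{\rho}$'s to coincide, leaving the total sum unchanged.

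The group axioms are then routine. Commutativity is immediate from the symmetric role of the two triplets. Associativity follows by taking simultaneous common denominators for three triplets via Proposition~\ref{PropCommonDenom}(1) applied with three factors (the same proof works verbatim for any $n$ by induction using Lemma~\ref{LemCommonDenom}), which places all three $\ovl{\rho}_i$ inside the same abelian group $\ovl{\Ebb}(Z,X)$. For the identity, given $[\FR{\ovl{t}}{\ovl{\del}}{\ovl{s}}]$ I take $\ovl{s}$ and $\ovl{t}$ themselves as the common denominators with $[\FR{\ovl{\id}}{0}{\ovl{\id}}]$, using $\ovl{u}_1,\ovl{v}_1=\ovl{\id}$ and $\ovl{u}_2=\ovl{s},\ovl{v}_2=\ovl{t}$; this gives $\ovl{\rho}_1=\ovl{\del}$ and $\ovl{\rho}_2=0$, so the sum reproduces $[\FR{\ovl{t}}{\ovl{\del}}{\ovl{s}}]$. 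For inverses, the triplet $[\FR{\ovl{t}}{-\ovl{\del}}{\ovl{s}}]$ does the job: with the same choice of common denominators and $\ovl{u}_i,\ovl{v}_i=\ovl{\id}$ the sum yields $\ovl{\del}+(-\ovl{\del})=0$, which agrees with $[\FR{\ovl{\id}}{0}{\ovl{\id}}]$ by the identity computation.

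For part (2), suppose $\ovl{\mu}_{C,A}(\ovl{\del})=0$, so $[\FR{\ovl{\id}}{\ovl{\del}}{\ovl{\id}}]\sim[\FR{\ovl{\id}}{0}{\ovl{\id}}]$. Unwinding this equivalence via a common denominator produces $\ovl{s}\in\ovl{\Ssc}(A,X)$ and $\ovl{t}\in\ovl{\Ssc}(Z,C)$ with $\ovl{s}\sas\ovl{t}\uas\ovl{\del}=0$ in $\ovl{\Ebb}(Z,X)$. Applying Remark~\ref{RemTrivK} once to $\ovl{s}\in\ovl{\Ssc}$ yields $\ovl{t}\uas\ovl{\del}=0$ in $\ovl{\Ebb}(Z,A)$, and a second application to $\ovl{t}\in\ovl{\Ssc}$ yields $\ovl{\del}=0$, giving injectivity. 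The main obstacle I anticipate is the bookkeeping for well-definedness in (1): one must carefully track how each $\ovl{\rho}_i$ transforms under joint refinement and verify that the biadditivity of $\ovl{\Ebb}$ really carries the sum through, so that Proposition~\ref{PropCommonDenom}(2) can be invoked to compare two formally different expressions for the sum.
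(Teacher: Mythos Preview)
Your proposal is correct and follows essentially the same approach as the paper: both rely on Proposition~\ref{PropCommonDenom}(2) for well-definedness, take simultaneous common denominators for associativity, and invoke Remark~\ref{RemTrivK} for injectivity of $\ovl{\mu}_{C,A}$. The only minor omission is that you do not explicitly verify that $\ovl{\mu}_{C,A}$ is additive (i.e., a group homomorphism) before checking injectivity; the paper notes this in one line, and it is indeed immediate since $[\FR{\ovl{\id}}{\ovl{\del}}{\ovl{\id}}]+[\FR{\ovl{\id}}{\ovl{\del}\ppr}{\ovl{\id}}]=[\FR{\ovl{\id}}{\ovl{\del}+\ovl{\del}\ppr}{\ovl{\id}}]$ using $\ovl{\id}$ as the common denominator.
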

\begin{proof}
{\rm (1)}
Well-definedness of the sum is obvious from Proposition~\ref{PropCommonDenom} {\rm (2)}.
We will show that it is associative. 
For any $[\FR{\ovl{t}_i}{\ovl{\del}_i}{\ovl{s}_i}]\in\wE(C,A)$ $(i=1,2,3)$, we take common denominators so that $[\FR{\ovl{t}_i}{\ovl{\del}_i}{\ovl{s}_i}]=[\FR{\ovl{t}}{\ovl{\rho}_i}{\ovl{s}}]$ hold for $i=1,2,3$.
Then we have 
\begin{eqnarray*}
([\FR{\ovl{t}_1}{\ovl{\del}_1}{\ovl{s}_1}]+[\FR{\ovl{t}_2}{\ovl{\del}_2}{\ovl{s}_2}])+[\FR{\ovl{t}_3}{\ovl{\del}_3}{\ovl{s}_3}]&=&[\FR{\ovl{t}}{\ovl{\rho}_1+\ovl{\rho}_2+\ovl{\rho}_3}{\ovl{s}}] \\
&=&[\FR{\ovl{t}_1}{\ovl{\del}_1}{\ovl{s}_1}]+([\FR{\ovl{t}_2}{\ovl{\del}_2}{\ovl{s}_2}]+[\FR{\ovl{t}_3}{\ovl{\del}_3}{\ovl{s}_3}]).
\end{eqnarray*}
Commutativity is obvious.
For any $[\FR{\ovl{t}}{\ovl{\del}}{\ovl{s}}]\in\wE(C,A)$, we have
\[ [\FR{\ovl{t}}{\ovl{\del}}{\ovl{s}}]+[\FR{\ovl{\id}}{0}{\ovl{\id}}]=[\FR{\ovl{t}}{\ovl{\del}}{\ovl{s}}]+[\FR{\ovl{t}}{0}{\ovl{s}}]=[\FR{\ovl{t}}{\ovl{\del}}{\ovl{s}}].
\]
Thus $[\FR{\ovl{\id}}{0}{\ovl{\id}}]$ is the zero element. 

For any $[\FR{\ovl{t}}{\ovl{\del}}{\ovl{s}}]\in\wE(C,A)$, we have the inverse element $[\FR{\ovl{t}}{\ovl{-\del}}{\ovl{s}}]$ of $[\FR{\ovl{t}}{\ovl{\del}}{\ovl{s}}]$. Thus $\wE(C,A)$ is an abelian group with respect to the above sum.

{\rm (2)} Additivity is immediate from the definition. Monomorphicity follows from Remark~\ref{RemTrivK}.
\end{proof}

\begin{prop}\label{PropEmu}
The above definitions give a biadditive functor $\wE\co\wC^\op\ti\wC\to\Ab$ and a natural transformation $\ovl{\mu}\co\ovl{\Ebb}\tc\wE\ci(\ovl{Q}^\op\ti\ovl{Q})$.
\end{prop}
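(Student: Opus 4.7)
The plan is to verify, in order: (i) that for each $\al\in\wC(A,A\ppr)$ the pushforward $\al\sas$ is a well-defined map $\wE(C,A)\to\wE(C,A\ppr)$ functorial in $\al$, (ii) dually for $\gam\uas$, (iii) that the two actions commute so that $\wE$ becomes a bifunctor, (iv) biadditivity of $\wE(-,-)$, and (v) naturality of $\ovl{\mu}$. Well-definedness of the two actions on representatives has already been verified above, so only the algebraic/functorial compatibilities remain.

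For (i), preservation of identities is immediate: the commutative square used in Definition~\ref{DefwEFtr} {\rm (1)} can be taken to be the identity square when $\al=\id$. For preservation of composition, given $\al_i=\ovl{Q}(\ovl{u}_i)\iv\ci\ovl{Q}(\ovl{a}_i)$ for $i=1,2$, I first use {\rm (MR2)} to rewrite $\al_2\ci\al_1$ as a single right fraction, then paste the two commutative squares coming from Definition~\ref{DefwEFtr} {\rm (1)} into a single one and apply Proposition~\ref{PropCommonDenom} {\rm (2)} to conclude that $(\al_2\ci\al_1)\sas [\FR{\ovl{t}}{\ovl{\del}}{\ovl{s}}]$ and $\al_{2\ast}\al_{1\ast}[\FR{\ovl{t}}{\ovl{\del}}{\ovl{s}}]$ agree in $\wE$. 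Step (ii) is dual.

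For (iii), the commutation $\gam\uas\ci\al\sas=\al\sas\ci\gam\uas$ follows because the two operations modify independent ends of the triplet representation: the computation in Remark~\ref{RemPP} shows that $\gam\uas\al\sas[\FR{\ovl{t}}{\ovl{\del}}{\ovl{s}}]=[\FR{\ovl{v\ci t\ppr}}{\ovl{c^{\prime\ast}a\ppr\sas\del}}{\ovl{s\ppr\ci u}}]$, and performing the two operations in the opposite order produces the same class thanks to the interchange $\ovl{c^{\prime\ast}a\ppr\sas\del}=\ovl{a\ppr\sas c^{\prime\ast}\del}$ coming from the bifunctoriality of $\ovl{\Ebb}$. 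For (iv), the abelian group structure on $\wE(C,A)$ has been established in Lemma~\ref{LemEmu} {\rm (1)}; the actions respect sums because, after bringing two elements to a common pair of denominators via Proposition~\ref{PropCommonDenom} {\rm (1)}, a single commutative square produced by {\rm (MR2)} transports their sum inside $\ovl{\Ebb}$, which is itself additive by Proposition~\ref{PropK}. Compatibility with direct sums of objects is handled identically, by splitting a common denominator into components.

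Finally, for (v), naturality of $\ovl{\mu}$ with respect to $\ovl{a}\in\ovl{\C}(A,A\ppr)$ is verified by choosing $\ovl{s}=\ovl{\id}_A$ and $\ovl{u}=\ovl{\id}_{A\ppr}$ in the commutative square of Definition~\ref{DefwEFtr} {\rm (1)}, which forces $\ovl{Q}(\ovl{a})\sas[\FR{\ovl{\id}}{\ovl{\del}}{\ovl{\id}}]=[\FR{\ovl{\id}}{\ovl{a}\sas\ovl{\del}}{\ovl{\id}}]=\ovl{\mu}_{C,A\ppr}(\ovl{a}\sas\ovl{\del})$, and dually for the contravariant slot. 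The main obstacle throughout is the compatibility $(\al_2\ci\al_1)\sas=\al_{2\ast}\ci\al_{1\ast}$ and its dual, since they require chaining two applications of {\rm (MR2)} and tracking the resulting diagrams carefully; Proposition~\ref{PropCommonDenom} {\rm (2)} is the key bookkeeping device that ensures these chained diagrams always land in the same equivalence class.
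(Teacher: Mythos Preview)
Your proposal is correct and follows essentially the same route as the paper: both verify commutation of $\al\sas$ and $\gam\uas$ via Remark~\ref{RemPP}, functoriality of composition by chaining {\rm (MR2)}-squares, that the actions are group homomorphisms by reducing to common denominators, and naturality of $\ovl{\mu}$ by the trivial choice of square. The only cosmetic difference is that the paper verifies additivity in the morphism variable directly as $(\al_1+\al_2)\sas=(\al_1)\sas+(\al_2)\sas$ (again by common denominators), whereas you phrase the same content as compatibility with direct sums; these are equivalent and equally routine here.
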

\begin{proof}
First we show that $\wE$ is a biadditive functor. By Remark~\ref{RemPP}, it is easy to check $\gam\uas\al\sas=\al\sas\gam\uas$ for any morphisms $\al,\gam$ in $\wC$. 

We show that $\al\sas$ is a group homomorphism for any $\al\in\wC(A,A\ppr)$. Take morphisms $\ovl{a}\in\ovl{\C}(A,D)$ and $\ovl{u}\in\ovl{\Ssc}(A\ppr,D)$ such that $\al=\ovl{Q}(\ovl{u})\iv\ci\ovl{Q}(\ovl{a})$. For any $[\FR{\ovl{t}}{\ovl{\del_{1}}}{\ovl{s}}], [\FR{\ovl{t\ppr}}{\ovl{\del_{2}}}{\ovl{s\ppr}}]\in\wE(C,A)$, we need to show $\al\sas([\FR{\ovl{t}}{\ovl{\del_{1}}}{\ovl{s}}]+[\FR{\ovl{t\ppr}}{\ovl{\del_{2}}}{\ovl{s\ppr}}])=\al\sas([\FR{\ovl{t}}{\ovl{\del_{1}}}{\ovl{s}}])+\al\sas([\FR{\ovl{t\ppr}}{\ovl{\del_{2}}}{\ovl{s\ppr}}])$. Taking common denominators, we may assume that $t=t\ppr,s=s\ppr$ and $[\FR{\ovl{t}}{\ovl{\del_i}}{\ovl{s}}]=[C\ov{\ovl{t}}{\lla}Z\ov{\ovl{\del_i}}{\dra}X\ov{\ovl{s}}{\lla}A]\in\wE(C,A)$ hold for $i=1,2$ from the beginning. By {\rm (MR2)}, there exist morphisms $\ovl{a\ppr}\co X\to X\ppr$ and $\ovl{s\ppr}\co D\to X\ppr$ satisfying $\ovl{a\ppr\ci s}=\ovl{s\ppr\ci a}$. Then we obtain
\begin{eqnarray*}
\al\sas([\FR{\ovl{t}}{\ovl{\del_{1}}}{\ovl{s}}]+[\FR{\ovl{t}}{\ovl{\del_{2}}}{\ovl{s}}])
&=&\al\sas([\FR{\ovl{t}}{\ovl{\del_{1}+\del_{2}}}{\ovl{s}}]) \\
&=&[\FR{\ovl{t}}{\ovl{a\ppr\sas(\del_{1}+\del_{2})}}{\ovl{s\ppr\ci u}}] \\
&=&[\FR{\ovl{t}}{\ovl{a\ppr\sas\del_{1}}}{\ovl{s\ppr\ci u}}]+[\FR{\ovl{t}}{\ovl{a\ppr\sas\del_{2}}}{\ovl{s\ppr\ci u}}] \\
&=&\al\sas([\FR{\ovl{t}}{\ovl{\del_{1}}}{\ovl{s}}])+\al\sas([\FR{\ovl{t}}{\ovl{\del_{2}}}{\ovl{s}}]).
\end{eqnarray*}
Dually we can show that $\gam\uas$ is a group homomorphism. 

Let us show $(\be\ci\al)\sas=\be\sas\al\sas$ for any $\al\in\wC(A,A_1)$ and $\be\in\wC(A_1,A_2)$. Put $\al=\ovl{Q}(\ovl{u_{1}})^{-1}\ci\ovl{Q}(\ovl{a_{1}})$ and $\be=\ovl{Q}(\ovl{u_{2}})^{-1}\ci\ovl{Q}(\ovl{a_{2}})$ as the following diagram. For any $[\FR{\ovl{t}}{\ovl{\del}}{\ovl{s}}]=[C\ov{\ovl{t}}{\lla}Z\ov{\ovl{\del}}{\dra}X\ov{\ovl{s}}{\lla}A]\in\wE(C,A)$, we obtain the following diagram by {\rm (MR2)}
\[
\xy
(-15,-8)*+{C}="6";
(-7,0)*+{Z}="8";
(7,0)*+{X}="10";
(15,8)*+{X_{1}}="12";
(15,-8)*+{A}="14";
(23,0)*+{D_{1}}="16";
(31,-8)*+{A_{1}}="18";
(31,8)*+{D_{3}}="20";
(39,0)*+{D_{2}}="22";
(47,-8)*+{A_{2}}="24";
(23,16)*+{X_{2}}="26";
{\ar^{\ovl{t}} "8";"6"};
{\ar@{-->}_{\ovl{\del}} "8";"10"};
{\ar^{\ovl{a_{1}\ppr}} "10";"12"};
{\ar^{\ovl{s_{1}}} "16";"12"};
{\ar^{\ovl{s}} "14";"10"};
{\ar_{\ovl{a_{1}}} "14";"16"};
{\ar_{\ovl{u_{1}}} "18";"16"};
{\ar_{\ovl{a_{2}}} "18";"22"};
{\ar_{\ovl{u_{2}}} "24";"22"};
{\ar_{\ovl{u_{3}}} "22";"20"};
{\ar^{\ovl{a_{3}}} "16";"20"};
{\ar^{\ovl{a_{3}\ppr}} "12";"26"};
{\ar_{\ovl{s_{2}}} "20";"26"};
{\ar@{}|\circlearrowright "10";"16"};
{\ar@{}|\circlearrowright "16";"22"};
{\ar@{}|\circlearrowright "12";"20"};
\endxy
\]
in which each $\ovl{u_{i}},\ovl{s_{i}}$ belongs to $\ovl{\Ssc}$.
Then we obtain
\begin{eqnarray*}
(\be\ci\al)\sas([\FR{\ovl{t}}{\ovl{\del}}{\ovl{s}}])
&=&(\ovl{Q}(\ovl{u_{3}\ci u_{2}})^{-1}\ci\ovl{Q}(\ovl{a_{3}\ci a_{1}}))\sas([\FR{\ovl{t}}{\ovl{\del}}{\ovl{s}}]) \\
&=&[\FR{\ovl{t}}{\ovl{(a_{3}\ppr)\sas(a_{1}\ppr)\sas\del}}{\ovl{s_{2}\ci u_{3}\ci u_{2}}}] \\
&=&\be\sas([\FR{\ovl{t}}{\ovl{(a_{1}\ppr)\sas\del}}{\ovl{s_{1}\ci u_{1}}}])\\
&=&\be\sas\al\sas([\FR{\ovl{t}}{\ovl{\del}}{\ovl{s}}]).
\end{eqnarray*}
Dually we can show $(\be\ci\al)\uas=\al\uas\be\uas$. 

For any $\al_{i}=\ovl{Q}(\ovl{u_{i}})^{-1}\ci\ovl{Q}(\ovl{a_{i}})\in\wC(A, A\ppr)$ for $i=1,2$, we show that $(\al_{1}+\al_{2})\sas=(\al_{1})\sas+(\al_{2})\sas$. Taking common denominators, we may assume $\ovl{u_{1}}=\ovl{u_{2}}$ with $u_{1}\co A\ppr\to D$. Take any element $[\FR{\ovl{t}}{\ovl{\del}}{\ovl{s}}]=[C\ov{\ovl{t}}{\lla}Z\ov{\ovl{\del}}{\dra}X\ov{\ovl{s}}{\lla}A]\in\wE(C,A)$. By {\rm (MR2)}, there are morphisms $\ovl{a_{i}\ppr}\co X\to X\ppr, \ovl{s_{i}}\co D\to X\ppr$ such that $\ovl{a_{i}\ppr\ci s}=\ovl{s_{i}\ci a_{i}}$ for $i=1,2$. Taking common denominators, we may assume that $s_{1}=s_{2}$. Then we have
\begin{eqnarray*}
((\al_{1})\sas+(\al_{2})\sas)([\FR{\ovl{t}}{\ovl{\del}}{\ovl{s}}])
&=&(\al_{1})\sas([\FR{\ovl{t}}{\ovl{\del}}{\ovl{s}}])+(\al_{2})\sas([\FR{\ovl{t}}{\ovl{\del}}{\ovl{s}}]) \\
&=&[\FR{\ovl{t}}{\ovl{(a_{1}\ppr)\sas\del}}{\ovl{s_{1}\ci u_{1}}}]+[\FR{\ovl{t}}{\ovl{(a_{2}\ppr)\sas\del}}{\ovl{s_{1}\ci u_{1}}}] \\
&=&[\FR{\ovl{t}}{\ovl{(a_{1}\ppr+a_{2}\ppr)\sas\del}}{\ovl{s_{1}\ci u_{1}}}] \\
&=&(\ovl{Q}(\ovl{u_{1}})^{-1}\ci\ovl{Q}(\ovl{a_{1}+a_{2}}))\sas([\FR{\ovl{t}}{\ovl{\del}}{\ovl{s}}]) \\
&=&(\al_{1}+\al_{2})\sas([\FR{\ovl{t}}{\ovl{\del}}{\ovl{s}}])
\end{eqnarray*}
as desired. Dually, we obtain $(\al_{1}+\al_{2})\uas=(\al_{1})\uas+(\al_{2})\uas$. Thus $\wE$ is a biadditive functor. 

We show that $\ovl{\mu}$ is a natural transformation. By Lemma~\ref{LemEmu} {\rm (2)}, it suffices to confirm the naturality of $\ovl{\mu}$. Let $\ovl{a}\co A\to A\ppr$ be an arbitrary morphism in $\ovl{\C}$. For any object $C\in\C$ and any element $\ovl{\del}\in\ovl{\E}(C,A)$, we have 
\begin{eqnarray*}
(\wE(C,\ovl{Q}(\ovl{a}))\ci\ovl{\mu}_{C,A})(\ovl{\del})
&=&\wE(C,\ovl{Q}(\ovl{a}))([\FR{\ovl{\id}}{\ovl{\del}}{\ovl{\id}}]) \\
&=&[\FR{\ovl{\id}}{\ovl{a\sas\del}}{\ovl{\id}}] \\
&=&\ovl{\mu}_{C,A\ppr}(\ovl{a\sas\del}) \\
&=&(\ovl{\mu}_{C,A\ppr}\ci\ovl{\E}(C,\ovl{a}))(\ovl{\del}).
\end{eqnarray*}
Dually, $\wE(\ovl{Q}(\ovl{c}),A)\ci\ovl{\mu}_{C,A}=\ovl{\mu}_{C\ppr,A}\ci\ovl{\E}(\ovl{c},A)$ holds for any morphism $\ovl{c}\co C\ppr\to C$ in $\ovl{\C}$. Thus $\ovl{\mu}$ is a natural transformation. 
\end{proof}

\subsection{Construction of $\ws$}

Similarly as in the previous subsection, suppose that $\Ssc$ satisfies {\rm (M0),(MR1),(MR2),(MR3)} and $\Ssc=p\iv(\ovl{\Ssc})$.
In this subsection, we construct a realization $\ws$ of $\wE$.

\begin{prop}\label{PropWellDef}
Let $A,C\in\C$ be any pair of objects.
For each $i=1,2$, let $(\FR{\ovl{t}_i}{\ovl{\del}_i}{\ovl{s}_i})=(C\ov{\ovl{t}_i}{\lla}Z_i\ov{\ovl{\del}_i}{\dra}X_i\ov{\ovl{s}_i}{\lla}A)$ be a triplet satisfying $\ovl{s}_i\in\ovl{\Ssc}(A,X_i), \ovl{t}_i\in\ovl{\Ssc}(Z_i,C), \ovl{\del}_i\in\ovl{\Ebb}(Z_i,X_i)$,  with $\sfr(\del_i)=[X_i\ov{x_i}{\lra}Y_i\ov{y_i}{\lra}Z_i]$.

If $[\FR{\ovl{t}_1}{\ovl{\del}_1}{\ovl{s}_1}]=[\FR{\ovl{t}_2}{\ovl{\del}_2}{\ovl{s}_2}]$ holds in $\wE(C,A)$, then
\begin{equation}\label{Equiv_Seq_in_Ctilde}
[A\ov{Q(x_1\ci s_1)}{\lra}Y_1\ov{Q(t_1\ci y_1)}{\lra}C]=[A\ov{Q(x_2\ci s_2)}{\lra}Y_2\ov{Q(t_2\ci y_2)}{\lra}C]
\end{equation}
holds as sequences in $\wC$.
\end{prop}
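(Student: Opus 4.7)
The plan is to reduce both triplets to realizations of a common $\sfr$-triangle over shared objects $X,Z$, push both forward by some $s'\in\Ssc$ so that they realize literally the same $\Ebb$-extension, and transport this equality of realizations back to $\wC$ via (MR3). The resulting chain of isomorphisms in $\wC$ will deliver the required equivalence of the two sequences.

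By Remark~\ref{RemSaturation}, I may assume $\Ssc=p\iv(\ovl{\Ssc})$, so every morphism of $\ovl{\Ssc}$ lifts to $\Ssc$. Applying Proposition~\ref{PropCommonDenom}~(1), I choose $X,Z\in\C$ together with lifts $s\in\Ssc(A,X)$, $t\in\Ssc(Z,C)$, $u_i\in\Ssc(X_i,X)$, $v_i\in\Ssc(Z,Z_i)$ satisfying $\ovl{s}=\ovl{u_is_i}$ and $\ovl{t}=\ovl{t_iv_i}$, and I set $\rho_i=v_i\uas u_{i\ast}\del_i\in\Ebb(Z,X)$. The hypothesis $[\FR{\ovl{t}_1}{\ovl{\del}_1}{\ovl{s}_1}]=[\FR{\ovl{t}_2}{\ovl{\del}_2}{\ovl{s}_2}]$ together with Proposition~\ref{PropCommonDenom}~(2) forces $\ovl{\rho_1}=\ovl{\rho_2}$ in $\ovl{\Ebb}(Z,X)$.

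For each $i$, I apply (C3) to $\langle X_i\ov{x_i}{\lra}Y_i\ov{y_i}{\lra}Z_i,\del_i\rangle$ along $u_i$ to obtain an $\sfr$-triangle $X\ov{x_i'}{\lra}Y_i'\ov{y_i'}{\lra}Z_i$ realizing $u_{i\ast}\del_i$, and then (C3') along $v_i$ to obtain $X\ov{x_i''}{\lra}Y_i''\ov{y_i''}{\lra}Z$ realizing $\rho_i$. Two applications of (MR3) yield $\bbf_i\in\ovl{\Ssc}(Y_i,Y_i')$ and $\bbf_i'\in\ovl{\Ssc}(Y_i'',Y_i')$ making the appropriate squares commute in $\ovl{\C}$; both become isomorphisms in $\wC$. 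Hence $\gam_i:=Q(\bbf_i)\iv\ci Q(\bbf_i')\co Y_i''\to Y_i$ is an iso in $\wC$, and using $\ovl{s}=\ovl{u_is_i}$ and $\ovl{t}=\ovl{t_iv_i}$ one verifies directly that $\gam_i\ci Q(x_i''s)=Q(x_is_i)$ and $Q(t_iy_i)\ci\gam_i=Q(ty_i'')$. Thus $[A\ov{Q(x_is_i)}{\lra}Y_i\ov{Q(t_iy_i)}{\lra}C]=[A\ov{Q(x_i''s)}{\lra}Y_i''\ov{Q(ty_i'')}{\lra}C]$ in $\wC$ for each $i$.

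To close the gap between $i=1$ and $i=2$, Definition~\ref{DefK} and $\ovl{\rho_1}=\ovl{\rho_2}$ furnish $s'\in\Ssc(X,X')$ with $s'_\ast\rho_1=s'_\ast\rho_2$ in $\Ebb(Z,X')$. Applying (C3) to each $\sfr$-triangle $X\to Y_i''\to Z$ along $s'$ produces $\sfr$-triangles $X'\ov{\widetilde{x}_i}{\lra}W_i\ov{\widetilde{y}_i}{\lra}Z$ realizing this common $\Ebb$-extension; since $\sfr$ takes values in equivalence classes of sequences, there is a $\C$-isomorphism $w\co W_1\to W_2$ with $w\ci\widetilde{x}_1=\widetilde{x}_2$ and $\widetilde{y}_2\ci w=\widetilde{y}_1$. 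A further application of (MR3) supplies $\bbf_i''\in\ovl{\Ssc}(Y_i'',W_i)$, and composing the resulting isomorphisms in $\wC$ with $Q(w)$ and with the $\gam_i$ of the previous paragraph produces the desired equivalence of the two original sequences. The main technical point is that (MR3) only asserts the existence of \emph{some} morphism of $\ovl{\Ssc}$, not that the $\C$-morphism furnished by (C3)/(C3') itself belongs to $\ovl{\Ssc}$; but since equivalence of sequences in $\wC$ merely requires an iso in $\wC$ making the outer squares commute, and since the identities guaranteed by (MR3) hold in $\ovl{\C}$ and therefore in $\wC$ after applying $\ovl{Q}$, this is enough to complete the proof.
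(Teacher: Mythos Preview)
Your proof is correct and follows essentially the same strategy as the paper's: reduce to a situation where the underlying $\Ebb$-extensions literally coincide (by pushing forward along a morphism in $\Ssc$), then invoke {\rm (MR3)} to produce morphisms in $\ovl{\Ssc}$ between the middle terms, which become isomorphisms in $\wC$.

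The only difference is organizational. The paper observes that, by the way the equivalence relation $\sim$ is defined (via common denominators), it is enough to treat the \emph{single-step} case where $(\FR{\ovl{t}_2}{\ovl{\del}_2}{\ovl{s}_2})$ is obtained from $(\FR{\ovl{t}_1}{\ovl{\del}_1}{\ovl{s}_1})$ by some $\ovl{u},\ovl{v}\in\ovl{\Ssc}$; this lets the paper get away with one intermediate $\sfr$-triangle $Y_3$ and just two applications of {\rm (MR3)}. You instead work symmetrically through a common denominator, which costs you six applications of {\rm (MR3)} (two for each $\gam_i$ and two more for the $\bbf_i''$). Your route is slightly longer but perfectly valid, and perhaps makes the symmetry of the statement more visible. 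A tiny notational slip: your $\bbf_i,\bbf_i',\bbf_i''$ live in $\ovl{\C}$, so one should write $\ovl{Q}(\bbf_i)$ rather than $Q(\bbf_i)$.
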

\begin{proof}
By the definition of the equivalence relation, it suffices to show $(\ref{Equiv_Seq_in_Ctilde})$ in the case where there exist $\ovl{u}\in\ovl{\Ssc}(X_1,X_2)$ and $\ovl{v}\in\ovl{\Ssc}(Z_2,Z_1)$ such that 
$\ovl{s}_2=\ovl{u}\ci\ovl{s}_1$, $\ovl{t}_2=\ovl{t}_1\ci\ovl{v}$, and $\ovl{u\sas v\uas\del_1}=\ovl{\del}_2$. 
By the definition of $\ovl{\Ebb}$, there exists $u\ppr\in\Ssc(X_2,X_2\ppr)$ such that $u\ppr\sas(u\sas v\uas\del_1)=u\ppr\sas\del_2$.

Hence replacing $u,s_2,\del_2$ by $u\ppr u,u\ppr s_2,u\ppr\sas\del_2$ respectively, we may assume that $u\sas v\uas\del_1=\del_2$ holds from the beginning. In this case, for $\sfr(u\sas\del_1)=[X_2\ov{x_3}{\lra}Y_3\ov{y_3}{\lra}Z_1]$, there exist $\wbf_1,\wbf_2\in\ovl{\Ssc}$ which make
\[
\xy
(-14,12)*+{X_1}="0";
(0,12)*+{Y_1}="2";
(14,12)*+{Z_1}="4";
(-14,0)*+{X_2}="10";
(0,0)*+{Y_3}="12";
(14,0)*+{Z_1}="14";
(-14,-12)*+{X_2}="20";
(0,-12)*+{Y_2}="22";
(14,-12)*+{Z_2}="24";
{\ar^{\ovl{x}_1} "0";"2"};
{\ar^{\ovl{y}_1} "2";"4"};
{\ar_{\ovl{u}} "0";"10"};
{\ar_{\wbf_1} "2";"12"};
{\ar@{=} "4";"14"};
{\ar^{\ovl{x}_3} "10";"12"};
{\ar^{\ovl{y}_3} "12";"14"};
{\ar@{=} "20";"10"};
{\ar^{\wbf_2} "22";"12"};
{\ar_{\ovl{v}} "24";"14"};
{\ar_{\ovl{x}_2} "20";"22"};
{\ar_{\ovl{y}_2} "22";"24"};
{\ar@{}|\circlearrowright "0";"12"};
{\ar@{}|\circlearrowright "2";"14"};
{\ar@{}|\circlearrowright "10";"22"};
{\ar@{}|\circlearrowright "12";"24"};
\endxy
\]
commutative in $\ovl{\C}$ by {\rm (MR3)}.
Then $\al=\ovl{Q}(\wbf_2)\iv\ci\ovl{Q}(\wbf_1)\in\Iso(\wC)$ makes
\[
\xy
(-16,0)*+{A}="0";
(4,0)*+{}="1";
(0,8)*+{Y_1}="2";
(0,-8)*+{Y_2}="4";
(-4,0)*+{}="5";
(16,0)*+{C}="6";
{\ar^{Q(x_1\ci s_1)} "0";"2"};
{\ar_{Q(x_2\ci s_2)} "0";"4"};
{\ar^{Q(t_1\ci y_1)} "2";"6"};
{\ar_{Q(t_2\ci y_2)} "4";"6"};
{\ar^{\cong}_{\al} "2";"4"};
{\ar@{}|\circlearrowright "0";"1"};
{\ar@{}|\circlearrowright "5";"6"};
\endxy
\]
commutative in $\wC$, which shows $(\ref{Equiv_Seq_in_Ctilde})$.
\end{proof}

\begin{dfn}\label{Def_sN}
Let $[\FR{\ovl{t}}{\ovl{\del}}{\ovl{s}}]=[C\ov{\ovl{t}}{\lla}Z\ov{\ovl{\del}}{\dra}X\ov{\ovl{s}}{\lla}A]\in\wE(C,A)$ be any $\wE$-extension. Take $\sfr(\del)=[X\ov{x}{\lra}Y\ov{y}{\lra}Z]$, and put
\begin{equation}\label{Real_sN}
\ws([\FR{\ovl{t}}{\ovl{\del}}{\ovl{s}}])=[A\ov{Q(x\ci s)}{\lra}Y\ov{Q(t\ci y)}{\lra}C]
\end{equation}
in $\wC$. Well-definedness is ensured by Proposition~\ref{PropWellDef}.
\end{dfn}

The following lemma will be used to show Proposition~\ref{PropExact}.
\begin{lem}\label{LemNtoS}
Let $f\in\C(A,B)$ be any morphism. The following holds.
\begin{enumerate}
\item If $f$ is an $\sfr$-inflation with $\Cone(f)\in\Ncal_{\Ssc}$, then $f\in\Ssc$.
\item Dually, if $f$ is an $\sfr$-deflation with $\CoCone(f)\in\Ncal_{\Ssc}$, then $f\in\Ssc$.
\end{enumerate}
\end{lem}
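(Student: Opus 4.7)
The plan is to prove~(1); assertion~(2) follows by duality. Thanks to Remark~\ref{RemSaturation} we may assume $\Ssc=p\iv(\ovl{\Ssc})$, so it suffices to show $\ovl{f}\in\ovl{\Ssc}$. Fix an $\sfr$-triangle $A\ov{f}{\lra} B\ov{y}{\lra} N\ov{\del}{\dra}$ realizing $f$ as an $\sfr$-inflation, with $N\in\Ncal_{\Ssc}$.

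The first step is to kill the extension $\del$ by an $\Ssc$-pushout. Because the morphism $0\to N$ belongs to $\Ssc$ by definition of $\Ncal_{\Ssc}$ and $\Ebb(0,A)=0$, Lemma~\ref{LemExtVanish} supplies $s\in\Ssc(A,A\ppr)$ with $s\sas\del=0$. The zero extension in $\Ebb(N,A\ppr)$ is realized by the split $\sfr$-triangle $A\ppr\ov{\iota}{\lra} A\ppr\oplus N\ov{\pi}{\lra} N$ (where $\iota$ and $\pi$ are the canonical inclusion and projection), a standard fact for extriangulated categories. Applying~(C3) to $s$ then produces $b\in\C(B,A\ppr\oplus N)$ making $(s,b,\id_N)$ a morphism of $\sfr$-triangles from the original triangle to this split one.

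Next I would invoke~(MR3) to upgrade this lift inside $\ovl{\C}$. Since $\ovl{s},\ovl{\id_N}\in\ovl{\Ssc}$ and the compatibility $s\sas\del=\id_N\uas(s\sas\del)$ is automatic, (MR3) yields $\bbf\in\ovl{\Ssc}(B,A\ppr\oplus N)$ with $\bbf\ci\ovl{f}=\ovl{\iota}\ci\ovl{s}$. Moreover, $N$ is isomorphic to $0$ in $\ovl{\C}$ (as $\id_N$ factors through $N\in\Ncal_{\Ssc}$), so the first-factor projection $\ovl{[1\ 0]}\co A\ppr\oplus N\to A\ppr$ is an isomorphism in $\ovl{\C}$ and hence lies in $\ovl{\Ssc}$. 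Composing gives $\ovl{[1\ 0]}\ci\bbf\in\ovl{\Ssc}$ with
\[ (\ovl{[1\ 0]}\ci\bbf)\ci\ovl{f}=\ovl{[1\ 0]}\ci\ovl{\iota}\ci\ovl{s}=\ovl{s}\in\ovl{\Ssc}, \]
so the $2$-out-of-$3$ property~(MR1) forces $\ovl{f}\in\ovl{\Ssc}$, and therefore $f\in\Ssc$.

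The main hurdle is recognizing that~(MR3) is precisely the device that converts the $\C$-level lift obtained from~(C3) into an $\ovl{\Ssc}$-level one, without which~(MR1) cannot be applied; the projection trick afterwards plays the role of cancelling the $N$-summand, exploiting $N\cong 0$ in $\ovl{\C}$.
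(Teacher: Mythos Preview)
Your argument is correct, but it takes a longer route than the paper's. The paper applies {\rm (MR3)} in a single stroke to the pair of $\sfr$-triangles
\[
A\ov{\id_A}{\lra}A\lra 0\ov{0}{\dra}
\quad\text{and}\quad
A\ov{f}{\lra}B\ov{g}{\lra}N\ov{\del}{\dra},
\]
using $a=\id_A$ and $c=(0\to N)$; the compatibility $(\id_A)\sas 0=(0\to N)\uas\del$ is trivial since $\Ebb(0,A)=0$, and both $\ovl{\id_A}$ and $\ovl{0\to N}$ lie in $\ovl{\Ssc}$ by definition of $\Ncal_{\Ssc}$. The resulting $\bbf\in\ovl{\Ssc}(A,B)$ satisfies $\bbf\ci\ovl{\id_A}=\ovl{f}\ci\ovl{\id_A}$, i.e.\ $\ovl{f}=\bbf\in\ovl{\Ssc}$, and one is done. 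Your approach instead first pushes out along some $s\in\Ssc(A,A\ppr)$ produced by Lemma~\ref{LemExtVanish}, maps into the split triangle over $A\ppr$, projects off the $N$-summand, and then invokes {\rm (MR1)} to cancel. The detour through Lemma~\ref{LemExtVanish} and the $2$-out-of-$3$ property is unnecessary: choosing the trivial triangle $A\to A\to 0$ rather than a split one over $A\ppr$ lets {\rm (MR3)} identify $\ovl{f}$ directly with the $\ovl{\Ssc}$-morphism it produces. (Incidentally, your intermediate appeal to {\rm (C3)} is also redundant, since you immediately supersede it with {\rm (MR3)}.)
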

\begin{proof}
By duality, it suffices to show {\rm (1)}. Let $A\ov{f}{\lra}B\ov{g}{\lra}N\ov{\del}{\dra}$ be an $\sfr$-triangle with $N\in\Ncal_{\Ssc}$.
Remark that $A\ov{\id_A}{\lra}A\to 0\ov{0}{\dra}$ is also an $\sfr$-triangle.
By {\rm (MR3)}, there exists $\ovl{u}\in\ovl{\Ssc}(A,B)$ which makes
\[
\xy
(-14,6)*+{A}="0";
(0,6)*+{A}="2";
(14,6)*+{0}="4";
(-14,-6)*+{A}="10";
(0,-6)*+{B}="12";
(14,-6)*+{N}="14";
{\ar^{\ovl{\id_A}} "0";"2"};
{\ar^{} "2";"4"};
{\ar_{\ovl{\id_A}} "0";"10"};
{\ar_{\ovl{u}} "2";"12"};
{\ar "4";"14"};
{\ar_{\ovl{f}} "10";"12"};
{\ar_{\ovl{g}} "12";"14"};
{\ar@{}|\circlearrowright "0";"12"};
{\ar@{}|\circlearrowright "2";"14"};
\endxy
\]
commutative in $\ovl{\C}$. This shows $\ovl{f}=\ovl{u}\in\ovl{\Ssc}$, hence $f\in\Ssc$ follows.
\end{proof}

\begin{prop}\label{PropExact}
Let $[\FR{\ovl{t}}{\ovl{\del}}{\ovl{s}}]=[C\ov{\ovl{t}}{\lla}Z\ov{\ovl{\del}}{\dra}X\ov{\ovl{s}}{\lla}A]\in\wE(C,A)$ be any $\wE$-extension, and take its representative arbitrarily. Then for $\sfr(\del)=[X\ov{x}{\lra}Y\ov{y}{\lra}Z]$, the following holds.
\begin{enumerate}
\item The sequence
\[ \wC(-,A)\ov{Q(x\ci s)\ci-}{\lra}\wC(-,Y)\ov{Q(t\ci y)\ci-}{\lra}\wC(-,C)\ov{[\FR{\ovl{t}}{\ovl{\del}}{\ovl{s}}]\ssh}{\lra}\wE(-,A) \]
is exact.
\item The sequence
\[ \wC(C,-)\ov{-\ci Q(t\ci y)}{\lra}\wC(Y,-)\ov{-\ci Q(x\ci s)}{\lra}\wC(A,-)\ov{[\FR{\ovl{t}}{\ovl{\del}}{\ovl{s}}]\ush}{\lra}\wE(C,-) \]
is exact.
\end{enumerate}
\end{prop}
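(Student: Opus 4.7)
The plan is to prove (1); (2) will follow by duality. Since $s, t\in\Ssc$ the morphisms $Q(s)$ and $Q(t)$ are invertible in $\wC$, so exactness of (1) is essentially controlled by $Q(x), Q(y)$ and $[\FR{\ovl{t}}{\ovl{\del}}{\ovl{s}}]$. The two consecutive composites vanish for easy reasons: $Q(t\ci y)\ci Q(x\ci s)=Q(t\ci (y\ci x)\ci s)=0$ by (C1) applied to $X\ov{x}{\lra}Y\ov{y}{\lra}Z\ov{\del}{\dra}$, and Definition~\ref{DefwEFtr}(2) applied with the commutative square $\ovl{t}\ci\ovl{y}=\ovl{t\ci y}\ci\ovl{\id_Y}$ yields $Q(t\ci y)\uas[\FR{\ovl{t}}{\ovl{\del}}{\ovl{s}}]=[\FR{\ovl{t\ci y}}{\ovl{y\uas\del}}{\ovl{s}}]$, which is zero because $y\uas\del=0$ by (C1') and any triplet with zero middle term coincides with $[\FR{\ovl{\id}}{0}{\ovl{\id}}]$ in the abelian group $\wE(W,A)$.

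For exactness at $\wC(-,Y)$, I take $\beta\in\wC(W,Y)$ with $Q(t\ci y)\ci\beta=0$ and represent $\beta=\ovl{Q}(\ovl{b})\ci\ovl{Q}(\ovl{r})\iv$ with $r\in\Ssc(W',W)$, $b\in\C(W',Y)$ (lifting through $\Ssc=p\iv(\ovl{\Ssc})$). After cancelling $Q(t)$, (MR2) produces $v\in\Ssc(W'',W')$ with $\ovl{y\ci b\ci v}=0$ in $\ovl{\C}$, so $y\ci b\ci v=j\ci i$ in $\C$ for some $N\in\Ncal_\Ssc$, $i\co W''\to N$ and $j\co N\to Z$. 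The critical step, and the main obstacle of the whole proof, is to realize $(-j)\uas\del$ by an $\sfr$-triangle $X\ov{x_1}{\lra}Y_1\ov{y_1}{\lra}N\ov{(-j)\uas\del}{\dra}$ -- Lemma~\ref{LemNtoS} ensures $x_1\in\Ssc$ -- and to apply (C3') with $-j\co N\to Z$, yielding $b_1\co Y_1\to Y$ so that $(\id_X,b_1,-j)$ is a morphism of $\sfr$-triangles (hence $b_1\ci x_1=x$) and, after swapping the two direct summands in the new $\sfr$-triangle produced by (C3'), an $\sfr$-triangle $Y_1\ov{\left[\bsm b_1\\ -y_1\esm\right]}{\lra}Y\oplus N\ov{[y\ -j]}{\lra}Z\dra$. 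Since $[y\ -j]\ci\left[\bsm b\ci v\\ i\esm\right]=y\ci b\ci v-j\ci i=0$, (C1) applied to this $\sfr$-triangle delivers $a_1\co W''\to Y_1$ with $b\ci v=b_1\ci a_1$. Putting $\alpha=Q(s)\iv\ci Q(x_1)\iv\ci Q(a_1)\ci Q(r\ci v)\iv\in\wC(W,A)$ (well-defined since $Q(s), Q(x_1)$ are invertible and $r\ci v\in\Ssc$) and telescoping via $Q(x)\ci Q(x_1)\iv=Q(b_1\ci x_1)\ci Q(x_1)\iv=Q(b_1)$, a direct calculation gives $Q(x\ci s)\ci\alpha=Q(b\ci v)\ci Q(r\ci v)\iv=\beta$.

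For exactness at $\wC(-,C)$, I take $\beta\in\wC(W,C)$ with $\beta\uas[\FR{\ovl{t}}{\ovl{\del}}{\ovl{s}}]=0$ and represent $\beta=\ovl{Q}(\ovl{c})\ci\ovl{Q}(\ovl{v})\iv$ with $v\in\Ssc(W',W)$, $c\in\C(W',C)$; by (MR2) a commutative square supplies $t'\in\Ssc, c'\in\C$ with $\ovl{t\ci c'}=\ovl{c\ci t'}$, so Definition~\ref{DefwEFtr}(2) expresses $\beta\uas[\FR{\ovl{t}}{\ovl{\del}}{\ovl{s}}]=[\FR{\ovl{v\ci t'}}{\ovl{c'\uas\del}}{\ovl{s}}]$. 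Unfolding its equality with the zero class of $\wE$ via a common denominator (Proposition~\ref{PropCommonDenom}) and applying Remark~\ref{RemTrivK} twice forces $\ovl{c'\uas\del}=0$ in $\ovl{\Ebb}(W'',X)$, so by Definition~\ref{DefK} there is $u\in\Ssc$ with $(c'\ci u)\uas\del=0$ in $\Ebb$; absorbing $u$ into $c', t'$, I may assume $c'\uas\del=0$ from the outset. Then (C1) for $X\ov{x}{\lra}Y\ov{y}{\lra}Z$ furnishes $b_2\co W''\to Y$ with $y\ci b_2=c'$, and setting $\gamma=Q(b_2)\ci Q(v\ci t')\iv\in\wC(W,Y)$, while noting $Q(t\ci c')=Q(c\ci t')$ in $\wC$ (because $\ovl{t\ci c'}=\ovl{c\ci t'}$ and morphisms factoring through $\Ncal_\Ssc$ vanish under $Q$), the computation $Q(t\ci y)\ci\gamma=Q(t\ci c')\ci Q(v\ci t')\iv=Q(c\ci t')\ci Q(v\ci t')\iv=\beta$ concludes the argument.
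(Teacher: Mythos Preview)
Your argument is correct and follows essentially the same route as the paper: both use {\rm (C3')} together with Lemma~\ref{LemNtoS} for exactness at $\wC(-,Y)$, and unwind the vanishing of the pullback via Remark~\ref{RemTrivK} to reduce to $(c')\uas\del=0$ in $\Ebb$ for exactness at $\wC(-,C)$. The only difference is presentational: the paper first replaces the given $\ws$-triangle by the one for $[\FR{\ovl{\id}}{\ovl{\del}}{\ovl{\id}}]$ via the obvious commutative ladder of isomorphisms (your opening sentence alludes to this but you carry $s,t$ through instead), which trims some bookkeeping. Two small slips to fix: the identity $y\uas\del=0$ follows from the complex condition in {\rm (C1)}, not {\rm (C1')}; and with your chosen square the pullback computes to $[\FR{\ovl{\id_Y}}{\ovl{y\uas\del}}{\ovl{s}}]$ rather than $[\FR{\ovl{t\ci y}}{\ovl{y\uas\del}}{\ovl{s}}]$ (either way it vanishes, so the conclusion is unaffected).
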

\begin{proof}
{\rm (1)} By the commutativity of the following diagram,
\[
\xy
(-42,6)*+{\wC(-,A)}="0";
(-14,6)*+{\wC(-,Y)}="2";
(14,6)*+{\wC(-,C)}="4";
(42,6)*+{\wE(-,A)}="6";
(-42,-6)*+{\wC(-,X)}="10";
(-14,-6)*+{\wC(-,Y)}="12";
(14,-6)*+{\wC(-,Z)}="14";
(42,-6)*+{\wE(-,X)}="16";
{\ar^{Q(x\ci s)\ci-} "0";"2"};
{\ar^{Q(t\ci y)\ci-} "2";"4"};
{\ar^{[\FR{\ovl{t}}{\ovl{\del}}{\ovl{s}}]\ssh} "4";"6"};
{\ar_{Q(s)\ci-}^{\cong} "0";"10"};
{\ar@{=} "2";"12"};
{\ar^{Q(t)\ci-}_{\cong} "14";"4"};
{\ar^{Q(s)\sas}_{\cong} "6";"16"};
{\ar_{Q(x)\ci-} "10";"12"};
{\ar_{Q(y)\ci-} "12";"14"};
{\ar_{[\FR{\ovl{\id}}{\ovl{\del}}{\ovl{\id}}]\ssh} "14";"16"};
{\ar@{}|\circlearrowright "0";"12"};
{\ar@{}|\circlearrowright "2";"14"};
{\ar@{}|\circlearrowright "4";"16"};
\endxy
\]
it suffices to show the exactness of the bottom row. 

Let us show the exactness of 
$\wC(P,X)\ov{Q(x)\ci-}{\lra}\wC(P,Y)\ov{Q(y)\ci-}{\lra}\wC(P,Z)$
for any $P\in\C$. Let $\be\in\wC(P,Y)$ be any element. Express $\be$ as $\be=\ovl{Q}(\ovl{f})\ci\ovl{Q}(\ovl{s})\iv$, using $\ovl{f}\in\ovl{\C}(P\ppr,Y)$ and $\ovl{s}\in\ovl{\Ssc}(P\ppr,P)$. If $Q(y)\ci\be=0$ then $\ovl{Q}(\ovl{y}\ci\ovl{f})=0$, hence there exists some $\ovl{s}\ppr\in\ovl{\Ssc}(P\pprr,P\ppr)$ such that $\ovl{y}\ci\ovl{f}\ci\ovl{s}\ppr=0$ in $\ovl{\C}$. This means that there exist $N\in\Ncal_{\Ssc}$ and $i\in\C(P\pprr,N),j\in\C(N,Z)$ such that $y\ci f\ci s\ppr=j\ci i$. For $\sfr(j\uas\del)=[X\ov{x\ppr}{\lra}Y\ppr\ov{y\ppr}{\lra}N]$, we have a morphism of $\sfr$-triangles
\[
\xy
(-12,6)*+{X}="0";
(0,6)*+{Y\ppr}="2";
(12,6)*+{N}="4";
(24,6)*+{}="6";
(-12,-6)*+{X}="10";
(0,-6)*+{Y}="12";
(12,-6)*+{Z}="14";
(24,-6)*+{}="16";
{\ar^{x\ppr} "0";"2"};
{\ar^{y\ppr} "2";"4"};
{\ar@{-->}^{j\uas\del} "4";"6"};
{\ar@{=} "0";"10"};
{\ar^{k} "2";"12"};
{\ar^{j} "4";"14"};
{\ar_{x} "10";"12"};
{\ar_{y} "12";"14"};
{\ar@{-->}_{\del} "14";"16"};
{\ar@{}|\circlearrowright "0";"12"};
{\ar@{}|\circlearrowright "2";"14"};
\endxy
\]
such that $Y\ppr\ov{\left[\bsm -y\ppr\\ k\esm\right]}{\lra}N\oplus Y\ov{[j\ y]}{\lra}Z\ov{x\ppr\sas\del}{\dra}$ is an $\sfr$-triangle.
By the exactness of $\C(P\pprr,Y\ppr)\to\C(P\pprr,N\oplus Y)\to\C(P\pprr,Z)$, there is $p\in\C(P\pprr,Y\ppr)$ such that
\[ \left[\bsm -y\ppr\\ k\esm\right]\ci p=\left[\bsm -i\\ f\ci s\ppr\esm\right]. \]
By Lemma~\ref{LemNtoS}, we have $\ovl{x}\ppr\in\ovl{\Ssc}$. If we put $\al=\ovl{Q}(\ovl{x}\ppr)\iv\ci Q(p)\ci Q(s\ci s\ppr)\iv$, then it satisfies $\be=Q(x)\ci\al$ by construction.

Let us show the exactness of
$\wC(P,Y)\ov{Q(y)\ci-}{\lra}\wC(P,Z)\ov{[\FR{\ovl{\id}}{\ovl{\del}}{\ovl{\id}}]\ssh}{\lra}\wE(P,X)$
for any $P\in\C$. Suppose that a morphism $\be\in\wC(P,Z)$ 
satisfies $[\FR{\ovl{\id}}{\ovl{\del}}{\ovl{\id}}]\ssh(\be)=\be\uas[\FR{\ovl{\id}}{\ovl{\del}}{\ovl{\id}}]=0$. Take $\ovl{f}\in\ovl{\C}(P\ppr,Z)$ and $\ovl{s}\in\ovl{\Ssc}(P\ppr,P)$ such that $\be=\ovl{Q}(\ovl{f})\ci\ovl{Q}(\ovl{s})\iv$. 
Then $\be\uas[\FR{\ovl{\id}}{\ovl{\del}}{\ovl{\id}}]=[\FR{\ovl{s}}{\ovl{f\uas\del}}{\ovl{\id}}]$ by definition. By Remark~\ref{RemTrivK} it follows $\ovl{f\uas\del}=0$, hence $t\uas (f\uas\del)=0$ holds for some $t\in\Ssc(P\pprr,P\ppr)$. 
By the exactness of
$\C(P\pprr,Y)\ov{y\ci-}{\lra}\C(P\pprr,Z)\ov{\del\ssh}{\lra}\Ebb(P\pprr,X)$, there exists $g\in\C(P\pprr,Y)$ such that $y\ci g=f\ci t$. If we put $\al=\ovl{Q}(\ovl{g})\ci\ovl{Q}(\ovl{s\ci t})\iv\in\wC(P,Y)$, this satisfies $Q(y)\ci\al=\be$ in $\wC(P,Z)$ by construction.

{\rm (2)} This can be shown dually to {\rm (1)}. 

\end{proof}

\subsection{Proof of the main theorem}

Similarly as in the previous subsection, suppose that $\Ssc$ satisfies {\rm (M0),(MR1),(MR2),(MR3)} and $\Ssc=p\iv(\ovl{\Ssc})$.
We will prove Theorem~\ref{ThmMultLoc} item by item.

\begin{proof}[Proof of Theorem~\ref{ThmMultLoc} {\rm (1)}]
Since conditions {\rm (C1'),(C2'),(C3')} can be checked in a dual manner, it suffices to confirm conditions {\rm (C1),(C2),(C3)} in Definition~\ref{DefEACat}.
As {\rm (C1)} follows from Proposition~\ref{PropExact} and {\rm (C2)} is obvious from the definition, it is enough to show {\rm (C3)}. 

Let $[\FR{\ovl{t}}{\ovl{\del}}{\ovl{s}}]=[C\ov{\ovl{t}}{\lla}Z\ov{\ovl{\del}}{\dra}X\ov{\ovl{s}}{\lla}A]\in\wE(C,A)$ be any $\wE$-extension, and let $\al\in\wC(A,A\ppr)$ be any morphism. Take any $\ovl{a}\in\ovl{\C}(A,D)$ and $\ovl{u}\in\ovl{\Ssc}(A\ppr,D)$ so that $\al=\ovl{Q}(\ovl{u})\iv\ci\ovl{Q}(\ovl{a})$ holds. Then by definition
$\al\sas [\FR{\ovl{t}}{\ovl{\del}}{\ovl{s}}]=[\FR{\ovl{t}}{\ovl{a\ppr\sas\del}}{\ovl{s\ppr\ci u}}]$
is given by using a commutative square
\begin{equation}\label{CSby(MS2)}
\xy
(-6,6)*+{A}="0";
(6,6)*+{D}="2";
(-6,-6)*+{X}="4";
(6,-6)*+{X\ppr}="6";
{\ar^{\ovl{a}} "0";"2"};
{\ar_{\ovl{s}} "0";"4"};
{\ar^{\ovl{s}\ppr} "2";"6"};
{\ar_{\ovl{a}\ppr} "4";"6"};
{\ar@{}|\circlearrowright "0";"6"};
\endxy
\end{equation}
in $\ovl{\C}$ satisfying $\ovl{s}\ppr\in\ovl{\Ssc}(D,X\ppr)$. By {\rm (C3)} for $\CEs$, 
we obtain a morphism of $\sfr$-triangles
\begin{equation}\label{Morph_dels}
\xy
(-12,6)*+{X}="0";
(0,6)*+{Y}="2";
(12,6)*+{Z}="4";
(24,6)*+{}="6";
(-12,-6)*+{X\ppr}="10";
(0,-6)*+{Y\ppr}="12";
(12,-6)*+{Z}="14";
(24,-6)*+{}="16";
{\ar^{x} "0";"2"};
{\ar^{y} "2";"4"};
{\ar@{-->}^{\del} "4";"6"};
{\ar_{a\ppr} "0";"10"};
{\ar_{b} "2";"12"};
{\ar@{=} "4";"14"};
{\ar_{x\ppr} "10";"12"};
{\ar_{y\ppr} "12";"14"};
{\ar@{-->}_{a\ppr\sas\del} "14";"16"};
{\ar@{}|\circlearrowright "0";"12"};
{\ar@{}|\circlearrowright "2";"14"};
\endxy
\end{equation}
which makes
\begin{equation}\label{stri_mappingcone}
X\ov{\left[\bsm x\\ a\ppr\esm\right]}{\lra}Y\oplus X\ppr\ov{[b\ -x\ppr]}{\lra}Y\ppr\ov{y^{\prime\ast}\del}{\dra}
\end{equation} an $\sfr$-triangle.
By definition we have $\ws([\FR{\ovl{t}}{\ovl{\del}}{\ovl{s}}])=[A\ov{Q(x\ci s)}{\lra}Y\ov{Q(t\ci y)}{\lra}C]$ and $\ws(\al\sas[\FR{\ovl{t}}{\ovl{\del}}{\ovl{s}}])=[A\ppr\ov{Q(x\ppr\ci s\ppr\ci u)}{\lra}Y\ppr\ov{Q(t\ci y\ppr)}{\lra}C]$.
Put $x\pprr=x\ppr\ci s\ppr\ci u$. Obviously, $(\ref{Morph_dels})$ induces the following morphism of $\ws$-triangles. 
\[
\xy
(-18,6)*+{A}="0";
(0,6)*+{Y}="2";
(18,6)*+{C}="4";
(32,6)*+{}="6";
(-18,-6)*+{A\ppr}="10";
(0,-6)*+{Y\ppr}="12";
(18,-6)*+{C}="14";
(32,-6)*+{}="16";
{\ar^{Q(x\ci s)} "0";"2"};
{\ar^{Q(t\ci y)} "2";"4"};
{\ar@{-->}^{[\FR{\ovl{t}}{\ovl{\del}}{\ovl{s}}]} "4";"6"};
{\ar_{\al} "0";"10"};
{\ar^{Q(b)} "2";"12"};
{\ar@{=} "4";"14"};
{\ar_{Q(x\pprr)} "10";"12"};
{\ar_{Q(t\ci y\ppr)} "12";"14"};
{\ar@{-->}_{\al\sas[\FR{\ovl{t}}{\ovl{\del}}{\ovl{s}}]} "14";"16"};
{\ar@{}|\circlearrowright "0";"12"};
{\ar@{}|\circlearrowright "2";"14"};
\endxy
\]
As we have $Q(t\ci y\ppr)\uas[\FR{\ovl{t}}{\ovl{\del}}{\ovl{s}}]=[\FR{\ovl{\id}_{Y\ppr}}{\ovl{y^{\prime\ast}\del}}{\ovl{s}}]$, it remains to show that
\begin{equation}\label{tobe_ws_tri}
A\ov{\left[\bsm Q(x\ci s)\\ \al\esm\right]}{\lra}Y\oplus A\ppr\ov{[Q(b)\ -Q(x\pprr)]}{\lra}Y\ppr\ov{[\FR{\ovl{\id}_{Y\ppr}}{\ovl{y^{\prime\ast}\del}}{\ovl{s}}]}{\dra}
\end{equation}
is an $\ws$-triangle.

Remark that since $(\ref{stri_mappingcone})$ is an $\sfr$-triangle, we have
\[ \ws([\FR{\ovl{\id}_Y}{\ovl{y^{\prime\ast}\del}}{\ovl{s}}])%
=[A\ov{Q(\left[\bsm x\\ a\ppr\esm\right]\ci s)}{\lra}Y\oplus X\ppr\ov{Q([b\ -x\ppr])}{\lra}Y\ppr] \]
by definition. Thus $(\ref{tobe_ws_tri})$ indeed becomes an $\ws$-triangle, since an isomorphism $\be=\id_Y\oplus Q(s\ppr\ci u)\in\wC(Y\oplus A\ppr,Y\oplus X\ppr)$ makes the diagram
\[
\xy
(-18,0)*+{A}="0";
(6,0)*+{}="1";
(0,10)*+{Y\oplus A\ppr}="2";
(0,-10)*+{Y\oplus X\ppr}="4";
(-6,0)*+{}="5";
(18,0)*+{Y\ppr}="6";
{\ar^{\left[\bsm Q(x\ci s)\\\al\esm\right]} "0";"2"};
{\ar_{\left[\bsm Q(x\ci s)\\ Q(a\ppr\ci s)\esm\right]} "0";"4"};
{\ar^{[ Q(b)\ -Q(x\pprr)]} "2";"6"};
{\ar_{[ Q(b)\ -Q(x\ppr)]} "4";"6"};
{\ar^{\cong}_{\be} "2";"4"};
{\ar@{}|\circlearrowright "0";"1"};
{\ar@{}|\circlearrowright "5";"6"};
\endxy
\]
commutative in $\wC$.
\end{proof}

\begin{proof}[Proof of Theorem~\ref{ThmMultLoc} {\rm (2)}]
We will show the universality of the functor $(Q,\mu)\co\CEs\to\wCEs$.
Let $(\D,\Fbb,\tfr)$ be any  weakly extriangulated category.

{\rm (i)} Let $(F,\phi)\co(\C,\Ebb,\sfr)\to (\D,\Fbb,\tfr)$ be an exact functor sending any $s\in\Ssc$ to an isomorphism.
It is obvious that $F$ uniquely factors through $Q$ via an additive functor $\wt{F}\co\wC\to\D$, namely, $F=\wt{F}\ci Q$.
We define a natural transformation $\wt{\phi}\co\wE\to\Fbb\ci(\wt{F}^\op\ti\wt{F})$ so that the pair $(\wt{F},\wt{\phi})$ becomes exact.
For each $A,C\in\C$, let $[\FR{\ovl{t}}{\ovl{\del}}{\ovl{s}}]=[C\ov{\ovl{t}}{\lla}Z\ov{\ovl{\del}}{\dra}X\ov{\ovl{s}}{\lla}A]\in\wE(C,A)$ be any element with $\sfr(\del)=[X\ov{x}{\lra}Y\ov{y}{\lra}Z]$.
Keeping in mind that $F(s)$ and $F(t)$ are isomorphisms,
we put $\wt{\phi}([\FR{\ovl{t}}{\ovl{\del}}{\ovl{s}}]):=(F(t)^{-1})^\ast(F(s)^{-1})_\ast(\phi(\delta))$.
Well-definedness and the additivity of $\wt{\phi}_{C,A}\co\wE(C,A)\to\Fbb(F(C),F(A))$ can be deduced easily from the definition of the equivalence relation given in Corollary~\ref{CorwETrans}.
Moreover, we get
\begin{equation}\label{UnivRealization}
\begin{split}
\tfr(\wt{\phi}([\FR{\ovl{t}}{\ovl{\del}}{\ovl{s}}]))&=[FA\ov{F(x\ci s)}{\lra}FY\ov{F(t\ci y)}{\lra}FC]\\
&=[\wt{F}QA\ov{\wt{F}Q(x\ci s)}{\lra}\wt{F}QY\ov{\wt{F}Q(t\ci y)}{\lra}\wt{F}QC]
\end{split}
\end{equation}
by Remark~\ref{RemWE}.
Naturality of $\wt{\phi}$ can be checked as follows.
Consider a morphism $\alpha=\ovl{Q}(\ovl{u})^{-1}\ci\ovl{Q}(\ovl{a})\co A\to A\ppr$ and the commutative square same as (\ref{CSby(MS2)}).
Then we have the following equalities.
\begin{eqnarray*}
\wt{\phi}(\alpha_\ast[\FR{\ovl{t}}{\ovl{\del}}{\ovl{s}}]) &=& \wt{\phi}([\FR{\ovl{t}}{\ovl{a\ppr_\ast\del}}{\ovl{s\ppr\ci u}}])\\
&=& (F(t)^{-1})^\ast(F(u)^{-1}\ci F(s\ppr)^{-1})_\ast(\phi(a\ppr_\ast\delta))\\
&=& (F(t)^{-1})^\ast(F(u)^{-1}\ci F(s\ppr)^{-1}\ci F(a\ppr))_\ast(\phi(\delta))\\
&=& (F(t)^{-1})^\ast(F(u)^{-1}\ci F(a)\ci F(s)^{-1})_\ast(\phi(\delta))\\
&=& \wt{F}(\alpha)_\ast\wt{\phi}([\FR{\ovl{t}}{\ovl{\del}}{\ovl{s}}])
\end{eqnarray*}
Similarly, we have $\wt{\phi}(\beta^\ast[\FR{\ovl{t}}{\ovl{\del}}{\ovl{s}}])=\wt{F}(\beta)^\ast\wt{\phi}([\FR{\ovl{t}}{\ovl{\del}}{\ovl{s}}])$ for any $\beta\in\wC(C\ppr, C)$.

By construction, for any $\del\in\Ebb(C,A)$ we have
\begin{equation}\label{P22}
\wt{\phi}_{C,A}(\mu_{C,A}(\del))=\wt{\phi}_{C,A}([\FR{\ovl{\id}}{\ovl{\del}}{\ovl{\id}}])=\phi(\del),
\end{equation}
which shows $(\wt{\phi}\ci(Q^\op\ti Q))\cdot\mu=\phi$, and thus $(\wt{F},\wt{\phi})\ci(Q,\mu)=(F,\phi)$ holds. It is obvious that $\wt{\phi}$ is uniquely determined by $(\ref{P22})$ (and the requirement of $(\ref{P7})$ for natural transformations in Definition~\ref{DefExFun} {\rm (3)}), as in the above construction.

{\rm (ii)} Let $\eta\co(F,\phi)\tc(G,\psi)$ be any natural transformation of exact functors between $(F,\phi),(G,\psi)\co\CEs\to (\D,\Fbb,\tfr)$ which send any $s\in\Ssc$ to isomorphisms.
By the universality of localization for additive categories, there exists a unique natural transformation $\wt{\eta}\co\wt{F}\tc \wt{G}$ of additive functors such that $\wt{\eta}\ci Q=\eta$. Indeed, this is given by $\wt{\eta}_C=\eta_C$ for any $C\in\C$. It is obvious from the construction and the naturality of $\eta$, that such $\wt{\eta}$ satisfies
\[ (\wt{\eta}_A)\sas\wt{\phi}_{C,A}([\FR{\ovl{t}}{\ovl{\del}}{\ovl{s}}])=%
(\wt{\eta}_C)\uas\wt{\psi}_{C,A}([\FR{\ovl{t}}{\ovl{\del}}{\ovl{s}}]) \]
for any $[\FR{\ovl{t}}{\ovl{\del}}{\ovl{s}}]\in\wE(C,A)$.
\end{proof}

To show {\rm (3)}, we need the following lemma.
\begin{lem}\label{LemComposeInf}
The following holds for any morphism $\al$ in $\wC$.
\begin{enumerate}
\item $\al$ is an $\ws$-inflation in $\wC$ if and only if $\al=\be\ci Q(f)\ci \gam$ holds for some $\sfr$-inflation $f$ in $\C$ and isomorphisms $\be,\gam$ in $\wC$.
\item $\al$ is an $\ws$-deflation in $\wC$ if and only if $\al=\be\ci Q(f)\ci\gam$ holds for some $\sfr$-deflation $f$ in $\C$ and isomorphisms $\be,\gam$ in $\wC$.
\end{enumerate}
\end{lem}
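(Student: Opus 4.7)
The plan is to handle (1) directly, with (2) following by duality. The "if" direction reduces to two observations: that $Q(f)$ itself is an $\ws$-inflation whenever $f$ is an $\sfr$-inflation, and that the class of $\ws$-inflations is stable under pre- and post-composition with isomorphisms in $\wC$. The first is immediate from exactness of $(Q,\mu)$: any $\sfr$-triangle $A\ov{f}{\lra}B\ov{g}{\lra}C\ov{\del}{\dra}$ is carried to an $\ws$-triangle. For postcomposition by an iso $\be\co Y\to Y'$, given an $\ws$-triangle $X\ov{\al}{\lra}Y\ov{\be_0}{\lra}Z\ov{\eta}{\dra}$, the sequence $X\ov{\be\ci\al}{\lra}Y'\ov{\be_0\ci\be\iv}{\lra}Z$ is equivalent to $X\ov{\al}{\lra}Y\ov{\be_0}{\lra}Z$ in the sense of Definition~\ref{DefEquiv2Seq}, so it still realises $\eta$. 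For precomposition by an iso $\gam\co X'\to X$, I apply {\rm (C3)} to $\gam\iv\co X\to X'$ together with any realisation of $(\gam\iv)\sas\eta$; by Remark~\ref{RemWE} the induced middle morphism is automatically an isomorphism, and the formulas recorded there identify $\al\ci\gam$ as an $\ws$-inflation realising $(\gam\iv)\sas\eta$. Composing these three stability observations yields the "if" direction.

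For the "only if" direction, by Remark~\ref{RemSaturation} I may (and will) assume $\Ssc=p\iv(\ovl{\Ssc})$, so that $Q(s)$ is an isomorphism in $\wC$ for every $s\in\C$ with $\ovl{s}\in\ovl{\Ssc}$. Let $\al\in\wC(X,Y)$ be an $\ws$-inflation, fitting in an $\ws$-triangle $X\ov{\al}{\lra}Y\ov{\be_0}{\lra}Z\ov{\eta}{\dra}$. Choose a representative
\[
\eta=[Z\ov{\ovl{t}}{\lla}Z_0\ov{\ovl{\del}}{\dra}X_0\ov{\ovl{s}}{\lla}X]
\]
with $\sfr(\del)=[X_0\ov{x}{\lra}Y_0\ov{y}{\lra}Z_0]$. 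By Definition~\ref{Def_sN},
\[
\ws(\eta)=[X\ov{Q(x\ci s)}{\lra}Y_0\ov{Q(t\ci y)}{\lra}Z],
\]
and comparing with $\ws(\eta)=[X\ov{\al}{\lra}Y\ov{\be_0}{\lra}Z]$ yields an isomorphism $b\in\wC(Y_0,Y)$ satisfying $\al=b\ci Q(x)\ci Q(s)$. Setting $\be=b$, $f=x$ and $\gam=Q(s)$ produces the required decomposition, since $f$ is an $\sfr$-inflation and both $\be$ and $\gam$ are isomorphisms in $\wC$.

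I do not expect a genuine obstacle; the content is essentially that Definition~\ref{Def_sN} expresses every realisation as an $\sfr$-inflation sandwiched between morphisms coming from $\ovl{\Ssc}$. The only subtle point is the normalisation $\Ssc=p\iv(\ovl{\Ssc})$ via Remark~\ref{RemSaturation}: without this reduction, the factor $Q(s)$ extracted from Definition~\ref{Def_sN} is only guaranteed to become an isomorphism after passing to $\ovl{\C}$, whereas the statement we want lives in $\wC$.
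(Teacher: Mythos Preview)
Your argument is correct. The ``only if'' direction is exactly what the paper means when it says this is ``obvious from the definition of $\ws$''; you have simply unpacked it. (One small remark: the normalisation $\Ssc=p\iv(\ovl{\Ssc})$ is not actually needed here. In a representative $(\FR{\ovl{t}}{\ovl{\del}}{\ovl{s}})$ we only know $\ovl{s}\in\ovl{\Ssc}$, but since $Q=\ovl{Q}\ci p$ and $\ovl{Q}$ inverts $\ovl{\Ssc}$, the morphism $Q(s)=\ovl{Q}(\ovl{s})$ is an isomorphism in $\wC$ regardless of whether $s$ itself lies in $\Ssc$.)

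For the ``if'' direction your route genuinely differs from the paper's. You work entirely inside $\wCEs$, using that it is already known to be weakly extriangulated: you observe that $Q(f)$ is an $\ws$-inflation by exactness of $(Q,\mu)$, and then invoke {\rm (C3)} for $\wCEs$ together with Remark~\ref{RemWE} to see that $\ws$-inflations are stable under composition with isomorphisms. The paper instead stays in $\C$ as long as possible: it writes $\gam\iv=\ovl{Q}(\ovl{t})\iv\ci\ovl{Q}(\ovl{e})$, applies {\rm (C3)} in $\CEs$ to the morphism $e$ to produce a new $\sfr$-triangle realising $e\sas\del$, pushes this forward via $(Q,\mu)$, and then reads off an explicit $\ws$-triangle with inflation $\be\ci Q(f)\ci\gam$. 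Your argument is cleaner and more conceptual, since the stability of inflations under isomorphisms is a general fact about any weakly extriangulated category; the paper's argument has the mild advantage of exhibiting the $\ws$-triangle concretely as arising from an $\sfr$-triangle via the recipe of Definition~\ref{Def_sN}, which is the form used immediately afterwards in the proof of Theorem~\ref{ThmMultLoc}~{\rm (3)}.
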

\begin{proof}
By duality, it is enough to show {\rm (1)}.
By the definition of $\ws$ it is obvious that any $\ws$-inflation can be written as $\be\ci Q(f)\ci \gam$ for some $\sfr$-inflation $f$ and isomorphisms $\be,\gam$ in $\wC$.

Let us show the converse. Take any $\sfr$-triangle $X\ov{f}{\lra}Y\ov{g}{\lra}Z\ov{\del}{\dra}$ and isomorphisms $\gam\in\wC(A,X),\be\in\wC(Y,B)$ in $\wC$. Express $\gam\iv$ as $\gam\iv=\ovl{Q}(\ovl{t})\iv\ci\ovl{Q}(\ovl{e})$, with some $\ovl{e}\in\ovl{\C}(X,X\ppr)$ and $\ovl{t}\in\ovl{\Ssc}(A,X\ppr)$. Remark that $\ovl{Q}(\ovl{e})\in\Iso(\wC)$ and $\gam=\ovl{Q}(\ovl{e})\iv\ci\ovl{Q}(\ovl{t})$ hold. For $\sfr(e\sas\del)=[X\ppr\ov{f\ppr}{\lra}Y\ppr\ov{g\ppr}{\lra}Z]$, we obtain a morphism of $\sfr$-triangles
\[
\xy
(-14,6)*+{X}="0";
(0,6)*+{Y}="2";
(14,6)*+{Z}="4";
(27,6)*+{}="6";
(-14,-6)*+{X\ppr}="10";
(0,-6)*+{Y\ppr}="12";
(14,-6)*+{Z}="14";
(27,-6)*+{}="16";
{\ar^{f} "0";"2"};
{\ar^{g} "2";"4"};
{\ar@{-->}^{\del} "4";"6"};
{\ar_{e} "0";"10"};
{\ar_{d} "2";"12"};
{\ar@{=} "4";"14"};
{\ar_{f\ppr} "10";"12"};
{\ar_{g\ppr} "12";"14"};
{\ar@{-->}_{e\sas\del} "14";"16"};
{\ar@{}|\circlearrowright "0";"12"};
{\ar@{}|\circlearrowright "2";"14"};
\endxy
\]
for some $d\in\C(Y,Y\ppr)$, by {\rm (C3)} for $\CEs$. Since $(Q,\mu)$ is an exact functor, 
\[
\xy
(-16,6)*+{X}="0";
(0,6)*+{Y}="2";
(16,6)*+{Z}="4";
(30,6)*+{}="6";
(-16,-6)*+{X\ppr}="10";
(0,-6)*+{Y\ppr}="12";
(16,-6)*+{Z}="14";
(30,-6)*+{}="16";
{\ar^{Q(f)} "0";"2"};
{\ar^{Q(g)} "2";"4"};
{\ar@{-->}_(0.56){\mu_{Z,X}(\del)} "4";"6"};
{\ar_{Q(e)} "0";"10"};
{\ar_{Q(d)} "2";"12"};
{\ar@{=} "4";"14"};
{\ar_{Q(f\ppr)} "10";"12"};
{\ar_{Q(g\ppr)} "12";"14"};
{\ar@{-->}_(0.56){\mu_{Z,X\ppr}(e\sas\del)} "14";"16"};
{\ar@{}|\circlearrowright "0";"12"};
{\ar@{}|\circlearrowright "2";"14"};
\endxy
\]
becomes a morphism of $\ws$-triangles. In particular we have $Q(d)\in\Iso(\wC)$ by Remark~\ref{RemWE}.
By the definition of $\ws$, we have
$\ws([\FR{\ovl{\id}}{\ovl{e\sas\del}}{\ovl{t}}]=[A\ov{Q(f\ppr\ci t)}{\lra}Y\ppr\ov{Q(g\ppr)}{\lra}Z]$.
Since $\lam=\be\ci Q(d)\iv$ makes
\[
\xy
(-16,0)*+{A}="0";
(4,0)*+{}="1";
(0,8)*+{Y\ppr}="2";
(0,-8)*+{B}="4";
(-4,0)*+{}="5";
(16,0)*+{Z}="6";
{\ar^{Q(f\ppr\ci t)} "0";"2"};
{\ar_{\be\ci Q(f)\ci\gam} "0";"4"};
{\ar^{Q(g\ppr)} "2";"6"};
{\ar_{Q(g\ppr)\ci\lam\iv} "4";"6"};
{\ar^{\cong}_{\lam} "2";"4"};
{\ar@{}|\circlearrowright "0";"1"};
{\ar@{}|\circlearrowright "5";"6"};
\endxy
\]
commutative in $\wC$, it follows that $\be\ci Q(f)\ci\gam$ is an $\ws$-inflation.
\end{proof}

\begin{proof}[Proof of Theorem~\ref{ThmMultLoc} {\rm (3)}]
Suppose that $\ovl{\Ssc}$ moreover satisfies {\rm (MR4)}. Let $X_1\ov{\al_1}{\lra}X_2$ and $X_2\ov{\al_2}{\lra}X_3$ be $\ws$-inflations. By Lemma~\ref{LemComposeInf}, for $i=1,2$ we may write $\al_i$ as $\al_i=\be_i\ci Q(f_i)\ci\gam_i$ with some $\be_i,\gam_i\in\Iso(\wC)$ and an $\sfr$-inflation $f_i$ in $\C$. It suffices to show that their composition $\al_2\ci\al_1$ becomes again of this form.
Express $(\gam_2\ci\be_1)\iv$ as $(\gam_2\ci\be_1)\iv=\ovl{Q}(\ovl{s})\iv\ci\ovl{Q}(\ovl{e})$ for some morphisms $\ovl{e},\ovl{s}$ in $\ovl{\C}$ with $\ovl{s}\in\ovl{\Ssc}$. We have $\ovl{Q}(\ovl{e})\in\Iso(\wC)$ and $\gam_2\ci\be_1=\ovl{Q}(\ovl{e})\iv\ci\ovl{Q}(\ovl{s})$. Denote the domains and codomains of $f_2$ and $e$ by $A\ov{f_2}{\lra}B$ and $A\ov{e}{\lra}A\ppr$, respectively. Since $f_2$ is an $\sfr$-inflation, there exists an $\sfr$-triangle $A\ov{f_2}{\lra}B\ov{g}{\lra}C\ov{\del}{\dra}$. Also to $e\sas\del$, another $\sfr$-triangle $A\ppr\ov{f_2\ppr}{\lra}B\ppr\ov{g\ppr}{\lra}C\ov{e\sas\del}{\dra}$ is associated. By {\rm (C3)} for $\CEs$, we obtain a morphism of $\sfr$-triangles
\[
\xy
(-14,6)*+{A}="0";
(0,6)*+{B}="2";
(14,6)*+{C}="4";
(27,6)*+{}="6";
(-14,-6)*+{A\ppr}="10";
(0,-6)*+{B\ppr}="12";
(14,-6)*+{C}="14";
(27,-6)*+{}="16";
{\ar^{f_2} "0";"2"};
{\ar^{g} "2";"4"};
{\ar@{-->}^{\del} "4";"6"};
{\ar_{e} "0";"10"};
{\ar_{d} "2";"12"};
{\ar@{=} "4";"14"};
{\ar_{f_2\ppr} "10";"12"};
{\ar_{g\ppr} "12";"14"};
{\ar@{-->}_{e\sas\del} "14";"16"};
{\ar@{}|\circlearrowright "0";"12"};
{\ar@{}|\circlearrowright "2";"14"};
\endxy
\]
for some $d\in\C(B,B\ppr)$.
Similarly as in the proof of Lemma~\ref{LemComposeInf}, we have $Q(d)\in\Iso(\wC)$. Applying {\rm (MR4)} to $f_2\ppr\ci s\ci f_1$, we obtain $\ovl{f}_2\ppr\ci \ovl{s}\ci \ovl{f}_1=\vbf\ci\ovl{x}\ci\ubf$ for some $\ubf,\vbf\in\ovl{\Ssc}$ and an $\sfr$-inflation $x$. Then we have
\begin{eqnarray*}
\al_2\ci\al_1
&=&
\big(\be_2\ci\ovl{Q}(\ovl{f}_2)\ci\gam_2\big)\ci\big(\be_1\ci\ovl{Q}(\ovl{f}_1)\ci\gam_1\big)\\
&=&
\big(\be_2\ci Q(d)\iv\ci\ovl{Q}(\vbf)\big)\ci\ovl{Q}(\ovl{x})\ci\big(\ovl{Q}(\ubf)\ci\gam_1\big).
\end{eqnarray*}
Since $\be_2\ci Q(d)\iv\ci\ovl{Q}(\vbf)$ and $\ovl{Q}(\ubf)\ci\gam_1$ belong to $\Iso(\wC)$, this indeed shows that $\al_2\ci\al_1$ is an $\ws$-inflation by Lemma~\ref{LemComposeInf}. Dually for $\ws$-deflations.
\end{proof}

Thus our main theorem is proved. We conclude this subsection with some remarks and consequences of Theorem~\ref{ThmMultLoc}.

\begin{rem}
Suppose that $\ovl{\Ssc}$ satisfies {\rm (M0),(MR1),(MR2),(MR3)} and $p(\Ssc)=\ovl{\Ssc}$, as before. Lemma~\ref{LemComposeInf} shows that $\wCEs$ becomes extriangulated if and only if $\ovl{\Ssc}$ satisfies the following {\rm (MR4$^{-}$)}, which is a weaker variant of {\rm (MR4)}.
\begin{itemize}
\item[{\rm (MR4$^{-}$)}] For any sequence of morphisms $X\ov{\mathbf{f}}{\lra}Y\ov{\mathbf{g}}{\lra}Z$ with $\mathbf{f},\mathbf{g}\in\ovl{\Mcal}_{\mathsf{inf}}$, there exist morphisms $\abf,\bbf$ in $\ovl{\C}$ such that $\abf\ci\mathbf{g}\ci\mathbf{f}\ci\bbf\in\ovl{\Mcal}_{\mathsf{inf}}$ and $\ovl{Q}(\abf),\ovl{Q}(\bbf)\in\Iso(\wC)$. 
Dually for $\ovl{\Mcal}_{\mathsf{def}}\se\ovl{\Mcal}$. 
\end{itemize}
\end{rem}

\begin{rem}\label{RemAdm}
Remark that if $\CEs$ corresponds to a triangulated category, or to an exact category which is moreover abelian, then any morphism $f$ in $\C$ is $\sfr$-\emph{admissible} in the sense that $f=m\ci e$ holds for an $\sfr$-deflation $e$ and an $\sfr$-inflation $m$.
Conversely, if $\CEs$ corresponds to an exact category in which any morphism is $\sfr$-admissible, then $\C$ is abelian.

By Lemma~\ref{LemComposeInf}, if any morphism in $\C$ is $\sfr$-admissible, then any morphism in $\wC$ is $\ws$-admissible. In particular if $\wCEs$ corresponds to an exact category and any morphism in $\C$ is $\sfr$-admissible, then $\wC$ is an abelian category.
\end{rem}


In the following way, extriangulated categories obtained by ideal quotients can be also seen as a particular type of the localization.
\begin{rem}\label{Rem_IdealQuotLoc}
Let $\Ical\se\C$ be any full additive subcategory closed by isomorphisms and direct summands, whose objects are both projective and injective. Let $p\co \C\to \C/[\Ical]$ denote the ideal quotient. If we put $\Ssc=p\iv(\Iso(\C/[\Ical]))$, then it is easy to see that the ideal $[\Ncal_\Ssc]$ coincides with $[\Ical]$, hence we obtain $\ovl{\C}=\C/[\Ical]$.

As shown in \cite[Proposition 3.30]{NP}, it has a natural extriangulated structure $(\ovl{\C},\ovl{\E},\ovl{\sfr})$, given by $\ovl{\E}(C,A)=\E(C,A)$ for any $A,C\in\C$ and $\ovl{\sfr}(\del)=[A\ov{\ovl{x}}{\lra}B\ov{\ovl{y}}{\lra}C]$ for any $\del\in\E(C,A)$ with $\sfr(\del)=[A\ov{x}{\lra}B\ov{y}{\lra}C]$. 
It is obvious that $\phi=\{\phi_{C,A}=\id\co\E(C,A)\to\ovl{\E}(C,A)\}_{C,A\in\C}$ gives an exact functor $(p,\phi)\co\CEs\to(\ovl{\C},\ovl{\E},\ovl{\sfr})$. 

Since $\ovl{\Ssc}=\Iso(\ovl{\C})$ trivially satisfies {\rm (MR2)}, we see that Corollary~\ref{CorOfThm} can be applied to $(p,\phi)$.
We remark that $\ovl{\Ssc}$ also satisfies {\rm (MR4)}. Indeed, let $A\ov{f}{\lra}B\ov{s}{\lra}X\ov{x}{\lra}Y$ be a sequence of morphisms in $\C$ in which $f,x$ are $\sfr$-inflations and $s$ belongs to $\Ssc$. Then there exists $t\in\Ssc(X,B)$ such that $\ovl{t}\iv=\ovl{s}$, and by {\rm (M3)} we obtain a morphism of $\sfr$-triangles
\[
\xy
(-12,6)*+{X}="0";
(0,6)*+{Y}="2";
(12,6)*+{Z}="4";
(24,6)*+{}="6";
(-12,-6)*+{B}="10";
(0,-6)*+{Y\ppr}="12";
(12,-6)*+{Z}="14";
(24,-6)*+{}="16";
{\ar^{x} "0";"2"};
{\ar^{} "2";"4"};
{\ar@{-->}^{} "4";"6"};
{\ar_{t} "0";"10"};
{\ar_{u} "2";"12"};
{\ar@{=} "4";"14"};
{\ar_{x\ppr} "10";"12"};
{\ar_{} "12";"14"};
{\ar@{-->}_{} "14";"16"};
{\ar@{}|\circlearrowright "0";"12"};
{\ar@{}|\circlearrowright "2";"14"};
\endxy
\]
for some $u\in\Ssc(Y,Y\ppr)$. This gives $\ovl{x\ci s\ci f}=\ovl{u}\iv\ci\ovl{x\ppr\ci t\ci s \ci f}=\ovl{u}\iv\ci\ovl{x\ppr\ci f}$. Then $x\ppr\ci f$ is an $\sfr$-inflation by {\rm (C4)} for $\CEs$. Since $\Ssc$ is closed under compositions, this shows that $\ovl{\Mcal}_{\mathsf{inf}}\se\ovl{\Mcal}$ is closed by compositions. Dually for $\ovl{\Mcal}_{\mathsf{def}}$.

Thus $\wCEs$ obtained in Corollary~\ref{CorOfThm} is extriangulated.
As the proof of Theorem~\ref{ThmMultLoc} suggests, localization by $\Ssc$ does not change $(\ovl{\C},\ovl{\E},\ovl{\sfr})$ essentially, and there is an equivalence of extriangulated categories $(\ovl{\C},\ovl{\E},\ovl{\sfr})\ov{\simeq}{\lra}\wCEs$ in the sense of {\rm (2)} in Proposition~\ref{PropExEq}, in an obvious way.
\end{rem}


The following is a corollary of Theorem~\ref{ThmMultLoc}, which gives a condition for the resulting localization to correspond to an exact category. This will be used in Subsection~\ref{Subsection_Percolating}.
\begin{cor}\label{CorLocExact}
Let $\Ssc\se\Mcal$ and $\Ncal_{\Ssc}$ be as in Theorem~\ref{ThmMultLoc} {\rm (3)}. Assume moreover that it satisfies the following conditions {\rm (i),(ii)} and their duals.
Then the resulting localization $\wCEs$ corresponds to an exact category. 
Moreover, if any morphism in $\C$ is $\sfr$-admissible, then $\wC$ is an abelian category.
\begin{itemize}
\item[{\rm (i)}] $\Ker\big(\C(X,A)\ov{x\ci-}{\lra}\C(X,B)\big)\se[\Ncal_{\Ssc}](X,A)$ holds for any $X\in\C$ and any $\sfr$-inflation $x\in\C(A,B)$. 
\item[{\rm (ii)}] For any $N\in\Ncal_{\Ssc}$ and any morphism $f\in\C(A,N)$, there exists a morphism $s\in\C(A\ppr,A)$ such that $f\ci s=0$ and $Q(s)\in\Iso(\wC)$. %
\end{itemize}
\end{cor}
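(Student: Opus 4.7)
By Remark~\ref{Ex_ExTri}~(1), to show that $\wCEs$ corresponds to an exact category it suffices to check that every $\ws$-inflation is monic and every $\ws$-deflation is epic. Lemma~\ref{LemComposeInf} tells us that every $\ws$-inflation has the form $\be\ci Q(x)\ci\gam$ with $x$ an $\sfr$-inflation in $\C$ and $\be,\gam\in\Iso(\wC)$, so the question reduces to showing $Q(x)\co Q(A)\to Q(B)$ is monic for each $\sfr$-inflation $x\co A\to B$; the dual statement about $\ws$-deflations will then follow by applying the symmetric form of {\rm (i)} and {\rm (ii)} to $\sfr$-deflations.

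To prove monomorphicity of $Q(x)$, I would take $\al\co W\to A$ in $\wC$ with $Q(x)\ci\al=0$ and, using {\rm (MR2)}, express $\al=\ovl{Q}(\ovl{g})\ci\ovl{Q}(\ovl{s})\iv$ for some $g\co V\to A$ in $\C$ and $s\in\Ssc(V,W)$. The hypothesis $Q(x)\ci\al=0$ becomes $\ovl{Q}(\ovl{x\ci g})=0$, and the standard characterization of nullity in a localization at a multiplicative system produces $\ovl{u}\in\ovl{\Ssc}(V\ppr,V)$ with $\ovl{x\ci g}\ci\ovl{u}=0$ in $\ovl{\C}$. Lifting to $u\in\Ssc(V\ppr,V)$ and unfolding, this says $x\ci g\ci u=j\ci i$ for some $N\in\Ncal_{\Ssc}$, $i\in\C(V\ppr,N)$, $j\in\C(N,B)$.

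The heart of the argument is then to combine {\rm (i)} and {\rm (ii)} to pass from this factorization to an honest vanishing. Applying {\rm (ii)} to $i\co V\ppr\to N$ yields $s\ppr\in\C(V\pprr,V\ppr)$ with $i\ci s\ppr=0$ and $Q(s\ppr)\in\Iso(\wC)$, so that $x\ci(g\ci u\ci s\ppr)=j\ci i\ci s\ppr=0$. Condition {\rm (i)} then forces $g\ci u\ci s\ppr\in[\Ncal_{\Ssc}]$, i.e.\ $Q(g\ci u\ci s\ppr)=0$. Since $Q(u)$ and $Q(s\ppr)$ are isomorphisms in $\wC$, it follows that $Q(g)=0$ and hence $\al=0$, proving $Q(x)$ is monic.

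For the final statement, assuming every morphism of $\C$ is $\sfr$-admissible, write any $\al\in\wC$ as $\ovl{Q}(\ovl{f})\ci\ovl{Q}(\ovl{s})\iv$ with $s\in\Ssc$, decompose $f=m\ci e$ with $m$ an $\sfr$-inflation and $e$ an $\sfr$-deflation, and use Lemma~\ref{LemComposeInf} to recognize $Q(m)$ as an $\ws$-inflation and $Q(e)\ci\ovl{Q}(\ovl{s})\iv$ as an $\ws$-deflation. Thus every morphism of $\wC$ is $\ws$-admissible, and Remark~\ref{RemAdm} yields that $\wC$ is abelian. The only real subtlety is in the third paragraph: conditions {\rm (i)} and {\rm (ii)} individually are not enough, and the trick is to use {\rm (ii)} to produce a ``quasi-isomorphism'' $s\ppr$ killing the $\Ncal_{\Ssc}$-part $i$, precisely so that the residual morphism lands in the kernel of $x\ci-$ where {\rm (i)} can be applied.
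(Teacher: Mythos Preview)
Your argument is correct and essentially identical to the paper's: both reduce via Lemma~\ref{LemComposeInf} to showing $Q(x)$ is monic for an $\sfr$-inflation $x$, pull a test morphism back through a denominator in $\ovl{\Ssc}$ to get $x\ci g\ci u=j\ci i$ through some $N\in\Ncal_{\Ssc}$, kill $i$ using {\rm (ii)}, and then apply {\rm (i)}. One tiny quibble: when you ``lift to $u\in\Ssc(V\ppr,V)$'' you only need any representative $u$ of $\ovl{u}$ in $\C$---it need not lie in $\Ssc$ itself, but $Q(u)$ is still invertible because $\ovl{u}\in\ovl{\Ssc}$ (cf.\ Remark~\ref{RemSaturation}).
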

\begin{proof}
By duality, it is enough to show that any $\ws$-inflation is monic in $\wC$. By Lemma~\ref{LemComposeInf}, it suffices to show that $Q(x)$ is monic in $\wC$ for any $\sfr$-inflation $x\in\C(A,B)$.
Since any morphism $\al\in\wC(X,A)$ can be written as $\al=\ovl{Q}(\ovl{f})\ci\ovl{Q}(\ovl{u})\iv$ for some $\ovl{f}\in\ovl{\C}(X\ppr,A)$ and $\ovl{u}\in\ovl{\Ssc}(X\ppr,X)$, it suffices to show that $Q(x\ci f)=0$ implies $Q(f)=0$ for any $f\in\C(X\ppr,A)$.

By {\rm (MR2)}, we see that $Q(x\ci f)=0$ is equivalent to the existence of $\ovl{v}\in\ovl{\Ssc}(Y,X\ppr)$ such that $\ovl{x}\ci\ovl{f}\ci\ovl{v}=0$. Thus $j\ci i=x\ci f\ci v$ holds for some $N\in\Ncal_{\Ssc}$ and $i\in\C(Y,N)$, $j\in\C(N,B)$.
By {\rm (ii)}, 
there exists $s\in\C(Y\ppr,Y)$ which satisfies $i\ci s=0$ and $Q(s)\in\Iso(\wC)$. We have $x\ci f\ci v\ci s=j\ci i\ci s=0$, hence $\ovl{f}\ci \ovl{v}\ci \ovl{s}=0$ holds by {\rm (i)}. This implies $Q(f)\ci Q(v)\ci Q(s)=0$, hence $Q(f)=0$ since $Q(v),Q(s)\in\Iso(\wC)$.

The last assertion is immediate from Remark~\ref{RemAdm}.
\end{proof}

\begin{rem}\label{RemLocExact}
In Corollary~\ref{CorLocExact}, we see that the following holds.
\begin{enumerate}
\item Condition {\rm (i)} and its dual are trivially satisfied if $\CEs$ corresponds to an exact category.
\item Condition {\rm (ii)} and its dual are satisfied if $\C$ is abelian (or similarly for $\CEs$ in which any morphism is $\sfr$-admissible) and if $\Ssc$ is the multiplicative system associated to a Serre subcategory $\Ncal\se\C$. 
\end{enumerate}
\end{rem}

\section{Relation to known constructions} \label{Section_Examples}

\subsection{Thick subcategories}

Before listing the known constructions which we are going to deal with, let us introduce the notion of a thick subcategory in an extriangulated category, which serves to control corresponding $\Ssc$.
\begin{dfn}\label{DefThick}
A full additive subcategory $\Ncal\se\C$ is called a \emph{thick} subcategory if it satisfies the following conditions. 
\begin{itemize}
\item[{\rm (i)}] $\Ncal\se\C$ is closed by isomorphisms and direct summands.
\item[{\rm (ii)}] $\Ncal$ satisfies $2$-out-of-$3$ for $\sfr$-conflations. Namely, if any two of objects $A,B,C$ in an $\sfr$-conflation $A\ov{x}{\lra}B\ov{y}{\lra}C$ belong to $\Ncal$, then so does the third.
\end{itemize}
\end{dfn}

\begin{rem}
The following is obvious from the definition.
\begin{enumerate}
\item A thick subcategory $\Ncal\se\C$ is extension-closed, hence is an extriangulated category.
\item If $(F,\phi)\co\CEs\to(\D,\Fbb,\tfr)$ is an exact functor to a weakly extriangulated category $\D$, then $\Ker F\se\C$ is a thick subcategory.
\end{enumerate}
\end{rem}

\begin{dfn}\label{DefMorphClassesLR}
For a thick subcategory $\Ncal\se\C$, we associate the following classes of morphisms.
\begin{eqnarray*}
\Lcal&=&\{ f\in\Mcal\mid f\ \text{is an}\ \sfr\text{-inflation with}\ \Cone(f)\in\Ncal\}.\\
\Rcal&=&\{ f\in\Mcal\mid f\ \text{is an}\ \sfr\text{-deflation with}\ \CoCone(f)\in\Ncal\}.
\end{eqnarray*}

Define $\Ssc_{\Ncal}\se\Mcal$ to be the smallest subset closed by compositions containing both $\Lcal$ and $\Rcal$. It is obvious that $\Ssc_{\Ncal}$ satisfies condition {\rm (M0)} in Section~\ref{Section_Localization}.
\end{dfn}

\begin{rem}\label{RemSN}
$\Ssc_{\Ncal}$ coincides with the set of all finite compositions of morphisms in $\Lcal$ and $\Rcal$.
\end{rem}

\begin{lem}\label{LemNSN}
$\Ncal_{\Ssc_{\Ncal}}=\Ncal$ holds.
\end{lem}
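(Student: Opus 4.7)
The plan is to prove both inclusions directly, relying on the axioms of a thick subcategory (in particular, 2-out-of-3 for $\sfr$-conflations) and the explicit description of $\Ssc_{\Ncal}$ given in Remark~\ref{RemSN}.

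First I would tackle the inclusion $\Ncal \subseteq \Ncal_{\Ssc_{\Ncal}}$. For any $N \in \Ncal$, I will exhibit the two morphisms $N \to 0$ and $0 \to N$ as elements of $\Rcal$ and $\Lcal$, respectively. Indeed, condition (C2) produces the $\sfr$-conflation $N \ov{\id_N}{\lra} N \lra 0$, showing that $N \to 0$ is an $\sfr$-deflation with cocone $N \in \Ncal$, hence lies in $\Rcal \subseteq \Ssc_{\Ncal}$. Dually, (C2') gives the $\sfr$-conflation $0 \to N \ov{\id_N}{\lra} N$, so $0 \to N$ is an $\sfr$-inflation with cone $N \in \Ncal$, which puts it in $\Lcal \subseteq \Ssc_{\Ncal}$. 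Thus $N \in \Ncal_{\Ssc_{\Ncal}}$.

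For the reverse inclusion $\Ncal_{\Ssc_{\Ncal}} \subseteq \Ncal$, take $N \in \Ncal_{\Ssc_{\Ncal}}$. By Remark~\ref{RemSN}, the morphism $N \to 0$ is expressible as a finite composition
\[ N = X_0 \ov{f_0}{\lra} X_1 \ov{f_1}{\lra} \cdots \ov{f_{n-1}}{\lra} X_n = 0 \]
with each $f_i \in \Lcal \cup \Rcal$. I will argue by descending induction on $i$ that every $X_i$ belongs to $\Ncal$, starting from the obvious fact $X_n = 0 \in \Ncal$. For the inductive step, assuming $X_{i+1} \in \Ncal$, I split into two cases depending on whether $f_i \in \Lcal$ or $f_i \in \Rcal$. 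If $f_i \in \Lcal$, there is an $\sfr$-conflation $X_i \ov{f_i}{\lra} X_{i+1} \lra C$ with $C \in \Ncal$, so two of its three terms lie in $\Ncal$ and Definition~\ref{DefThick}(ii) forces $X_i \in \Ncal$. Dually, if $f_i \in \Rcal$, there is an $\sfr$-conflation $K \lra X_i \ov{f_i}{\lra} X_{i+1}$ with $K \in \Ncal$, and again 2-out-of-3 yields $X_i \in \Ncal$. Applied successively, this gives $N = X_0 \in \Ncal$.

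No substantive obstacle is expected: the argument reduces entirely to combining the definition of $\Ssc_{\Ncal}$ (as compositions of morphisms in $\Lcal \cup \Rcal$) with the thickness axiom. The only minor point to double-check is that the statement $\Ncal_{\Ssc_{\Ncal}} \subseteq \Ncal$ can be deduced using just one of the two conditions $N\to 0\in \Ssc_\Ncal$ and $0\to N\in \Ssc_\Ncal$ required in Definition~\ref{DefNS} — but this is fine, as the induction uses only the first.
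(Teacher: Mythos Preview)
Your proof is correct and takes essentially the same approach as the paper: both arguments reduce to applying 2-out-of-3 along a factorization of a morphism in $\Ssc_{\Ncal}$ into steps from $\Lcal\cup\Rcal$. The only cosmetic difference is that the paper uses the morphism $0\to X$ with a forward induction (showing each successive target lies in $\Ncal$), whereas you use $N\to 0$ with a backward induction; these are symmetric and equally valid.
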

\begin{proof}
Clearly, $\Ncal\se\Ncal_{\Ssc_{\Ncal}}$ holds. We show that the converse inclusion holds. Let $X$ be an arbitrary object in $\Ncal_{\Ssc_{\Ncal}}$. Then the morphism $0\to X$ belongs to $\Ssc_{\Ncal}$. By Remark~\ref{RemSN}, it suffices to show the following claim:
For any morphism $f\co N\to Y$ with $N\in\Ncal$, if $f$ is contained in either $\Lcal$ or $\Rcal$, then $Y$ is contained in $\Ncal$. This follows immediately from that $\Ncal$ is a thick subcategory. 
\end{proof}

\begin{lem}\label{LemM3}
$\Ssc_{\Ncal}$ satisfies {\rm (M3)} in Corollary~\ref{CorMultLoc}.
\end{lem}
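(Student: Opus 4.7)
The strategy is to reduce, by induction on the lengths of decompositions of $a$ and $c$ as words in $\Lcal\cup\Rcal$ (cf.\ Remark~\ref{RemSN}), to an atomic base case $a\in\Lcal\cup\Rcal$ with $c=\id_C$, and then exhibit $b$ explicitly in $\Lcal\cup\Rcal$ by applying {\rm (ET4)} together with its dual.

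\textbf{Reduction.} Write $a=a_n\ci\cdots\ci a_1$ with each $a_i\in\Lcal\cup\Rcal$, and similarly $c=c_m\ci\cdots\ci c_1$; induct on $n+m$. If $n\ge 1$, realize the pushout $a_{1\ast}\del$ by an $\sfr$-triangle $A_1\ov{x_1}{\lra}B_1\ov{y_1}{\lra}C\ov{a_{1\ast}\del}{\dra}$. The atomic case applied to $(a_1,\id_C)$ between $A\ov{x}{\lra}B\ov{y}{\lra}C$ and this intermediate yields $b_1\in\Ssc_\Ncal$; the inductive hypothesis applied to $(a_n\ci\cdots\ci a_2,\,c)$ between the intermediate and $A\ppr\to B\ppr\to C\ppr$ (which satisfy $(a_n\ci\cdots\ci a_2)_\ast(a_{1\ast}\del)=a_\ast\del=c^\ast\del\ppr$) yields $b_2\in\Ssc_\Ncal$. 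Then $b=b_2\ci b_1\in\Ssc_\Ncal$. The case $m\ge 1$ is handled symmetrically via {\rm (C3$'$)} and pullback of $\del\ppr$ along a factor of $c$.

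\textbf{Atomic case $a\in\Lcal$, $c=\id_C$.} Complete $a$ to an $\sfr$-triangle $A\ov{a}{\lra}A\ppr\ov{n}{\lra}N\ov{\omega}{\dra}$ with $N\in\Ncal$, and obtain $b\co B\to B\ppr$ from {\rm (C3)}. Applying {\rm (ET4)} to the split $\sfr$-triangle $A\ov{\Delta}{\lra}A\oplus A\ov{[1,-1]}{\lra}A$ and the direct-sum $\sfr$-triangle $A\oplus A\ov{x\oplus a}{\lra}B\oplus A\ppr\ov{y\oplus n}{\lra}C\oplus N$---whose composite $(x\oplus a)\ci\Delta=[x,a]^T$ matches the inflation provided by {\rm (C3)}---produces, after identifying the resulting cofiber with $B\ppr$, an $\sfr$-triangle $A\ov{bx}{\lra}B\ppr\ov{(y\ppr,e_N)^T}{\lra}C\oplus N\dra$ in which $e_N\co B\ppr\to N$ satisfies $e_N\ci x\ppr=-n$ and $e_N\ci b=0$. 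Applying the dual axiom {\rm (ET4)$^\op$} to this triangle and the split $\sfr$-triangle $C\ov{(1,0)^T}{\lra}C\oplus N\ov{(0,1)}{\lra}N$ (composable deflations with composite $e_N$) then yields the desired $\sfr$-triangle $B\ov{b}{\lra}B\ppr\ov{e_N}{\lra}N\dra$ with third term in $\Ncal$, showing $b\in\Lcal\se\Ssc_\Ncal$. The case $a\in\Rcal$, $c=\id_C$ is dual: complete $a$ to $K\to A\ov{a}{\lra}A\ppr$ with $K\in\Ncal$ and exchange the roles of {\rm (ET4)} and {\rm (ET4)$^\op$} to obtain an $\sfr$-triangle $K\to B\ov{b}{\lra}B\ppr\dra$, hence $b\in\Rcal$.

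\textbf{Main obstacle.} The delicate step is identifying, in the atomic case, the middle morphism $B\to B\ppr$ produced by {\rm (ET4)$^\op$} with the original $b$ from {\rm (C3)}. This is forced by the compatibilities $y\ppr\ci b=y$ and $e_N\ci b=0$ from {\rm (ET4)}, which together pin down the middle morphism of the resulting $\sfr$-triangle via the exact sequences of {\rm (C1)} and {\rm (C1$'$)}. Once this matching is made precise, thickness of $\Ncal$ (the $2$-out-of-$3$ axiom, Definition~\ref{DefThick}~{\rm (ii)}) ensures the third term remains in $\Ncal$, completing the argument.
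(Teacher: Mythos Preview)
Your reduction to the atomic case $c=\id_C$, $a\in\Lcal\cup\Rcal$ by induction on decomposition length is correct and is exactly what the paper does.

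The atomic case, however, has a genuine gap. You obtain $b$ from {\rm (C3)} and then attempt to show that this particular $b$ lies in $\Lcal$ via an elaborate {\rm (ET4)}/{\rm (ET4)$^{\op}$} construction. The identification you flag as the ``main obstacle'' is indeed problematic: the map produced by {\rm (ET4)$^{\op}$} is a morphism $E\to B\ppr$ for some object $E$ sitting in a conflation $A\to E\to C$, and nothing forces $E$ to be $B$ or this map to coincide with the {\rm (C3)}-$b$. In a general extriangulated category deflations need not be epimorphic, so the compatibilities $y\ppr\ci b=y$ and $e_N\ci b=0$ do not determine $b$; even the intermediate claim that the map $B\ppr\to C\oplus N$ has first component equal to $y\ppr$ would require the deflation $[b\ -x\ppr]$ to be epic, which you cannot assume. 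If instead you abandon the {\rm (C3)}-$b$ and use the {\rm (ET4)$^{\op}$}-map, you must still verify it gives a morphism of $\sfr$-triangles realizing $a_\ast\del$, which means tracking the extensions through both steps---and this is precisely what you have not done.

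The paper's route is much shorter. For $a\in\Rcal$, apply {\rm (ET4)} to the composable inflations $N\to A$ and $A\ov{x}{\to}B$: this directly yields a conflation $N\to B\ov{b}{\to}E$ together with a conflation $A\ppr\to E\to C$ realizing $a_\ast\del$ and a morphism of $\sfr$-triangles $(a,b,\id_C)$, so $b\in\Rcal$ after identifying $E\cong B\ppr$. For $a\in\Lcal$, the paper simply cites \cite[Proposition~3.15]{NP}, which takes two conflations $A\ov{x}{\to}B\to C$ and $A\ov{a}{\to}A\ppr\to N$ with common source and outputs the $3{\times}3$ diagram containing a conflation $B\ov{b}{\to}B\ppr\to N$ and a morphism of $\sfr$-triangles $(a,b,\id_C)$, giving $b\in\Lcal$ in one stroke. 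Your detour through the diagonal split triangle is essentially an attempt to reprove that proposition by hand, with the crucial bookkeeping left unfinished.
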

\begin{proof}
Let $A\ov{x}{\lra}B\ov{y}{\lra}C\ov{\del}{\dra}$ and $A\ppr\ov{x\ppr}{\lra}B\ppr\ov{y\ppr}{\lra}C\ppr\ov{\del\ppr}{\dra}$ be any pair of $\sfr$-triangles, and suppose that $a\in\Ssc_{\Ncal}(A,A\ppr),c\in\Ssc_{\Ncal}(C,C\ppr)$ satisfy $a\sas\del=c\uas\del\ppr$. It suffices to show the existence of $b\in\Ssc_{\Ncal}$ which gives a morphism of $\sfr$-triangles
\[
\xy
(-12,6)*+{A}="0";
(0,6)*+{B}="2";
(12,6)*+{C}="4";
(24,6)*+{}="6";
(-12,-6)*+{A\ppr}="10";
(0,-6)*+{B\ppr}="12";
(12,-6)*+{C\ppr}="14";
(24,-6)*+{}="16";
{\ar^{x} "0";"2"};
{\ar^{y} "2";"4"};
{\ar@{-->}^{\del} "4";"6"};
{\ar_{a} "0";"10"};
{\ar_{b} "2";"12"};
{\ar^{c} "4";"14"};
{\ar_{x\ppr} "10";"12"};
{\ar_{y\ppr} "12";"14"};
{\ar@{-->}_{\del\ppr} "14";"16"};
{\ar@{}|\circlearrowright "0";"12"};
{\ar@{}|\circlearrowright "2";"14"};
\endxy.
\]
We may assume that either $c$ or $a$ equals to the identity map. Suppose $c=\id$. By Remark~\ref{RemSN}, we may assume that $a$ belongs to either $\Rcal$ or $\Lcal$. Then the assertion follows from {\rm (ET4)}, \cite[Proposition 3.15]{NP}, respectively.
\end{proof}

We prepare some lemmas which will be used in the proceeding subsections.
\begin{lem}\label{LemThickFirstProperties}
The following holds for any thick subcategory $\Ncal\se\C$.
\begin{enumerate}
\item $\Lcal,\Rcal\se\Mcal$ contain all isomorphisms, and are closed by compositions in $\Mcal$.
\item For any $l\in\Lcal$, morphism $\ovl{l}$ is epimorphic in $\ovl{\C}$. Dually, for any $r\in\Rcal$, morphism $\ovl{r}$ is monomorphic in $\ovl{\C}$.
\item For any $X\ov{l}{\lla}X\ppr\ov{g}{\lra}Y$ with $l\in\Lcal$, there exists a commutative square
\[
\xy
(-6,6)*+{X\ppr}="0";
(6,6)*+{Y}="2";
(-6,-6)*+{X}="4";
(6,-6)*+{Y\ppr}="6";
{\ar^{g} "0";"2"};
{\ar_{l} "0";"4"};
{\ar^{l\ppr} "2";"6"};
{\ar_{g\ppr} "4";"6"};
{\ar@{}|\circlearrowright "0";"6"};
\endxy
\]
in $\C$ such that $l\ppr\in\Lcal$. Moreover if $g\in\Rcal$, then it can be chosen to satisfy $l\ppr\in\Lcal$ and $g\ppr\in\Rcal$.

Dually, for any $X\ov{f}{\lra}Y\ov{r}{\lla}Y\ppr$ with $r\in\Rcal$, there exists a commutative square
\[
\xy
(-6,6)*+{X\ppr}="0";
(6,6)*+{Y\ppr}="2";
(-6,-6)*+{X}="4";
(6,-6)*+{Y}="6";
{\ar^{f\ppr} "0";"2"};
{\ar_{r\ppr} "0";"4"};
{\ar^{r} "2";"6"};
{\ar_{f} "4";"6"};
{\ar@{}|\circlearrowright "0";"6"};
\endxy
\]
in $\C$ such that $r\ppr\in\Rcal$. Moreover if $f\in\Lcal$, then it can be chosen to satisfy $r\ppr\in\Rcal$ and $f\ppr\in\Lcal$.
\end{enumerate}
\end{lem}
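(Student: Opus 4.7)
I plan to verify the three items by combining the axioms (C2), (C3), (C4), (C1'), and (ET4) of the extriangulated structure with the $2$-out-of-$3$ property defining a thick subcategory. All right-hand statements concerning $\Rcal$, $\ovl{r}$, and the second halves of (3) will follow by duality, so I will concentrate on the $\Lcal$-side. For (1), any isomorphism $f\co A\to B$ is equivalent to $A\ov{\id_A}{\lra}A\to 0$ via (C2), so $\Cone(f)=0\in\Ncal$ and $f\in\Lcal$; for closure under composition, given $f,g\in\Lcal$, the composite $g\ci f$ is an $\sfr$-inflation by (C4), and applying (ET4) to the $\sfr$-triangles realizing $\Cone(f)$ and $\Cone(g)$ produces an $\sfr$-triangle $\Cone(f)\lra\Cone(g\ci f)\lra\Cone(g)\dra$ whose outer terms lie in $\Ncal$, so thickness forces $\Cone(g\ci f)\in\Ncal$.

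For (2) I must show that if $h\in\C(B,X)$ satisfies $\ovl{h}\ci\ovl{l}=0$---equivalently, $h\ci l=j\ci i$ for some $i\co A\to M$, $j\co M\to X$ with $M\in\Ncal$---then $\ovl{h}=0$. Starting from an $\sfr$-triangle $A\ov{l}{\lra}B\ov{p}{\lra}N\ov{\del}{\dra}$ with $N\in\Ncal$, I will feed $i$ and this $\sfr$-triangle into (C3) to obtain an $\sfr$-triangle $M\ov{l\ppr}{\lra}B\ppr\ov{p\ppr}{\lra}N\ov{i\sas\del}{\dra}$ (so $B\ppr\in\Ncal$ by thickness) together with an $\sfr$-conflation $A\ov{\left[\bsm l\\ i\esm\right]}{\lra}B\oplus M\ov{[b\ -l\ppr]}{\lra}B\ppr$. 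The decisive move is to bundle $h$ and $j$ into $[h\ -j]\co B\oplus M\to X$, which annihilates $\left[\bsm l\\ i\esm\right]$ by hypothesis; exactness of the (C1')-sequence $\C(B\ppr,X)\lra\C(B\oplus M,X)\lra\C(A,X)$ then factors $[h\ -j]$, and hence $h$, through $B\ppr\in\Ncal$.

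For (3), the first assertion is immediate from (C3): given $X\ov{l}{\lla}X\ppr\ov{g}{\lra}Y$ with $l\in\Lcal$ realized by $X\ppr\ov{l}{\lra}X\ov{p}{\lra}N\ov{\del}{\dra}$, applying (C3) to $g$ yields an $\sfr$-triangle $Y\ov{l\ppr}{\lra}Y\ppr\ov{p\ppr}{\lra}N\ov{g\sas\del}{\dra}$ with the required commutative square, and $l\ppr\in\Lcal$ because $\Cone(l\ppr)=N\in\Ncal$. For the ``moreover'' clause, when $g\in\Rcal$ is additionally realized by $M\ov{k}{\lra}X\ppr\ov{g}{\lra}Y\ov{\rho}{\dra}$, I will instead apply (ET4) to the composable pair $(k,l)$; the resulting octahedral diagram supplies $\sfr$-triangles $M\ov{l\ci k}{\lra}X\ov{g\ppr}{\lra}Y\ppr\dra$ and $Y\ov{l\ppr}{\lra}Y\ppr\ov{e}{\lra}N\dra$ together with the commutativity $l\ppr\ci g=g\ppr\ci l$. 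Thickness then delivers $l\ppr\in\Lcal$ (cone $N$) and $g\ppr\in\Rcal$ (cocone $M$) simultaneously.

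The main obstacle I anticipate is the epimorphicity argument in (2), where one must assemble $h$ and $j$ into a single morphism on the direct sum $B\oplus M$ in order to exploit the (C3)-produced $\sfr$-conflation; items (1) and (3) are then essentially direct consequences of (ET4) and (C3).
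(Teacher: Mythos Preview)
Your proposal is correct and follows essentially the same route as the paper: (1) is the fleshed-out version of the paper's ``straightforward'', (2) uses exactly the same (C3)-pushout-plus-(C1') factorization argument (the paper leaves the $[h\ -j]$ step implicit but it is the same), and (3) uses (C3) for the first part and (ET4) for the ``moreover'' clause just as the paper does. There is nothing to add.
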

\begin{proof}
(1) Since $\Ncal\se\C$ is extension-closed, this is straightforward.

(2) Let $X\ppr\ov{l}{\lra}X$ be a morphism in $\Lcal$, and suppose that $\ovl{f}\ci\ovl{l}=0$ holds for $X\ov{f}{\lra}Y$.
The composed morphism $f\ci l$ can be factored as $X\ppr\ov{a}{\lra}N\ov{}{\lra}Y$ with $N\in\Ncal$.
Condition (C3) yields a morphism of $\sfr$-triangles
\[
\xy
(-12,6)*+{X\ppr}="0";
(0,6)*+{X}="2";
(12,6)*+{N\ppr}="3";
(24,6)*+{}="p";
(-12,-6)*+{N}="4";
(0,-6)*+{X\pprr}="6";
(12,-6)*+{N\ppr}="7";
(24,-6)*+{}="q";
{\ar^{l} "0";"2"};
{\ar "2";"3"};
{\ar_{a} "0";"4"};
{\ar^{a\ppr} "2";"6"};
{\ar_{i} "4";"6"};
{\ar "6";"7"};
{\ar@{}|\circlearrowright "0";"6"};
{\ar@{}|\circlearrowright "2";"7"};
{\ar@{=} "3";"7"};
{\ar@{-->} "3";"p"};
{\ar@{-->} "7";"q"};
\endxy
\]
which gives an $\sfr$-conflation $X\ppr\ov{\left[\bsm l\\ a\esm\right]}{\lra}X\oplus N\ov{[a\ppr\ -i]}{\lra}X\pprr$.
Note that $X\pprr\in\Ncal$ follows from $N,N\ppr\in\Ncal$. Since $f$ factors through $X\pprr$, this shows $\ovl{f}=0$. Thus the former assertion is shown, and dually for the latter.

(3) We only check the former assertion. Dually for the latter.
Due to {\rm (C3)}, a desired square exists.
If $g\in\Rcal$, by using {\rm (ET4)}, we have a choice to satisfy $l\ppr\in\Lcal$ and $g\ppr\in\Rcal$.
\end{proof}

\subsection{Typical cases}

Examples we have in mind are the following. 

\begin{ex}\label{ExVerdier}
{\rm (}\emph{Verdier quotient.}{\rm )} Suppose that $\C$ is a triangulated category. If we regard it as an extriangulated category, then Definition~\ref{DefThick} agrees with the usual definition of a thick subcategory.
In this case we have $\Ssc_{\Ncal}=\Lcal=\Rcal$ and the localization of $\C$ by $\Ssc_{\Ncal}$ naturally becomes triangulated. 
\end{ex}

\begin{ex}\label{ExSerreAbel}
{\rm (}\emph{Serre quotient.}{\rm )} Suppose that $\C$ is an abelian category, which we may regard as an extriangulated category. Then any Serre subcategory of $\C$ is thick in the sense of Definition~\ref{DefThick}.
In this case we have $\Ssc_{\Ncal}=\Lcal\ci\Rcal$, and the localization of $\C$ by $\Ssc_{\Ncal}$ becomes abelian.
\end{ex}

\begin{ex}\label{ExTwo-sidedExact}
Suppose that $\C$ is an exact category, and that $\Ncal\se\C$ is a full additive subcategory closed by isomorphisms. 
As in \cite[Definition~2.4]{HKR}, it is said to be a \emph{two-sided admissibly percolating subcategory} if it satisfies the following conditions.
\begin{itemize}
\item[{\rm (i)}] For any conflation $A\to B\to C$, we have $B\in\Ncal$ if and only if $A,C\in\Ncal$.
\item[{\rm (ii)}] For any morphism $f\co X\to A$ with $A\in\Ncal$, there is a factorization of $f$ as $X\ov{g}{\to}A\ppr\ov{h}{\to}A$ with a deflation $g$ and an inflation $h$.
\item[{\rm (iii)}] Dual of (ii).
\end{itemize}
In this case, we have $\Ssc_{\Ncal}=\Lcal\ci\Rcal$ and the localization of $\C$ by $\Ssc_{\Ncal}$ becomes an exact category such that the localization functor is an exact functor. 
\end{ex}

\begin{rem}
As for localizations for exact categories, there is a precedent result by C\'{a}rdenas-Escudero \cite{C-E}.
As stated in \cite[Theorem 8.1]{HR}, this can be dealt as a particular case of Example~\ref{ExTwo-sidedExact}.
\end{rem}

\begin{ex}\label{ExRump}
Suppose that $\C$ is an exact category, and that $\Ncal\se\C$ is a full additive subcategory closed by isomorphisms. 
As in \cite[Definition~7]{R}, it is said to be \emph{biresolving} if it satisfies the following conditions.
\begin{itemize}
\item[{\rm (i)}] For any $C\in\C$, there is an inflation $C\to N$ and a deflation $N\ppr\to C$ for some $N,N\ppr\in\Ncal$.
\item[{\rm (ii)}] $\Ncal$ satisfies $2$-out-of-$3$ for conflations.
\end{itemize}

In particular, any biresolving subcategory $\Ncal\se\C$ closed by direct summands is a thick subcategory in the sense of Definition~\ref{DefThick}.

In \cite[Proposition~12]{R}, it is shown that for any biresolving subcategory of an exact category, localization of $\ovl{\C}=\C/[\Ncal]$ by the morphisms which are both monomorphic and epimorphic, has a natural structure of a triangulated category.
\end{ex}

\begin{rem}\label{RemRump}
Since replacing $\Ncal$ by $\add\Ncal$ does not affect the resulting localization, in this article we only deal with the case where $\Ncal\se\C$ is thick. Here $\add\Ncal\se\C$ denotes the smallest additive full subcategory containing $\Ncal$, closed by direct summands.
\end{rem}

\begin{ex}\label{ExHTCP}
Let $\CEs$ be an extriangulated category. As defined in \cite[Definition~5.1]{NP}, a \emph{Hovey twin cotorsion pair} is a quartet $((\Scal,\Tcal),(\Ucal,\Vcal))$ of full subcategories closed by isomorphisms and direct summands satisfying
\begin{itemize}
\item[{\rm (i)}] $(\Scal,\Tcal)$ and $(\Ucal,\Vcal)$ are cotorsion pairs on $\C$.
\item[{\rm (ii)}] $\Ebb(\Scal,\Vcal)=0$.
\item[{\rm (iii)}] $\Cone(\Vcal,\Scal)=\CoCone(\Vcal,\Scal)$.
\end{itemize}

If $((\Scal,\Tcal),(\Ucal,\Vcal))$ is a Hovey twin cotorsion pair, then $\Ncal=\Cone(\Vcal,\Scal)$ satisfies $2$-out-of-$3$ for $\sfr$-conflations by \cite[Proposition~5.3]{NP}. 

It is also immediate that $\Ncal\se\C$ is closed by direct summands. Indeed if $X\oplus Y\in\Ncal$, then for $\sfr$-conflations $T\to S\to X$ and $T\ppr\to S\ppr\to Y$ satisfying $S,S\ppr\in\Scal$ and $T,T\ppr\in\Tcal$, we see that the $\sfr$-conflation $T\oplus T\ppr\to S\oplus S\ppr \to X\oplus Y$ should satisfy $T\oplus T\ppr\in\Ncal\cap\Tcal=\Vcal$ by the $2$-out-of-$3$ property. This implies $T,T\ppr\in\Vcal$, hence $X,Y\in\Ncal$.
Thus $\Ncal\se\C$ is a thick subcategory.

If moreover $\CEs$ satisfies condition {\rm (WIC)}, then the following holds by \cite[Corollaries~5.12, 5.22]{NP}.
\begin{itemize}
\item $\mathit{wCof}\se\Lcal$ and $\mathit{wFib}\se\Rcal$ hold in the notation of \cite[Definition~5.11]{NP},
\item $\Wbb=\mathit{wFib}\ci\mathit{wCof}\se\Mcal$ satisfies $\Lcal,\Rcal\se\Wbb$, and $2$-out-of-$3$ with respect to the composition in $\Mcal$,
\end{itemize}
hence in particular we have $\Ssc_{\Ncal}=\Wbb=\Rcal\ci\Lcal$. Localization of $\C$ by $\Ssc_{\Ncal}$ becomes naturally triangulated, as shown in \cite[Theorem~6.20]{NP}.
\end{ex}

The rest of this article is devoted to demonstrate how the examples listed above can be seen as particular cases of the localization given in the previous Section~\ref{Section_Localization}, by showing that the assumption of Theorem~\ref{ThmMultLoc} is indeed satisfied. More precisely, we roughly divide them into the following two cases. 
\begin{itemize}
\item[{\rm (A)}] Localizations obtained in Examples~\ref{ExVerdier}, \ref{ExRump}, \ref{ExHTCP}. 
\item[{\rm (B)}] Localizations obtained in Examples~\ref{ExVerdier}, \ref{ExSerreAbel}, \ref{ExTwo-sidedExact}. 
\end{itemize}

In fact, this division results from particular properties of the thick subcategories. Remark that Example~\ref{ExVerdier} belongs to both cases.
Subsection~\ref{Subsection_LocTri} deals with case {\rm (A)}. We will show that the assumption of Theorem~\ref{ThmMultLoc} is satisfied for any thick subcategory which is \emph{biresolving} (Definition~\ref{Def_BiResol}), and that the localization $\wCEs$ obtained by the theorem corresponds to a triangulated category. Subsection~\ref{Subsection_Percolating} deals with case {\rm (B)}. We will show that the assumption of Corollary~\ref{CorMultLoc} is satisfied for any thick subcategory which is \emph{percolating} (Definition~\ref{Def_Percolating}). With an additional assumption which fits well with percolating subcategories, the localization $\wCEs$ obtained by the corollary can be also made correspond to an exact category. 

\begin{rem}
For Examples~\ref{ExVerdier} and \ref{ExSerreAbel}, it is also easy to check directly that they satisfy the assumption of Corollary~\ref{CorMultLoc}.
On the other hand, this is not always satisfied in case {\rm (A)}. This is the reason why we need the generality of Theorem~\ref{ThmMultLoc}.
\end{rem}

\subsection{Case {\rm (A)}: Triangulated localization by biresolving subcategories}\label{Subsection_LocTri}

We observe that in each of Examples~\ref{ExRump}, \ref{ExHTCP} (and also \ref{ExVerdier}), the corresponding thick subcategory $\Ncal\se\C$ is of the following type. 
\begin{dfn}\label{Def_BiResol} 
A thick subcategory $\Ncal\se\C$ is called \emph{biresolving}, if for any $C\in\C$ there exist an $\sfr$-inflation $C\to N$ and an $\sfr$-deflation $N\ppr\to C$ for some $N,N\ppr\in\Ncal$.
\end{dfn}


This definition obviously covers the above-mentioned examples in mind. Let us summarize here for clarity, together with some other trivial cases.
\begin{ex}
Let $\CEs$ be an extriangulated category, as before.
\begin{enumerate}
\item $\C$ itself is always biresolving in $\CEs$.
\item The thick full subcategory of zero objects in $\C$ is biresolving if and only if $\CEs$ corresponds to a triangulated category.
\item If $\CEs$ corresponds to a triangulated category, then any thick subcategory $\Ncal\se\C$ is biresolving. 
\item Suppose that $\CEs$ corresponds to an exact category, and that $\Ncal\se\C$ is a thick subcategory. Then $\Ncal\se\C$ is biresolving in the sense of Definition~\ref{Def_BiResol} if and only if it is biresolving in the sense of \cite[Definition~7]{R}. (See also Remark~\ref{RemRump}.)
\item Suppose that $\CEs$ satisfies {\rm (WIC)}. If $((\Scal,\Tcal),(\Ucal,\Vcal))$ is a Hovey twin cotorsion pair, then $\Ncal=\Cone(\Vcal,\Scal)\se\C$ is a biresolving thick subcategory as seen in Example~\ref{ExHTCP}.
\item Suppose that $\CEs$ is a Frobenius extriangulated category in the sense of \cite[Definition~7.1]{NP}, and let $\Ncal\se\C$ be the full subcategory of projective-injective objects. Then $\Ncal\se\C$ is a biresolving thick subcategory. (See also \cite[Remark~7.3]{NP} for the relation with the above {\rm (5)}.)
\end{enumerate}
\end{ex}

\begin{rem}
As shown in \cite[Theorem~3.13]{ZZ} (cf. \cite[Corollary~7.4 and Remark~7.5]{NP}), the ideal quotient $(\ovl{\C},\ovl{\E},\ovl{\sfr})$ in {\rm (6)} corresponds to a triangulated category. By Remark~\ref{Rem_IdealQuotLoc}, so does $\wCEs$.
\end{rem}
%
%
%

In the rest of this subsection, let 
$\Ncal\se\C$ denote a biresolving thick subcategory. The aim of this subsection is to show Proposition~\ref{PropSatisfy} and Corollary~\ref{CorLocTri}.
Let us start with some lemmas.
\begin{lem}\label{LemAllInf}
Let $f\in\C(A,B)$ be any morphism.
\begin{enumerate}
\item By taking an $\sfr$-inflation $i\in\C(A,N)$ to $N\in\Ncal$, morphism $f$ can be written as a composition of
an $\sfr$-inflation $\left[\bsm f\\ i\esm\right]\in\C(A,B\oplus N)$ and a split epimorphism $r=[1\ 0]\in\C(B\oplus N,B)$. Remark that we have $r\in\Rcal$.
\item By taking an $\sfr$-deflation $j\in\C(N\ppr,B)$ from $N\ppr\in\Ncal$, morphism $f$ can be written as a composition of
an $\sfr$-deflation $[f\ j]\in\C(A\oplus N\ppr,B)$ and a split monomorphism $l=\left[\bsm1\\0\esm\right]\in\C(A,A\oplus N\ppr)$. Remark that we have $l\in\Lcal$.
\end{enumerate}
\end{lem}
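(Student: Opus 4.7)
My plan for part (1) is as follows. The identity $[1\ 0]\circ\left[\bsm f\\ i\esm\right]=f$ is an immediate matrix computation, and $r=[1\ 0]\co B\oplus N\to B$ is patently a split epimorphism. To see $r\in\Rcal$, I will exhibit the split $\sfr$-conflation $N\ov{\left[\bsm 0\\ 1_N\esm\right]}{\lra}B\oplus N\ov{r}{\lra}B$, obtained by taking the direct sum of the trivial $\sfr$-conflations supplied by (C2) and (C2'); this presents $r$ as an $\sfr$-deflation with cocone $N\in\Ncal$.

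The substantive step is to show that $\left[\bsm f\\ i\esm\right]$ is an $\sfr$-inflation. My plan is to factor it as
\[
\left[\bsm f\\ i\esm\right]=(1_B\oplus i)\circ\left[\bsm f\\ 1_A\esm\right]\co A\lra B\oplus A\lra B\oplus N,
\]
so that (C4) reduces the problem to showing each factor is an $\sfr$-inflation. The right factor $\left[\bsm f\\ 1_A\esm\right]$ is a split monomorphism (with retraction $[0\ 1_A]$); moreover, the automorphism $\left[\bsm 1_B & f\\ 0 & 1_A\esm\right]$ of $B\oplus A$ sends the canonical inclusion $\left[\bsm 0\\ 1_A\esm\right]$ to $\left[\bsm f\\ 1_A\esm\right]$. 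Since $\left[\bsm 0\\ 1_A\esm\right]$ appears as the first morphism of a split $\sfr$-conflation $A\to B\oplus A\to B$ (again from (C2) and (C2')), Remark~\ref{RemWE} identifies $\left[\bsm f\\ 1_A\esm\right]$ as an $\sfr$-inflation realizing the same (zero) extension. The left factor $1_B\oplus i$ is a direct sum of two $\sfr$-inflations and hence itself an $\sfr$-inflation by compatibility of $\sfr$ with finite direct sums.

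Part (2) is entirely dual, trading (C4) for (C4') and using the $\sfr$-deflation $j\co N\ppr\to B$ supplied by the biresolving hypothesis on $\Ncal$ in place of $i$. I foresee no real obstacle here: the whole argument is matrix bookkeeping combined with the standard closure properties of $\sfr$-inflations (preservation under direct sums, composition by (C4), and invariance under isomorphism of the middle object via Remark~\ref{RemWE}), together with their evident duals.
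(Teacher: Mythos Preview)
Your argument is correct. The paper's own proof is a one-line citation to \cite[Corollary~3.16]{NP}, whose content is essentially a direct application of axiom {\rm (C3)}: starting from the $\sfr$-triangle $A\ov{i}{\lra}N\lra M\ov{\del}{\dra}$ and the morphism $a=f\co A\to B$, condition {\rm (C3)} immediately produces the $\sfr$-triangle
\[
A\ov{\left[\bsm i\\ f\esm\right]}{\lra}N\oplus B\lra B\ppr\dra,
\]
so $\left[\bsm f\\ i\esm\right]$ is an $\sfr$-inflation in one stroke. Your route instead factors $\left[\bsm f\\ i\esm\right]=(1_B\oplus i)\circ\left[\bsm f\\ 1_A\esm\right]$ and invokes {\rm (C4)}, the additivity of the realization, and the invariance of conflations under isomorphism of the middle term. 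This is slightly longer but has the virtue of being self-contained: it only uses the elementary closure properties of $\sfr$-inflations rather than the more structured mapping-cone output of {\rm (C3)}. Either approach is adequate here; the {\rm (C3)} route is just shorter. One cosmetic point: your appeal to Remark~\ref{RemWE} for the step ``composing an inflation with an automorphism of the middle term yields an inflation'' is really just Definition~\ref{DefEquiv2Seq} (equivalence of sequences), which already gives $[A\ov{x}{\lra}B\ov{y}{\lra}C]=[A\ov{\phi x}{\lra}B\ov{y\phi\iv}{\lra}C]$ for any automorphism $\phi$ of $B$.
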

\begin{proof}
This is obvious by \cite[Corollary 3.16]{NP}.
\end{proof}

\begin{lem}\label{LemRL}
$\Ssc_{\Ncal}=\Rcal\ci\Lcal$ holds.
\end{lem}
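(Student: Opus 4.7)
The inclusion $\Rcal\ci\Lcal\se\Ssc_{\Ncal}$ is immediate, since $\Ssc_{\Ncal}$ is closed under composition and contains both $\Lcal$ and $\Rcal$.

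For the converse, by Remark~\ref{RemSN} any $s\in\Ssc_{\Ncal}$ is a finite composition of morphisms from $\Lcal\cup\Rcal$, and Lemma~\ref{LemThickFirstProperties}(1) allows one to absorb consecutive factors of the same kind, so that $s$ may be written as an alternating word in $\Lcal$ and $\Rcal$. Since the class $\Rcal\ci\Lcal$ is stable under post-composition by $\Rcal$ (as $r_0\ci(r\ci l)=(r_0\ci r)\ci l$) and under pre-composition by $\Lcal$ (as $(r\ci l)\ci l_0=r\ci(l\ci l_0)$), an induction on word-length reduces the task to the following atomic swap.

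\textbf{Key claim.} For every $r\in\Rcal(A,B)$ and $l\in\Lcal(B,C)$, the composite $l\ci r$ lies in $\Rcal\ci\Lcal$.

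The plan for the key claim is to use biresolving to enlarge the diagram and expose a suitable intermediate object $D$ between $A$ and $C$. Write the given data as the $\sfr$-triangles $K\to A\ov{r}{\lra}B\ov{\del_1}{\dra}$ and $B\ov{l}{\lra}C\to N\ov{\del_2}{\dra}$ with $K,N\in\Ncal$. By biresolving, choose an $\sfr$-inflation $b\co B\to N_B$ into $N_B\in\Ncal$. Applying {\rm (C3)} to the $l$-triangle together with $b$ yields an $\sfr$-conflation
\[B\ov{\left[\bsm l\\ b\esm\right]}{\lra}C\oplus N_B\lra F\]
whose cone $F$ sits in an auxiliary $\sfr$-triangle $N_B\to F\to N$; since $N_B,N\in\Ncal$, 2-out-of-3 for the thick subcategory $\Ncal$ forces $F\in\Ncal$, and hence $\left[\bsm l\\ b\esm\right]\in\Lcal$. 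Dually, starting from a biresolving $\sfr$-deflation $q\co N_A\ppr\to A$ with $N_A\ppr\in\Ncal$ and using {\rm (C3$'$)} on the $r$-triangle produces the $\Rcal$-side of the factorization.

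The main obstacle is to patch these two pieces into a single factorization $l\ci r=r'\ci l'$ with $l'\in\Lcal$ and $r'\in\Rcal$. I plan to do this by applying {\rm (ET4)} (or by iterating {\rm (C3)} and {\rm (C3$'$)}) to combine the two constructions, extracting the intermediate object $D$ as the cone of a composable pair of inflations (equivalently, the cocone of a composable pair of deflations). At every step, the verifications that a cone or cocone lies in $\Ncal$ reduce to 2-out-of-3 for the thick subcategory $\Ncal$, and the biresolving hypothesis is precisely what makes enough $\Ncal$-resolutions available on both sides of the diagram.
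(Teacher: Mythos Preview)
Your reduction to the key claim is correct and matches the paper: since $\Lcal$ and $\Rcal$ are each closed under composition, it suffices to show $\Lcal\ci\Rcal\se\Rcal\ci\Lcal$.

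However, your attack on the key claim has a genuine gap. The two constructions you describe do not combine into a factorization of $l\ci r$. Enlarging $l$ to $\left[\bsm l\\ b\esm\right]\in\Lcal$ via an inflation $b\co B\to N_B$ from the \emph{middle} object tells you nothing new: you already knew $l\in\Lcal$, and you are left with $l\ci r=[1\ 0]\ci\left[\bsm l\\ b\esm\right]\ci r$, which is a word in $\Rcal\ci\Lcal\ci\Rcal$ — no shorter than before. The dual piece you sketch for $r$ has the same defect. Your closing paragraph acknowledges that patching is ``the main obstacle'' and that you only ``plan'' to resolve it; as it stands this is not a proof, and the specific pieces you have assembled do not point towards a resolution.

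The paper's idea is to inflate from the \emph{domain} rather than the middle. Choose an $\sfr$-inflation $i\co A\to N$ with $N\in\Ncal$; then Lemma~\ref{LemAllInf} gives at once the factorization
\[
l\ci r=[1\ 0]\ci\left[\bsm l\ci r\\ i\esm\right],\qquad [1\ 0]\in\Rcal(C\oplus N,C),
\]
so the entire problem reduces to showing that the inflation $\left[\bsm l\ci r\\ i\esm\right]\co A\to C\oplus N$ has cone in $\Ncal$. The paper does this in two steps: first, using \cite[Proposition~3.17]{NP}, the cone $Y'$ of the auxiliary inflation $\left[\bsm r\\ i\esm\right]$ fits in a conflation $N_1\to N\to Y'$ (here $N_1=\CoCone(r)$), so $Y'\in\Ncal$; second, using \cite[Lemma~3.14]{NP} and the factorization $\left[\bsm l\ci r\\ i\esm\right]=(l\oplus\id_N)\ci\left[\bsm r\\ i\esm\right]$, the cone $Z'$ of $\left[\bsm l\ci r\\ i\esm\right]$ fits in a conflation $Y'\to Z'\to N_2$ (here $N_2=\Cone(l)$), so $Z'\in\Ncal$. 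Both steps are routine applications of $2$-out-of-$3$ for $\Ncal$, as you anticipated; what you were missing is the single choice of resolution at $A$ that makes the factorization immediate.
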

\begin{proof}
It suffices to show $\Lcal\ci\Rcal\se\Rcal\ci\Lcal$. 
For any pair of $\sfr$-conflations $N_1\ov{m}{\lra}X\ov{r}{\lra}Y$ and $Y\ov{l}{\lra}Z\ov{e}{\lra}N_2$ with $N_1,N_2\in\Ncal$, let us show that $f=l\ci r$ belongs to $\Rcal\ci\Lcal$. 
Take an $\sfr$-inflation $i\in\C(X,N)$ to some $N\in\Ncal$. As in Lemma~\ref{LemAllInf}, we obtain $\sfr$-conflations $X\ov{\left[\bsm f\\ i\esm\right]}{\lra}Z\oplus N\ov{g}{\lra}Z\ppr$ and $X\ov{\left[\bsm r\\ i\esm\right]}{\lra}Y\oplus N\ov{h}{\lra}Y\ppr$, and factorize $f$ as below.
\[
\xy
(-16,14)*+{N_1}="0";
(0,14)*+{N}="2";
%
(-16,0)*+{X}="10";
(0,0)*+{Z\oplus N}="12";
(16,0)*+{Z\ppr}="14";
(-16,-14)*+{Y}="20";
(0,-14)*+{Z}="22";
(16,-14)*+{N_2}="24";
%
%
{\ar_{m} "0";"10"};
{\ar^{\left[\bsm0\\1\esm\right]} "2";"12"};
%
{\ar^(0.4){\left[\bsm f\\ i\esm\right]} "10";"12"};
{\ar_(0.56){g} "12";"14"};
{\ar_{r} "10";"20"};
{\ar^{[1\ 0]} "12";"22"};
%
{\ar_{l} "20";"22"};
{\ar_{e} "22";"24"};
%
{\ar@{}|\circlearrowright "10";"22"};
\endxy
\]
Here, the rows and columns are $\sfr$-conflations.
By \cite[Lemma~3.14]{NP}, there exist morphisms $d,d\ppr$ which makes the following diagram commutative,
\[
\xy
(-16,7)*+{X}="0";
(0,7)*+{Y\oplus N}="2";
(16,7)*+{Y\ppr}="4";
(-16,-7)*+{X}="10";
(0,-7)*+{Z\oplus N}="12";
(16,-7)*+{Z\ppr}="14";
(0,-21)*+{N_2}="22";
(16,-21)*+{N_2}="24";
{\ar^(0.4){\left[\bsm r\\ i\esm\right]} "0";"2"};
{\ar^(0.54){h} "2";"4"};
{\ar@{=} "0";"10"};
{\ar_{l\oplus \id} "2";"12"};
{\ar^{d} "4";"14"};
{\ar_(0.4){\left[\bsm f\\ i\esm\right]} "10";"12"};
{\ar_(0.54){g} "12";"14"};
{\ar_{[e\ 0]} "12";"22"};
{\ar^{d\ppr} "14";"24"};
{\ar@{=} "22";"24"};
{\ar@{}|\circlearrowright "0";"12"};
{\ar@{}|\circlearrowright "2";"14"};
{\ar@{}|\circlearrowright "12";"24"};
\endxy
\]
such that $Y\ppr\ov{d}{\lra}Z\ppr\ov{d\ppr}{\lra}N_2$ is an $\sfr$-conflation.
By \cite[Proposition~3.17]{NP}, there exists a morphism $i\ppr$ which makes the following diagram commutative,
\[
\xy
(-16,7)*+{N_1}="2";
(0,7)*+{X}="4";
(16,7)*+{Y}="6";
(-16,-7)*+{N}="12";
(0,-7)*+{Y\oplus N}="14";
(16,-7)*+{Y}="16";
(-16,-21)*+{Y\ppr}="22";
(0,-21)*+{Y\ppr}="24";
{\ar^{m} "2";"4"};
{\ar^{r} "4";"6"};
{\ar_{i\ppr} "2";"12"};
{\ar^{\left[\bsm r\\ i\esm\right]} "4";"14"};
{\ar@{=} "6";"16"};
{\ar_(0.46){\left[\bsm 0\\ 1\esm\right]} "12";"14"};
{\ar_(0.54){[1\ 0]} "14";"16"};
{\ar_{h\ci\left[\bsm 0\\ 1\esm\right]} "12";"22"};
{\ar^{h} "14";"24"};
{\ar@{=} "22";"24"};
{\ar@{}|\circlearrowright "2";"14"};
{\ar@{}|\circlearrowright "4";"16"};
{\ar@{}|\circlearrowright "12";"24"};
\endxy
\]
such that $N_1\ov{i\ppr}{\lra}N\ov{h\ci\left[\bsm 0\\ 1\esm\right]}{\lra}Y\ppr$ is an $\sfr$-conflation.
We remark that $i\ppr=i\ci m$ also holds by the commutativity.
Since $\Ncal\se\C$ is thick, it follows $Y\ppr,Z\ppr\in\Ncal$. This shows $\left[\bsm f\\ i\esm\right]\in\Lcal$, and thus $f=[1\ 0]\ci \left[\bsm f\\ i\esm\right]\in\Rcal\ci\Lcal$ as desired.
\end{proof}

\begin{lem}\label{LemSInfDef}
Let $f\in\C(A,B)$ be any morphism.
\begin{enumerate}
\item $f\in\Lcal$ holds if and only if $f$ is an $\sfr$-inflation satisfying $f\in\Ssc_{\Ncal}$.
\item $f\in\Rcal$ holds if and only if $f$ is an $\sfr$-deflation satisfying $f\in\Ssc_{\Ncal}$.
\end{enumerate}
\end{lem}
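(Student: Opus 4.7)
The ``only if'' directions of (1) and (2) follow immediately from the definitions: every element of $\Lcal$ is by construction an $\sfr$-inflation lying in $\Ssc_\Ncal$, and dually every element of $\Rcal$ is an $\sfr$-deflation in $\Ssc_\Ncal$. By passing to the opposite extriangulated category $(\C^{\op},\E^{\op},\sfr^{\op})$, the two statements are exchanged, so it suffices to prove the ``if'' direction of (1). Assume therefore that $f\co A\to B$ is an $\sfr$-inflation with $f\in\Ssc_\Ncal$; I want to conclude $\Cone(f)\in\Ncal$.

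The plan begins by invoking Lemma~\ref{LemRL} to decompose $f=r\ci l$ with $l\in\Lcal$ and $r\in\Rcal$. Fix $\sfr$-triangles $A\ov{l}{\lra}X\ov{p}{\lra}N_1\ov{\lambda}{\dra}$ (with $N_1\in\Ncal$), $N_2\ov{m}{\lra}X\ov{r}{\lra}B\ov{\rho}{\dra}$ (with $N_2\in\Ncal$), and $A\ov{f}{\lra}B\ov{g}{\lra}C\ov{\delta}{\dra}$. I then intend to build the pushout $P$ of $l$ along $f$ by applying (C3) to the first $\sfr$-triangle together with the morphism $f$: this realizes $f_\sas\lambda$ by an $\sfr$-triangle $B\ov{l\ppr}{\lra}P\ov{p\ppr}{\lra}N_1\ov{f_\sas\lambda}{\dra}$ (so $l\ppr\in\Lcal$) and supplies a morphism $k\co X\to P$ with $k\ci l=l\ppr\ci f$. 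The standard pushout property for $\sfr$-inflations in an extriangulated category then yields that $k$ is itself an $\sfr$-inflation with $\Cone(k)\cong\Cone(f)=C$.

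Two applications of (ET4), to the composable $\sfr$-inflation pairs $A\ov{f}{\lra}B\ov{l\ppr}{\lra}P$ and $A\ov{l}{\lra}X\ov{k}{\lra}P$, yield a common cone $E:=\Cone(l\ppr\ci f)=\Cone(k\ci l)$ with connecting $\sfr$-triangles $C\to E\to N_1\ov{?}{\dra}$ and $N_1\to E\to C\ov{?}{\dra}$, respectively. Thickness of $\Ncal$ with $N_1\in\Ncal$ gives the equivalence $C\in\Ncal\Leftrightarrow E\in\Ncal$, which is not yet conclusive. To break this circularity I will bring in the $\Ncal$-layer from $r$ by a third application of (ET4), this time to $N_2\ov{m}{\lra}X\ov{k}{\lra}P$: the pushout identity $k\ci l=l\ppr\ci r\ci l$ forces $(k-l\ppr\ci r)\ci l=0$, so $k-l\ppr\ci r=s\ci p$ for some $s\co N_1\to P$, and combined with $r\ci m=0$ this gives $k\ci m=s\ci(p\ci m)$ factoring through $N_1\in\Ncal$. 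Tracking this factorization through the resulting octahedron, and comparing with the earlier connecting $\sfr$-triangles involving $E$, is intended to force $E\in\Ncal$ and hence $C\in\Ncal$, completing the proof.

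The hard part will be the final step: ensuring that the three octahedron diagrams from the successive applications of (ET4) fit together consistently, so that the $\Ncal$-information from the third application actually propagates to $E$ (rather than merely reproducing the circular equivalence from the first two). I expect a careful diagram chase through the pushout and the explicit extension identities in (ET4) to pin down the required comparison, and if necessary one can supplement the argument by constructing further pushouts/pullbacks using the biresolving hypothesis to introduce additional $\Ncal$-layers as scaffolding.
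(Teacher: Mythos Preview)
Your proposal is not a proof but a plan, and the plan has a genuine gap at exactly the point you flag as ``the hard part''. Your third application of (ET4), to $N_2\ov{m}{\lra}X\ov{k}{\lra}P$, produces an $\sfr$-conflation $B\to \Cone(k\ci m)\to C$. To extract $C\in\Ncal$ from this you would need control over both $B$ and $\Cone(k\ci m)$, but neither lies in $\Ncal$ in general. The observation that $k\ci m$ factors through $N_1\in\Ncal$ does \emph{not} force its cone into $\Ncal$: a morphism factoring through $\Ncal$ can have arbitrary cone. So the circularity you identified after the first two octahedra is not broken by this step, and the sketch does not close.

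The paper's argument avoids the pushout $P$ entirely and is a one-liner. With $f=r\ci l$ (from Lemma~\ref{LemRL}) you have two $\sfr$-conflations $A\ov{l}{\lra}X\to N_1$ and $A\ov{f}{\lra}B\to C$ starting at the same object, related by the $\sfr$-deflation $r\co X\to B$ with $\CoCone(r)=N_2$. This is precisely the input for the dual of \cite[Proposition~3.17]{NP}, which immediately yields a commutative diagram whose right-hand column is an $\sfr$-conflation $N_2\to N_1\to C$. Thickness of $\Ncal$ then gives $C\in\Ncal$, hence $f\in\Lcal$. No pushout, no (ET4), no diagram chase: the Noether-type relation between the three cones is packaged in a single citation. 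Your detour through $P$ re-derives pieces of this relation by hand but loses the direct link between $N_2$ and $C$ that the cited proposition supplies.
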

\begin{proof}
By duality, it is enough to show {\rm (1)}. As the converse is trivial, it suffices to show that any $\sfr$-inflation $f\in\Ssc_{\Ncal}=\Rcal\ci\Lcal$ should belong to $\Lcal$. However, this immediately follows from the dual of \cite[Proposition~3.17]{NP} since $\Ncal\se\C$ is thick.
\end{proof}

\begin{lem}\label{LemAllInf2}
For any $f\in\C(A,B)$, the following are equivalent.
\begin{enumerate}
\item $f\in\Ssc_{\Ncal}$. 
\item $\left[\bsm f\\ i\esm\right]\in\Lcal$ holds for a decomposition as in Lemma~\ref{LemAllInf} {\rm (1)}.
\item $[f\ j]\in\Rcal$ holds for a decomposition as in Lemma~\ref{LemAllInf} {\rm (2)}.
\end{enumerate}
\end{lem}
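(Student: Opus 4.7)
The plan is to show $(2)\Leftrightarrow(1)$ directly and then conclude $(1)\Leftrightarrow(3)$ by duality. For $(2)\Rightarrow(1)$, recall that in the decomposition of Lemma~\ref{LemAllInf} {\rm (1)}, $f=r\ci \left[\bsm f\\ i\esm\right]$ with $r=[1\ 0]\in\Rcal\se\Ssc_{\Ncal}$; once we assume $\left[\bsm f\\ i\esm\right]\in\Lcal\se\Ssc_{\Ncal}$, then $f\in\Ssc_{\Ncal}$ follows from closedness under composition.

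For the converse $(1)\Rightarrow(2)$, first I would observe the factorization
\[ \left[\bsm f\\ i\esm\right]=(f\oplus\id_N)\ci\left[\bsm \id_A\\ i\esm\right] \]
in $\C(A,B\oplus N)$. The morphism $\left[\bsm \id_A\\ i\esm\right]$ sits in the split $\sfr$-conflation $A\ov{\left[\bsm\id_A\\ i\esm\right]}{\lra}A\oplus N\ov{[-i\ 1]}{\lra}N$ (which one verifies by writing $\left[\bsm\id_A\\ i\esm\right]=\left[\bsm 1&0\\ i&1\esm\right]\ci\left[\bsm\id_A\\ 0\esm\right]$ and noting the left factor is an automorphism), hence its cone is $N\in\Ncal$ and so $\left[\bsm\id_A\\ i\esm\right]\in\Lcal$. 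On the other hand, since $f\in\Ssc_{\Ncal}$ and $\id_N\in\Ssc_{\Ncal}$, closedness of $\Ssc_{\Ncal}$ under finite direct sums (condition {\rm (M0)}) gives $f\oplus\id_N\in\Ssc_{\Ncal}$. Thus $\left[\bsm f\\ i\esm\right]$ is a composition of two elements of $\Ssc_{\Ncal}$, hence lies in $\Ssc_{\Ncal}$. Since $\left[\bsm f\\ i\esm\right]$ is already known to be an $\sfr$-inflation by Lemma~\ref{LemAllInf} {\rm (1)}, Lemma~\ref{LemSInfDef} {\rm (1)} then yields $\left[\bsm f\\ i\esm\right]\in\Lcal$.

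Finally, $(1)\Leftrightarrow(3)$ follows from the dual argument, using Lemma~\ref{LemAllInf} {\rm (2)}, the dual split $\sfr$-conflation $N\ppr\ov{\left[\bsm -j\\ 1\esm\right]}{\lra}A\oplus N\ppr\ov{[1\ j]}{\lra}A$ to produce a morphism in $\Rcal$, and Lemma~\ref{LemSInfDef} {\rm (2)}. No step presents a real obstacle; the only point requiring care is the choice of the split $\sfr$-conflation witnessing $\left[\bsm\id_A\\ i\esm\right]\in\Lcal$, since one must confirm that its cone is the object $N\in\Ncal$ (not some other object), which is ensured by the elementary factorization through an automorphism of $A\oplus N$.
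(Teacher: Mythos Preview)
Your proof is correct and follows essentially the same route as the paper. The only cosmetic difference is that for the step $(1)\Rightarrow(2)$ the paper invokes Lemma~\ref{LemSplitN}~{\rm (1)} to obtain $\left[\bsm f\\ i\esm\right]\in\Ssc_{\Ncal}$ directly, whereas you re-derive its content by hand via the factorization $(f\oplus\id_N)\ci\left[\bsm\id_A\\ i\esm\right]$; both arguments then finish with Lemma~\ref{LemSInfDef}~{\rm (1)}.
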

\begin{proof}
By duality, it is enough to show {\rm (1)}\,$\EQ$\,{\rm (2)}. If $f\in\Ssc_{\Ncal}$, then $\left[\bsm f\\ i\esm\right]$ belongs to $\Ssc_{\Ncal}$ by Lemma~\ref{LemSplitN} {\rm (1)}, hence to $\Lcal$ by Lemma~\ref{LemSInfDef}. This shows {\rm (1)}\,$\tc$\,{\rm (2)}. Since $r\in\Rcal$ is always satisfied, the converse is trivial.
\end{proof}

\begin{lem}\label{LemAddedForBiresol}
$p(\Ssc_{\Ncal})=\ovl{\Ssc_{\Ncal}}$ holds.
\end{lem}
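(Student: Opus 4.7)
The plan is to verify condition {\rm (iv)} of Lemma~\ref{LemSplitN}~{\rm (3)}: any split monomorphism $f\co A\to B$ with $\ovl{f}\in\Iso(\ovl{\C})$ lies in $\Ssc_{\Ncal}$. This implies {\rm (iii)} (and thus $p(\Ssc_{\Ncal})=\ovl{\Ssc_{\Ncal}}$) by the argument already given in the proof of Lemma~\ref{LemSplitN}~{\rm (3)}.

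So fix such an $f$, together with a splitting $g\co B\to A$, so that $gf=1_A$. The hypothesis that $\ovl{f}$ is an isomorphism in $\ovl{\C}$ yields a factorization $1_B-fg=ji$ with $N\in\Ncal$, $i\co B\to N$ and $j\co N\to B$. By replacing $i$ and $j$ by $i\ci(1_B-fg)$ and $(1_B-fg)\ci j$, I may further assume $if=0$ and $gj=0$; then $fg$ and $ji$ are complementary idempotents on $B$, and $ij$ is an idempotent on $N$. With this setup I write the factorization
\[
A\ov{\left[\bsm 1_A\\ 0\esm\right]}{\lra} A\oplus N \ov{[f\ j]}{\lra} B,
\]
whose first morphism is a split $\sfr$-inflation with cone $N\in\Ncal$, hence lies in $\Lcal$. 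Invoking Lemma~\ref{LemRL}, which gives $\Ssc_{\Ncal}=\Rcal\ci\Lcal$, the problem reduces to showing that the split epimorphism $[f\ j]$ belongs to $\Rcal$, that is, is an $\sfr$-deflation with cocone in $\Ncal$.

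To produce $[f\ j]\in\Rcal$, I plan to use the biresolving hypothesis to fix an $\sfr$-deflation $\pi\co M\to B$ with $M\in\Ncal$ (and, dually, an $\sfr$-inflation $B\to M'$ with $M'\in\Ncal$), then combine $\pi$ with $[f\ j]$ by means of the dual of Lemma~\ref{LemAllInf} and the octahedral axiom {\rm (ET4)$^{\op}$}. The output will be an $\sfr$-conflation $K\to A\oplus N\ov{[f\ j]}{\lra}B$ in which $K$ is realised as the part of $N$ cut out by the idempotent $1-ij$. Since $\Ncal$ is thick (in particular closed under direct summands) and $N\in\Ncal$, this summand $K$ lies in $\Ncal$, yielding $[f\ j]\in\Rcal$ as required.

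The main obstacle is precisely the last step, namely promoting the complementary idempotent $1-ij$ on $N\in\Ncal$ to a genuine $\sfr$-conflation realising the cocone of $[f\ j]$. The difficulty is that $\C$ is not assumed idempotent complete, so one cannot simply split $1-ij$ inside $\C$; the biresolving hypothesis must be used in an essential way to transport the idempotent into an object where the splitting can be carried out within $\Ncal$, after which thickness ensures membership in $\Ncal$. Once this is settled, the rest of the argument — the factorization $f=[f\ j]\ci\left[\bsm 1_A\\0\esm\right]$ and an appeal to Lemma~\ref{LemRL} — is formal.
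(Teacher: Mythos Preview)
Your outline identifies the right reduction (verify condition {\rm (iv)} of Lemma~\ref{LemSplitN}~{\rm (3)}) and a reasonable factorization of $f$, but the gap you flag is genuine and you have not closed it. Showing $[f\ j]\in\Rcal$ requires, first of all, that $[f\ j]$ be an $\sfr$-deflation; you only know it is a split epimorphism, and in an extriangulated category without weak idempotent completeness this does not suffice. Your plan to ``transport the idempotent'' via the biresolving hypothesis and {\rm (ET4)$^{\op}$} is a description of what must happen, not an argument. Even granting an $\sfr$-conflation $K\to A\oplus N\oplus M\ov{[f\ j\ \pi]}{\lra}B$ from Lemma~\ref{LemAllInf}~{\rm (2)}, you obtain only $K\oplus B\cong A\oplus N\oplus M$, and it is unclear how to extract from this either $K\in\Ncal$ or the stronger conclusion that $[f\ j]$ itself is an $\sfr$-deflation.

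The paper proceeds differently and sidesteps this obstacle entirely. Rather than factoring $f$ through the split epimorphism $[f\ j]$, it uses Lemma~\ref{LemAllInf}~{\rm (1)} to produce an $\sfr$-inflation $\left[\bsm f\\i\esm\right]\co A\to B\oplus N$ with $N\in\Ncal$; since $f$ is split monic, so is $\left[\bsm f\\i\esm\right]$, and hence the associated $\sfr$-conflation splits, giving $A\oplus C\cong B\oplus N$ for the cone $C$. The hypothesis $\ovl{f}\in\Iso(\ovl{\C})$ then forces $\ovl{C}\cong 0$, so $1_C$ factors through some $N'\in\Ncal$, and a second application of the same trick to $C$ gives $C\oplus D\cong N'\oplus N''$ with $N',N''\in\Ncal$; closure of $\Ncal$ under direct summands now yields $C\in\Ncal$, whence $\left[\bsm f\\i\esm\right]\in\Lcal$ and $f\in\Ssc_{\Ncal}$ by Lemma~\ref{LemAllInf2}. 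The key point is that Lemma~\ref{LemAllInf} hands you an honest $\sfr$-inflation for free, so you never have to promote a split morphism to an $\sfr$-inflation or $\sfr$-deflation by hand; the iteration then reduces the problem to closure under summands, which is part of thickness.
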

\begin{proof}
By Lemma~\ref{LemSplitN}, it suffices to show that condition {\rm (iv)} in Lemma~\ref{LemSplitN} {\rm (3)} is satisfied. Let $f\in\C(A,B)$ be any split monomorphism such that $\ovl{f}$ is an isomorphism in $\ovl{\C}$. As in Lemma~\ref{LemAllInf} {\rm (1)}, we may take an $\sfr$-conflation $A\ov{\left[\bsm f\\ i\esm\right]}{\lra}B\oplus N\to C$ with $N\in\Ncal$. Since $f$ is a split monomorphism, so is $\left[\bsm f\\ i\esm\right]$. This means that $\left[\bsm f\\ i\esm\right]$ induces an isomorphism $A\oplus C\cong B\oplus N$ in $\C$. Since $\ovl{f}$ is an isomorphism in $\ovl{\C}$, it follows $C\cong0$ in $\ovl{\C}$. Then there is a split monomorphism $s\colon C\to N\ppr$ with $N\in\Ncal$. As in Lemma~\ref{LemAllInf} {\rm (1)}, we may take an $\sfr$-conflation $C\ov{\left[\bsm s\\ i\ppr\esm\right]}{\lra}N\ppr\oplus N\pprr\to D$ with $N\pprr\in\Ncal$. In the same manner as before, this induces an isomorphism $C\oplus D\cong N\ppr\oplus N\pprr$ in $\C$. Then we have $C\in\Ncal$ since $\Ncal\se\C$ is closed by direct summands. Thus $\left[\bsm f\\ i\esm\right]\in\Lcal$ holds, hence $f\in\Ssc_{\Ncal}$ by Lemma~\ref{LemAllInf2}.
\end{proof}


\begin{lem}\label{LemForN2}
For any $l\in\Lcal$, morphism $\ovl{l}$ is monomorphic in $\ovl{\C}$.
Dually $\ovl{r}$ is epimorphic in $\ovl{\C}$, for any $r\in\Rcal$.
\end{lem}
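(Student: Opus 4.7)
\emph{Plan.} By duality it suffices to show that $\overline{l}$ is monic in $\overline{\C}$ for every $l\in\Lcal$. Fix an $\sfr$-triangle $A\overset{l}{\to}B\overset{g}{\to}N\overset{\del}{\dashrightarrow}$ with $N\in\Ncal$, and suppose $f\in\C(X,A)$ satisfies $\overline{l}\ci\overline{f}=0$, so that $lf=ji$ for some $i\in\C(X,M)$, $j\in\C(M,B)$ with $M\in\Ncal$; the goal is to factor $f$ through an object of $\Ncal$.

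The strategy is to dualize the proof of Lemma~\ref{LemThickFirstProperties}(2), using (C3$'$) in place of (C3), and then to close the argument via biresolvingness. First I apply (C3$'$) to the triangle above with $c=g\ci j\co M\to N$: realizing $(gj)\uas\del\in\E(M,A)$ by an $\sfr$-triangle $A\overset{l''}{\to}B''\overset{g''}{\to}M\overset{(gj)\uas\del}{\dashrightarrow}$ gives $l''\in\Lcal$, yields a morphism $b\co B''\to B$ with $b\ci l''=l$ and $g\ci b=gj\ci g''$, and produces a second $\sfr$-triangle
\[
B''\overset{\binom{-g''}{b}}{\to}M\oplus B\overset{[gj,\,g]}{\to}N\overset{l''\sas\del}{\dashrightarrow}.
\]
Since $[gj,g]\binom{0}{ji}=gji=glf=0$, an application of (C1) to this last triangle lifts $\binom{0}{ji}$ to a morphism $h\co X\to B''$ with $g''h=0$ and $bh=ji$; exactness of $\C(X,A)\overset{l''\ci-}{\to}\C(X,B'')\overset{g''\ci-}{\to}\C(X,M)$ then yields $k\co X\to A$ with $l''k=h$, and hence $lk=b(l''k)=bh=ji=lf$.

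Biresolvingness is now brought in to close the remaining gap. Choose an $\sfr$-inflation $\al\co A\to N_A$ with $N_A\in\Ncal$; by Lemma~\ref{LemAllInf}(1), $\binom{l}{\al}\co A\to B\oplus N_A$ is an $\sfr$-inflation, and (C3) applied to $A\to B\to N$ with $a=\al$ identifies its cone $B'$ as an extension of $N$ by $N_A$, so that $B'\in\Ncal$ by thickness and $\binom{l}{\al}\in\Lcal$. Because $\binom{l}{\al}\ci f=(j\oplus\id_{N_A})\binom{i}{\al f}$ factors through $M\oplus N_A\in\Ncal$, the dualized construction of the preceding paragraph, rerun with $\binom{l}{\al}$ and $M\oplus N_A$ in place of $l$ and $M$, will produce a morphism $k'\co X\to A$ satisfying simultaneously $lk'=lf$ and $\al k'=\al f$. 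Combining these with the equation $lk=lf$ from the first construction, and invoking the already-established second assertion of Lemma~\ref{LemThickFirstProperties}(2) (that $\overline{r}$ is monic for $r\in\Rcal$) together with Lemma~\ref{LemAllInf2} applied to appropriate deflations from $\Ncal$ onto $A$ furnished by biresolvingness, will force $\overline{f}=0$.

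\emph{Main obstacle.} The principal difficulty will be bridging the equalities $lk=lf$ and $\al k'=\al f$ supplied by the (C3$'$)--(C1) construction with the desired conclusion $f\in[\Ncal]$: in an exact category the monicity of $l$ would settle the matter immediately, but in a general extriangulated category $l$ has a nontrivial kernel which must be absorbed into $[\Ncal]$. Biresolvingness is exactly what supplies the extra $\sfr$-inflation $\al\co A\to N_A$ needed to pin down $f$ modulo $\Ncal$ using two parallel applications of the dualized argument, and coordinating these two applications is the most delicate step.
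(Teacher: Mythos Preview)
Your proposal has a genuine gap at the point you yourself flag as the ``most delicate step.'' The two (C3$'$)--(C1) constructions produce morphisms $k,k'\in\C(X,A)$ with $l(f-k)=0$ and $\left[\bsm l\\ \alpha\esm\right](f-k')=0$, but neither $k$ nor $k'$ comes with any relation to $[\Ncal]$; the constructions merely rephrase the hypothesis. Concretely, from $\left[\bsm l\\ \alpha\esm\right](f-k')=0$ you would need to conclude that $f-k'$ factors through an object of $\Ncal$, but in a general extriangulated category the kernel of an $\sfr$-inflation is not controlled by its cone: $(\mathrm{C1})$ gives exactness of $\C(X,A)\to\C(X,B\oplus N_A)\to\C(X,B')$, not at $\C(X,A)$. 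Lemma~\ref{LemThickFirstProperties}~(2) tells you $\ovl{r}$ is monic for $r\in\Rcal$, but there is no $r\in\Rcal$ in sight whose kernel contains $f$ (or $f-k'$); and Lemma~\ref{LemAllInf2} only characterizes membership in $\Ssc_\Ncal$, which is a different question. So the closing step is not just delicate but absent.

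The paper's argument takes a much shorter route that avoids this obstacle entirely. Given $l\ci f=j\ci i$ with $j\co N\to Y'$ and $N\in\Ncal$, biresolvingness supplies an $\sfr$-deflation $N'\to Y'$ with $N'\in\Ncal$; replacing $N$ by $N\oplus N'$, one may assume $j$ itself is an $\sfr$-deflation. Then $[l\ j]\co Y\oplus N\to Y'$ is an $\sfr$-deflation, and $\left[\bsm f\\ -i\esm\right]$ lies in its kernel, so $f$ factors through the cocone $W$ of $[l\ j]$. Since $l\in\Ssc_\Ncal$, the dual of Lemma~\ref{LemSplitN}~(1) gives $[l\ j]\in\Ssc_\Ncal$, whence $W\in\Ncal$ by Lemma~\ref{LemSInfDef}~(2). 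The key idea you are missing is to build an explicit object $W$ of $\Ncal$ through which $f$ factors, rather than trying to squeeze information out of unspecified lifts $k,k'$.
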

\begin{proof}
Suppose that $f\in\C(X,Y)$ and $l\in\Lcal(Y,Y\ppr)$ satisfy $\ovl{l}\ci\ovl{f}=0$ in $\ovl{\C}$. Then there exists a commutative square
\[
\xy
(-6,6)*+{X}="0";
(6,6)*+{N}="2";
(-6,-6)*+{Y}="4";
(6,-6)*+{Y\ppr}="6";
{\ar^{i} "0";"2"};
{\ar_{f} "0";"4"};
{\ar^{j} "2";"6"};
{\ar_{l} "4";"6"};
{\ar@{}|\circlearrowright "0";"6"};
\endxy
\]
in $\C$ for some $N\in\Ncal$. Since $\Ncal\se\C$ is biresolving, there is an $\sfr$-deflation $N\ppr\to Y\ppr$ for some $N\ppr\in\Ncal$. Replacing $N$ by $N\oplus N\ppr$, we may assume that $j$ is an $\sfr$-deflation, from the beginning. Then there is an $\sfr$-conflation
\[ W\to Y\oplus N\ov{[l\ j]}{\lra}Y\ppr \]
for some $W$. By the commutativity of the above square, we see that $f$ factors through $W$. By the dual of Lemma~\ref{LemSplitN} {\rm (1)}, we have $[l\ j]\in\Ssc_{\Ncal}$. Thus Lemma~\ref{LemSInfDef} {\rm (2)} shows $W\in\Ncal$, 
hence $\ovl{f}=0$ follows. The latter part can be shown dually.
\end{proof}

\begin{lem}\label{LemForN3}
For any $X\ov{f}{\lra}Y\ov{l}{\lla}Y\ppr$ with $l\in\Lcal$, there exists a commutative square
\begin{equation}\label{Square_F6}
\xy
(-8,6)*+{X\ppr}="0";
(8,6)*+{Y\ppr}="2";
(-8,-6)*+{X\oplus N}="4";
(8,-6)*+{Y}="6";
{\ar^{f\ppr} "0";"2"};
{\ar_{l\ppr} "0";"4"};
{\ar^{l} "2";"6"};
{\ar_(0.6){[f\ j]} "4";"6"};
{\ar@{}|\circlearrowright "0";"6"};
\endxy
\end{equation}
in $\C$ such that $N\in\Ncal$ and $l\ppr\in\Lcal$. Thus we obtain a commutative diagram 
\[
\xy
(-6,6)*+{X\ppr}="0";
(6,6)*+{Y\ppr}="2";
(-6,-6)*+{X}="4";
(6,-6)*+{Y}="6";
{\ar^{\ovl{f}\ppr} "0";"2"};
{\ar_{\ovl{s}} "0";"4"};
{\ar^{\ovl{l}} "2";"6"};
{\ar_{\ovl{f}} "4";"6"};
{\ar@{}|\circlearrowright "0";"6"};
\endxy
\]
in $\ovl{\C}$, where $s=[1\ 0]\ci l\ppr\in\Ssc_{\Ncal}$.
%
%
\end{lem}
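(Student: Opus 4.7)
My plan is to avoid the naive (C3')-pullback of $l$ along $[f\ j]$ (which would produce an inflation $X\ppr\to X\oplus N\oplus Y$ with an extraneous $Y$-summand in the codomain) and instead construct $l\ppr$ by directly annihilating the obstruction to lifting $f$ inside the biresolving subcategory. Since $l\in\Lcal$, I fix the $\sfr$-conflation $Y\ppr\ov{l}{\lra}Y\ov{p}{\lra}M\ov{\del}{\dra}$ with $M\in\Ncal$, and invoke biresolvingness to select an $\sfr$-deflation $j\co N\to Y$ with $N\in\Ncal$. By Lemma~\ref{LemAllInf} {\rm (2)}, the matrix $[f\ j]\co X\oplus N\to Y$ is an $\sfr$-deflation.

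The crucial observation is that the composite $p\ci[f\ j]=[pf\ pj]\co X\oplus N\to M$ is again an $\sfr$-deflation: $pj\co N\to M$ is a composition of $\sfr$-deflations (so itself an $\sfr$-deflation by {\rm (C4')}), and a second application of Lemma~\ref{LemAllInf} {\rm (2)} to $pf\co X\to M$ paired with the $\sfr$-deflation $pj$ from $N\in\Ncal$ yields the claim. I take the $\sfr$-conflation $X\ppr\ov{l\ppr}{\lra}X\oplus N\ov{p\ci[f\ j]}{\lra}M$ realizing it; since $\Cone(l\ppr)=M\in\Ncal$, we have $l\ppr\in\Lcal$ as required. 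By exactness $p\ci[f\ j]\ci l\ppr=0$, so {\rm (C1')} applied to the $\sfr$-triangle for $l$ supplies $f\ppr\co X\ppr\to Y\ppr$ with $l\ci f\ppr=[f\ j]\ci l\ppr$, which is precisely the desired commutative square in $\C$.

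For the induced square in $\ovl{\C}$, I set $s=[1\ 0]\ci l\ppr$ and decompose $[f\ j]\ci l\ppr=f\ci s+j\ci([0\ 1]\ci l\ppr)$; the second summand factors through $N\in\Ncal$, so $\ovl{l}\ci\ovl{f}\ppr=\ovl{f}\ci\ovl{s}$ holds in $\ovl{\C}$. Finally $s\in\Ssc_{\Ncal}$ because $[1\ 0]\co X\oplus N\to X$ is a split $\sfr$-deflation with cocone $N\in\Ncal$, so $[1\ 0]\in\Rcal$, and $l\ppr\in\Lcal$. The main conceptual obstacle is the very first step: resisting the reflex to pull $l$ back along $[f\ j]$ via {\rm (C3')}, and instead recognizing that one should push the obstruction $p\ci[f\ j]$ down into the biresolving subcategory and take its kernel, after which the cone automatically lies in $\Ncal$ and the codomain $X\oplus N$ is exactly as required.
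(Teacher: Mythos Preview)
Your proof is correct and follows essentially the same route as the paper's. The paper's proof is a one-liner: after forming the $\sfr$-deflation $[f\ j]\co X\oplus N\to Y$, it simply invokes {\rm (ET4)$^\op$} applied to the pair of deflations $[f\ j]$ and $p\co Y\to M$, which directly yields the conflation $X\ppr\ov{l\ppr}{\lra}X\oplus N\ov{p\ci[f\ j]}{\lra}M$ together with the map $f\ppr\co X\ppr\to Y\ppr$ making the square commute. Your argument unpacks this: you first establish that $p\ci[f\ j]$ is a deflation, take its cocone $l\ppr$, and then produce $f\ppr$ via the exactness in {\rm (C1')}. One small simplification: since $[f\ j]$ and $p$ are both $\sfr$-deflations, {\rm (C4')} already gives that $p\ci[f\ j]$ is an $\sfr$-deflation, so the detour through $[pf\ pj]$ and a second appeal to Lemma~\ref{LemAllInf} {\rm (2)} is unnecessary.
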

\begin{proof}
%
Suppose that $X\ov{f}{\lra}Y\ov{l}{\lla}Y\ppr$ satisfies $l\in\Lcal$. By the assumption, there is an $\sfr$-deflation $j\in\C(N,Y)$ from some $N\in\Ncal$. Then $[f\ j]\co X\oplus N\to Y$ is also an $\sfr$-deflation, and a commutative square $(\ref{Square_F6})$ can be obtained by using {\rm (ET4)$^\op$}.
\end{proof}

\begin{prop}\label{PropSatisfy}
Let $\Ncal\se\C$ be a biresolving thick subcategory. 
Then $\Ssc_{\Ncal}$ satisfies {\rm (MR1)},\,$\ldots$\,,\,{\rm (MR4)}.
Moreover, $\ovl{\Ssc_{\Ncal}}$ can be described as
\begin{equation}\label{Eq_MonoEpi}
\ovl{\Ssc_{\Ncal}}=\{\ovl{x}\in\Mor(\ovl{\C})\mid \ovl{x}\ \text{is both monomorphic and epimorphic} \}. 
\end{equation}
\end{prop}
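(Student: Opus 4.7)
The plan is to prove the explicit description (\ref{Eq_MonoEpi}) first and then deduce the four conditions from it, observing along the way that $\ovl{\Ssc_{\Ncal}}=\Iso(\ovl{\C})$ in the biresolving case. One inclusion of (\ref{Eq_MonoEpi}) is immediate: by Lemma~\ref{LemRL} every element of $\ovl{\Ssc_{\Ncal}}$ is (up to composition with isomorphisms) of the form $\ovl{r}\ci\ovl{l}$ with $l\in\Lcal,r\in\Rcal$, and Lemma~\ref{LemThickFirstProperties}(2) combined with Lemma~\ref{LemForN2} shows that each of $\ovl{l},\ovl{r}$ is both monic and epic in $\ovl{\C}$.

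For the reverse inclusion, I would take a mono-epi $\ovl{f}\co A\to B$ in $\ovl{\C}$ and use Lemma~\ref{LemAllInf}(1) to factor $f=r\ci j$ with $r=[1\ 0]\in\Rcal$ and $j\co A\to B\oplus N$ an $\sfr$-inflation. Since $\ovl{r}$ is an isomorphism in $\ovl{\C}$, $\ovl{j}$ itself is both monic and epic. Complete $j$ to an $\sfr$-triangle $A\ov{j}{\lra}B\oplus N\ov{g}{\lra}C\ov{\del}{\dra}$. By Remark~\ref{Rem_IdealQuotLoc} the ideal quotient carries an extriangulated structure $(\ovl{\C},\ovl{\Ebb},\ovl{\sfr})$ to which this $\sfr$-triangle descends; since $\ovl{g}\ci\ovl{j}=0$ and $\ovl{j}$ is epic, we obtain $\ovl{g}=0$. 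Applying (C1) for $(\ovl{\C},\ovl{\Ebb},\ovl{\sfr})$ to the resulting $\ovl{\sfr}$-triangle with test object $X=B\oplus N$ shows that $\id_{B\oplus N}$ lies in the image of $\ovl{j}\ci-$, so $\ovl{j}$ is split epic; combined with $\ovl{j}$ being monic, $\ovl{j}$ is an isomorphism, and therefore so is $\ovl{f}$. By Lemma~\ref{LemAddedForBiresol} together with Lemma~\ref{LemSplitN}(3), every isomorphism of $\ovl{\C}$ lies in $\ovl{\Ssc_{\Ncal}}$, which finishes (\ref{Eq_MonoEpi}) and moreover shows $\ovl{\Ssc_{\Ncal}}=\Iso(\ovl{\C})$.

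With this identification, (MR1) and (MR2) hold trivially since isomorphisms satisfy 2-out-of-3 and form a (trivial) multiplicative system. Condition (MR3) follows from Lemma~\ref{LemM3} (which proves (M3) for $\Ssc_{\Ncal}$) and Claim~\ref{ClaimMultLoc}(3), once Lemma~\ref{LemAddedForBiresol} supplies $p(\Ssc_{\Ncal})=\ovl{\Ssc_{\Ncal}}$. For (MR4), I would take two composable elements $\vbf_1\ci\ovl{x_1}\ci\ubf_1$ and $\vbf_2\ci\ovl{x_2}\ci\ubf_2$ of $\ovl{\Mcal}_{\mathsf{inf}}$, lift the middle isomorphism $\ubf_2\ci\vbf_1$ to some $w\in\Ssc_{\Ncal}$ via Lemma~\ref{LemSplitN}(3), and apply Lemma~\ref{LemAllInf}(1) to $x_2\ci w$ to write it as $r\ci y$ with $r\in\Rcal$ and $y$ an $\sfr$-inflation. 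Then (C4) makes $y\ci x_1$ an $\sfr$-inflation, and the full composition becomes $(\vbf_2\ci\ovl{r})\ci\ovl{y\ci x_1}\ci\ubf_1\in\ovl{\Mcal}_{\mathsf{inf}}$, since $\vbf_2\ci\ovl{r}$ and $\ubf_1$ are isomorphisms in $\ovl{\C}$.

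The main obstacle is the reverse inclusion in (\ref{Eq_MonoEpi})---recognizing that a mono-epi in $\ovl{\C}$ must already be an isomorphism there. The decisive trick is to apply (C1) to the induced $\ovl{\sfr}$-triangle whose middle morphism vanishes; this is only available because the ideal quotient by the thick subcategory $\Ncal_{\Ssc_{\Ncal}}=\Ncal$ (Lemma~\ref{LemNSN}) inherits an extriangulated structure via Remark~\ref{Rem_IdealQuotLoc}, so that biresolvingness enters in an essential (if somewhat hidden) way through the existence of the $\sfr$-triangle completing $j$.
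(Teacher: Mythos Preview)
Your central step---invoking Remark~\ref{Rem_IdealQuotLoc} to equip $\ovl{\C}=\C/[\Ncal]$ with an extriangulated structure and then applying {\rm (C1)} there---is not valid. That remark (and \cite[Proposition~3.30]{NP} behind it) requires the subcategory to consist of objects that are both projective and injective in $\CEs$. A biresolving thick subcategory need not have this property: in a triangulated category every thick subcategory is biresolving, yet the only projective-injective object is zero. So there is no $(\ovl{\C},\ovl{\Ebb},\ovl{\sfr})$ to appeal to, and the argument ``$\ovl{g}=0$, hence by {\rm (C1)} $\ovl{j}$ is split epic'' has no content.

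The conclusion you draw from it, namely $\ovl{\Ssc_{\Ncal}}=\Iso(\ovl{\C})$, is actually false. Take $\T=D^b(\modu kA_2)$ with $\Ncal$ the thick subcategory generated by the simple $S_2$; the surjection $P_1\to S_1$ has cone $S_2[1]\in\Ncal$ and hence lies in $\Ssc_{\Ncal}$, but it is not invertible in $\T/[\Ncal]$ since $\Hom_{\T}(S_1,P_1)=0$ and $\id_{P_1}$ does not factor through any shift of $S_2$. Because your verification of {\rm (MR1)} and {\rm (MR2)} rests entirely on $\ovl{\Ssc_{\Ncal}}=\Iso(\ovl{\C})$, both collapse. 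The paper instead proves {\rm (M1)} and {\rm (MR2)} directly from the decomposition $\Ssc_{\Ncal}=\Rcal\ci\Lcal$ (Lemma~\ref{LemRL}) together with Lemmas~\ref{LemThickFirstProperties}, \ref{LemForN2}, \ref{LemForN3}, and obtains the reverse inclusion in $(\ref{Eq_MonoEpi})$ by showing that the cone of an $\sfr$-inflation representing a mono-epi lies in $\Ncal$, via {\rm (ET4)$^\op$}, {\rm (M3)} and thickness. Your {\rm (MR4)} argument is essentially correct once you stop calling elements of $\ovl{\Ssc_{\Ncal}}$ ``isomorphisms'' and simply note they lie in $\ovl{\Ssc_{\Ncal}}$; this is close to the paper's one-line deduction from Lemma~\ref{LemAllInf}.
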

\begin{proof}
By Lemma~\ref{LemAddedForBiresol}, we have $p(\Ssc_{\Ncal})=\ovl{\Ssc_{\Ncal}}$. Also, {\rm (M3)} is already shown in Lemma~\ref{LemM3}, and {\rm (MR4)} is immediate from Lemma~\ref{LemAllInf}. Thus by Claim~\ref{ClaimMultLoc}, it suffices to confirm conditions {\rm (M1),(MR2)} and show $(\ref{Eq_MonoEpi})$.

{\rm (M1)}
$\Ssc_{\Ncal}\se\Mcal$ is closed by composition, by its definition. Let
\begin{equation}\label{Diag_Comm_*}
\xy
(-7,6)*+{A}="0";
(-7,-6)*+{B}="2";
(2,2)*+{}="3";
(7,-6)*+{C}="4";
{\ar_{f} "0";"2"};
{\ar_{g} "2";"4"};
{\ar^{h} "0";"4"};
{\ar@{}|\circlearrowright "2";"3"};
\endxy
\end{equation}
be any commutative diagram in $\C$ satisfying $h\in\Ssc_{\Ncal}$. Let us show that $f\in\Ssc_{\Ncal}$ holds if and only if $g\in\Ssc_{\Ncal}$.

As the converse can be shown in a dual manner, it is enough to show that $g\in\Ssc_{\Ncal}$ implies $f\in\Ssc_{\Ncal}$. 
Since $g$ is a composition of morphisms in $\Lcal$ and $\Rcal$, it suffices to show in the case of $g\in\Lcal$ and of $g\in\Rcal$.

Take an $\sfr$-inflation $i\in\C(A,N)$ to some $N\in\Ncal$. Then $f\ppr=\left[\bsm f\\ i\esm\right],h\ppr=\left[\bsm h\\ i\esm\right]$ are $\sfr$-inflations, hence we obtain $\sfr$-triangles
\[
A\ov{\left[\bsm f\\ i\esm\right]}{\lra}B\oplus N\ov{y}{\lra}X\ov{\del}{\dra}\quad \text{and}\quad
A\ov{\left[\bsm h\\ i\esm\right]}{\lra}C\oplus N\ov{z}{\lra}N\ppr\ov{\rho}{\dra},
\]
in which $N\ppr\in\Ncal$ holds by Lemma~\ref{LemAllInf2}. It suffices to show $X\in\Ncal$, again by the same lemma.
Remark that $(\ref{Diag_Comm_*})$ yields another commutative diagram
\[
\xy
(-10,7)*+{A}="0";
(-10,-8)*+{B\oplus N}="2";
(2,4)*+{}="3";
(10,-8)*+{C\oplus N}="4";
{\ar_{f\ppr} "0";"2"};
{\ar_{g\ppr} "2";"4"};
{\ar^{h\ppr} "0";"4"};
{\ar@{}|\circlearrowright "2";"3"};
\endxy
\]
in $\C$, where we put $g\ppr=g\oplus\id_N$.
If $g\in\Lcal$, then $g\ppr\in\Lcal$ holds. Thus by {\rm (ET4)} we obtain a commutative diagram
\[
\xy
(-24,14)*+{A}="0";
(-8,14)*+{B\oplus N}="2";
(8,14)*+{X}="4";
(-24,0)*+{A}="10";
(-8,0)*+{C\oplus N}="12";
(8,0)*+{N\ppr}="14";
(-8,-14)*+{N\pprr}="22";
(8,-14)*+{N\pprr}="24";
{\ar^(0.44){f\ppr} "0";"2"};
{\ar^(0.56){y} "2";"4"};
{\ar@{=} "0";"10"};
{\ar^{g\ppr} "2";"12"};
{\ar^{} "4";"14"};
{\ar_(0.44){h\ppr} "10";"12"};
{\ar_(0.56){z} "12";"14"};
{\ar_{} "12";"22"};
{\ar^{} "14";"24"};
{\ar@{=} "22";"24"};
{\ar@{}|\circlearrowright "0";"12"};
{\ar@{}|\circlearrowright "2";"14"};
{\ar@{}|\circlearrowright "12";"24"};
\endxy
\]
with $N\pprr\in\Ncal$, whose rows and columns are $\sfr$-conflations. Since $\Ncal\se\C$ is thick, it follows $X\in\Ncal$.

On the other hand if $g\in\Rcal$, then $g\ppr\in\Rcal$ holds. Thus by the dual of \cite[Proposition~3.17]{NP} we obtain a diagram
\[
\xy
(-8,14)*+{N\pprr}="2";
(8,14)*+{N\pprr}="4";
(-24,0)*+{A}="10";
(-8,0)*+{B\oplus N}="12";
(8,0)*+{X}="14";
(-24,-14)*+{A}="20";
(-8,-14)*+{C\oplus N}="22";
(8,-14)*+{N\ppr}="24";
{\ar@{=} "2";"4"};
{\ar^{} "2";"12"};
{\ar^{} "4";"14"};
{\ar^(0.44){f\ppr} "10";"12"};
{\ar_(0.56){y} "12";"14"};
{\ar@{=} "10";"20"};
{\ar_{g\ppr} "12";"22"};
{\ar^{} "14";"24"};
{\ar_(0.44){h\ppr} "20";"22"};
{\ar_(0.56){z} "22";"24"};
{\ar@{}|\circlearrowright "2";"14"};
{\ar@{}|\circlearrowright "10";"22"};
{\ar@{}|\circlearrowright "12";"24"};
\endxy
\]
with $N\pprr\in\Ncal$, whose rows and columns are $\sfr$-conflations. Since $\Ncal\se\C$ is thick, it follows $X\in\Ncal$ also in this case.

{\rm (MR2)} 
By Lemmas~\ref{LemThickFirstProperties} and \ref{LemForN2}, any $\ovl{s}\in\ovl{\Ssc_{\Ncal}}$ is monomorphic and epimorphic in $\ovl{\C}$.
Remaining conditions also follow from Lemma~\ref{LemThickFirstProperties}, Lemma~\ref{LemForN3} and its dual.


As for $(\ref{Eq_MonoEpi})$, it remains to show that
\[ \ovl{\Ssc_{\Ncal}}\supseteq\{\ovl{x}\in\Mor(\ovl{\C})\mid \ovl{x}\ \text{is both monomorphic and epimorphic}\} \]
holds. Let $x\in\C(A,B)$ be any morphism such that $\ovl{x}$ is monomorphic and epimorphic in $\ovl{\C}$. By Lemma~\ref{LemAllInf}, we may assume that $x$ is an $\sfr$-inflation, from the beginning. Since $\Ncal\se\C$ is biresolving, there is an $\sfr$-deflation $N\ov{g}{\lra}B$ and an $\sfr$-inflation $B\ov{i}{\lra}N\ppr$ for some $N,N\ppr\in\Ncal$. There exist $\sfr$-triangles $A\ov{x}{\lra}B\ov{y}{\lra}C\ov{\del}{\dra}$ and $D\ov{f}{\lra}N\ov{g}{\lra}B\ov{\rho}{\dra}$. By {\rm (ET4)$^\op$}, we obtain a diagram made of $\sfr$-triangles as below,
\[
\xy
(-12,12)*+{D}="0";
(0,12)*+{D}="2";
(-12,0)*+{E}="10";
(0,0)*+{N}="12";
(12,0)*+{C}="14";
(24,0)*+{}="16";
(-12,-12)*+{A}="20";
(0,-12)*+{B}="22";
(12,-12)*+{C}="24";
(24,-12)*+{}="26";
(-12,-24)*+{}="30";
(0,-24)*+{}="32";
{\ar@{=} "0";"2"};
{\ar^{} "0";"10"};
{\ar^{f} "2";"12"};
{\ar_{} "10";"12"};
{\ar_{} "12";"14"};
{\ar@{-->}^{\thh} "14";"16"};
{\ar_{e} "10";"20"};
{\ar^{g} "12";"22"};
{\ar@{=} "14";"24"};
{\ar_{x} "20";"22"};
{\ar_{y} "22";"24"};
{\ar@{-->}_{\del} "24";"26"};
{\ar@{-->} "20";"30"};
{\ar@{-->}^{\rho} "22";"32"};
{\ar@{}|\circlearrowright "0";"12"};
{\ar@{}|\circlearrowright "10";"22"};
{\ar@{}|\circlearrowright "12";"24"};
\endxy
\]
for some $\Ebb$-extension $\thh$.
Since $\ovl{x}$ is monomorphic and epimorphic in $\ovl{\C}$, it follows $\ovl{e}=0$ and $\ovl{y}=0$. Thus we have $\ovl{\del}=\ovl{e\sas\thh}=0$, which means that there exists some $s\in\Ssc_{\Ncal}(A,A\ppr)$ such that $s\sas\del=0$. Since $\Ssc_{\Ncal}$ satisfies {\rm (M3)}, there exists $t\in\Ssc_{\Ncal}(B,A\ppr\oplus C)$ which gives the following morphism of $\sfr$-triangles.
\[
\xy
(-15,6)*+{A}="0";
(0,6)*+{B}="2";
(15,6)*+{C}="4";
(28,6)*+{}="6";
(-15,-6)*+{A\ppr}="10";
(0,-6)*+{A\ppr\oplus C}="12";
(15,-6)*+{C}="14";
(28,-6)*+{}="16";
{\ar^{x} "0";"2"};
{\ar^{y} "2";"4"};
{\ar@{-->}^{\del} "4";"6"};
{\ar_{s} "0";"10"};
{\ar_{t} "2";"12"};
{\ar@{=} "4";"14"};
{\ar_(0.4){\left[\bsm1\\0\esm\right]} "10";"12"};
{\ar_(0.6){[0\ 1]} "12";"14"};
{\ar@{-->}_{0} "14";"16"};
{\ar@{}|\circlearrowright "0";"12"};
{\ar@{}|\circlearrowright "2";"14"};
\endxy
\]
Then the commutativity of the right square means that $t$ is of the form $t=\left[\bsm b\\ y\esm\right]$ for some $b\in\C(B,A\ppr)$.
Since $\left[\bsm i\\ b \esm\right]\in\C(B,N\ppr\oplus A\ppr)$ is an $\sfr$-inflation, we have an $\sfr$-conflation $B\ov{\left[\bsm i\\ b\esm\right]}{\lra}N\ppr\oplus A\ppr\to Z$ associated to it. This in turn gives rise to an $\sfr$-conflation
\[ B\ov{\left[\bsm i\\ b\\0\esm\right]}{\lra}N\ppr\oplus A\ppr\oplus C\to Z\oplus C. \]
Since $\ovl{t}=\left[\bsm\ovl{b}\\0\esm\right]\in\ovl{\Ssc_{\Ncal}}$, we also have $\ovl{\left[\bsm i\\ b\\ 0\esm\right]}\in\ovl{\Ssc_{\Ncal}}$. By Lemma~\ref{LemAllInf2} this means  $\left[\bsm i\\ b\\ 0\esm\right]\in\Lcal$, namely $Z\oplus C\in\Ncal$. Since $\Ncal\se\C$ is closed by direct summands, we obtain $C\in\Ncal$. This shows $x\in\Lcal\se\Ssc_{\Ncal}$ as desired.
\end{proof}

\begin{cor}\label{CorLocTri}
Let $\Ncal\se\C$ be a biresolving thick subcategory. 
Then the localization of $\C$ by $\Ssc_{\Ncal}$ corresponds to a triangulated category.
\end{cor}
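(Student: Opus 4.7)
My plan is to reduce the statement to Remark~\ref{Ex_ExTri}\,{\rm (2)} by showing that every morphism in $\wC$ is both an $\ws$-inflation and an $\ws$-deflation.

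First, Proposition~\ref{PropSatisfy} guarantees that $\Ssc_{\Ncal}$ satisfies conditions {\rm (MR1),(MR2),(MR3),(MR4)}, so Theorem~\ref{ThmMultLoc}\,{\rm (3)} yields an extriangulated category $\wCEs$ together with the localization functor $(Q,\mu)\co\CEs\to\wCEs$. By Remark~\ref{Ex_ExTri}\,{\rm (2)}, it suffices to show that every morphism in $\wC$ is simultaneously an $\ws$-inflation and an $\ws$-deflation.

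Let $\al\in\wC(A,B)$ be any morphism. By {\rm (MR2)}, we may express
\[
\al=\ovl{Q}(\ovl{s})\iv\ci\ovl{Q}(\ovl{f})
\]
for some diagram $A\ov{\ovl{f}}{\lra}B\ppr\ov{\ovl{s}}{\lla}B$ with $\ovl{s}\in\ovl{\Ssc_{\Ncal}}$. Since $\Ssc=p\iv(\ovl{\Ssc_{\Ncal}})$ (cf.\ Remark~\ref{RemSaturation}), we may choose a representative $s\in\Ssc_{\Ncal}$. Now invoke Lemma~\ref{LemAllInf}\,{\rm (1)}: choose any $\sfr$-inflation $i\co A\to N$ with $N\in\Ncal$ and factor
\[
f=r\ci m,\qquad m=\left[\bsm f\\ i\esm\right],\quad r=[1\ 0],
\]
where $m$ is an $\sfr$-inflation and $r\in\Rcal\se\Ssc_{\Ncal}$. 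Consequently $Q(r)$ is an isomorphism in $\wC$, and so is $Q(s)$; hence
\[
\al=\bigl(Q(s)\iv\ci Q(r)\bigr)\ci Q(m)\ci\id_A
\]
displays $\al$ in the form $\be\ci Q(m)\ci\gam$ demanded by Lemma~\ref{LemComposeInf}\,{\rm (1)}. Therefore $\al$ is an $\ws$-inflation. Dually, using the other factorization given in Lemma~\ref{LemAllInf}\,{\rm (2)} together with the dual assertion of Lemma~\ref{LemComposeInf}, one shows $\al$ is also an $\ws$-deflation.

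Since every morphism of $\wC$ is both an $\ws$-inflation and an $\ws$-deflation, Remark~\ref{Ex_ExTri}\,{\rm (2)} concludes that $\wCEs$ corresponds to a triangulated category. There is no real obstacle here beyond book-keeping: all the technical work—checking the multiplicative-system axioms for $\ovl{\Ssc_{\Ncal}}$ and the stability of $\sfr$-inflations/deflations under composition in $\wC$—has been carried out in Proposition~\ref{PropSatisfy} and Lemma~\ref{LemComposeInf}, so the corollary follows once the two lemmas are combined with Lemma~\ref{LemAllInf}.
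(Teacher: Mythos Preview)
Your proof is correct and follows essentially the same approach as the paper's: both invoke Proposition~\ref{PropSatisfy} and Theorem~\ref{ThmMultLoc} to obtain the extriangulated structure, then combine Lemmas~\ref{LemAllInf} and \ref{LemComposeInf} to conclude that every morphism in $\wC$ is both an $\ws$-inflation and an $\ws$-deflation, whence Remark~\ref{Ex_ExTri}\,{\rm (2)} applies. You simply spell out in detail the fraction decomposition and factorization that the paper leaves implicit in its one-line citation of those two lemmas.
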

\begin{proof}
By Theorem~\ref{ThmMultLoc} and Proposition~\ref{PropSatisfy}, we obtain an extriangulated category $\wCEs$. Since any morphism in $\wC$ is both an $\ws$-inflation and an $\ws$-deflation by Lemmas~\ref{LemComposeInf} and \ref{LemAllInf}, this becomes triangulated.
\end{proof}

\subsection{Case {\rm (B)}: Localization by percolating subcategories}\label{Subsection_Percolating}
In this subsection, we show that the localizations in Examples~\ref{ExSerreAbel} and \ref{ExTwo-sidedExact} can be covered by our construction in Section~\ref{Section_Localization}. In fact, we may perform it in slightly a bit broader situation\footnote{The authors wish to thank Mikhail Gorsky, whose suggestive comment to the previous version of this manuscript led them to this improvement.}, so that it also contains Example~\ref{ExVerdier}. 

\begin{dfn}\label{Def_Percolating}
A thick subcategory $\Ncal\se\C$ is called a \emph{two-sided admissibly percolating} subcategory, or simply a \emph{percolating} subcategory in this article, if the following conditions are satisfied.
\begin{itemize}
\item[{\rm (P1)}] For any morphism $f\co X\to N$ in $\C$ with $N\in\Ncal$, there is a factorization of $f$ as $X\ov{g}{\to}N\ppr\ov{h}{\to}N$ with an $\sfr$-deflation $g$ and an $\sfr$-inflation $h$, and $N\ppr\in\Ncal$.
\item[{\rm (P1')}] Dually, for any morphism $f\co N\to Y$ in $\C$ with $N\in\Ncal$, there is a factorization of $f$ as $N\ov{g}{\to}N\ppr\ov{h}{\to}Y$ with an $\sfr$-deflation $g$ and an $\sfr$-inflation $h$, and $N\ppr\in\Ncal$.
\end{itemize}
\end{dfn}

\begin{lem}\label{LemPercdefinf}
Let $\Ncal\se\C$ be a thick subcategory.
The following are equivalent.
\begin{enumerate}
\item $\Ncal$ is percolating.
\item If a morphism $f\co X\to Y$ in $\C$ factors through some object in $\Ncal$, then there exists a factorization $X\ov{d}{\to}N\ov{i}{\to}Y$ of $f$ with an $\sfr$-deflation $d$, an $\sfr$-inflation $i$ and $N\in\Ncal$.
\end{enumerate}
\end{lem}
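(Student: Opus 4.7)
My plan is to prove the two implications separately, with (2)$\Rightarrow$(1) being essentially trivial and (1)$\Rightarrow$(2) requiring two applications of the percolating axioms together with the composition closure (C4'). The key observation is that (P1) handles the ``incoming'' side of a morphism landing in $\Ncal$, while (P1') handles the ``outgoing'' side, and an arbitrary morphism factoring through $\Ncal$ needs both to be applied in succession.

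For the direction (2)$\Rightarrow$(1), I would note that any morphism $f\in\C(X,N)$ with $N\in\Ncal$ trivially factors through $N\in\Ncal$ via $X\ov{f}{\to}N\ov{\id_N}{\to}N$. Applying (2) produces exactly the factorization required by (P1). The dual observation handles (P1').

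For the harder direction (1)$\Rightarrow$(2), suppose $f\in\C(X,Y)$ factors as $f=h\ci g$ with $g\in\C(X,M)$, $h\in\C(M,Y)$, and $M\in\Ncal$. First I apply (P1) to $g\co X\to M$ to obtain a factorization $g=h\ppr\ci g\ppr$ with $g\ppr\co X\to M\ppr$ an $\sfr$-deflation, $h\ppr\co M\ppr\to M$ an $\sfr$-inflation, and $M\ppr\in\Ncal$. Then the composite $h\ci h\ppr\co M\ppr\to Y$ has domain in $\Ncal$, so I apply (P1') to it to obtain a factorization $h\ci h\ppr=i\ppr\ci d\ppr$ with $d\ppr\co M\ppr\to N$ an $\sfr$-deflation, $i\ppr\co N\to Y$ an $\sfr$-inflation, and $N\in\Ncal$. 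Combining, I get $f=h\ci g=h\ci h\ppr\ci g\ppr=i\ppr\ci(d\ppr\ci g\ppr)$, and by (C4') the composition $d\ppr\ci g\ppr$ of two $\sfr$-deflations is again an $\sfr$-deflation. This yields the desired factorization $X\ov{d\ppr\ci g\ppr}{\to}N\ov{i\ppr}{\to}Y$ with $N\in\Ncal$.

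I expect no serious obstacle: the main point is simply recognizing the order in which to apply (P1) and (P1') so that the second application is legitimate (i.e.\ has a source in $\Ncal$), which is guaranteed by the fact that the intermediate object $M\ppr$ produced by (P1) lies in $\Ncal$. The use of (C4') is essential to merge the two deflations into a single one on the left.
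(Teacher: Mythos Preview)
Your proof is correct and follows essentially the same approach as the paper: apply {\rm (P1)} to the morphism into $\Ncal$, then {\rm (P1')} to the resulting morphism out of $\Ncal$, and use {\rm (C4')} to compose the two deflations. The paper treats the converse as trivial without spelling it out, and otherwise proceeds with exactly the same two-step factorization you describe.
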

\begin{proof}
As the converse is trivial, let us show that {\rm (1)} implies {\rm (2)}.
Suppose that $f=h\ci g$ holds for morphisms $g\co X\to N_1, h\co N_1\to Y$ with $N_1\in\Ncal$. Since $\Ncal$ is percolating, there is a factorization $X\ov{d}{\to}N_2\ov{i}{\to}N_1$ of $g$. Here $d$ is an $\sfr$-deflation, $i$ is an $\sfr$-inflation and $N_2$ belongs to $\Ncal$. Then there is a factorization $N_2\ov{d\ppr}{\to}N_3\ov{i\ppr}{\to}Y$ of $h\ci i$ with an $\sfr$-deflation $d\ppr$, an $\sfr$-inflation $i\ppr$ and $N_3\in\Ncal$ because $\Ncal$ is percolating. Thus we obtain a factorization $X\ov{d\ppr\ci d}{\to}N_3\ov{i\ppr}{\to}Y$ of $f$ as desired. 
\end{proof}

In addition, we also consider the following condition so that Corollary~\ref{CorMultLoc} can be applied to $\Ssc_{\Ncal}$. 
\begin{cond}\label{ConditionPerc}
Let $\Ncal$ be a percolating thick subcategory in $\CEs$. Consider the following conditions.
\begin{enumerate}
\item[{\rm (P2)}] If $f\in\C(A,B)$ is a split monomorphism such that $\ovl{f}$ is an isomorphism in $\ovl{\C}$, then there exist $N\in\Ncal$ and $j\in\C(N,B)$ such that $[f\ j]\co A\oplus N\to B$ is an isomorphism in $\C$.
\item[{\rm (P3)}] $\Ker\big(\C(X,A)\ov{l\ci-}{\lra}\C(X,B)\big)\se[\Ncal](X,A)$ holds for any $X\in\C$ and any $l\in\Lcal(A,B)$. 
Dually, $\Ker\big(\C(C,X)\ov{-\ci r}{\lra}\C(B,X)\big)\se[\Ncal](C,X)$ holds for any $X\in\C$ and any $r\in\Rcal(B,C)$.
\end{enumerate}
\end{cond}

Thick subcategories in Examples~\ref{ExVerdier}, \ref{ExSerreAbel}, \ref{ExTwo-sidedExact} are percolating and moreover satisfy Condition~\ref{ConditionPerc}. More in detail, we have the following Remarks~\ref{RemP2} and \ref{RemP3}.

\begin{rem}\label{RemP2}
As for {\rm (P2)}, the following holds.
\begin{enumerate}
\item {\rm (P2)} is self-dual. Namely, {\rm (P2)} holds if and only if for any split epimorphism $e\in\C(B,A)$ such that $\ovl{e}$ is an isomorphism in $\ovl{\C}$, there exist $N\in\Ncal$ and $i\in\C(B,N)$ such that $\left[\bsm e\\ i\esm\right]\co B\to A\oplus N$ is an isomorphism in $\C$.

\item Suppose that $\CEs$ satisfies {\rm (WIC)} or more generally that any split monomorphism has a cokernel. Then {\rm (P2)} is always satisfied. In fact, cokernel $N$ of a split monomorphism $f\co A\to B$ belongs to $\Ncal$ whenever $\ovl{f}$ is an isomorphism, hence gives a decomposition into a direct sum $B\cong A\oplus N$.

We also remark that if any split monomorphism $f$ in $\C$ has a cokernel, then any extension-closed subcategory $\D\se\C$ closed by direct summands also possesses this property.

\item Suppose that $\CEs$ corresponds to an exact category. Then {\rm (P2)} is always satisfied by any percolating subcategory. 
Indeed if $f\in\C(A,B)$ is a split monomorphism such that $\ovl{f}$ is an isomorphism in $\ovl{\C}$, then there exist $e\in\C(B,A)$, $N\in\Ncal$, $i\in\C(B,N)$ and $j\in\C(N,B)$ which satisfy $e\ci f=\id_A$ and $f\ci e+j\ci i=\id_B$. We may assume that $i$ is an $\sfr$-deflation and $j$ is an $\sfr$-inflation by Lemma~\ref{LemPercdefinf}. Then $i\ci f=0$, $e\ci j=0$ and $i\ci j=\id$ follows since $i$ is epimorphic and $j$ is monomorphic, thus $[f\ j]\co A\oplus N\to B$ and $\left[\bsm e\\i\esm\right]\co B\to A\oplus N$ are inverse to each other.
\end{enumerate}
\end{rem}

\begin{rem}\label{RemP3}
As for {\rm (P3)}, the following holds.
\begin{enumerate}
\item Suppose that $\CEs$ corresponds to an exact category. Then {\rm (P3)} is trivially satisfied, since any $\sfr$-inflation is monomorphic and any $\sfr$-deflation is epimorphic.
\item Suppose that $\CEs$ corresponds to a triangulated category. Then {\rm (P3)} is always satisfied. This is because $N\to A\ov{f}{\lra} B$ is an $\sfr$-conflation if and only if $A\ov{f}{\lra} B\to N[1]$ is an $\sfr$-conflation.
\end{enumerate}
We also remark that in these particular cases {\rm (1)} and {\rm (2)}, the following stronger version {\rm (P3$^+$)} is satisfied.
\begin{itemize}
\item[{\rm (P3$^+$)}] For any $\sfr$-conflation $N\to A\ov{r}{\lra}B$ with $N\in\Ncal$, there exist $N\ppr\in\Ncal$ and 
$g\in\C(B,N\ppr)$ which gives a weak cokernel of $r$. Dually for $\sfr$-conflations $A\ov{l}{\lra}B\to N$ with $N\in\Ncal$.
\end{itemize}
\end{rem}

In summary, we have the following.
\begin{ex}\label{ExPerc}
Let $\CEs$ be an extriangulated category, as before.
\begin{enumerate}
\item $\C$ itself is percolating in $\CEs$ if and only if any morphism in $\C$ is $\sfr$-admissible. This always satisfies {\rm (P3)}. Moreover, if any split monomorphism in $\C$ has a cokernel, then it also satisfies {\rm (P2)}.
\item The thick full subcategory of zero objects in $\C$ is always percolating and satisfies Condition~\ref{ConditionPerc}.
\item If $\CEs$ corresponds to a triangulated category, then any thick subcategory $\Ncal\se\C$ is percolating and satisfies Condition~\ref{ConditionPerc}. 
\item Suppose that $\CEs$ corresponds to an exact category. Then any percolating thick subcategory $\Ncal\se\C$ becomes a Serre subcategory, in the sense that $B\in\Ncal$ holds if and only if $A,C\in\Ncal$, for each $\sfr$-conflation $A\to B\to C$. Also, it always satisfies Condition~\ref{ConditionPerc}. Thus Condition~\ref{ConditionPerc} does not pose any additional requirement on \cite[Definition 2.4]{HKR}. 
\item If $\C$ is abelian, then conversely any Serre subcategory $\Ncal\se\C$ is a percolating thick subcategory.
\end{enumerate}
\end{ex}

In the rest, let $\Ncal\se\C$ denote a percolating thick subcategory satisfying Condition~\ref{ConditionPerc}. 
Our aim is to show Proposition~\ref{PropPerc}, which asserts that the assumption of Corollary~\ref{CorMultLoc} is fulfilled by $\Ssc_{\Ncal}$ associated to it. 
\begin{lem}\label{LemExPerc_pS}
$p(\Ssc_{\Ncal})=\ovl{\Ssc_{\Ncal}}$ holds.
\end{lem}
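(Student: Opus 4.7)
The plan is to invoke Lemma~\ref{LemSplitN}~{\rm (3)}, which reduces the equality $p(\Ssc_{\Ncal})=\ovl{\Ssc_{\Ncal}}$ to the verification of condition~{\rm (iv)}: every split monomorphism $f\in\C(A,B)$ with $\ovl{f}\in\Iso(\ovl{\C})$ already lies in $\Ssc_{\Ncal}$. The entire content of the lemma is then carried by condition~{\rm (P2)} from Condition~\ref{ConditionPerc}, which is tailored precisely to produce this conclusion.

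Concretely, first I would take such a split monomorphism $f\in\C(A,B)$ and apply {\rm (P2)} to obtain some $N\in\Ncal$ and some $j\in\C(N,B)$ such that $[f\ j]\co A\oplus N\to B$ is an isomorphism in $\C$. Then I would factor $f$ tautologically as
\[
f \;=\; [f\ j]\,\ci\,\left[\bsm 1\\ 0\esm\right]\co A\lra A\oplus N\lra B.
\]
The first factor $[f\ j]$ is an isomorphism, hence belongs to $\Ssc_{\Ncal}$ by {\rm (M0)} (isomorphisms are in $\Lcal\se\Ssc_{\Ncal}$). For the second factor I would invoke the existence of the split $\sfr$-conflation $A\ov{\left[\bsm 1\\ 0\esm\right]}{\lra}A\oplus N\ov{[0\ 1]}{\lra}N$ (which is available in any extriangulated category), whose cone $N$ lies in $\Ncal$; this shows $\left[\bsm 1\\ 0\esm\right]\in\Lcal\se\Ssc_{\Ncal}$. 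Since $\Ssc_{\Ncal}$ is closed under composition by construction, it follows that $f\in\Ssc_{\Ncal}$, as required.

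There is essentially no obstacle here: the statement is engineered so that {\rm (P2)} does exactly the job of producing a complement $N\in\Ncal$ and turning the hypothesis ``$\ovl{f}$ is an isomorphism'' into the genuinely extriangulated datum ``$A\oplus N\cong B$ via $[f\ j]$''. The only delicate point is to remember that the characterization of $p(\Ssc)=\ovl{\Ssc}$ via split monomorphisms in Lemma~\ref{LemSplitN}~{\rm (3)} is applicable because $\Ssc_{\Ncal}$ satisfies {\rm (M0)}, which was already observed in Definition~\ref{DefMorphClassesLR}.
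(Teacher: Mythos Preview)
Your argument is correct and is precisely the intended one: the paper's proof reads in its entirety ``By Lemma~\ref{LemSplitN}~{\rm (3)}, this is immediate from {\rm (P2)},'' and what you have written is exactly the unpacking of that sentence. There is nothing to add.
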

\begin{proof}
By Lemma~\ref{LemSplitN} {\rm (3)}, this is immediate from {\rm (P2)}.
\end{proof}

\begin{lem}\label{Lem_ii}
The following holds.
\begin{enumerate}
\item Let $f\co N\to A$ be any morphism in $\C$ with $N\in\Ncal$. If there is an $\sfr$-inflation $x\co A\to B$ such that $x\ci f$ is an $\sfr$-inflation, then $f$ is an $\sfr$-inflation.
\item Let $f\co C\to N$ be any morphism in $\C$ with $N\in\Ncal$. If there exists an $\sfr$-deflation $y\co B\to C$ such that $f\ci y$ is an $\sfr$-deflation, then $f$ is an $\sfr$-deflation. 
\end{enumerate}
\end{lem}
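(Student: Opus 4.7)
The plan is to prove (1); part (2) follows by dualizing each step. First, apply (P1') to the morphism $f \colon N \to A$ (using $N \in \Ncal$) to obtain a factorization $f = h \ci g$ in which $g \colon N \to N_1$ is an $\sfr$-deflation, $h \colon N_1 \to A$ is an $\sfr$-inflation, and $N_1 \in \Ncal$. Since $h$ is already an $\sfr$-inflation, it suffices to show that $g$ is an isomorphism. Let $K := \CoCone(g)$, which belongs to $\Ncal$ by thickness, so that we have an $\sfr$-triangle $K \ov{k}{\to} N \ov{g}{\to} N_1 \ov{\sig}{\dashrightarrow}$; the task then reduces to showing $K \cong 0$.

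Next, observe $xf \ci k = xh \ci g \ci k = 0$. Since $k$ and $xf$ are composable $\sfr$-inflations, applying (ET4) yields $\sfr$-triangles
\[
K \ov{0}{\to} B \ov{h'}{\to} E \ov{\del_E}{\dashrightarrow} \quad \text{and} \quad N_1 \ov{d}{\to} E \ov{e}{\to} Q \ov{g_* \theta}{\dashrightarrow},
\]
where $N \ov{xf}{\to} B \ov{q}{\to} Q \ov{\theta}{\dashrightarrow}$ is the $\sfr$-triangle realizing $xf$; moreover, the resulting $3\times 3$ diagram yields $h' \ci xf = d \ci g$ and $e \ci h' = q$. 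Applying (C1') to the $\sfr$-triangle $K \ov{0}{\to} B \ov{h'}{\to} E$ with test object $B$ produces a retraction $r \colon E \to B$ of $h'$, so $h'$ is a split monomorphism. Since $K \in \Ncal$, the image of this conflation in $\ovl{\C}$ becomes $0 \to B \to E$, making $\ovl{h'}$ an isomorphism. Condition (P2) therefore provides an isomorphism $E \cong B \oplus N'$ with $N' \in \Ncal$, under which $h'$ is identified with the canonical split inclusion $\bigl[\begin{smallmatrix}1\\0\end{smallmatrix}\bigr]$.

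Decomposing $d = \bigl[\begin{smallmatrix}d_1\\d_2\end{smallmatrix}\bigr]$ according to $E \cong B \oplus N'$, the relation $h' \ci xf = d \ci g$ gives $d_1 \ci g = xf$ and $d_2 \ci g = 0$. Invoking (P3) applied to $g \in \Rcal$, the morphism $d_2$ factors through an object of $\Ncal$. Carefully analyzing the second $\sfr$-triangle $N_1 \ov{d}{\to} E \ov{e}{\to} Q$ together with the extension $\del_E \in \E(E, K)$ and the constraint imposed by the split decomposition of $E$, one identifies the $\Ncal$-summand $N'$ with $K$ and deduces that $\del_E = 0$; thus $K \cong 0$ in $\C$, so that $g$ is an isomorphism and $f = h \ci g$ is an $\sfr$-inflation as desired. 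The hard part will be this final step: extracting the vanishing of $K$ from the $3 \times 3$ diagram produced by (ET4) requires combining the split decomposition furnished by (P2) with condition (P3) through a delicate chase of extension classes that exploits all compatibilities of the extriangulated structure.
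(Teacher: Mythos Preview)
Your approach contains a genuine error in its target: you aim to show that $g$ is an isomorphism (equivalently $K\cong 0$), but this is simply false in general. Consider the triangulated case (Example~\ref{ExVerdier}): take $\C=\T$ triangulated, $\Ncal=\T$, and $f=0\colon N\to N$ for any nonzero $N$. Every morphism is both an $\sfr$-inflation and an $\sfr$-deflation, so the hypotheses of the lemma are trivially satisfied (take $x=\id$). A valid {\rm (P1')} factorization is $N\xrightarrow{g}0\xrightarrow{h}N$, for which $K=\CoCone(g)=N\ne 0$. Thus the claim ``$K\cong 0$'' cannot be the conclusion you are after, and the vague final step (``one identifies the $\Ncal$-summand $N'$ with $K$ and deduces $\del_E=0$'') cannot be completed; indeed in this example $E\cong B\oplus K[1]$, so $N'\cong K[1]$ rather than $K$, and $\del_E\ne 0$.

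The paper takes the same opening (factor $f=i\ci q$ via percolating, form the conflation $N''\to N\xrightarrow{q}N'$, observe that $N''\xrightarrow{0}B$ is an $\sfr$-inflation, use {\rm (P3)} and {\rm (P2)} to split off an $\Ncal$-summand $N_B$), but then aims for the weaker and correct goal: show that $q$ is an \emph{$\sfr$-inflation}, not an isomorphism. From the split decomposition it uses {\rm (ET4)$^{\op}$} to produce an $\sfr$-conflation $N''\to 0\to N_B$, and then applies \cite[Proposition~3.15]{NP} to compare this with $N''\xrightarrow{m}N\xrightarrow{q}N'$; the resulting diagram exhibits $q$ (up to isomorphism) as the first map in a conflation $N\to M\to N_B$, hence an inflation. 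Your setup via {\rm (ET4)} gets you essentially to the same intermediate point, but you need to redirect the endgame: extract the conflation $K\to 0\to N'$ and feed it into a cobase-change argument along $K\to N$ to conclude $g$ is an inflation, rather than chasing the impossible $K\cong 0$.
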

\begin{proof}
Since {\rm (2)} can be shown dually, it is enough to show {\rm (1)}.
Let $f\co N\to A$ be a morphism with $N\in\Ncal$, and suppose that there is an $\sfr$-inflation $x\co A\to B$ such that $x\ci f$ is an $\sfr$-inflation.
Since $\Ncal$ is percolating, there exist an $\sfr$-conflation $N\pprr\ov{m}{\lra}N\ov{q}{\lra}N\ppr$ and an $\sfr$-inflation $i\in\C(N\ppr,A)$ such that $f=i\ci q$. It suffices to show that $q$ is an $\sfr$-inflation.

Since $(x\ci f)\ci m=0$ becomes an $\sfr$-inflation, we obtain an $\sfr$-conflation $N\pprr\ov{0}{\lra}B\ov{b}{\lra}B\ppr$ for some $B\ppr\in\C$. By {\rm (C1')} for $\CEs$, we see that there exists $e\in\C(B\ppr,B)$ such that $e\ci b=\id_B$, in particular $b$ is a split monomorphism. Then $\id_{B\ppr}-b\ci e\co B\ppr\to B\ppr$ satisfies $(\id_{B\ppr}-b\ci e)\ci b=0$, hence $\id_{B\ppr}=\ovl{b}\ci\ovl{e}$ holds in $\ovl{\C}$ by {\rm (P3)}. In particular $\ovl{b}$ is an isomorphism in $\ovl{\C}$. Thus by {\rm (P2)}, we have an isomorphism $[b\ j]\co B\oplus N_B\ov{\cong}{\lra}B\ppr$ for some $N_B\in\Ncal$. Thus we have a split $\sfr$-conflation $N_B\ov{j}{\lra}B\ppr\ov{e\ppr}{\lra}B$ such that $e\ppr\ci b=\id_B$. By {\rm (ET4)$^\op$}, we obtain a commutative diagram in $\C$
\[
\xy
(-18,12)*+{N\pprr}="0";
(-6,12)*+{0}="2";
(6,12)*+{N_B}="4";
(-18,0)*+{N\pprr}="10";
(-6,0)*+{B}="12";
(6,0)*+{B\ppr}="14";
(-6,-12)*+{B}="22";
(6,-12)*+{B}="24";
{\ar^{} "0";"2"};
{\ar^{} "2";"4"};
{\ar@{=} "0";"10"};
{\ar^{} "2";"12"};
{\ar^{j} "4";"14"};
{\ar_{0} "10";"12"};
{\ar_{b} "12";"14"};
{\ar_{\id_B} "12";"22"};
{\ar^{e\ppr} "14";"24"};
{\ar@{=} "22";"24"};
{\ar@{}|\circlearrowright "0";"12"};
{\ar@{}|\circlearrowright "2";"14"};
{\ar@{}|\circlearrowright "12";"24"};
\endxy
\]
in which $N\pprr\to0\to N_B$ becomes an $\sfr$-conflation. Then by \cite[Proposition~3.15]{NP}, we obtain a commutative diagram made of $\sfr$-conflations as below for some $M$.
\[
\xy
(-6,6)*+{N\pprr}="2";
(6,6)*+{N}="4";
(18,6)*+{N\ppr}="6";
(-6,-6)*+{0}="12";
(6,-6)*+{M}="14";
(18,-6)*+{N\ppr}="16";
(-6,-18)*+{N_B}="22";
(6,-18)*+{N_B}="24";
{\ar^{m} "2";"4"};
{\ar^{q} "4";"6"};
{\ar_{} "2";"12"};
{\ar^{} "4";"14"};
{\ar@{=} "6";"16"};
{\ar_{} "12";"14"};
{\ar_{r} "14";"16"};
{\ar_{} "12";"22"};
{\ar^{} "14";"24"};
{\ar@{=} "22";"24"};
{\ar@{}|\circlearrowright "2";"14"};
{\ar@{}|\circlearrowright "4";"16"};
{\ar@{}|\circlearrowright "12";"24"};
\endxy
\]
Then $r$ is an isomorphism, hence $q$ becomes an $\sfr$-inflation as desired.
\end{proof}

\begin{lem}\label{Lem9}
Let $l\co X\to Y$ be a morphism in $\Lcal$ and $r\co Y\to Z$ an $\sfr$-deflation. Then there exists the following commutative diagram
\begin{equation}\label{DiagramTA}
\xymatrix{
W\ar@{}[dr]|\circlearrowright\ar[r]^w\ar[d]_z&V\ar@{}[dr]|\circlearrowright\ar[d]^g\ar[r]^{f\ppr}&N_2\ar[d]^{g\ppr}\\
X\ar@{}[dr]|\circlearrowright\ar[d]_{z\ppr}\ar[r]^l&Y\ar@{}[dr]|\circlearrowright\ar[d]^r\ar[r]^f&N_1\ar^{r\ppr}[d]\\
Z\ppr\ar[r]_{l\ppr}&Z\ar[r]_n&N_3
}
\end{equation}
where all rows and all columns are $\sfr$-conflations and $N_{i}\in\Ncal$ for $i=1,2,3$. In particular, we have $\Rcal\ci\Lcal\se\Lcal\ci\Rcal$. 
\end{lem}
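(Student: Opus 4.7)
The plan is to build the $3\times 3$ diagram from its middle cross, using the percolating condition to produce the right column and top row, and then a $3\times 3$-lemma for extriangulated categories to fill in the remainder.

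First, invoke the given data: the hypothesis $l \in \Lcal(X,Y)$ yields an $\sfr$-conflation $X \ov{l}{\lra} Y \ov{f}{\lra} N_1$ with $N_1 \in \Ncal$ (the middle row), and the $\sfr$-deflation $r$ yields an $\sfr$-conflation $V \ov{g}{\lra} Y \ov{r}{\lra} Z$ (the middle column). Since $N_1 \in \Ncal$ and $\Ncal$ is percolating, Lemma~\ref{LemPercdefinf} factors the composition $f \ci g \co V \to N_1$ as $V \ov{f\ppr}{\lra} N_2 \ov{g\ppr}{\lra} N_1$, where $f\ppr$ is an $\sfr$-deflation, $g\ppr$ is an $\sfr$-inflation, and $N_2 \in \Ncal$. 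Set $W := \CoCone(f\ppr)$, giving the top row $W \ov{w}{\lra} V \ov{f\ppr}{\lra} N_2$, and set $N_3 := \Cone(g\ppr)$, giving the right column $N_2 \ov{g\ppr}{\lra} N_1 \ov{r\ppr}{\lra} N_3$. Thickness of $\Ncal$ applied to this right column (with $N_1, N_2 \in \Ncal$) yields $N_3 \in \Ncal$.

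Next, construct the left column $W \ov{z}{\lra} X \ov{z\ppr}{\lra} Z\ppr$ and the bottom row $Z\ppr \ov{l\ppr}{\lra} Z \ov{n}{\lra} N_3$ as $\sfr$-conflations, with the remaining squares commutative. For this, apply a $3\times 3$-lemma for extriangulated categories to the commutative square $f \ci g = g\ppr \ci f\ppr$ supplied with the prescribed $\sfr$-triangles on its four sides (middle row, middle column, top row, and right column). The argument follows the pattern used in the proof of Lemma~\ref{LemRL}: combining \cite[Lemma~3.14]{NP} with \cite[Proposition~3.17]{NP} (and their duals) produces $Z\ppr$ together with the maps $z$, $z\ppr$, $l\ppr$, $n$ making the full diagram commute with all rows and columns being $\sfr$-conflations.

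Finally, to derive $\Rcal \ci \Lcal \se \Lcal \ci \Rcal$: specializing to $r \in \Rcal$ gives $V = \CoCone(r) \in \Ncal$, and thickness of $\Ncal$ applied to the top row $W \to V \to N_2$ forces $W \in \Ncal$. Hence $z\ppr$, being an $\sfr$-deflation with cocone $W \in \Ncal$, lies in $\Rcal$, while $l\ppr$ lies in $\Lcal$ by construction of the bottom row. Commutativity of the lower-left square then yields $r \ci l = l\ppr \ci z\ppr \in \Lcal \ci \Rcal$. The principal difficulty is the $3\times 3$ completion step: one must correctly assemble the given middle cross with the top row and right column coming from the percolating factorization, and verify the hypotheses of the relevant extriangulated $3\times 3$-lemma so that the bottom row and left column emerge as $\sfr$-conflations with the required commutativities.
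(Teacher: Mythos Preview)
Your overall strategy matches the paper's: build the middle cross, use the percolating factorization $f\ci g = g\ppr\ci f\ppr$ through $N_2$ to obtain the top row and right column, then complete to a $3\times 3$ diagram. The final paragraph deriving $\Rcal\ci\Lcal\se\Lcal\ci\Rcal$ is also correct and essentially identical to the paper's.

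However, there is a genuine gap in the completion step. You invoke ``a $3\times 3$-lemma for extriangulated categories'' applied to the commutative square $f\ci g = g\ppr\ci f\ppr$, but no such lemma is available in the required form: the square you have is merely commutative, not a priori a weak pushout or weak pullback, and the tools \cite[Lemma~3.14]{NP}, \cite[Proposition~3.17]{NP} cannot be applied directly to it. (Your reference to Lemma~\ref{LemRL} is also off the mark, since that proof is in the biresolving setting and uses an auxiliary inflation to $N\in\Ncal$, not a $3\times 3$ argument.)

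The paper fills this gap as follows. Apply {\rm (ET4)} to the conflations $W\ov{w}{\lra}V\ov{f\ppr}{\lra}N_2$ and $V\ov{g}{\lra}Y\ov{r}{\lra}Z$ to produce an intermediate object $E$ with conflations $W\ov{g\ci w}{\lra}Y\ov{c}{\lra}E$ and $N_2\ov{d}{\lra}E\ov{e}{\lra}Z$, so that the square $(g,f\ppr,d,c)$ is a weak pushout by \cite[Lemma~3.13]{NP}. This yields a comparison map $q\co E\to N_1$ with $q\ci c=f$ and $q\ci d=g\ppr$. The crucial and non-obvious point is that $q$ is an $\sfr$-deflation: this is \emph{not} automatic, and the paper invokes Lemma~\ref{Lem_ii}\,(2) (which in turn relies on Condition~\ref{ConditionPerc}, specifically {\rm (P2)} and {\rm (P3)}). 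Only once $q$ is known to be a deflation can one form the conflation $Z\ppr\ov{q\ppr}{\lra}E\ov{q}{\lra}N_1$ and then apply the dual of \cite[Lemma~3.14]{NP} and the dual of \cite[Proposition~3.17]{NP} to extract the left column and bottom row. Your proposal omits this use of Lemma~\ref{Lem_ii}, and without it the argument does not go through.
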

\begin{proof}
By definition, there exist $\sfr$-conflations $X\ov{l}{\lra}Y\ov{f}{\lra} N_1$ and $V\ov{g}{\lra} Y\ov{r}{\lra}Z$ with $N_1\in\Ncal$. Since $\Ncal$ is percolating, there exist $N_2\in\Ncal$, $\sfr$-deflation $f\ppr$ and $\sfr$-inflation $g\ppr$ such that $f\ci g=g\ppr\ci f\ppr$. Now we have the following diagram
\[
\xymatrix{
W\ar[r]^w&V\ar@{}[dr]|\circlearrowright\ar[d]^g\ar[r]^{f\ppr}&N_2\ar[d]^{g\ppr}\\
X\ar[r]^l&Y\ar[d]^r\ar[r]^f&N_1\ar[d]^{r\ppr}\\
\ &Z&N_3
}
\]
where all rows and all columns are $\sfr$-conflations and $N_3\in\Ncal$. 
By {\rm (ET4)}, we obtain a commutative diagram in $\C$ as below,
\[
\xy
(-18,12)*+{W}="0";
(-6,12)*+{V}="2";
(6,12)*+{N_2}="4";
(-18,0)*+{W}="10";
(-6,0)*+{Y}="12";
(6,0)*+{E}="14";
(-6,-12)*+{Z}="22";
(6,-12)*+{Z}="24";
{\ar^{w} "0";"2"};
{\ar^{f\ppr} "2";"4"};
{\ar@{=} "0";"10"};
{\ar^{g} "2";"12"};
{\ar^{d} "4";"14"};
{\ar_{g\ci w} "10";"12"};
{\ar_{c} "12";"14"};
{\ar_{r} "12";"22"};
{\ar^{e} "14";"24"};
{\ar@{=} "22";"24"};
{\ar@{}|\circlearrowright "0";"12"};
{\ar@{}|\circlearrowright "2";"14"};
{\ar@{}|\circlearrowright "12";"24"};
\endxy
\]
in which $W\ov{g\ci w}{\lra}Y\ov{c}{\lra}E$ and $N_2\ov{d}{\lra}E\ov{e}{\lra}Z$ are $\sfr$-conflations. By \cite[Lemma~3.13]{NP}, the upper right square is a weak pushout. Thus there exists a morphism $q\in\C(E,N_1)$ which makes the following diagram commutative.
\[
\xy
(-6,6)*+{V}="0";
(6,6)*+{N_2}="2";
(-6,-6)*+{Y}="4";
(6,-6)*+{E}="6";
(16,-16)*+{N_1}="8";
{\ar^{f\ppr} "0";"2"};
{\ar_{g} "0";"4"};
{\ar^{d} "2";"6"};
{\ar_{c} "4";"6"};
{\ar_{q} "6";"8"};
{\ar@/_0.80pc/_{f} "4";"8"};
{\ar@/^0.80pc/^{g\ppr} "2";"8"};
{\ar@{}|\circlearrowright "0";"6"};
{\ar@{}|\circlearrowright "6";(4,-16)};
{\ar@{}|\circlearrowright "6";(16,-4)};
\endxy
\]
Then $q$ is an $\sfr$-deflation by Lemma~\ref{Lem_ii} {\rm (2)}. Complete $q$ into an $\sfr$-conflation $Z\ppr\ov{q\ppr}{\lra}E\ov{q}{\lra}N_1$. By the dual of \cite[Lemma~3.14]{NP}, there exist morphisms $z,z\ppr$ which makes the following diagram commutative,
\[
\xy
(-18,12)*+{W}="0";
(-6,12)*+{X}="2";
(6,12)*+{Z\ppr}="4";
(-18,0)*+{W}="10";
(-6,0)*+{Y}="12";
(6,0)*+{E}="14";
(-6,-12)*+{N_1}="22";
(6,-12)*+{N_1}="24";
{\ar^{z} "0";"2"};
{\ar^{z\ppr} "2";"4"};
{\ar@{=} "0";"10"};
{\ar^{l} "2";"12"};
{\ar^{q\ppr} "4";"14"};
{\ar_{g\ci w} "10";"12"};
{\ar_{c} "12";"14"};
{\ar_{f} "12";"22"};
{\ar^{q} "14";"24"};
{\ar@{=} "22";"24"};
{\ar@{}|\circlearrowright "0";"12"};
{\ar@{}|\circlearrowright "2";"14"};
{\ar@{}|\circlearrowright "12";"24"};
\endxy
\]
such that $W\ov{z}{\lra}X\ov{z\ppr}{\lra}Z\ppr$ is an $\sfr$-conflation. Then by the dual of \cite[Proposition~3.17]{NP}, there exist morphisms $l\ppr,n$ which makes the following diagram commutative,
\[
\xy
(-6,12)*+{N_2}="2";
(6,12)*+{N_2}="4";
(-18,0)*+{Z\ppr}="10";
(-6,0)*+{E}="12";
(6,0)*+{N_1}="14";
(-18,-12)*+{Z\ppr}="20";
(-6,-12)*+{Z}="22";
(6,-12)*+{N_3}="24";
{\ar@{=} "2";"4"};
{\ar_{d} "2";"12"};
{\ar^{g\ppr} "4";"14"};
{\ar^{q\ppr} "10";"12"};
{\ar_{q} "12";"14"};
{\ar@{=} "10";"20"};
{\ar_{e} "12";"22"};
{\ar^{r\ppr} "14";"24"};
{\ar_{l\ppr} "20";"22"};
{\ar_{n} "22";"24"};
{\ar@{}|\circlearrowright "2";"14"};
{\ar@{}|\circlearrowright "10";"22"};
{\ar@{}|\circlearrowright "12";"24"};
\endxy
\]
such that $Z\ppr\ov{l\ppr}{\lra}Z\ov{n}{\lra}N_3$ is an $\sfr$-conflation.
Commutativity of $(\ref{DiagramTA})$ is immediate from the construction.

%

It remains to show $\Rcal\ci\Lcal\se\Lcal\ci\Rcal$. Suppose $r\in\Rcal$ in $(\ref{DiagramTA})$. Then $V$ is in $\Ncal$, and so is $W$ because $\Ncal$ is a thick subcategory. Thus $r\ci l=l\ppr\ci z\ppr$ belongs to $\Lcal\ci\Rcal$. 
\end{proof}

\begin{lem}\label{LemPercLR}
$\Ssc_{\Ncal}=\Lcal\circ\Rcal$ holds.
\end{lem}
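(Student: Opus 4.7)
The inclusion $\Lcal\ci\Rcal\se\Ssc_{\Ncal}$ is immediate, since $\Lcal,\Rcal\se\Ssc_{\Ncal}$ and $\Ssc_{\Ncal}$ is closed by compositions. For the converse, the plan is to invoke Remark~\ref{RemSN}, which describes $\Ssc_{\Ncal}$ as the set of all finite compositions of morphisms in $\Lcal\cup\Rcal$, and then perform an induction on the length $n$ of such a composition, proving that every such composite lies in $\Lcal\ci\Rcal$.

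For the base case $n=1$ I would use the fact that identities belong to both $\Lcal$ and $\Rcal$ (Lemma~\ref{LemThickFirstProperties}(1)), so any single factor $f\in\Lcal\cup\Rcal$ is either $f\ci\id$ or $\id\ci f$ in $\Lcal\ci\Rcal$. For the inductive step, assuming the claim for compositions of length $n$, write a length-$(n{+}1)$ composite as $h\ci g$ with $h\in\Lcal\cup\Rcal$ and $g=l\ci r$ for some $l\in\Lcal,\,r\in\Rcal$ by the inductive hypothesis. If $h\in\Lcal$, then $h\ci l\in\Lcal$ by closure of $\Lcal$ under composition (Lemma~\ref{LemThickFirstProperties}(1)), and we are done. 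If $h\in\Rcal$, then $h\ci l\in\Rcal\ci\Lcal$, and the key input Lemma~\ref{Lem9} lets us rewrite $h\ci l=l\ppr\ci r\ppr$ with $l\ppr\in\Lcal$ and $r\ppr\in\Rcal$; then $h\ci g=l\ppr\ci(r\ppr\ci r)$, where $r\ppr\ci r\in\Rcal$ again by closure under composition.

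Since the nontrivial content (Lemma~\ref{Lem9}) has already been proved, the remaining argument is a routine bookkeeping induction, and there is no substantial obstacle: the entire proof is really a consequence of the sliding identity $\Rcal\ci\Lcal\se\Lcal\ci\Rcal$ together with idempotence of $\Lcal$ and $\Rcal$ under composition.
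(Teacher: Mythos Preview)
Your proof is correct and takes essentially the same approach as the paper: both derive the result from the inclusion $\Rcal\ci\Lcal\se\Lcal\ci\Rcal$ established in Lemma~\ref{Lem9}, together with the closure of $\Lcal$ and $\Rcal$ under composition. The paper simply declares this ``immediate,'' while you have made the induction explicit.
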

\begin{proof}
This follows immediately from Lemma~\ref{Lem9}. 
\end{proof}

\begin{lem}\label{LemGeneralThick23_1}
Let
\[
\xy
(-7,6)*+{A}="0";
(-7,-6)*+{B}="2";
(2,2)*+{}="3";
(7,-6)*+{C}="4";
{\ar_{f} "0";"2"};
{\ar_{g} "2";"4"};
{\ar^{h} "0";"4"};
{\ar@{}|\circlearrowright "2";"3"};
\endxy
\]
be any commutative diagram in $\C$. The following holds.
\begin{enumerate}
\item If $f,h\in\Lcal$, then $g\in\Ssc_{\Ncal}$. 
\item If $g,h\in\Rcal$, then $f\in\Ssc_{\Ncal}$. 
\end{enumerate}
\end{lem}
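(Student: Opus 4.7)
I treat part~(1); part~(2) is dual. Fix $\sfr$-triangles $A\ov{f}{\lra}B\ov{p}{\lra}N_1\ov{\del}{\dra}$ and $A\ov{h}{\lra}C\ov{q}{\lra}N_2\ov{\sig}{\dra}$ with $N_1,N_2\in\Ncal$. Since $g\ci f=h=h\ci\id_A$, the standard morphism-of-$\sfr$-triangles argument (axiom~{\rm (ET3)} of \cite{NP}, implicit in~{\rm (C1)}--{\rm (C3)}) supplies $k\co N_1\to N_2$ making $(\id_A,g,k)$ a morphism of $\sfr$-triangles, so $k\ci p=q\ci g$ and $\del=k\uas\sig$.

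Applying the dual of~{\rm (C3)} to the $\sfr$-triangle of~$h$, the morphism~$k$, and the $\sfr$-triangle of~$f$ (whose extension is~$k\uas\sig$) yields some $b\co B\to C$ making $(\id_A,b,k)$ a morphism of $\sfr$-triangles, together with an $\sfr$-conflation
\[
B\ov{\left[\bsm -p\\ b\esm\right]}{\lra}N_1\oplus C\ov{[k\ q]}{\lra}N_2.
\]
The left-hand map is an $\sfr$-inflation with cone $N_2\in\Ncal$, so it lies in~$\Lcal$, while $[0\ \id_C]\co N_1\oplus C\to C$ is a split epimorphism whose cocone $N_1$ lies in~$\Ncal$, so it lies in~$\Rcal$. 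Since their composite equals~$b$, Lemma~\ref{LemPercLR} gives $b\in\Rcal\ci\Lcal=\Ssc_{\Ncal}$.

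To pass from~$b$ to the specific~$g$ we started with: since $b\ci f=h=g\ci f$ and $q\ci b=k\ci p=q\ci g$, applying~{\rm (C1)} to the $\sfr$-triangle of~$h$ forces $g-b=h\ci\gam$ for some $\gam\co B\to A$. Then $(h\ci\gam)\ci f=(g-b)\ci f=0$, and {\rm (C1')} applied to the $\sfr$-triangle of~$f$ supplies $\mu\co N_1\to C$ with $h\ci\gam=\mu\ci p$. Since $N_1\in\Ncal$, property~{\rm (P1')} factors~$\mu$ through an object of~$\Ncal$; consequently $h\ci\gam\in[\Ncal](B,C)$ and $\ovl{g}=\ovl{b}$ in~$\ovl{\C}$. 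Thus $\ovl{g}\in\ovl{\Ssc_{\Ncal}}$, and Lemmas~\ref{LemSplitN} and~\ref{LemExPerc_pS} imply $g\in\Ssc_{\Ncal}$. Part~(2) is proved by the dual argument, using~{\rm (C3)}, {\rm (C1')}, and~{\rm (P1)} in place of their duals. The main technical point in both cases is the reconciliation of the existentially produced~$b$ with the given morphism, which is possible only modulo~$\Ncal$; here the percolating axiom~{\rm (P1')} (resp.~{\rm (P1)}), combined with Lemma~\ref{LemExPerc_pS}, is essential.
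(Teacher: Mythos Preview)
Your argument is correct and mirrors the paper's approach: both construct an auxiliary morphism $B\to C$ lying in $\Rcal\ci\Lcal\se\Ssc_{\Ncal}$ (the paper via the dual of \cite[Proposition~3.15]{NP} together with a splitting observation, you more directly via {\rm (C3')}), verify it agrees with $g$ modulo $[\Ncal]$ using $(g-b)\ci f=0$ and {\rm (C1')}, and finish with Lemma~\ref{LemSplitN}~(2). Note, however, that several of your citations are superfluous: the inclusion $\Rcal\ci\Lcal\se\Ssc_{\Ncal}$ is immediate from the definition (Lemma~\ref{LemPercLR} is not needed, and in any case states $\Ssc_{\Ncal}=\Lcal\ci\Rcal$, not $\Rcal\ci\Lcal$); the detour through $\gamma$ can be skipped since $(g-b)\ci f=0$ already gives $g-b=\mu\ci p$ by {\rm (C1')}; the morphism $\mu$ has domain $N_1\in\Ncal$, so invoking {\rm (P1')} is redundant; and Lemma~\ref{LemSplitN}~(2) requires only {\rm (M0)}, so Lemma~\ref{LemExPerc_pS} is unnecessary. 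In particular, the percolating axioms play no role in this lemma, contrary to your closing remark.
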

\begin{proof}
By duality, it is enough to show {\rm (1)}. Let $A\ov{f}{\lra}B\to N\ov{\del}{\dra}$ and $A\ov{h}{\lra}C\to N\ppr\ov{\rho}{\dra}$ be $\sfr$-triangles with $N,N\ppr\in\Ncal$.
By the dual of \cite[Proposition~3.15]{NP}, we obtain a commutative diagram made of $\sfr$-triangles as below.
\[
\xy
(-7,7)*+{A}="2";
(7,7)*+{C}="4";
(21,7)*+{N\ppr}="6";
(35,7)*+{}="8";
(-7,-7)*+{B}="12";
(7,-7)*+{E}="14";
(21,-7)*+{N\ppr}="16";
(35,-7)*+{}="18";
(-7,-21)*+{N}="22";
(7,-21)*+{N}="24";
(-7,-34)*+{}="32";
(7,-34)*+{}="34";
{\ar^{h} "2";"4"};
{\ar^{} "4";"6"};
{\ar@{-->}^{\rho} "6";"8"};
{\ar_{f} "2";"12"};
{\ar^{} "4";"14"};
{\ar@{=} "6";"16"};
{\ar_{} "12";"14"};
{\ar_{} "14";"16"};
{\ar@{-->}_{f\sas\rho} "16";"18"};
{\ar_{} "12";"22"};
{\ar^{} "14";"24"};
{\ar@{=} "22";"24"};
{\ar@{-->}_{\del} "22";"32"};
{\ar@{-->}^{h\sas\del} "24";"34"};
{\ar@{}|\circlearrowright "2";"14"};
{\ar@{}|\circlearrowright "4";"16"};
{\ar@{}|\circlearrowright "12";"24"};
\endxy
\]
Since $h\sas\del=g\sas f\sas\del=0$, we may replace $[C\to E\to N]$ by $[C\ov{\left[\bsm 1\\0\esm\right]}{\lra}C\oplus N\ov{[0\ 1]}{\lra}N]$, to obtain
\[
\xy
(-7,7)*+{A}="2";
(7,7)*+{C}="4";
(21,7)*+{N\ppr}="6";
(35,7)*+{}="8";
(-7,-7)*+{B}="12";
(7,-7)*+{C\oplus N}="14";
(21,-7)*+{N\ppr}="16";
(35,-7)*+{}="18";
(-7,-21)*+{N}="22";
(7,-21)*+{N}="24";
(-7,-34)*+{}="32";
(7,-34)*+{}="34";
{\ar^{h} "2";"4"};
{\ar^{} "4";"6"};
{\ar@{-->}^{\rho} "6";"8"};
{\ar_{f} "2";"12"};
{\ar^{\left[\bsm1\\0\esm\right]} "4";"14"};
{\ar@{=} "6";"16"};
{\ar_(0.35){m} "12";"14"};
{\ar_{} "14";"16"};
{\ar@{-->}_{f\sas\rho} "16";"18"};
{\ar_{} "12";"22"};
{\ar^{[0\ 1]} "14";"24"};
{\ar@{=} "22";"24"};
{\ar@{-->}_{\del} "22";"32"};
{\ar@{-->}^{h\sas\del} "24";"34"};
{\ar@{}|\circlearrowright "2";"14"};
{\ar@{}|\circlearrowright "4";"16"};
{\ar@{}|\circlearrowright "12";"24"};
\endxy
\]
for some $m=\left[\bsm g\ppr\\ g\pprr\esm\right]\in\Lcal(B,C\oplus N)$. Since $[1\ 0]\in\Rcal(C\oplus N,C)$, we have $g\ppr=[1\ 0]\ci \left[\bsm g\ppr\\ g\pprr\esm\right]\in\Rcal\ci\Lcal\se\Ssc_{\Ncal}$. Since $(g-g\ppr)\ci f=0$, by {\rm (C1')} it follows that $g-g\ppr$ factors through $N$, hence $\ovl{g}=\ovl{g}\ppr$. Since $\Ssc_{\Ncal}$ satisfies {\rm (M0)}, we can apply Lemma~\ref{LemSplitN} to $\Ssc=\Ssc_{\Ncal}$ to conclude $g\in\Ssc_{\Ncal}$.
\end{proof}

\begin{lem}\label{LemGeneralThick23_2}
Let $A\ov{r}{\lra}D\ov{l}{\lra}C$ and $A\ov{f}{\lra}B\ov{g}{\lra}C$ be a pair of sequences in $\C$ satisfying $l\in\Lcal$ and $r\in\Rcal$.
Assume that
\[
\xy
(-6,6)*+{A}="0";
(6,6)*+{D}="2";
(-6,-6)*+{B}="4";
(6,-6)*+{C}="6";
{\ar^{\ovl{r}} "0";"2"};
{\ar_{\ovl{f}} "0";"4"};
{\ar^{\ovl{l}} "2";"6"};
{\ar_{\ovl{g}} "4";"6"};
{\ar@{}|\circlearrowright "0";"6"};
\endxy
\]
is commutative in $\ovl{\C}$.
Then the following holds.
\begin{enumerate}
\item If $f\in\Lcal$, then $g\in\Ssc_{\Ncal}$. 
\item If $g\in\Rcal$, then $f\in\Ssc_{\Ncal}$. 
\end{enumerate}
\end{lem}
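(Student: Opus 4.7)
By duality between (1) and (2) it suffices to prove (1), so I assume $f\in\Lcal$. The plan is to reduce to Lemma~\ref{LemGeneralThick23_1}(1) by first absorbing the modulo-$\Ncal$ gap into an enlargement of the middle object $B$ and then pushing out $f$ along $r$. Concretely, the hypothesis $\ovl{g}\ci\ovl{f}=\ovl{l}\ci\ovl{r}$ supplies $N\in\Ncal$ and $i\co A\to N$, $j\co N\to C$ with $g\ci f+j\ci i=l\ci r$; setting $\tilde{f}=\left[\bsm f\\ i\esm\right]\co A\to B\oplus N$ and $\tilde{g}=[g\ j]\co B\oplus N\to C$, I have $\tilde{g}\ci\tilde{f}=l\ci r$ strictly in $\C$, with $\tilde{f}=(f\oplus\id_N)\ci\left[\bsm 1\\ i\esm\right]\in\Lcal$ and $g=\tilde{g}\ci\left[\bsm 1\\ 0\esm\right]$ where $\left[\bsm 1\\ 0\esm\right]\in\Lcal\se\Ssc_{\Ncal}$. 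Thus it suffices to prove $\tilde{g}\in\Ssc_{\Ncal}$, and I may and do assume from the outset that $g\ci f=l\ci r$ holds strictly.

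Next, I take an $\sfr$-triangle $A\ov{f}{\lra}B\ov{y}{\lra}N_f\ov{\del_f}{\dra}$ with $N_f\in\Ncal$, and use (C3) to push out along $r$, obtaining $D\ov{f_D}{\lra}P\ov{y_P}{\lra}N_f\ov{r\sas\del_f}{\dra}$ and a morphism $u\co B\to P$ with $u\ci f=f_D\ci r$ and $y_P\ci u=y$; since the cokernel of $f_D$ is $N_f\in\Ncal$, $f_D\in\Lcal$. The same invocation of (C3) supplies the mapping-cone conflation $A\ov{\left[\bsm f\\ r\esm\right]}{\lra}B\oplus D\ov{[u\ -f_D]}{\lra}P$. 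Applying (C1') to the morphism $[g\ -l]\co B\oplus D\to C$, which kills $\left[\bsm f\\ r\esm\right]$ by strict commutativity, produces a mediator $\eta\co P\to C$ satisfying $\eta\ci u=g$ and $\eta\ci f_D=l$. Because $f_D$ and $\eta\ci f_D=l$ both lie in $\Lcal$, Lemma~\ref{LemGeneralThick23_1}(1) immediately gives $\eta\in\Ssc_{\Ncal}$.

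It remains to check $u\in\Ssc_{\Ncal}$, whence $g=\eta\ci u\in\Ssc_{\Ncal}$ by closure under composition. For this I apply (M3) (Lemma~\ref{LemM3}) to the morphism of the above two $\sfr$-triangles induced by $a=r\in\Rcal\se\Ssc_{\Ncal}$ and $c=\id_{N_f}\in\Ssc_{\Ncal}$ (they satisfy $a\sas\del_f=r\sas\del_f=c\uas(r\sas\del_f)$), obtaining $u\ppr\in\Ssc_{\Ncal}$ with $u\ppr\ci f=f_D\ci r$ and $y_P\ci u\ppr=y$. Then $(u-u\ppr)\ci f=0$, so by (C1') the difference $u-u\ppr$ factors through $N_f\in\Ncal$, giving $\ovl{u}=\ovl{u\ppr}\in\ovl{\Ssc_{\Ncal}}$; combining Lemma~\ref{LemExPerc_pS} with Lemma~\ref{LemSplitN}(3) yields $p\iv(\ovl{\Ssc_{\Ncal}})=\Ssc_{\Ncal}$, whence $u\in\Ssc_{\Ncal}$. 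I expect this last verification to be the hard part of the argument: the pushout map $u$ delivered by (C3) carries no built-in $\Ssc_{\Ncal}$-certificate, so one is forced to compare it against a distinct $\Ssc_{\Ncal}$-candidate $u\ppr$ produced by (M3) and bridge the two through the ideal quotient, relying on the saturation identity $p\iv(\ovl{\Ssc_{\Ncal}})=\Ssc_{\Ncal}$ coming from (P2).
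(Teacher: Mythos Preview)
Your proof is correct and follows the same overall strategy as the paper: absorb the modulo-$\Ncal$ discrepancy by enlarging $B$, build an amalgam of $f$ and $r$, factor $g$ through it, and invoke Lemma~\ref{LemGeneralThick23_1}(1) on the factor through $D$. The one genuine technical difference is how the amalgam is produced. You use {\rm (C3)} and its mapping-cone conflation, which gives you $u\co B\to P$ with no immediate $\Ssc_{\Ncal}$-membership, forcing the extra {\rm (M3)}/saturation detour to certify $u\in\Ssc_{\Ncal}$. The paper instead applies {\rm (ET4)} to the composable inflations $N_2\ov{m}{\lra}A\ov{f_0}{\lra}B_0$ (where $N_2\to A\to D$ is the $\sfr$-conflation realising $r$); this yields the analogue of your $u$ directly as the deflation in an $\sfr$-conflation $N_2\to B_0\to E$, so it lies in $\Rcal\se\Ssc_{\Ncal}$ automatically. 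Both routes are valid; the {\rm (ET4)} route is shorter because it packages the $\Rcal$-certificate into the construction and does not need Lemma~\ref{LemExPerc_pS}. Incidentally, your final step can be shortened: once $\ovl{u}=\ovl{u'}$ with $u'\in\Ssc_{\Ncal}$, Lemma~\ref{LemSplitN}(2) alone (which needs only {\rm (M0)}) already gives $u\in\Ssc_{\Ncal}$, without appealing to the full saturation $p\iv(\ovl{\Ssc_{\Ncal}})=\Ssc_{\Ncal}$.
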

\begin{proof}
By duality, it is enough to show {\rm (1)}.
By $\ovl{g}\ci\ovl{f}=\ovl{l}\ci\ovl{r}$, there exists $N_0\in\Ncal$ and $i\in\C(A,N_0),j\in\C(N_0,C)$ such that $l\ci r=g\ci f+j\ci i$.
If we put $B_0=B\oplus N_0$, $f_0=\left[\bsm f\\ i\esm\right]$ and $g_0=[g\ j]$, then this means the commutativity of
\begin{equation}\label{CommADB_0C}
\xy
(-6,6)*+{A}="0";
(6,6)*+{D}="2";
(-6,-6)*+{B_0}="4";
(6,-6)*+{C}="6";
{\ar^{r} "0";"2"};
{\ar_{f_0} "0";"4"};
{\ar^{l} "2";"6"};
{\ar_{g_0} "4";"6"};
{\ar@{}|\circlearrowright "0";"6"};
\endxy
\end{equation}
in $\C$. Remark that $f\in\Lcal$ implies $f_0\in\Lcal$. Thus there are $\sfr$-triangles
\[ A\ov{f_0}{\lra}B_0\ov{y}{\lra}N_1\ov{\del_1}{\dra},\quad N_2\ov{m}{\lra}A\ov{r}{\lra}D\ov{\del_2}{\dra} \]
for some $N_1,N_2\in\Ncal$.
By {\rm (ET4)}, we obtain a diagram made of $\sfr$-triangles as below.
\[
\xy
(-21,7)*+{N_2}="0";
(-7,7)*+{A}="2";
(7,7)*+{D}="4";
(-21,-7)*+{N_2}="10";
(-7,-7)*+{B_0}="12";
(7,-7)*+{E}="14";
(-7,-21)*+{N_1}="22";
(7,-21)*+{N_1}="24";
{\ar^{m} "0";"2"};
{\ar^{r} "2";"4"};
{\ar^{\del_2}@{-->} "4";(19,7)};
{\ar@{=} "0";"10"};
{\ar_{f_0} "2";"12"};
{\ar^{d} "4";"14"};
{\ar_{} "10";"12"};
{\ar_{b} "12";"14"};
{\ar@{-->}^{} "14";(19,-7)};
{\ar_{y} "12";"22"};
{\ar^{} "14";"24"};
{\ar@{=} "22";"24"};
{\ar@{-->}_{\del_1} "22";(-7,-34)};
{\ar@{-->}^{r\sas\del_1} "24";(7,-34)};
{\ar@{}|\circlearrowright "0";"12"};
{\ar@{}|\circlearrowright "2";"14"};
{\ar@{}|\circlearrowright "12";"24"};
\endxy
\]
By \cite[Lemma~3.13]{NP}, the upper right square
\[
\xy
(-6,6)*+{A}="0";
(6,6)*+{D}="2";
(-6,-6)*+{B_0}="4";
(6,-6)*+{E}="6";
{\ar^{r} "0";"2"};
{\ar_{f_0} "0";"4"};
{\ar^{d} "2";"6"};
{\ar_{b} "4";"6"};
{\ar@{}|\circlearrowright "0";"6"};
\endxy
\]
is a weak pushout. Thus by the commutativity of $(\ref{CommADB_0C})$, there exists $e\in\C(E,C)$ such that $e\ci b=g_0$ and $e\ci d=l$. Since $l,d\in\Lcal$, we obtain $e\in\Ssc_{\Ncal}$ by Lemma~\ref{LemGeneralThick23_1} {\rm (1)}. Thus $g=e\ci b\ci\left[\bsm1\\0\esm\right]\in\Ssc_{\Ncal}\ci\Rcal\ci\Lcal\se\Ssc_{\Ncal}$ follows.
\end{proof}

By using Lemma~\ref{LemGeneralThick23_2}, we can show the following.
\begin{lem}\label{LemPercPreMR1}
Let
\[
\xy
(-6,6)*+{A}="0";
(6,6)*+{D}="2";
(-6,-6)*+{B}="4";
(6,-6)*+{C}="6";
{\ar^{r} "0";"2"};
{\ar_{f} "0";"4"};
{\ar^{l} "2";"6"};
{\ar_{g} "4";"6"};
{\ar@{}|\circlearrowright "0";"6"};
\endxy
\]
be any commutative square in $\C$ with $l\in\Lcal$ and $r\in\Rcal$.
The following holds.
\begin{enumerate}
\item If $f\in\Rcal$, then $g\in\Ssc_{\Ncal}$. 
\item If $g\in\Lcal$, then $f\in\Ssc_{\Ncal}$. 
\end{enumerate}
\end{lem}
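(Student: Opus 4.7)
We prove (1); statement (2) then follows by applying (1) in $\C^{\op}$ equipped with the percolating thick subcategory $\Ncal^{\op}$, which again satisfies Condition~\ref{ConditionPerc} (with the roles of $\Lcal$ and $\Rcal$ interchanged). So assume $l\in\Lcal$, $r\in\Rcal$, $f\in\Rcal$, and $g\ci f=l\ci r$; the goal is to produce $\tilde{r}\in\Rcal$ and $l\ppr\in\Lcal$ with $g=l\ppr\ci\tilde{r}$, whence $g\in\Ssc_{\Ncal}$ by Lemma~\ref{LemPercLR}.

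The deflation piece $\tilde{r}$ is produced by a $3\times 3$-diagram argument. Pick the $\sfr$-conflation $N_1\ov{\al}{\lra}A\ov{f}{\lra}B$ coming from $f\in\Rcal$, with $N_1\in\Ncal$. Then $l\ci(r\ci\al)=g\ci f\ci\al=0$, and Condition {\rm (P3)} applied to the inflation $l$ forces $r\ci\al\in[\Ncal](N_1,D)$. Lemma~\ref{LemPercdefinf} then yields a factorization $r\ci\al=b\ci a$ with $a\co N_1\to M$ an $\sfr$-deflation, $b\co M\to D$ an $\sfr$-inflation, and $M\in\Ncal$. Complete $b$ to an $\sfr$-conflation $M\ov{b}{\lra}D\ov{p}{\lra}D\ppr$; then $p\in\Rcal$, and therefore $p\ci r\in\Rcal$. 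I would then extend the commutative square $b\ci a=r\ci\al$ to a $3\times 3$-diagram of $\sfr$-conflations via iterated applications of {\rm (ET4)} (in the spirit of \cite[Proposition~3.15]{NP}): middle column $N_1\ov{\al}{\lra}A\ov{f}{\lra}B$, middle row $N_2\to A\ov{r}{\lra}D$, right column $M\ov{b}{\lra}D\ov{p}{\lra}D\ppr$, and top row $\Ker(a)\to N_1\ov{a}{\lra}M$ (where $\Ker(a)\in\Ncal$ by thickness). The bottom row $P\to B\ov{\tilde{r}}{\lra}D\ppr$ has its first term fitting into a left column $\Ker(a)\to N_2\to P$ of objects of $\Ncal$, so by thickness $P\in\Ncal$; hence $\tilde{r}\in\Rcal$ and $\tilde{r}\ci f=p\ci r$.

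For the inflation piece, observe that $l\ci b\co M\to C$ has source in $\Ncal$, so $\ovl{l\ci b}=0$ in $\ovl{\C}$ and $\ovl{l}$ factors through $\ovl{p}$ as $\ovl{l}=\ovl{l\ppr}\ci\ovl{p}$ for some $l\ppr\co D\ppr\to C$. The same computation yields $\ovl{g}=\ovl{l\ppr}\ci\ovl{\tilde{r}}$ in $\ovl{\C}$; lifting to $\C$, there exist $N\in\Ncal$, $i\in\C(B,N)$ and $j\in\C(N,C)$ with $g=l\ppr\ci\tilde{r}+j\ci i$. Writing $g=[l\ppr\ j]\ci\left[\bsm\tilde{r}\\ i\esm\right]\co B\to D\ppr\oplus N\to C$, Lemma~\ref{LemSplitN}\,{\rm (1)} and its dual show $\left[\bsm\tilde{r}\\ i\esm\right]\in\Ssc_{\Ncal}$ and (provided $l\ppr\in\Ssc_{\Ncal}$) $[l\ppr\ j]\in\Ssc_{\Ncal}$, whence $g\in\Ssc_{\Ncal}$ as $\Ssc_{\Ncal}$ is closed under composition. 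To realize $l\ppr$ as a genuine element of $\Lcal$ (and thus conclude not just $l\ppr\in\Ssc_{\Ncal}$ but the sharper $g\in\Lcal\ci\Rcal$ required by Lemma~\ref{LemPercLR}), I would apply {\rm (ET4)} to the composable inflations $b\co M\to D$ and $l\co D\to C$, using that $l\ci b$ factors through $\Ncal$ by {\rm (P1')} and that $\Cone(l)=N_3\in\Ncal$, to obtain an $\sfr$-conflation $D\ppr\to C^{\flat}\to N_3^\flat$ with $N_3^\flat\in\Ncal$ by thickness, then absorb the difference between $C^\flat$ and $C$ into the $N$-summand above via Lemma~\ref{LemSplitN}.

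The main obstacle will be the third step: while $\tilde{r}$ drops out cleanly from a $3\times 3$-diagram, constructing $l\ppr\in\Lcal$ satisfying $g=l\ppr\ci\tilde{r}$ (on the nose in $\C$, not merely in $\ovl{\C}$) requires careful orchestration of {\rm (ET4)} with {\rm (P1')} and Lemma~\ref{LemSplitN}, and one must avoid a circular appeal to the $2$-out-of-$3$ property {\rm (M1)} whose verification is the very purpose of this lemma.
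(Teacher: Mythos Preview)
Your overall plan—build a $3\times 3$-diagram to extract a deflation piece $\tilde{r}\in\Rcal$, then produce an inflation piece $l\ppr$—is reasonable in spirit, but the second half has a genuine gap that you yourself flag, and one step before it is actually incorrect as written.

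The incorrect step is the claim that $\ovl{l}$ factors through $\ovl{p}$ in $\ovl{\C}$. You argue that $l\ci b\co M\to C$ has domain in $\Ncal$, hence $\ovl{l\ci b}=0$; but from $\ovl{l}\ci\ovl{b}=0$ you cannot conclude that $\ovl{l}$ factors through the cokernel-like map $\ovl{p}$, because $\ovl{\C}$ is merely an additive quotient and carries no exactness. In $\C$ itself the exact sequence $\C(D\ppr,C)\to\C(D,C)\to\C(M,C)$ would give the factorization, but there you would need $l\ci b=0$ on the nose, which fails (for instance whenever $\CEs$ is exact and $M\ne 0$, since then $l\ci b$ is a monomorphism). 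So the morphism $l\ppr\co D\ppr\to C$ is not actually produced, and the subsequent argument with Lemma~\ref{LemSplitN} has nothing to stand on. Your attempted repair via {\rm (ET4)} on $b$ and $l$ does yield a map $D\ppr\to E$ in $\Lcal$, but with $E=\Cone(l\ci b)\ne C$; the map $C\to E$ lies in $\Rcal$, not $\Lcal$, so ``absorbing the difference'' is exactly the $2$-out-of-$3$ step you correctly worry about.

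The paper sidesteps this entirely. Rather than attempting a direct decomposition $g=l\ppr\ci\tilde r$, it first applies the dual of Lemma~\ref{Lem9} to the two deflations $r,f$ with common source $A$, obtaining $r\ppr\in\Rcal(B,Z)$ and $y\ppr\in\Rcal(D,Z)$ with $r\ppr\ci f=y\ppr\ci r$; this is a ready-made $3\times 3$, so no ad hoc construction is needed. Then it uses {\rm (ET4)} on the inflations $x\ppr\co N_3\to D$ and $l\co D\to C$ to produce $q\in\Rcal(C,Z\ppr)$ and $z\in\Lcal(Z,Z\ppr)$ with $q\ci l=z\ci y\ppr$. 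A short computation gives $q\ci g\ci f=z\ci r\ppr\ci f$, and {\rm (P3)} (applied to the deflation $f$) upgrades this to $\ovl{q}\ci\ovl{g}=\ovl{z}\ci\ovl{r\ppr}$ in $\ovl{\C}$. Now one is exactly in the hypotheses of Lemma~\ref{LemGeneralThick23_2}\,{\rm (2)} (with $q\in\Rcal$, $z\in\Lcal$, $r\ppr\in\Rcal$), which was proved independently and yields $g\in\Ssc_{\Ncal}$. The point is that post-composing with $q\in\Rcal$ converts the problem into one where the already-available Lemma~\ref{LemGeneralThick23_2} applies, avoiding any need to manufacture an $\Lcal$-map into $C$ itself.
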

\begin{proof}
By duality, it is enough to show {\rm (1)}.
Take $\sfr$-conflations $N\ov{i}{\lra}A\ov{r}{\lra}D$ and $N\ppr\ov{i\ppr}{\lra}A\ov{f}{\lra}B$.
By the dual of Lemma~\ref{Lem9}, we obtain a commutative diagram
\[
\xy
(-12,12)*+{N_2}="0";
(0,12)*+{N\ppr}="2";
(12,12)*+{N_3}="4";
(-12,0)*+{N}="10";
(0,0)*+{A}="12";
(12,0)*+{D}="14";
(-12,-12)*+{N_1}="20";
(0,-12)*+{B}="22";
(12,-12)*+{Z}="24";
{\ar^{x} "0";"2"};
{\ar^{y} "2";"4"};
{\ar_{} "0";"10"};
{\ar_{i\ppr} "2";"12"};
{\ar^{x\ppr} "4";"14"};
{\ar^{i} "10";"12"};
{\ar^{r} "12";"14"};
{\ar_{} "10";"20"};
{\ar_{f} "12";"22"};
{\ar^{y\ppr} "14";"24"};
{\ar_{i\ppr} "20";"22"};
{\ar_{r\ppr} "22";"24"};
{\ar@{}|\circlearrowright "0";"12"};
{\ar@{}|\circlearrowright "2";"14"};
{\ar@{}|\circlearrowright "10";"22"};
{\ar@{}|\circlearrowright "12";"24"};
\endxy
\]
in $\C$ with $N_1,N_2,N_3\in\Ncal$, whose rows and columns are $\sfr$-conflations. In particular we have $y\ppr,r\ppr\in\Rcal$. 

Since $l\in\Lcal$, there is an $\sfr$-conflation $D\ov{l}{\lra}C\to N^{\prime\prime\prime}$. By {\rm (ET4)}, we obtain a commutative diagram
\[
\xy
(-12,12)*+{N_3}="0";
(0,12)*+{D}="2";
(12,12)*+{Z}="4";
(-12,0)*+{N_3}="10";
(0,0)*+{C}="12";
(12,0)*+{Z\ppr}="14";
(0,-12)*+{N^{\prime\prime\prime}}="22";
(12,-12)*+{N^{\prime\prime\prime}}="24";
{\ar^{x\ppr} "0";"2"};
{\ar^{y\ppr} "2";"4"};
{\ar@{=} "0";"10"};
{\ar^{l} "2";"12"};
{\ar^{z} "4";"14"};
{\ar_{} "10";"12"};
{\ar_{q} "12";"14"};
{\ar_{} "12";"22"};
{\ar^{} "14";"24"};
{\ar@{=} "22";"24"};
{\ar@{}|\circlearrowright "0";"12"};
{\ar@{}|\circlearrowright "2";"14"};
{\ar@{}|\circlearrowright "12";"24"};
\endxy
\]
whose rows and columns are $\sfr$-conflations.
Then we have
\[ q\ci g\ci f=q\ci l\ci r=z\ci y\ppr\ci r=z\ci r\ppr\ci f. \]
By {\rm (P3)}, 
this means that
\[
\xy
(-6,6)*+{B}="0";
(6,6)*+{Z}="2";
(-6,-6)*+{C}="4";
(6,-6)*+{Z\ppr}="6";
{\ar^{\ovl{r}\ppr} "0";"2"};
{\ar_{\ovl{g}} "0";"4"};
{\ar^{\ovl{z}} "2";"6"};
{\ar_{\ovl{q}} "4";"6"};
{\ar@{}|\circlearrowright "0";"6"};
\endxy
\]
is commutative in $\ovl{\C}$. Since $z\in\Lcal$ and $q,r\ppr\in\Rcal$ hold, we obtain $g\in\Ssc_{\Ncal}$ 
by Lemma~\ref{LemGeneralThick23_2} {\rm (2)}.
\end{proof}

\begin{prop}\label{PropPerc}
Let $\Ncal\se\C$ be a percolating thick subcategory which satisfies Condition~\ref{ConditionPerc}.
Then $\Ssc_{\Ncal}$ satisfies {\rm (M1)},\,$\ldots\,$,\,{\rm (M4)} and $\Ssc_{\Ncal}=p\iv(\ovl{\Ssc_{\Ncal}})$.
Thus the localization $\wCEs$ becomes extriangulated by Corollary~\ref{CorMultLoc}.
\end{prop}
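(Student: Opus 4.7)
The equalities $p(\Ssc_{\Ncal})=\ovl{\Ssc_{\Ncal}}$ and condition {\rm (M3)} are already in hand from Lemmas~\ref{LemExPerc_pS} and~\ref{LemM3}, respectively, so the task reduces to verifying {\rm (M1)}, {\rm (M2)} and {\rm (M4)}. The central organisational device is the decomposition $\Ssc_{\Ncal}=\Lcal\ci\Rcal$ of Lemma~\ref{LemPercLR}, together with the accumulated $2$-out-of-$3$ style lemmas established above.

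For {\rm (M1)}, consider a commutative triangle $A\ov{f}{\lra}B\ov{g}{\lra}C$ with $h=g\ci f$. The case $f,g\in\Ssc_{\Ncal}$ is immediate from {\rm (M0)}; for each of the two non-trivial cases, decompose $h$ (and the other available member of $\Ssc_{\Ncal}$) as $\Lcal\ci\Rcal$, paste the resulting diagrams into $2\ti 2$ squares, and apply Lemmas~\ref{LemGeneralThick23_1}, \ref{LemGeneralThick23_2} and~\ref{LemPercPreMR1} at the corners to propagate the ``being in $\Ssc_{\Ncal}$'' property through the remaining morphism. For the Ore-type axioms in {\rm (M2)}, one similarly decomposes $s=l\ci r$ and completes the square for the $l$-piece and the $r$-piece separately, combining Lemma~\ref{LemThickFirstProperties}(3), its dual, and the percolating properties {\rm (P1)}, {\rm (P1')}.

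The delicate part of {\rm (M2)} is the cancellation axiom. Given $s\in\Ssc_{\Ncal}(A,B)$ and $f\co B\to C$ with $f\ci s=0$ in $\C$, write $s=l\ci r$ with $r\co A\to D\in\Rcal$ and $l\co D\to B\in\Lcal$. From $(f\ci l)\ci r=0$ the dual of {\rm (P3)} yields $f\ci l=j\ci i$ factoring through some $N\in\Ncal$; condition {\rm (P1')} refines $j$ as $j=j_2\ci j_1$ with $j_2\co N\pprr\to C$ an $\sfr$-inflation and $N\pprr\in\Ncal$. The cone of $j_2$ is an $\sfr$-triangle $N\pprr\to C\ov{t_1}{\lra}E$ with $t_1\in\Rcal$ and $t_1\ci f\ci l=0$ in $\C$. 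Applying {\rm (C1')} to the $\sfr$-conflation $D\ov{l}{\lra}B\ov{p}{\lra}N\ppr$ (in which $N\ppr\in\Ncal$ because $l\in\Lcal$) gives $t_1\ci f=g\ppr\ci p$ for some $g\ppr\co N\ppr\to E$; a second application of {\rm (P1')} to $g\ppr$ and a second cone construction produce $t_2\in\Rcal$ with $t_2\ci t_1\ci f=0$ in $\C$. The composite $t=t_2\ci t_1\in\Rcal\ci\Rcal\se\Ssc_{\Ncal}$ is the required annihilator, and the opposite cancellation is dual.

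For {\rm (M4)}, given two elements $t_i\ci x_i\ci s_i$ ($i=1,2$) of $\Mcal_{\mathsf{inf}}$, their composition reads $t_2\ci x_2\ci(s_2\ci t_1)\ci x_1\ci s_1$; decomposing $s_2\ci t_1=l\ci r$ in $\Lcal\ci\Rcal$, the block $x_2\ci l$ is an $\sfr$-inflation by {\rm (C4)}, while the remaining block $r\ci x_1$ is reorganised via a pushout-type argument using the cocone $\sfr$-triangle for $r$ together with {\rm (ET4)}, absorbing the $\Ncal$-content into the flanking members of $\Ssc_{\Ncal}$. This brings the whole composition into the form $t\ppr\ci x\ppr\ci s\ppr$ demanded by {\rm (M4)}; the deflation side is dual. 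The principal obstacle in the proof is the {\rm (M2)} cancellation step: everything must be carried out strictly in $\C$ rather than modulo $\Ncal$, and this is what forces the two-stage cone-chase combining {\rm (P3)}, {\rm (P1')} and {\rm (C1')}.
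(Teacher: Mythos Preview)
Your proposal is correct and follows essentially the same route as the paper: the same lemmas (\ref{LemGeneralThick23_1}, \ref{LemGeneralThick23_2}, \ref{LemPercPreMR1}) drive {\rm (M1)}, the same two-stage cone argument via {\rm (P3)}, {\rm (P1')}/Lemma~\ref{LemPercdefinf} and {\rm (C1')} handles the cancellation half of {\rm (M2)}, and {\rm (M4)} is handled by (the dual of) Lemma~\ref{Lem9}. The only stylistic difference is that for {\rm (M4)} the paper first reduces to showing $y\ci s\ci y\ppr\in\Mcal_{\mathsf{def}}$ for $\sfr$-deflations $y,y\ppr$ (absorbing the outer $\Ssc_{\Ncal}$-flanks at once), which is a little cleaner than reorganising the full six-fold composition as you do, but the underlying mechanism is identical.
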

\begin{proof}
By Lemmas~\ref{LemM3} and \ref{LemExPerc_pS}, it remains to show {\rm (M1),(M2),(M4)}. 

{\rm (M1)} Let
\[
\xy
(-7,6)*+{A}="0";
(-7,-6)*+{B}="2";
(2,2)*+{}="3";
(7,-6)*+{C}="4";
{\ar_{f} "0";"2"};
{\ar_{g} "2";"4"};
{\ar^{h} "0";"4"};
{\ar@{}|\circlearrowright "2";"3"};
\endxy
\]
be any commutative diagram in $\C$, with $h\in\Ssc_{\Ncal}$. Similarly as in the proof of Proposition~\ref{PropSatisfy}, by duality it is enough to show that $g\in\Ssc_{\Ncal}$ implies $f\in\Ssc_{\Ncal}$. Moreover, it suffices to show in the case where $g\in\Lcal$ or $g\in\Rcal$.
By Lemma~\ref{LemPercLR}, we may write $h=l\ci r$ by some $l\in\Lcal$ and $r\in\Rcal$. If $g\in\Rcal$, Lemma~\ref{LemGeneralThick23_2} {\rm (2)} shows $f\in\Ssc_{\Ncal}$.
Similarly if $g\in\Lcal$, Lemma~\ref{LemPercPreMR1} {\rm (2)} shows $f\in\Ssc_{\Ncal}$. 

{\rm (M2)} We firstly consider morphisms $X\ppr\ov{s}{\lla}X\ov{f}{\lra}Y$ in $\C$ and a factorization $s\co X\ov{r}{\lra}X\pprr\ov{l}{\lra}X\ppr$ with $l\in\Lcal, r\in\Rcal$.
We will complete the morphisms $X\ppr\ov{s}{\lla}X\ov{f}{\lra}Y$ to a commutative square.
Take an $\sfr$-conflation $N\ov{g}{\lra}X\ov{r}{\lra}X\pprr$.
By {\rm (P1')}, we may factorize 
$f\ci g$ as $N\ov{f\ppr}{\lra}N\ppr\ov{g\ppr}{\lra}Y$ with an $\sfr$-deflation $f\ppr$, an $\sfr$-inflation $g\ppr$ and $N\ppr\in\Ncal$. If we take an $\sfr$-conflation $N\ppr\ov{g\ppr}{\lra}Y\ov{r\ppr}{\lra}Y\ppr$, we obtain the following commutative solid squares
\[
\xymatrix@C=24pt@R=12pt{
N\ar[rr]^{f\ppr}\ar[dr]_g&&N\ppr\ar[dr]^{g\ppr}&&\\
&X\ar@{}[ur]|\circlearrowright\ar[rr]^f\ar[rd]^r\ar[dd]_s&&Y\ar[rd]^{r\ppr}&\\
&&X\pprr\ar@{}[l]|\circlearrowright\ar@{}[ur]|\circlearrowright\ar@{}[dr]|\circlearrowright\ar[rr]\ar[dl]^l&&Y\ppr\ar@{..>}[ld]^{l\ppr}\\
&X\ppr\ar@{..>}[rr]&&Y\pprr&
}
\]
By Lemma~\ref{LemThickFirstProperties}, we have the dotted arrows with $l\ppr\in\Lcal$ which make the whole diagram commutative.
Since $l\ppr\ci r\ppr\in\Ssc_\Ncal$, we have a desired commutative square.

Next, let $X\ppr\ov{s}{\lra}X\ov{f}{\lra}Y$ be a sequence with $s\in\Ssc_{\Ncal}$ and $f\ci s=0$.
By Lemma~\ref{LemPercLR}, there exist $r\in\Rcal(X\ppr,X\pprr)$ and $l\in\Lcal(X\pprr,X)$ such that $s=l\ci r$. By {\rm (P3)} 
we have $f\ci l=j\ci i$ for some $N\in\Ncal$, $i\in\C(X\pprr,N)$ and $j\in\C(N,Y)$. By Lemma~\ref{LemPercdefinf} we may assume that $j$ is an $\sfr$-inflation. By (C1') for $\CEs$, we obtain a commutative diagram with $N\ppr\in\Ncal$
\[
\xymatrix{
X\pprr\ar@{}[dr]|\circlearrowright\ar[r]^{l}\ar[d]_i&X\ar@{}[dr]|\circlearrowright\ar[r]\ar[d]^{f}&N\ppr\ar[d]\\
N\ar[r]_j&Y\ar[r]_{r\ppr}&Y\ppr
}
\]
in which two rows are $\sfr$-conflations.
Thus $r\ppr\ci f$ factors through $N\ppr\in\Ncal$. By Lemma~\ref{LemPercdefinf}, there exists some $N\pprr\in\Ncal$, $\sfr$-deflation $i\ppr\in\C(X,N\pprr)$ and $\sfr$-conflation $N\pprr\ov{j\ppr}{\lra}Y\ppr\ov{l\ppr}{\lra}Y\pprr$ such that $r\ppr\ci f=j\ppr\ci i\ppr$. Then $l\ppr\ci r\ppr\ci f=0$ holds for $l\ppr\ci r\ppr\in\Ssc_{\Ncal}$.
Other conditions can be checked dually.

{\rm (M4)} Let us show that $\Mcal_{\mathsf{def}}\se\Mcal$ is closed under compositions. Since $\Ssc_{\Ncal}=\Lcal\ci\Rcal$ is closed under compositions, it suffices to show that $y\ci s\ci y\ppr\in\Mcal_{\mathsf{def}}$ holds for any $\sfr$-deflations $y,y\ppr$ and any $s\in\Ssc_{\Ncal}$. By $\Ssc_{\Ncal}=\Lcal\ci\Rcal$, there are an $\sfr$-inflation $l$ and an $\sfr$-deflation $r$ satisfying $s=l\ci r$. By Lemma~\ref{Lem9}, there exist an $\sfr$-deflation $d$ and $l\ppr\in\Lcal$ satisfying $y\ci l=l\ppr\ci d$. Thus we have $y\ci s\ci y\ppr=l\ppr\ci d\ci r\ci y\ppr$ as desired. Dually, $\Mcal_{\mathsf{inf}}\se\Mcal$ is closed under compositions. 
\end{proof}

\begin{cor}\label{CorLast}
Let $\Ncal\se\C$ be a percolating thick subcategory satisfying {\rm (P2)}.
Suppose that $\Ncal$ also satisfies the following condition.
\begin{itemize}
\item $\Ker\big(\C(X,A)\ov{x\ci-}{\lra}\C(X,B)\big)\se[\Ncal](X,A)$ holds for any $X\in\C$ and any $\sfr$-inflation $x\in\C(A,B)$. 
Dually, $\Ker\big(\C(C,X)\ov{-\ci y}{\lra}\C(B,X)\big)\se[\Ncal](C,X)$ holds for any $X\in\C$ and any $\sfr$-deflation $y\in\C(B,C)$.
\end{itemize}
Then the localization $\wCEs$ obtained in Proposition~\ref{PropPerc} corresponds to an exact category.
If moreover any morphism in $\C$ is $\sfr$-admissible, then $\wC$ is an abelian category.
\end{cor}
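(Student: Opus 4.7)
The plan is to reduce Corollary~\ref{CorLast} to Corollary~\ref{CorLocExact} applied to $\Ssc=\Ssc_{\Ncal}$. Since Proposition~\ref{PropPerc} already gives conditions {\rm (M0)--(M4)} and $\Ssc_{\Ncal}=p\iv(\ovl{\Ssc_{\Ncal}})$, all that remains is to verify hypotheses {\rm (i),(ii)} in Corollary~\ref{CorLocExact} and their duals. Note that by Lemma~\ref{LemNSN}, $\Ncal_{\Ssc_{\Ncal}}=\Ncal$, so these conditions concern $\Ncal$ itself.

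Condition {\rm (i)} in Corollary~\ref{CorLocExact} and its dual coincide with the extra kernel-vanishing assumption imposed in the statement of Corollary~\ref{CorLast}, so they are given.

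For condition {\rm (ii)}, given $N\in\Ncal$ and a morphism $f\in\C(A,N)$, I would apply {\rm (P1)} to factor $f$ as $A\ov{d}{\lra}N\ppr\ov{h}{\lra}N$ with $d$ an $\sfr$-deflation, $h$ an $\sfr$-inflation, and $N\ppr\in\Ncal$. Completing $d$ to an $\sfr$-conflation $A\ppr\ov{s}{\lra}A\ov{d}{\lra}N\ppr$, we have $d\ci s=0$, hence $f\ci s=h\ci d\ci s=0$. Since $N\ppr\in\Ncal$, the morphism $s$ belongs to $\Lcal\se\Ssc_{\Ncal}$, so $Q(s)\in\Iso(\wC)$. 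The dual condition {\rm (ii$'$)} is verified symmetrically using {\rm (P1$'$)}: for $f\in\C(N,A)$, factor $f=h\ci g$ with $g$ an $\sfr$-deflation, $h$ an $\sfr$-inflation to $A$ with $\sfr$-conflation source $N\ppr\in\Ncal$; completing $h$ yields an $\sfr$-conflation $N\ppr\ov{h}{\lra}A\ov{s}{\lra}A\ppr$ with $s\in\Rcal\se\Ssc_{\Ncal}$, giving $s\ci f=0$ as required.

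Corollary~\ref{CorLocExact} then yields that $\wCEs$ corresponds to an exact category. The abelian statement in the last sentence follows at once from the final clause of Corollary~\ref{CorLocExact}: if every morphism in $\C$ is $\sfr$-admissible, so is every morphism in $\wC$ (via Remark~\ref{RemAdm} and Lemma~\ref{LemComposeInf}), forcing $\wC$ to be abelian. No step here looks like a genuine obstacle; the only subtle point is identifying that the morphism $s$ produced from {\rm (P1)} lies in $\Lcal$ (rather than merely in $\Ssc_{\Ncal}$), which is automatic because the cone $N\ppr$ of $s$ lies in $\Ncal$.
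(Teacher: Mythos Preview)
Your approach is essentially the same as the paper's: reduce to Corollary~\ref{CorLocExact}, identify condition {\rm (i)} there with the extra kernel hypothesis via $\Ncal_{\Ssc_{\Ncal}}=\Ncal$ (Lemma~\ref{LemNSN}), and verify {\rm (ii)} and its dual using {\rm (P1)} and {\rm (P1')} exactly as you spell out. The paper's proof is more terse but follows the identical route.

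One small point you glossed over: to invoke Proposition~\ref{PropPerc} you need Condition~\ref{ConditionPerc}, i.e.\ both {\rm (P2)} and {\rm (P3)}. Only {\rm (P2)} is assumed directly; {\rm (P3)} follows because $\Lcal$ consists of $\sfr$-inflations and the kernel hypothesis in the corollary is stated for \emph{all} $\sfr$-inflations (and dually). The paper notes this with ``Obviously $\Ncal$ satisfies {\rm (P3)}''; you should add one sentence to the same effect so that your appeal to Proposition~\ref{PropPerc} is justified.
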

\begin{proof}
Obviously $\Ncal$ satisfies {\rm (P3)}, hence Proposition~\ref{PropPerc} can be applied.
We remark that $\Ncal_{\Ssc_{\Ncal}}=\Ncal$ holds by Lemma~\ref{LemNSN}, hence the above condition is nothing but {\rm (i)} of Corollary~\ref{CorLocExact} with its dual.  Also, condition {\rm (ii)} of Corollary~\ref{CorLocExact} and its dual are fulfilled by {\rm (P1)} and {\rm (P1')}. Thus the resulting localization $\wCEs$ corresponds to an exact category by Corollary~\ref{CorLocExact}. The last assertion follows from Remark~\ref{RemAdm}.
\end{proof}

Let us conclude this subsection with the following construction which provides percolating subcategories satisfying Condition~\ref{ConditionPerc}. This is a generalization of Example~\ref{ExPerc} {\rm (3)}. Since the extriangulated category $\CEs$ obtained below is not triangulated nor exact in general, the resulting localization does not belong to either of Examples~\ref{ExVerdier}, \ref{ExSerreAbel}, \ref{ExTwo-sidedExact}.
A similar construction appears in \cite[Example~2.15]{GMT} for recollements of abelian categories.

\begin{prop}
Let $(T,\xi)\co\T\to\T_\Ncal$ be the Verdier quotient of a triangulated category $\T$ by a thick subcategory $\Ncal\se\T$. Here $\xi\co T\ci [1]\ov{\cong}{\ltc}[1]\ci T$ denotes a natural isomorphism, as in Remark~\ref{RemExFun}.
Let $\D\se\T_{\Ncal}$ be an extension-closed subcategory closed by direct summands, which we naturally regard as an extriangulated category $(\D,\Fbb,\tfr)$.

Let $\C\se\T$ be the extension-closed subcategory given by $\C=T\iv(\D)$, which we also regard as an extriangulated category $\CEs$. 
Let $(F=T|_{\C},\phi)\co\CEs\to(\D,\Fbb,\tfr)$ be the exact functor induced by restricting $T$, for which $\phi_{C,A}\co\E(C,A)\to\F(FC,FA)$ is given by $\phi_{C,A}(\del)=\xi_A\ci T(\del)$ for any $A,C\in\C$ and $\del\in\E(C,A)$.
Then the following holds.
\begin{enumerate}
\item $\Ncal\se\C$ is a percolating subcategory of $\CEs$ which satisfies Condition~\ref{ConditionPerc} and $\Ssc_{\Ncal}=F\iv(\Iso(\D))$.
\item The exact functor $(\wt{F},\wt{\phi})\co\wCEs\to(\D,\F,\tfr)$ obtained by Corollary~\ref{CorMultLoc} is an equivalence of extriangulated categories in the sense of {\rm (2)} in Proposition~\ref{PropExEq}.
\item If moreover $(\D,\Fbb,\tfr)$ corresponds to an exact category, then $\Ncal$ also satisfies the assumption of Corollary~\ref{CorLast}.
\end{enumerate}
\end{prop}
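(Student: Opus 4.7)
The plan is to verify (1) by checking each listed property of $\Ncal$ using the ambient triangulated structure on $\T$, then to deduce (2) from Corollary~\ref{CorMultLoc} by showing $\wt{F}$ is an equivalence of categories and $\wt{\phi}$ is a natural isomorphism, and finally to derive (3) from the exactness hypothesis on $\D$ via the monicity of $\sfr$-inflations under $F$.

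For (1), the subcategory $\Ncal\se\C$ is thick: extension-closure of $\C\se\T$ places $\Ncal$ inside $\C$, and closure under direct summands and the $2$-out-of-$3$ property for $\sfr$-conflations are inherited from the corresponding properties of $\Ncal$ in $\T$, since $\sfr$-conflations in $\C$ are precisely the triangles of $\T$ whose three terms lie in $\C$. Percolation is immediate from the triangulated structure: any $f\co X\to N$ with $N\in\Ncal$ factors as $X\ov{f}{\lra}N\ov{\id_N}{\lra}N$, where $\id_N$ is an $\sfr$-inflation with cone $0$ and $f$ is an $\sfr$-deflation because its fibre in $\T$ lies in $\C$ by extension-closedness; dually for {\rm (P1')}. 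Condition {\rm (P2)} holds because any split monomorphism $f\co A\to B$ in $\C$ with $\ovl{f}$ an isomorphism in $\ovl{\C}$ has cokernel in $\T$ which is annihilated by $T$, hence lies in $\Ncal\se\C$ and furnishes the required direct sum decomposition of $B$. Condition {\rm (P3)} follows similarly: for $l\in\Lcal$ with $\sfr$-triangle $A\ov{l}{\lra}B\to N\to A[1]$ and $N\in\Ncal$, any morphism killed by $l$ factors in $\T$ through $N[-1]\in\Ncal$. Finally, $\Ssc_{\Ncal}=F\iv(\Iso(\D))$ follows because the generators of $\Ssc_{\Ncal}$ are sent to isomorphisms by $T$, and conversely $T(f)$ an isomorphism forces $\Cone(f)\in\Ncal\cap\C$, placing $f\in\Lcal\se\Ssc_{\Ncal}$.

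For (2), Corollary~\ref{CorMultLoc} yields the exact functor $(\wt{F},\wt{\phi})$, and Proposition~\ref{PropExEq} reduces the claim to showing $\wt{F}$ is an equivalence of categories and $\wt{\phi}$ is a natural isomorphism. Essential surjectivity of $\wt{F}$ follows from that of the Verdier quotient $T$ together with closure of $\D$ under isomorphism. Fullness uses roof calculus: any $g\co FX\to FY$ is represented by a roof $X\ov{f}{\lra}Y\ppr\ov{s}{\lla}Y$ in $\T$ with $\Cone(s)\in\Ncal$, and extension-closedness places $Y\ppr\in\C$ with $s\in\Lcal\se\Ssc_{\Ncal}$, producing the preimage $Q(s)\iv\ci Q(f)$. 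Faithfulness uses that $T(f)=0$ in $\T_{\Ncal}$ forces $f$ to factor through some $N\in\Ncal$ in $\T$, hence in $\C$ by fullness, and $Q(N)\cong 0$ in $\wC$ (via Lemma~\ref{LemNSN}) then kills $Q(f)$. For $\wt{\phi}$, surjectivity is witnessed by triplets $(C\ov{\ovl{t}}{\lla}Z\ov{\ovl{\del}}{\dra}A\ov{\ovl{\id}}{\lla}A)$ built from roof representations of morphisms in $\T_{\Ncal}(T(C),T(A)[1])$; for injectivity, $\wt{\phi}([\FR{\ovl{t}}{\ovl{\del}}{\ovl{s}}])=0$ implies $T(\del)=0$ in $\T_{\Ncal}$, so $\del=k\ci h$ factors through some $N\in\Ncal$ in $\T$, and the $\sfr$-inflation $x\co X\to Y$ in the $\sfr$-triangle realizing $k\in\E(N,X)$ lies in $\Lcal\se\Ssc_{\Ncal}$ and satisfies $x\sas\del=0$ (because $x[1]\ci k=0$ in that triangle), so $\ovl{\del}=0$ in $\ovl{\E}$ and the class vanishes.

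For (3), the hypothesis that $(\D,\Fbb,\tfr)$ corresponds to an exact category makes $F(x)=T(x)$ a monomorphism in $\D$ for every $\sfr$-inflation $x\co A\to B$ in $\C$. Any $g\co X\to A$ with $x\ci g=0$ then satisfies $T(g)=0$ in $\D$, and hence in $\T_{\Ncal}$; the Verdier quotient factors $g$ through some $N\in\Ncal$ in $\T$, and fullness of $\C\se\T$ promotes this to a factorization in $\C$, so $g\in[\Ncal](X,A)$. The dual argument treats $\sfr$-deflations, verifying the hypothesis of Corollary~\ref{CorLast}. The main obstacle is the natural-isomorphism assertion for $\wt{\phi}$ in (2): it requires matching the equivalence relation on triplets defining $\wE$ with the roof-equivalence on morphisms in $\T_{\Ncal}$, handled via the explicit formula for $\wt{\phi}$ together with the observation that extensions factoring through $\Ncal$ in $\T$ are annihilated by $\sfr$-inflations in $\Lcal$.
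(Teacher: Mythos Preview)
Your argument is correct and follows essentially the same route as the paper: verify thickness, percolation, {\rm (P2)}, {\rm (P3)}, and $\Ssc_{\Ncal}=F\iv(\Iso(\D))$ directly from the ambient triangulated structure; then establish the equivalence in {\rm (2)} via essential surjectivity, full faithfulness, and bijectivity of $\wt{\phi}$ (the paper packages the last step as an explicit inverse $\lambda$, but your surjectivity/injectivity argument is equivalent); and deduce {\rm (3)} from monicity of $\tfr$-inflations in $\D$. One small slip: in your percolation step you justify that the fibre $K$ of $f\co X\to N$ lies in $\C$ ``by extension-closedness'', but extension-closure only controls the middle term of a conflation; the correct reason is that $T(N)=0$ forces $T(K)\cong T(X)\in\D$, hence $K\in\C=T\iv(\D)$.
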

\begin{proof}
{\rm (1)} It is obvious that $\Ncal$ is a thick subcategory in $\CEs$. 
To show that $\Ncal\se\C$ is percolating, let $N\in\Ncal$ be any object, and $x\in\C(N,A)$ be any morphism. If we complete $x$ into a distinguished triangle $N\ov{x}{\lra}A\ov{y}{\lra}B\lra N[1]$ in $\T$, then it gives an $\sfr$-conflation $N\ov{x}{\lra}A\ov{y}{\lra}B$. This shows that $x$ is an $\sfr$-inflation. Dually, any morphism $y\in\C(A,N)$ to $N\in\Ncal$ is an $\sfr$-deflation. Thus in particular $\Ncal\se\C$ is percolating.

We know that $\Ncal$ satisfies {\rm (P2)} as seen in Remark~\ref{RemP2} {\rm (2)}. We can also easily check that $\Ssc_{\Ncal}=\Rcal=\Lcal$ holds in $\C$. Indeed, any morphism $f\in\C(A,B)$ belongs to $\Ssc_{\Ncal}$ if and only if it can be completed into a distinguished triangle $N\to A\ov{f}{\lra}B\to N[1]$ in $\T$ for some $N\in\Ncal$, if and only if it satisfies $F(f)\in\Iso(\D)$. Thus $\Ncal$ also satisfies {\rm (P3$^+$)}.

{\rm (2)} Remark that Proposition~\ref{PropPerc} shows that Corollary~\ref{CorMultLoc} can be applied to $\Ssc_{\Ncal}$. By Proposition~\ref{PropExEq}, it suffices to show that $\wt{F}$ is an equivalence and $\wt{\phi}$ is a natural isomorphism.
Since $T$ is essentially surjective, obviously so is $\wt{F}$. By construction, if $s\in\T(X,Y)$ is a morphism with $T(s)\in\Iso(\T_{\Ncal})$, then $X\in\C$ holds if and only if $Y\in\C$. This property is enough to conclude that $\wt{F}\co\wt{\C}\to\D$ is fully faithful. 

Let us show that $\wt{\phi}_{C,A}\co\wt{\Ebb}(C,A)\to\Fbb(FC,FA)$ is an isomorphism for any $A,C\in\C$. By definition, we have $\Fbb(FC,FA)=\T_{\Ncal}(FC,(FA)[1])$. Any element $\al\in\T_{\Ncal}(FC,(FA)[1])$ can be expressed as $\al=\xi_A\ci T(\sig)\ci F(t)\iv$ for some $C\ppr\in\C$ and $\sig\in\T(C\ppr,A[1])=\Ebb(C\ppr,A)$. It is straightforward to show that the map
\[ \lam\co \Fbb(FC,FA)\to\wE(C,A)\ ;\ \al\mapsto [\FR{\ovl{t}}{\ovl{\sig}}{\id}] \]
is well-defined and gives the inverse of $\wt{\phi}_{C,A}$.


{\rm (3)} Assume that $(\D,\Fbb,\tfr)$ corresponds to an exact category.
Suppose that $x\ci f=0$ holds for an $\sfr$-inflation $x\co A\to B$ and a morphism $f\co X\to A$ in $\C$. Since $F(x)$ is a monomorphism in $\D$ we get $F(f)=0$, which implies that $f$ factors through an object in $\Ncal$. Dually for $\sfr$-deflations.
\end{proof}

\end{document}